\newtheorem{theorem}{Theorem}[section]
\newtheorem{lemma}[theorem]{Lemma}
\newtheorem{proposition}[theorem]{Proposition}
\newtheorem{corollary}[theorem]{Corollary}
\theoremstyle{definition}
\newtheorem{definition}[theorem]{Definition}
\newtheorem{convention}[theorem]{Convention}
\newtheorem{remark}[theorem]{Remark}
\numberwithin{equation}{section}
\newcommand{\Natural}{{\mathbb N}}
\newcommand{\Real}{{\mathbb R}}
\newcommand{\Rational}{{\mathbb Q}}
\newcommand{\Complex}{{\mathbb C}}
\newcommand{\Integral}{{\mathbb Z}}
\newcommand{\Field}{{\mathbb F}}
\newcommand{\Sph}{{\mathbb S}}
\newcommand{{\FPS}}{{\mathrm{Fix}}}
\newcommand{{\PPS}}{{\mathrm{Per}}}
\newcommand{{\FPC}}{{\mathscr{F}\mathrm{ix}}}
\newcommand{{\PPC}}{{\mathscr{P}\mathrm{er}}}
\newcommand{{\POC}}{{\mathscr{O}\mathrm{rb}}}
\newcommand{{\periodic}}{{\mathtt{per}}}
\newcommand{{\pA}}{{\mathtt{pA}}}
\newcommand{{\reduction}}{{\mathtt{fDt}}}
\newcommand{{\NT}}{\mathtt{NT}}
\newcommand{{\gd}}{{\mathtt{gd}}}
\title[{hyperbolic 3-manifolds almost determined by finite quotients}]{
Finite-volume hyperbolic 3-manifolds\\ are almost determined\\ by their finite quotient groups}
\author[Yi Liu]{%
        Yi Liu} 
\address{%
        Beijing International Center for Mathematical Research, Peking University\\
				Beijing 100871, China P.R.} 
\email{%
    liuyi@bicmr.pku.edu.cn}
\thanks{Partially supported by NSFC grant 11925101, and National Key R\&D Program of China 2020YFA0712800}
\subjclass[2010]{Primary 57M50; Secondary 57M10, 30F40}
\keywords{profinite completion, hyperbolic geometry, 3-manifolds, fixed point theory}
\date{%
 \today} 
\begin{document}

\begin{abstract}
	For any orientable finite-volume hyperbolic 3-manifold,
	this paper proves that
	the profinite isomorphism type of the fundamental group
	uniquely determines
	the isomorphism type of the first integral cohomology,
	as marked with the Thurston norm and the fibered classes;
	moreover, up to finite ambiguity,
	the profinite isomorphism type
	determines the isomorphism type of the fundamental group,
	among the class of finitely generated 3-manifold groups.
\end{abstract}

\maketitle

\section{Introduction}\label{Sec-introduction}
A pair of finitely generated groups $G_A$ and $G_B$ are said to be \emph{profinitely isomorphic},
if every finite quotient group of $G_A$ is isomorphic to a finite quotient group of $G_B$,
and vice versa.
It is known that being profinitely isomorphic is equivalent to 
having isomorphic profinite completions.
For any class of finitely generated residually finite groups $\mathscr{C}$, 
a group $G$ in $\mathscr{C}$ is said to be \emph{profinitely rigid} among $\mathscr{C}$
if any group in $\mathscr{C}$ that is profinitely isomorphic to $G$ is isomorphic to $G$.
We say that $G$ is \emph{profinitely almost rigid} among $\mathscr{C}$,
if there exist finitely many groups in $\mathscr{C}$,
such that any group in $\mathscr{C}$ that is profinitely isomorphic to $G$
is isomorphic to one of those groups.

For example, among all finitely generated residually finite groups,
it is easy to see that finite groups and abelian groups are profinitely rigid;
nilpotent groups are profinitely almost rigid,
but not always profinitely rigid \cite{Pickel_nilpotent,Pickel_nilpotent-by-finite,Remeslennikov};
and there exist metabelian groups which are not profinitely almost rigid \cite{Pickel_metabelian}.
More recently, M.~R.~Bridson, D.~B.~McReynolds, A.~W.~Reid, and R.~Spitler
have proved the profinite rigidity of
several arithmetic lattices of small covolume in $\mathrm{PSL}(2,\Complex)$, or in $\mathrm{O}^+(3,1)$,
\cite{BMRS}.
See \cite{Aka_Kazhdan,Aka_finiteness,BCR_Fuchsian} for more examples
of profinite rigidity or profinite almost rigidity
among various classes of lattices in Lie groups.
In the literature, 
the profinite isomorphism class is also called the \emph{genus} of the group,
which reflects the original analogy of this notion with 
the genus of an integral quadratic form.
We refer the reader to Reid's beautiful survey \cite{Reid_survey} 
for an overview of this topic.

Among finitely generated $3$--manifold groups,
the fundamental group of any once-punctured torus bundle
over a circle is profinitely rigid \cite{BRW};
see also \cite{Bridson--Reid} for the special case with the figure-eight knot group.
The Weeks manifold provides a closed example with profinite rigidity \cite{BMRS}.
However, there exist Anosov torus bundles and periodic closed surface bundles
with non-isomorphic but profinitely isomorphic fundamental groups
\cite{Funar_torus_bundles,Hempel_quotient,Stebe_integer_matrices}.
In a series of works,
G.~Wilkes shows that orientable closed Seifert fiber spaces and graph manifolds
are profinitely almost rigid, and in fact,
any profinitely isomorphic pair of those manifolds are commensurable
\cite{Wilkes_sf,Wilkes_graph,Wilkes_graph_II}.

For any orientable connected closed $3$--manifold,
H.~Wilton and P.~A.~Zalesskii show that 
the profinite isomorphism type of the fundamental group
determines the profinite isomorphism types
of the prime factors and their geometric pieces,
and also determines 
the geometric decomposition graphs together with the kinds of vertex geometries
\cite{WZ_graph_manifolds,WZ_geometry,WZ_decomposition}.
Therefore,
in view of the geometric decomposition,
it remains the most interesting to understand 
orientable finite-volume hyperbolic $3$--manifolds,
or more specifically,
how much the profinite completion tells about their groups.

The goal of this paper is to prove the following main result.
To emphasize,
throught this paper, we talk about finite hyperbolic $3$--manifolds
referring only to those \emph{complete} ones.

\begin{theorem}\label{main_profinite_almost_rigidity}
	Among finitely generated $3$--manifold groups,
	every finite-volume hyperbolic $3$--manifold group is profinitely almost rigid.
\end{theorem}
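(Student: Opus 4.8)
The plan is to leverage the Wilton--Zalesskii results quoted above together with the rich structure of profinite completions of finite-volume hyperbolic $3$--manifold groups. Let $M$ be an orientable finite-volume hyperbolic $3$--manifold with fundamental group $\Gamma=\pi_1(M)$, and suppose $N$ is a $3$--manifold with finitely generated fundamental group $\pi_1(N)$ that is profinitely isomorphic to $\Gamma$. First I would reduce to the case where $N$ is itself an orientable finite-volume hyperbolic $3$--manifold: by \cite{WZ_geometry,WZ_decomposition} the profinite isomorphism type detects the geometric decomposition graph and the types of vertex geometries, and a hyperbolic $\Gamma$ has trivial decomposition graph and hyperbolic vertex, so $\pi_1(N)$ must be the fundamental group of a single orientable finite-volume hyperbolic piece; a separate (easier) argument handles orientability and the distinction between closed and cusped via profinite detection of peripheral structure. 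So the real content is: among orientable finite-volume hyperbolic $3$--manifolds, there are only finitely many with profinite completion isomorphic to $\widehat\Gamma$.

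For the finiteness I would argue via volume. The key point is that the covolume of a hyperbolic $3$--manifold is a profinite invariant, or at least profinitely bounded: the profinite completion determines the finite-index subgroup lattice, hence all finite covers and their Euler-characteristic-type data; combining this with the fact (from the paper's earlier machinery, and from \cite{WZ_geometry}) that the profinite isomorphism respects the peripheral/cusp structure, one can pin down $\operatorname{Vol}(N)$ from $\widehat\Gamma$, perhaps up to a bounded set of possibilities. One route is to use that $\mathrm{rk}\, H_1$ of finite covers grows like volume times index (Lück approximation / first $L^2$-Betti number vanishing forces a subtler argument in the cusped case, but the paper's control of the Thurston norm and $H^1$ with marked fibered classes, promised in the abstract, is exactly the tool that makes cusped covers tractable). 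Once $\operatorname{Vol}(N)$ is constrained to a finite set of real numbers, the Jorgensen--Thurston theorem tells us that only finitely many orientable finite-volume hyperbolic $3$--manifolds have volume below any fixed bound, so $N$ ranges over a finite list.

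The last step is to upgrade ``finitely many hyperbolic manifolds of bounded volume'' to ``finitely many groups profinitely isomorphic to $\Gamma$.'' This is immediate once the previous two steps are in place: $\pi_1(N)$ lies in the finite set of fundamental groups of orientable finite-volume hyperbolic $3$--manifolds of volume at most the established bound, and by Mostow rigidity each such manifold has a unique such group, so profinite almost rigidity of $\Gamma$ among all finitely generated $3$--manifold groups follows. One should also record that the ``finitely many groups'' in the definition can be taken uniformly, i.e.\ depending only on the profinite completion $\widehat\Gamma$ and not on further choices.

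I expect the main obstacle to be the volume-boundedness step in the \emph{cusped} case. For closed hyperbolic $3$--manifolds one has the first $L^2$-Betti number vanishing and clean rank-gradient estimates, but cusped manifolds have positive virtual first Betti number contributions from the cusps that must be separated out; this is precisely where the finer invariants of this paper---the profinite invariance of $H^1(M;\Integral)$ together with the Thurston norm and the set of fibered classes, as announced in the abstract---are needed, because the Thurston norm ball encodes the hyperbolic volume more tightly than Betti numbers alone. Making the dependence of $\operatorname{Vol}$ on $\widehat\Gamma$ genuinely rigid (not merely bounded), and handling the interaction between the profinite peripheral structure and Dehn filling, will be the technical heart of the argument.
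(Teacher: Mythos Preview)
Your proposal has two genuine gaps, both in the volume step.

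First, the Jorgensen--Thurston theorem does \emph{not} say that only finitely many orientable finite-volume hyperbolic $3$--manifolds have volume below a fixed bound. It says the set of volumes is well-ordered (of order type $\omega^\omega$) and that only finitely many manifolds share any \emph{given} volume, but volumes accumulate from below: for example, infinitely many hyperbolic Dehn fillings of the figure-eight knot complement are closed hyperbolic $3$--manifolds with volume strictly less than that of the figure-eight complement. So even a sharp volume bound would not give finiteness.

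Second, it is \emph{not known} that volume is a profinite invariant, or even profinitely bounded, for finite-volume hyperbolic $3$--manifolds; the paper says so explicitly in the introduction (``the techniques of this paper have limitation in telling if profinitely isomorphic finite-volume hyperbolic $3$--manifolds should have \ldots equal volume''). Your suggested mechanisms do not work either: the first $L^2$--Betti number vanishes for all finite-volume hyperbolic $3$--manifolds, closed or cusped, so rank gradient carries no volume information; and the Thurston-norm unit ball, while profinitely invariant by Theorem~\ref{main_Thurston_norm}, does not encode hyperbolic volume.

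The paper takes a completely different route. After the same Wilton--Zalesskii reduction you describe, it passes to a virtual fiber (Theorem~\ref{quasi-fibered}), identifies $M$ and $N$ as mapping tori of pseudo-Anosov automorphisms $f_M,f_N$, and proves via the $\widehat{\Integral}^\times$--regularity (Theorem~\ref{main_xregular}) and the invariance of indexed periodic Nielsen numbers (Theorem~\ref{profinite_invariance_nu}) that $\chi(S_M)=\chi(S_N)$ and $\lambda(f_M)=\lambda(f_N)$. Finiteness then comes from the Arnoux--Yoccoz/Ivanov theorem (Theorem~\ref{pA_finiteness}): on a fixed surface, only finitely many pseudo-Anosov classes have stretch factor below a given bound. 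The invariant that does the work is the stretch factor, not the volume.
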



An important step toward Theorem \ref{main_profinite_almost_rigidity} is 
to show, for finite-volume hyperbolic 3-manifolds,
that profinite group isomorphisms
are quite restrictive on the profinite free abelianization level.
To be precise, let $\pi_A$ and $\pi_B$ be 
a pair of finitely generated residually finite groups,
and denote by $H_A$ and $H_B$ their abelianizations modulo torsion, respectively.
Denote by $\widehat{\Integral}$ the commutative ring of profinite integers,
and $\widehat{\Integral}^\times$ its multiplicative group of units.
We say that an isomorphism $\Psi\colon\widehat{\pi_A}\to \widehat{\pi_B}$
between the profinite completions
is \emph{$\widehat{\Integral}^\times$--regular}, 
if the induced $\widehat{\Integral}$--linear isomorphism
$\Psi_*\colon \widehat{H_A}\to \widehat{H_B}$ is 
the profinite completion of some isomorphism $H_A\to H_B$ 
composed with the scalar multiplication by some unit $\mu\in\widehat{\Integral}^\times$.
When $\mu$ happens to be $\pm1$, 
being $\widehat{\Integral}^\times$--regular 
is the equivalent to being \emph{regular} modulo torsion,
where the latter condition was introduced 
by M.~Boileau and S.~Friedl in \cite{Boileau--Friedl_fiberedness}.

For orientable aspherical compact $3$--manifolds with empty or tori boundary,
Boileau and Friedl show that regular profinite group isomorphisms canonically determine
linear isomorphisms on the first integral cohomology,
preserving the Thurston norm and the fibered classes
\cite{Boileau--Friedl_fiberedness}.
In general, 
isomorphisms between profinite $3$--manifold groups do not have to be regular.
Irregular examples include those in Hempel's pairs \cite{Hempel_quotient},
and almost all automorphisms of the profinite $3$--torus group.
For finite-volume hyperbolic $3$--manifold groups,
one might suspect that all profinite isomorphisms are regular,
but that conjecture appears hard to approach.
What we are able to show in the present paper
is the $\widehat{\Integral}^\times$--regularity in the finite-volume hyperbolic case,
which still leads to a profinite invariance result
regarding the Thurston-norm unit balls and the fibered faces.

\begin{theorem}\label{main_xregular}
	For any pair of finite-volume hyperbolic $3$--manifolds,
	every isomorphism between their profinite group completions is $\widehat{\Integral}^\times$--regular.
\end{theorem}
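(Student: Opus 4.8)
The plan is to reduce $\widehat{\Integral}^\times$-regularity to a statement about how a profinite isomorphism interacts with the cusp structure and with the residual arithmetic of the homology. Let $\Psi\colon\widehat{\pi_A}\to\widehat{\pi_B}$ be an isomorphism of profinite completions of $\pi_A=\pi_1(M_A)$ and $\pi_B=\pi_1(M_B)$, with $M_A,M_B$ orientable finite-volume hyperbolic. First I would recall the existing structure: $\Psi$ induces a $\widehat{\Integral}$-linear isomorphism $\Psi_*\colon\widehat{H_A}\to\widehat{H_B}$ on free abelianizations modulo torsion, and by Wilton--Zalesskii type results together with Boileau--Friedl, $\Psi$ respects the peripheral structure up to conjugacy and scaling, i.e.\ it carries the (profinite closure of the) peripheral subgroup system of $M_A$ to that of $M_B$, matching cusps bijectively. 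The goal is then to show that $\Psi_*$ is, up to a global unit $\mu\in\widehat{\Integral}^\times$, the completion of an honest isomorphism $H_A\to H_B$; equivalently, that the ``distortion'' of $\Psi_*$ relative to the integral lattices is a scalar.

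The heart of the argument, as I see it, is to pin down this distortion locally at each cusp and then globalize. For a single cusp torus $T_i$, the peripheral subgroup $P_i\cong\Integral^2$ has $\widehat{P_i}\cong\widehat{\Integral}^2$, and $\Psi$ restricted to $\widehat{P_i}$ is conjugate to an isomorphism onto $\widehat{P_{\sigma(i)}}$; such an isomorphism of $\widehat{\Integral}^2$-modules is given by a matrix in $\mathrm{GL}_2(\widehat{\Integral})$, and I expect that the constraints coming from (a) compatibility with $\Psi_*$ on global homology via the maps $H_1(T_i)\to H_1(M)$ and (b) the fact that $\Psi$ is a group isomorphism (not just a module map) force this peripheral matrix to be a scalar multiple of an integral matrix, with the scalar $\mu_i\in\widehat{\Integral}^\times$ a priori depending on $i$. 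Here one wants to use that the integral structure on $H_1(T_i)$ is intrinsic: the congruence quotients $H_1(T_i;\Integral/n)$ are recovered profinitely, and an automorphism of $\widehat{\Integral}^2$ that is ``$\Integral$-linear mod $n$ for all $n$ up to scaling'' has its scaling ambiguity living in $\widehat{\Integral}^\times$. The second main input is to show the scalars $\mu_i$ agree across cusps and propagate to all of $H_1(M_A)$: for this I would use connectivity of $M_A$ and the image of $\bigoplus_i H_1(T_i)\to H_1(M_A)$ — half-lives-half-dies gives that the peripheral classes span a Lagrangian-dimensional subspace, in particular a subspace large enough that a $\widehat{\Integral}$-linear map which is $\mu_i$-integral on each cusp summand is forced to be $\mu$-integral on their span with a common $\mu$, and then, since that span has finite index in $H_A$ in the relevant sense (rationally it need not, but the full homology is detected by combining peripheral data with the hyperbolic Dehn-filling family $M_A(n)$, whose groups are profinitely controlled by $\Psi$ via $\widehat{\pi_A}\to\widehat{\pi_{A}(n)}$), one concludes $\Psi_* = \mu\cdot\widehat{f}$ for some isomorphism $f\colon H_A\to H_B$.

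Concretely the steps are: (1) use the results quoted in the excerpt to fix the peripheral correspondence of $\Psi$; (2) for each cusp, analyze the induced $\mathrm{GL}_2(\widehat{\Integral})$-class and extract a cusp-scalar $\mu_i$ together with an integral matrix, using that $\Integral^2\subset\widehat{\Integral}^2$ is detected by finite quotients; (3) compare the $\mu_i$ by running $\Psi$ through the Dehn fillings $M_A(n)\to M_B(\Psi(n))$ — a filling kills a peripheral $\Integral$ and the resulting closed hyperbolic manifolds are again covered by the hypotheses, and mismatched scalars would produce incompatible integral homology data for large $n$; (4) assemble a global isomorphism $H_A\to H_B$ and check it trivializes $\mu^{-1}\Psi_*$. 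I expect step (3), reconciling the local scalars into one global unit, to be the main obstacle: it requires showing that the Dehn-filling family rigidifies enough of the homology, i.e.\ that one cannot have $\mu_i\neq\mu_j$ on different cusps surviving all fillings, which is really where hyperbolicity (through Thurston's Dehn surgery theorem and the abundance of hyperbolic fillings with controlled homology) must be used rather than just $3$-manifold topology. A secondary technical point is the closed case, where there are no cusps and one instead has to locate an intrinsic integral structure on $H_1$ purely from finite quotients and the Thurston norm — there the argument degenerates to showing the distortion is a scalar directly, presumably by comparing $\Psi_*$ with itself precomposed with deck transformations over finite covers.
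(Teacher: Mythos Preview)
Your proposal has a genuine gap at step (2), and this gap is fatal to the whole strategy. You assert that the restriction of $\Psi$ to a peripheral $\widehat{\Integral}^2$ yields an element of $\mathrm{GL}_2(\widehat{\Integral})$ which is forced to be a scalar $\mu_i$ times an integral matrix, citing ``compatibility with $\Psi_*$ on global homology'' and ``the fact that $\Psi$ is a group isomorphism (not just a module map).'' Neither constraint does anything here. Compatibility with $\Psi_*$ is circular, since the global statement about $\Psi_*$ is exactly what you are trying to prove. And on an abelian subgroup like $\Integral^2$, a group isomorphism of profinite completions \emph{is} a $\widehat{\Integral}$-module isomorphism; there is no extra rigidity. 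A generic matrix in $\mathrm{GL}_2(\widehat{\Integral})$ is not of the form $\mu A$ with $A\in\mathrm{GL}_2(\Integral)$ --- the matrix coefficient module of such a map can have rank up to $4$ --- and nothing you have said rules this out. Your sentence about being ``$\Integral$-linear mod $n$ for all $n$ up to scaling'' is a description satisfied by \emph{every} element of $\mathrm{GL}_2(\widehat{\Integral})$, so it extracts no scalar.

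More seriously, you treat the closed case as a ``secondary technical point,'' but in the paper it is the main case, and it is where all the genuine work happens. The paper's argument does not proceed via peripheral or local data at all. Instead it first establishes (Sections 4--5) a bijective correspondence of Thurston-norm cones under any nondegenerate specialization $\Psi^*_\varepsilon$, building on Jaikin-Zapirain's fibering criterion and virtual fibering. Then (Section 6) it gives a projective-geometric criterion: $\mathrm{MC}(\Psi_*)$ has rank $1$ if, beyond the automatic correspondence of vertices of the projective dual polytope $\mathcal{D}(M,\mathcal{C})$, one can exhibit an \emph{interior} point of $\mathcal{D}(M_A,\mathcal{C}_A)$ whose image in $\mathcal{D}(M_B,\mathcal{C}_B)$ stays fixed under perturbation of $\varepsilon$. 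Such a trackable interior point is constructed by passing to a carefully chosen finite cover where a new vertex of $\mathcal{D}(M',\mathcal{C}')$ projects into the interior downstairs; the construction uses Markov partitions for the suspension pseudo-Anosov flow, support subgraphs for boundary faces, and Wise's special quotient theorem (techniques from \cite{Liu_vhsr}). The cusped case is then \emph{derived from} the closed case by orbifold Dehn filling along $\Psi$-corresponding slopes --- slopes which are themselves pinned down by the global Thurston-norm cone correspondence, not by purely peripheral data. Your Dehn-filling idea in step (3) is in spirit close to this last reduction, but it cannot stand on its own without the closed case already in hand.
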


\begin{theorem}\label{main_Thurston_norm}
	For any pair of orientable finite-volume hyperbolic $3$--manifolds,
	every isomorphism between their profinite group completions 
	determines a linear isomorphism on the first integral cohomology,
	canonically up to the central symmetric involution,
	such that the linear isomorphism and its inverse 
	both preserve the Thurston norm and the fibered classes.
\end{theorem}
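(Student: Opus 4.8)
The plan is to derive Theorem~\ref{main_Thurston_norm} from Theorem~\ref{main_xregular} together with an adaptation of the Boileau--Friedl mechanism \cite{Boileau--Friedl_fiberedness}, the only genuinely new point being that the scalar ambiguity permitted by $\widehat{\Integral}^\times$--regularity does no harm. Fix orientable finite-volume hyperbolic $3$--manifolds $M_A,M_B$ with fundamental groups $\pi_A,\pi_B$, write $H_A,H_B$ for their abelianizations modulo torsion, put $b=b_1(M_A)$, and let $\Psi\colon\widehat{\pi_A}\to\widehat{\pi_B}$ be an isomorphism. Since $\Psi_*\colon\widehat{H_A}\to\widehat{H_B}$ is an isomorphism of free $\widehat{\Integral}$--modules, $b_1(M_B)=b$; if $b=0$ there is nothing to prove, so assume $b\ge1$. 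By Theorem~\ref{main_xregular} we may write $\Psi_*=\widehat{\phi}\cdot\mu$ for some isomorphism $\phi\colon H_A\to H_B$ and some $\mu\in\widehat{\Integral}^\times$, and dualizing (using that $H^1$ of a compact orientable $3$--manifold is torsion free) yields a linear isomorphism $\Phi:=\phi^{*}\colon H^1(M_B;\Integral)\to H^1(M_A;\Integral)$. I would first record that $\Phi$ is canonical modulo the central symmetric involution: if $\widehat{\phi_1}\cdot\mu_1=\Psi_*=\widehat{\phi_2}\cdot\mu_2$, then $\phi_2^{-1}\circ\phi_1\in\mathrm{GL}_b(\Integral)$ completes to the scalar matrix $(\mu_2\mu_1^{-1})\cdot\id$, so by inspection of its entries $\mu_2\mu_1^{-1}\in\Integral\cap\widehat{\Integral}^\times=\{\pm1\}$ and $\phi_2^{-1}\circ\phi_1=\pm\id$; hence the unordered pair $\{\Phi,-\Phi\}$ depends only on $\Psi$, which is the asserted canonicity.

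It then remains to show that $\Phi$ and $\Phi^{-1}$ preserve the Thurston norm and the set of fibered classes, and by $\Rational_{>0}$--homogeneity and density it suffices to treat primitive integral classes. The difficulty is that Theorem~\ref{main_xregular} supplies only $\widehat{\Integral}^\times$--regularity, not the regularity hypothesis under which \cite{Boileau--Friedl_fiberedness} operates, so that theorem does not apply verbatim. The remedy is that the scalar $\mu$ is invisible to every finite-quotient datum the Boileau--Friedl argument consumes. Fix a primitive $\alpha_A\in H^1(M_A;\Integral)$ and put $\alpha_B:=\Phi^{-1}(\alpha_A)$, so that $\widehat{\alpha_B}\circ\Psi=\mu\cdot\widehat{\alpha_A}$ as continuous homomorphisms $\widehat{\pi_A}\to\widehat{\Integral}$. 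For every prime $p$ and every $k\ge1$, reduction modulo $p^k$ carries $\mu$ to a unit of $\Integral/p^k$, so $\widehat{\alpha_A}$ and $\mu\cdot\widehat{\alpha_A}$ have the same kernel modulo $p^k$; therefore $\Psi$ restricts to an isomorphism between the profinite completions of the fundamental groups of the $p^k$--fold cyclic covers of $M_A$ and $M_B$ dual to $\alpha_A$ and $\alpha_B$, compatibly in $k$, and — after intersecting with the kernel of an arbitrary finite quotient — between those of the finite covers obtained by refining the cyclic tower by any finite quotient. Up to units and the ambiguity $t\mapsto t^{-1}$, the twisted Alexander polynomial of a triple $(M,\alpha,\gamma)$ is an invariant of this tower of $p$--power cyclic covers refined by $\gamma$, together with the deck-transformation groups; the matching furnished by $\Psi$ respects all of that data up to re-parametrizing each deck group $\Integral/p^k$ by the unit $\mu\bmod p^k$, an operation that changes neither the span nor the monicness of the resulting polynomial. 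Feeding this into the twisted-Alexander-polynomial descriptions of the Thurston norm (a supremum over finite quotients of suitably normalized spans, due to Friedl--Kim and Friedl--Vidussi) and of fiberedness (monicness of all twisted Alexander polynomials, for hyperbolic $3$--manifolds), exactly as in \cite{Boileau--Friedl_fiberedness}, gives $\|\alpha_A\|_{T}=\|\alpha_B\|_{T}$ and that $\alpha_A$ is fibered if and only if $\alpha_B$ is. Running the same argument with $\Psi^{-1}$ in place of $\Psi$ yields the assertions for $\Phi^{-1}$, and together these establish the theorem.

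The step I expect to require the most care is precisely the claim that the scalar $\mu$ is harmless: one must check that the Boileau--Friedl passage from a profinite isomorphism to conclusions about the Thurston norm and fiberedness genuinely factors through the correspondence of finite cyclic (and cyclic-by-finite) covers, with the cyclic direction entering only through those finite quotients, so that replacing the integral class $\alpha_A$ by the non-integral class $\mu\cdot\alpha_A$ leaves every relevant invariant fixed. In practice this amounts to isolating the exact points where \cite{Boileau--Friedl_fiberedness} invokes regularity and verifying that $\widehat{\Integral}^\times$--regularity is enough there, or else re-deriving from scratch the profinite invariance of the spans and of the monicness of twisted Alexander polynomials over finite quotients directly from the matching of finite quotients of $\pi_A$ and $\pi_B$, using that a unit of each $\Integral/p^k$ merely re-parametrizes the corresponding cyclic cover. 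Everything else is bookkeeping around Theorem~\ref{main_xregular} and the canonicity argument above.
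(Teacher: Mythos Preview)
Your canonicity argument is correct and matches the paper. Your route to the Thurston-norm and fiberedness statements, however, is genuinely different from the paper's, and the step you flag as ``requiring the most care'' is the crux, not bookkeeping.

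The paper never adapts the Boileau--Friedl twisted-Alexander mechanism to the $\widehat{\Integral}^\times$--regular setting. Its proof of Corollary~\ref{TN_correspondence} runs as follows. The fibered-cone correspondence is established independently in Sections~\ref{Sec-fibered_cones}--\ref{Sec-TN_cones} (Theorem~\ref{fibered_cone}, Corollary~\ref{fibered_cone_correspondence}) by a direct refinement of Jaikin-Zapirain's annihilator argument in completed group rings; this already gives the fibered-class correspondence without Alexander polynomials. For a primitive fibered class the Thurston norm equals $-\chi(\text{fiber})$, and Corollary~\ref{fiber_surface_correspondence} shows that $\Psi$ matches the fiber subgroups, whose Euler characteristic is a profinite invariant by cohomological goodness; this yields Thurston-norm preservation on fibered cones. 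Finally, virtual fibering (Theorem~\ref{quasi-fibered}) produces a $\Psi$--corresponding cover in which every pulled-back class is quasi-fibered, and the covering formula (Theorem~\ref{TN_cover}) together with continuity extends the conclusion to all classes.

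Your route would require showing that the span and the relevant fiberedness criterion for $\Delta^{\rho,\alpha}$ over finite fields survive the substitution $t\mapsto t^\mu$ at the profinite level. This is plausible---the span equals the $\mathbb{F}$--dimension of the twisted Alexander module, and over a finite field that dimension can be recovered from the finite cyclic quotients via the universal-coefficient sequence, while the fiberedness side is essentially the content of Theorem~\ref{fibered_cone}---but it is not the light adaptation of \cite{Boileau--Friedl_fiberedness} you describe; it amounts to reproving the paper's Sections~\ref{Sec-fibered_cones}--\ref{Sec-TN_cones} in a different language. Note also that ``monicness'' is not the right invariant here: over a finite field the criterion of Theorem~\ref{fibered_TH} is finite $\mathbb{F}$--dimension (equivalently $\Delta_1\neq 0$), not monicness. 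The paper does eventually confront the $t\mapsto t^\mu$ comparison of polynomials, in Section~\ref{Sec-TRT}, but only for \emph{fibered} classes and over $\Rational$, where the Wang exact sequence reduces matters to characteristic polynomials of integer matrices and Ueki's reciprocal-polynomial theorem (Theorem~\ref{Ueki_lemma}) applies; none of that structure is available for a general class. The upshot is that the paper's approach sidesteps your hard step entirely, while yours would have to supply it.
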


In particular, Theorem \ref{main_Thurston_norm} addresses 
Reid's Question 7.3 in \cite{Reid_survey} affirmatively.
Theorem \ref{main_profinite_almost_rigidity} provides positive evidence to \cite[Question 9]{Reid_discrete},
as it immediately implies that any profinite isomorphism class of lattices in $\mathrm{PSL}(2,\Complex)$ 
is the disjoint union of at most finitely many conjugacy classes.
It seems that Theorems \ref{main_profinite_almost_rigidity} and \ref{main_xregular}
can be combined with former results of Wilkes and Wilton--Zalesskii,
yielding profinite almost rigidity for more general classes of $3$--manifolds,
such as aspherical compact $3$--manifolds of zero Euler characteristic.
However, the techniques of this paper have limitation in telling
if profinitely isomorphic finite-volume hyperbolic $3$--manifolds
should have a common finite cover, or equal volume, or same chirality.

The proof of Theorems \ref{main_xregular} and \ref{main_Thurston_norm}
is concluded in Section \ref{Sec-TN_unit_ball}.
The proof of Theorem \ref{main_profinite_almost_rigidity}
is concluded in Section \ref{Sec-profinite_almost_rigidity}.

\subsection{Ingredients}
The general idea of studying profinite $3$--manifold groups
is to read off topology from the profinite completion.
For instance, we wish to establish 
either the profinite invariance of certain characteristic properties,
or a profinite correspondence of certain characteristic objects.
From that perspective,
our profinite almost rigidity for finite-volume hyperbolic $3$--manifolds 
is proved with three major steps, as follows.
Technical ingredients of those steps are explained next with more detail.
\begin{enumerate}
\item We establish a profinite correspondence of the Thurston-norm cones (Theorem \ref{profinite_isomorphism_npc}),
based on a weaker correspondence of the fibered cones (Theorem \ref{fibered_cone}).
This step actually works for 
nonpositively curved, virtually fibered $3$--manifolds.
\item We establish the $\widehat{\Integral}^\times$--regularity of profinite group isomorphisms (Theorem \ref{main_xregular}),
using more virtual properties that are available for finite-volume hyperbolic $3$--manifolds.
This quickly leads to the profinite invariance of the Thurston norm (Theorem \ref{main_Thurston_norm}),
and also to a useful profinite correspondence of the fiber subgroups (Corollary \ref{fiber_surface_correspondence}).
\item We establish the invariance of the indexed periodic Nielsen numbers,
for profinitely corresponding fibers and their pseudo-Anosov monodromy (Theorem \ref{profinite_invariance_nu}).
This implies the invariance of the pseudo-Anosov stretch factor,
and the profinite almost rigidity (Theorem \ref{main_profinite_almost_rigidity})
follows from classical finiteness results in mapping class group theory.
\end{enumerate}

\subsubsection{Profinite correspondence of Thurston-norm cones}
The key ingredient of this step is inspired from a recent result of A.~Jaikin-Zapirain,
which shows that topologically fibering (as a surface bundle over a circle)
is a profinite property among finitely generated $3$--manifold groups \cite{JZ}.
Taking a closer look, 
we realize that Jaikin-Zapirain's proof
already sets up certain partial correspondence between the fibered cones, 
given a profinite isomorphism between $3$--manifold groups,
(Theorem \ref{fibered_cone} and Corollary \ref{both_fibered}).
For $3$--manifolds satisfying Agol's RFRS criterion for virtual fibering,
we promote the partial correspondence to a bijective correspondence between the Thurston-norm cones,
preserving fibered cones, (Theorem \ref{profinite_isomorphism_npc} and Corollary \ref{fibered_cone_correspondence}).
This is done by applying the partial correspondence to some regular finite cover,
where pullback non-fibered classes all lie on faces of fibered cones.

To motivate the partial correspondence, 
we revisit Jaikin--Zapirain's result as follows:
Given an isomorphism $\Psi\colon\widehat{\pi_A}\to \widehat{\pi_B}$
between profinite completions of $3$--manifold groups,
the induced isomorphism $\Psi_*\colon\widehat{H_A}\to \widehat{H_B}$ 
on the profinite free abelianization level
can be written as a linear combination 
$z_1\Phi_1+\cdots+z_r\Phi_r$, where $\Phi_1,\cdots,\Phi_r\in \mathrm{Hom}_\Integral(H_A,H_B)$ 
are nonzero and where $z_1,\cdots,z_r\in\widehat{\Integral}$ are linearly independent over $\Integral$.
Then, for generic $r$--tuples of integers $c=(c_1,\cdots,c_r)$ in $\Integral^r$,
the homomorphism $\Psi_*^c\colon H_A\to H_B$ defined as $\Psi_*^c=c_1\Phi_1+\cdots+c_r\Phi_r$
is nondegenerate, and it induces 
a dual homomorphism $\Psi^*_c\colon H^1(M_B;\Integral)\to H^1(M_A;\Integral)$
on the first integral cohomology of the $3$--manifolds.
The argument of Jaikin--Zapirain actually implies that 
the $\Psi^*_c$--preimage of any fibered cone for $M_A$
is contained in a fibered cone for $M_B$.
(See Theorem \ref{fibered_cone} with a slightly more general statement,
where $\Psi$ is allowed to be any epimorphism.)
This observation motivates us to introduce a $\Integral$--submodule
$\mathrm{MC}(\Psi_*;H_A,H_B)$ of $\widehat{\Integral}$,
which is contained in the $\Integral$--span of $z_1,\cdots,z_r$,
torsion-free of rank $r$, and uniquely defined.
We call $\mathrm{MC}(\Psi_*;H_A,H_B)$
the \emph{matrix coefficient module} for $\Psi_*$ with respect to $H_A$ and $H_B$,
thinking of $\Psi_*$ abstractly as living in 
$\mathrm{Hom}_\Integral(H_A,H_B)\otimes_\Integral \mathrm{MC}(\Psi_*;H_A,H_B)$,
and thinking of $\Psi_*^c$ as the specialization of $\Psi_*$
under some $\Integral$--homomorphism $\mathrm{MC}(\Psi_*;H_A,H_B)\to \Integral$.
We study more generally specializations $\Psi_*^\varepsilon\colon H_1(M_A;\Real)\to H_1(M_B;\Real)$
and their duals $\Psi^*_\varepsilon\colon H^1(M_B;\Real)\to H^1(M_A;\Real)$,
where $\varepsilon\in \mathrm{Hom}_\Integral(\mathrm{MC}(\Psi_*;H_A,H_B),\Real)$.
See Section \ref{Sec-MC} for the precise definition.
The matrix coefficient module plays a crucial role throughout this paper.
For example, the $\widehat{\Integral}^\times$--regularity condition on $\Psi$
is equivalent to that $\mathrm{MC}(\Psi_*;H_A,H_B)$ has rank one.

\subsubsection{Profinite correspondence of the Thurston-norm unit ball}
For any finite-volume hyperbolic $3$--manifold $M$,
any codimension--$0$ Thurston-norm cone $\mathcal{C}$ in $H^1(M;\Real)$
has a compact cross-section, 
which is an affine polytope and whose projective structure depends only on $\mathcal{C}$.
If $\mathcal{C}$ is a fibered cone, its linear dual cone in $H_1(M;\Real)$
can be characterized in terms of Fried's homology directions.
This relates up fibered cones with the dynamics of 
their associated unparametrized pseudo-Anosov flows on $M$,
(see Section \ref{Subsec-polytopes_and_cones}).
For any isomorphism $\Psi\colon\widehat{\pi_A}\to \widehat{\pi_B}$
between profinite completions of finite-volume hyperbolic $3$--manifolds
$M_A$ and $M_B$,
and any $\Psi^*_\varepsilon$--corresponding pair of 
codimension--$0$ Thurston-norm cones $\mathcal{C}_A$ and $\mathcal{C}_B$,
the dual linear isomorphism $\Psi_*^\varepsilon$ maps
the extreme rays dual to the codimension--$1$ faces of $\mathcal{C}_A$
bijectively onto those similar rays with respect to $\mathcal{C}_B$.
As one may observe from the above description,
those extreme rays already puts many constraints to the structure of $\Psi_*$,
since their correspondence cannot vary under small perturbation of $\varepsilon$.

To prove the $\widehat{\Integral}^\times$--regularity of $\Psi$,
we offer a criterion, 
saying that $\mathrm{MC}(\Psi_*;H_A,H_B)$ has rank one
if $\Psi_*^\varepsilon$ witnesses an extra correspondence between
some pair of rays, in the interior of the linear duals of $\mathcal{C}_A$ and $\mathcal{C}_B$,
and if they stay invariant under small perturbation of $\varepsilon$.
In \cite{Liu_vhsr},
the author develops relevant techniques 
for constructing such interior rays as desired.
In fact,
the rays can be constructed as the projection image of some virtual extreme rays, 
associated to some finite covers of the $3$--manifolds and their fibered cones.
The construction invokes fillings of quasiconvex subgroups 
in virtually special word hyperbolic groups \cite{Wise_book,Wise_notes,AGM-MSQT},
and its currently available version works for closed hyperbolic $3$--manifolds.
We derive the cusped case from the closed case,
using the profinite correspondence of cusp subgroups
due to Wilton and Zalesski \cite{WZ_geometry,WZ_decomposition}.

\subsubsection{Profinite invariance of Nielsen numbers}
In \cite{Liu_procongruent_conjugacy}, the author proves a finiteness result
about mapping classes, which is analogous to the profinite almost rigidity.
In particular, the invariance of the indexed periodic Nielsen numbers 
is proved in that case,
for pairs of mapping classes $f_A,f_B\in\mathrm{Mod}(S)$ of an orientable connected closed surface $S$,
such that the induced profinite group outer automorphisms $[f_A],[f_B]\in\mathrm{Out}(\widehat{\pi_1(S)})$
are conjugate. 
When the mapping classes $f_A$ and $f_B$ are pseudo-Anosov,
one may actually see (with Theorem \ref{main_xregular}) that their mapping torus groups
are regularly profinitely isomorphic.
With the profinite correspondence of fiber subgroups (Corollary \ref{fiber_surface_correspondence}),
the strategy of this step is similar with \cite{Liu_procongruent_conjugacy}:
We first establish the profinite invariance of 
certain twisted Reidemeister torsions (Theorem \ref{profinite_invariance_TRT}),
and then prove the invariance of the indexed periodic Nielsen numbers
using twisted Lefshetz numbers.

Some essential modification from the treatment in \cite{Liu_procongruent_conjugacy} is required,
because we only have $\widehat{\Integral}^\times$--regularity
instead of regularity.
The extra ingredient comes from a recent work of J.~Ueki,
which shows that profinitely isomorphic knot groups have 
identical Alexander polynomials \cite{Ueki}.
Ueki's proof relies on an early result of D.~Fried
about cyclic resultants of reciprocal polynomials \cite{Fried_resultant}.
It also relies on a special fact about knot groups,
that is, the elementary ideal of the Alexander module is principal in $\Integral[t^{\pm1}]$,
 (generated by the Alexander polynomial).
Using some results from \cite{Ueki},
we prove the profinite invariance of twisted Reidemeister torsions
with respect to fibered classes
and $\Rational$--linear representations with finite image.
Note that the $\Rational$--irreducible characters of a finite group
only distinguish all the conjugacy classes of cyclic subgroups.
(By contrast, the $\Complex$--irreducible characters 
distinguish all the conjugacy classes of elements.)
For this reason,
we also must strengthen the argument about Nielsen numbers
in \cite{Liu_procongruent_conjugacy},
using the conjugacy separability of finitely generated $3$--manifold groups 
\cite{HWZ_conjugacy_separability}
and representation theory of finite groups over $\Rational$.

\subsection{Organization}
The organization of this paper is as follows.

In Section \ref{Sec-preliminary}, 
we review profinite completions of finitely generated $3$--manifold groups,
and polytopes and cones associated to the Thurston norm of $3$--manifolds,
and twisted Reidemeister torsions of $3$--manifolds.

In Section \ref{Sec-MC}, we introduce matrix coefficient modules and their specializations.
In Section \ref{Sec-fibered_cones}, we prove a generalization
Jaikin-Zapirain's result about fibered $3$--manifolds, allowing profinite epimorphisms.
In Section \ref{Sec-TN_cones},
we establish the profinite correspondence of the Thurston-norm cones
for orientable, nonpositively curved, virtually fibered $3$--manifolds.

In Section \ref{Sec-TN_unit_ball},
we establish the profinite invariance of the Thurston-norm unit ball,
for orientable finite-volume hyperbolic $3$--manifolds, 
proving Theorems \ref{main_xregular} and \ref{main_Thurston_norm}.

In Section \ref{Sec-TRT},
we establish the profinite invariance of certain twisted Reidemeister torsions over $\Rational$,
for profinitely corresponding fibered classes of orientable finite-volume $3$--manifolds.
In Section \ref{Sec-Nielsen},
we establish the profinite invariance of indexed periodic Nielsen numbers,
for profinitely corresponding fibered classes of orientable closed hyperbolic $3$--manifolds.
In Section \ref{Sec-profinite_almost_rigidity},
we establish the profinite almost rigidity among finitely generated $3$--manifold groups,
for finite-volume hyperbolic $3$--manifolds,
proving Theorem \ref{main_profinite_almost_rigidity}.

\subsection*{Acknowledgement} 
The author is grateful to the anonymous referee for careful proofreading,
and for correcting an inaccurate point
in a preliminary version of the proof of Lemma \ref{p_a_r_cover}.

\section{Preliminary}\label{Sec-preliminary}
In this section, we review background materials that are necessary
for our subsequent treatment.
We use \cite{AFW_book_group} as our general reference book for $3$--manifold groups,
and \cite{Brown_book,Ribes--Zalesskii_book} 
for cohomology and homology theory of abstract or profinite groups,
and \cite{Turaev_book_torsion} for combinatorial torsions,
and \cite{FLP_book} for surface automorphisms.
Other topic surveys include \cite{Friedl--Vidussi_survey} on twisted Alexander polynomials,
and \cite{Reid_survey} on profinite properties of $3$--manifolds.

\subsection{Profinite completions of groups}\label{Subsec-profinite_completion}
	For any group $G$,
	the \emph{profinite completion} $\widehat{G}$ refers to the inverse limit
	of (the inverse system of)
	all the quotient groups $G/N$, where $N$ ranges over all the finite-index normal subgroups of $G$.
	The natural homomorphism $G\to\widehat{G}$ is injective if and only if $G$ is residually finite.	
	
	The \emph{profinite topology} on $\widehat{G}$ can be characterized as
	the unique, coarsest topology, 
	such that every homomorphism of $\widehat{G}$ to any discrete finite group is continuous.
	Furnished with the profinite topology, $\widehat{G}$ becomes a compact, Hausdorff, totally disconnected 
	topological group.
	A subgroup of $\widehat{G}$ is open if and only if it is closed of finite index.
	When $G$ is finitely generated, 
	every finite-index subgroup of $\widehat{G}$ is open,
	which is a deep result due to N.~Nikolov and D.~Segal \cite{Nikolov--Segal}.
	It follows that 
	the abstract group structure of $\widehat{G}$ determines its profinite topology.
	Moreover, in this case,
	the isomorphism type of $\widehat{G}$
	is determined by all the finite quotients of $G$,
	or more precisely, by the set of their isomorphism types without duplication,
	no matter in the abstract or the topological sense,
	(see \cite[Theorem 2.2]{Reid_survey}).
		
	The \emph{completion group algebra} $\llbracket\widehat{\Integral}\widehat{G}\rrbracket$
	is defined as the inverse limit of all the quotient group algebras 
	$[(\Integral/l\Integral)(G/N)]$ of the group algebra $[\Integral G]$, where $l\Integral$ 
	ranges over all the finite-index ideals of $\Integral$, 
	and where $N$ ranges over all the finite-index normal subgroups of $G$.
	There is also a natural profinite topology on $\llbracket\widehat{\Integral}\widehat{G}\rrbracket$,
	which turns $\llbracket\widehat{\Integral}\widehat{G}\rrbracket$ into
	a compact, Hausdorff, totally disconnected topological ring.
	In general,  
	for any discrete left $\llbracket \widehat{\Integral}\widehat{G}\rrbracket$--module $V$,
	the $n$--th profinite group cohomology $H^n(\widehat{G};V)$
	with coefficients in $V$  ($n\in\Integral)$ is 
	a functorially defined, discrete $\widehat{\Integral}$--module,
	(which is abstractly a torsion abelian group);
	for any profinite right $\llbracket \widehat{\Integral}\widehat{G}\rrbracket$--module $W$,
	the $n$--th profinite group homology $H_n(\widehat{G};W)$ with coefficients in $W$ 
	($n\in\Integral$) is 
	a functorially defined, profinite $\widehat{\Integral}$--module,
	(which is abstractly a profinite abelian group).
	We refer the reader to \cite[Chapter 6]{Ribes--Zalesskii_book}
	for detailed account of profinite group cohomology and homology;
	compare the parallel theory with abstract groups in \cite[Chapter III]{Brown_book}.
			
	A group $G$ is said to be \emph{cohomologically good},
	if the natural homomorphism
	$H^n(\widehat{G};V)\to H^n(G;V)$ is an isomorphism of abelian groups,
	for all $n$, 
	and for all finite left $[\Integral G]$--modules $V$,
	(which are automatically discrete left $\llbracket\widehat{\Integral}\widehat{G}\rrbracket$--modules).
	This concept is introduced by J.~P.~Serre in \cite[Chapter I, \S 2, Exercise 1]{Serre_book_Galois}.
	When $G$ is of type $\mathrm{FP}_\infty$,	
	for each $n$ the group homology $H^n(G;V)$ is finitely generated as an abelian group
	if $V$ is finitely generated as an abelian group,
	so $H^n(G;V)$ is finite if $V$ is finite
	\cite[Chapter VIII, Section 5, cf.~Exercise 1]{Brown_book}.
	In this case, being cohomologically good 
	is equivalent to the condition that 
	$H_n(G;W)\to H_n(\widehat{G};W)$
	is an isomorphism
	for all $n$ and for all finite right $[\Integral G]$--modules $W$,
	(which are automatically profinite right $\llbracket\widehat{\Integral}\widehat{G}\rrbracket$--modules).
	In fact, this follows immediately from 
	the Pontrjagin duality between group homology and cohomology \cite[Chapter VI, Proposition 7.1]{Brown_book}, 
	and its profinite group version \cite[Chapter 6, Proposition 6.3.6]{Ribes--Zalesskii_book}.
	As pointed out in \cite[Proposition 3.1]{JZ},
	a group of type $\mathrm{FP}_\infty$ is cohomologically good
	if and only if 
	$H_n(G;\llbracket \widehat{\Integral}\widehat{G}\rrbracket)$ 
	vanishes except $H_0(G;\llbracket \widehat{\Integral}\widehat{G}\rrbracket)\cong\widehat{\Integral}$,
	or in other words,
	$H_*(G;\llbracket \widehat{\Integral}\widehat{G}\rrbracket)
	\cong H_*(\widehat{G};\llbracket \widehat{\Integral}\widehat{G}\rrbracket)$.
	
	In this paper, a \emph{$3$--manifold group} refers to any group
	that is isomorphic to the fundamental group of a connected $3$--manifold
	(possibly bounded, possibly non-compact).
	Any finitely generated $3$--manifold group can be realized 
	with a connected compact $3$--manifold.
	In particular,
	finitely generated $3$--manifold groups
	are all finitely presented and residually finite \cite[Section 3.2, (C.5) and (C.29)]{AFW_book_group}.
	Every finitely generated $3$--manifold group can be decomposed
	as a free product of finitely many freely indecomposable factors.
	The infinite factors are all realizable with aspherical compact $3$--manifolds,
	which provide finite CW complex models of their classifying spaces;
	the finite factors can be realized with spherical geometric $3$--manifolds,
	but their classifying spaces
	(and more generally, the classifying space of any finite group) 
	also admit CW complex models with finitely many cells of each dimension,
	by standard construction
	\cite[Chapter I, Section 5, cf.~Exercise 3]{Brown_book}.
	It follows that finitely generated $3$--manifold groups
	are all of type $\mathrm{FP}_\infty$.
	As a consequence of the geometric decomposition
	and the virtual fibering of (complete) finite-volume hyperbolic $3$--manifolds,
	it is known that finitely generated $3$--manifold groups are all
	cohomologically good \cite[Theorem 7.3]{Reid_discrete}.

\subsection{Polytopes and cones}\label{Subsec-polytopes_and_cones}
	For any orientable connected compact $3$-manifold $M$,
	the \emph{Thurston norm} for $M$ is a seminorm on the real linear space $H^1(M;\Real)$,
	introduced by W.~P.~Thurston in \cite{Thurston-norm}.
	It is symmetric and takes integral values on the integral lattice $H^1(M;\Integral)$.
	If $M$ supports a complete hyperbolic	structure of finite volume in its interior,
	the Thurston norm is actually a norm.
	In this paper, we denote the Thurston norm for $M$ as
	\begin{equation}\label{Thurston_norm_def}
	\|\cdot\|_{\mathtt{Th}}\colon H^1(M;\Real)\to [0,+\infty).
	\end{equation}
	
	We recall the definition (\ref{Thurston_norm_def})
	from \cite[Section 1]{Thurston-norm} as follows.
	For any orientable compact surface $\Sigma$ whose disconnected components are
	$\Sigma_1,\cdots,\Sigma_r$, the \emph{complexity} of $\Sigma$ refers to 
	the sum of the nonnegative integers $\mathrm{max}\{-\chi(\Sigma_i),0\}$,
	ranging over all the components $\Sigma_i$.
	Choose an orientation of $M$.
	For any cohomology class $\phi\in H^1(M;\Integral)$, 
	the Thurston norm $\|\phi\|_{\mathtt{Th}}$ is defined as 
	the minimum complexity of $\Sigma$,
	where $\Sigma$ ranges over all the properly embedded, oriented compact subsurfaces of $M$,
	such that $\phi$ is the Poincar\'e--Lefschetz dual of $[\Sigma]\in H_2(M,\partial M;\Integral)$.
	Thurston shows 
	$\|\phi+\psi\|_{\mathtt{Th}}\leq \|\phi\|_{\mathtt{Th}}+\|\psi\|_{\mathtt{Th}}$
	for all $\phi,\psi\in H^1(M;\Integral)$,
	and 
	$\|m\cdot\phi\|_{\mathtt{Th}}=|m|\cdot\|\phi\|_{\mathtt{Th}}$
	for all $\phi\in H^1(M;\Integral)$ and $m\in\Integral$.
	It follows that $\|\cdot\|_{\mathtt{Th}}$ can be extended linearly over $H^1(M;\Rational)$,
	and then continuously over $H^1(M;\Real)$.
	The result is the Thurston norm (\ref{Thurston_norm_def}),
	and it is obviously independent of the auxiliary orientation of $M$.
		
	In the above definition,
	one may use properly immersed subsurfaces instead of properly embedded ones,
	and the resulting seminorm does not change.
	This nontrivial fact is proved by D.~Gabai in \cite{Gabai-taut}, 
	confirming a former conjecture of Thurston \cite[Section 5]{Thurston-norm}.
	In particular, the Thurston norm for any finite cover 
	is proportional to the covering degree
	\cite[Corollary 6.13]{Gabai-taut}:
	
	\begin{theorem}\label{TN_cover}
	Let $M$ be any orientable connected compact $3$--manifold,
	and $M'\to M$ be a finite cover.
	Then the following formula holds
	for any cohomology class $\phi\in H^1(M;\Real)$ and its pullback $\phi'\in H^1(M';\Real)$:
	$$\|\phi'\|_{\mathtt{Th}}=[M':M]\cdot\|\phi\|_{\mathtt{Th}}.$$ 
	\end{theorem}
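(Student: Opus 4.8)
The plan is to deduce the statement for real cohomology classes from the integral case, and to treat the integral case by transferring norm-minimizing surfaces between $M$ and $M'$. Write $d=[M':M]$ and let $p\colon M'\to M$ denote the covering. Since the Thurston norm is continuous and satisfies $\|m\psi\|_{\mathtt{Th}}=|m|\,\|\psi\|_{\mathtt{Th}}$, and since $p^*\colon H^1(M;\Real)\to H^1(M';\Real)$ is continuous and linear, both sides of the asserted identity are continuous functions of $\phi$; once they are known to agree on $H^1(M;\Integral)$, hence on the dense subspace $H^1(M;\Rational)$, they agree on all of $H^1(M;\Real)$. So it suffices to treat $\phi\in H^1(M;\Integral)$; write $\phi'=p^*\phi$, and for an oriented compact surface $\Sigma$ with components $\Sigma_1,\dots,\Sigma_r$ abbreviate its complexity as $\chi_-(\Sigma)=\sum_i\max\{-\chi(\Sigma_i),0\}$.

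For the inequality $\|\phi'\|_{\mathtt{Th}}\le d\,\|\phi\|_{\mathtt{Th}}$, I would pull back a realizing surface. Choose a properly embedded oriented compact subsurface $\Sigma\subset M$ Poincar\'e--Lefschetz dual to $\phi$ with $\chi_-(\Sigma)=\|\phi\|_{\mathtt{Th}}$. Its full preimage $\Sigma'=p^{-1}(\Sigma)$ is a properly embedded oriented compact subsurface of $M'$ dual to $p^*\phi=\phi'$, and the restricted covering $\Sigma'\to\Sigma$ has degree $d$. Each component of $\Sigma'$ covers a single component $\Sigma_i$ of $\Sigma$ with some degree, the degrees over a fixed $\Sigma_i$ summing to $d$; in particular a component of $\Sigma'$ has positive Euler characteristic precisely when it covers a sphere or disk component of $\Sigma$. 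A short bookkeeping of Euler characteristics over the components then gives $\chi_-(\Sigma')=d\,\chi_-(\Sigma)$, whence $\|\phi'\|_{\mathtt{Th}}\le\chi_-(\Sigma')=d\,\|\phi\|_{\mathtt{Th}}$.

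For the reverse inequality I would push a realizing surface down. Choose a properly embedded oriented compact subsurface $\Sigma'\subset M'$ dual to $\phi'$ with $\chi_-(\Sigma')=\|\phi'\|_{\mathtt{Th}}$. The restriction $f=p|_{\Sigma'}\colon\Sigma'\to M$ is a proper immersion of a compact oriented surface into $M$, and, by the projection formula for the pushforward $p_*$ and the fact that $p_*$ carries the fundamental class of $M'$ to $d$ times that of $M$, its image class satisfies $f_*[\Sigma']=d\cdot PD_M(\phi)$ in $H_2(M,\partial M;\Integral)$; in other words, $f$ is a properly immersed (singular) surface representing the Poincar\'e--Lefschetz dual of $d\phi$. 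By Gabai's theorem that the singular Thurston norm coincides with the Thurston norm \cite{Gabai-taut}, together with the homogeneity of the norm, $d\,\|\phi\|_{\mathtt{Th}}=\|d\phi\|_{\mathtt{Th}}\le\chi_-(\Sigma')=\|\phi'\|_{\mathtt{Th}}$. Combining the two inequalities yields the asserted equality.

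The only genuinely deep input is the reverse inequality: projecting an embedded surface from $M'$ yields only an immersed surface downstairs, and it is precisely Gabai's equality between the singular and embedded Thurston norms that licenses the comparison. The remaining care goes into the bookkeeping of Euler characteristics of covering surfaces under the nonlinear function $\chi_-$ (in particular discarding sphere and disk components, which contribute nothing on either side) and into the normalization conventions relating $PD_M$, $p^*$, and $p_*$.
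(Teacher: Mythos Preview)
Your argument is correct. The paper does not supply its own proof of this statement; it simply records it as a consequence of Gabai's theorem that the singular Thurston norm coincides with the embedded one, citing \cite[Corollary 6.13]{Gabai-taut} directly. Your derivation is exactly the standard one underlying that corollary: the easy inequality by pulling back a minimizing surface, and the hard inequality by pushing down and invoking Gabai's singular--embedded equality. So your approach and the paper's (implicit) approach coincide.
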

	
	The unit ball $\mathcal{B}_{\mathtt{Th}}(M)$ of the Thurston norm for $M$
	is a (possibly noncompact) codimension--$0$ convex polyhedron in $H^1(M;\Real)$.
	It is the linear dual of a polytope in $H_1(M;\Real)$,
	which is symmetric about the origin and 
	whose vertices all lie in the integral lattice $H_1(M;\Integral)_{\mathtt{free}}$,
	\cite[Section 2]{Thurston-norm}.
	The radial rays in $H^1(M;\Real)$
	through any closed face of $\mathcal{B}_{\mathtt{Th}}(M)$
	form a convex polyhedral cone, on which the Thurston norm is linear.
	Such cones are called \emph{Thurston-norm cones} for $M$.
	Therefore, $H^1(M;\Real)$ is the union of the codimension--$0$ Thurston-norm cones.
	For any finite cover $M'\to M$,
	the intersection of $\mathcal{B}_{\mathtt{Th}}(M')$
	with the image of induced embedding $H^1(M;\Real)\to H^1(M';\Real)$ is 
	the image of $\mathcal{B}_{\mathtt{Th}}(M)$ dilated by the factor $[M':M]$.
	This is the geometric interpretation of Theorem \ref{TN_cover}.

	\begin{remark}\label{polyhedron_remark}\
	\begin{enumerate}
	\item
	In real affine linear spaces, we speak of convexity 
	and the point-set topology;
	(a \emph{real affine linear space} 
	is a homogeneous space which is isomorphic to $\Real^n$ 
	furnished with its affine linear transformations).
	A (closed) \emph{convex polyhedron}	in a real affine linear space 
	refers to a closed convex subset,
	such that every point on the boundary lies in
	only finitely many maximal convex subsets of the boundary.
	A compact convex polyhedron is a \emph{polytope}.
	We speak of \emph{closed} or \emph{open faces}
	of various dimension or codimension in convex polyhedra, as usual.
	For clarity, we avoid saying `open convex polyhedra',
	which could be confusing in the positive codimension case.
	Instead, we speak of the interior of convex polyhedra.
	\item
	In real linear spaces,
	a (radial) \emph{cone} refers generally to a subset that is invariant under positive rescaling.
	However, we only encounter convex polyhedral cones in this paper.
	\end{enumerate}
	\end{remark}

	A cohomology class $\phi\in H^1(M;\Integral)$ is said to be \emph{fibered}
	if $\phi$ is represented by the free homotopy class of
	a continuous map $M\to \Sph^1$ which is a fiber-bundle projection
	onto the standard circle.
	The fiber is connected precisely when $\phi$ is primitive.
	Thurston shows that every fibered class $\phi$ lies in 
	the interior of a unique codimension--$0$ Thurston-norm cone
	$\mathcal{C}_{\mathtt{Th}}(M,\phi)$, and moreover,
	every cohomology class $\psi\in H^1(M;\Integral)$
	in the interior of $\mathcal{C}_{\mathtt{Th}}(M,\phi)$
	is a fibered class; see \cite[Theorem 5]{Thurston-norm}.
	Any $\mathcal{C}_{\mathtt{Th}}(M,\phi)$ arising this way
	is called a \emph{fibered cone} for $M$.
	
	\begin{theorem}\label{quasi-fibered}
	Let $M$ be an orientable connected compact $3$--manifold.
	Suppose that $M$ has empty or tori boundary, and 
	admits a complete Riemannian metric of nonpositive sectional curvature in the interior.
	Then, there exists a finite regular cover $M'\to M$,
	such that for every nontrivial cohomology class $\phi\in H^1(M;\Real)$, 
	the pull-back cohomology class $\phi'\in H^1(M';\Real)$ 
	lies in the closure of a fibered cone for $M'$.
	\end{theorem}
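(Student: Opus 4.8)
\emph{Strategy.} The plan is to derive Theorem~\ref{quasi-fibered} from Agol's virtual fibering criterion together with the virtual RFRS property of nonpositively curved $3$--manifold groups, and then to upgrade the resulting class-by-class statement to one valid over a single finite cover, using the finiteness of the set of Thurston-norm faces. First, since $M$ carries a complete nonpositively curved metric in its interior, its universal cover is contractible, so $M$ is aspherical and hence irreducible, and $\chi(M)=\tfrac12\chi(\partial M)=0$. Moreover $\pi_1(M)$ is virtually RFRS: by the cubulation and virtual specialness results for hyperbolic, graph, and mixed $3$--manifolds (Agol, Wise, Liu, Przytycki--Wise), $\pi_1(M)$ is virtually special, hence virtually a subgroup of a right-angled Artin group, hence virtually RFRS; and since the RFRS condition passes to subgroups and one may pass to normal cores, there is a finite \emph{regular} cover $M_0\to M$ with $\pi_1(M_0)$ RFRS. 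If $b_1(M_0)=0$ there is nothing to prove, so assume $b_1(M_0)\geq 1$.

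\emph{Inheritance of quasi-fiberedness.} Next I would record the following persistence statement: if $\mathcal{C}$ is a fibered cone of an orientable compact $3$--manifold $N$ with empty or tori boundary and $p\colon N'\to N$ is a finite cover, then $p^*(\overline{\mathcal{C}})$ is contained in the closure of a single fibered cone of $N'$. Indeed, the pullback of a fibre bundle over $\Sph^1$ is again a fibre bundle over $\Sph^1$ (with possibly disconnected fibre), so $p^*$ carries rational classes in $\mathrm{int}(\mathcal{C})$ to fibered classes of $N'$, which lie in some fibered cone $\mathcal{C}'$; by Theorem~\ref{TN_cover} the Thurston norm of $N'$ restricts on $p^*H^1(N;\Real)$ to a positive multiple of that of $N$, hence is linear on the convex cone $p^*(\mathcal{C})$. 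One then invokes the elementary convexity fact that if a polyhedral seminorm is linear on a convex cone $K$ and some relative-interior point of $K$ lies in the closed cone $\overline{\mathcal{C}'}$ over a codimension--$0$ face, then $K\subseteq\overline{\mathcal{C}'}$ (proved by comparing the seminorm with the supporting functional of the face, which lies below it everywhere with equality exactly on $\overline{\mathcal{C}'}$). Applied with $K=p^*(\mathcal{C})$, this gives $p^*(\overline{\mathcal{C}})=\overline{p^*(\mathcal{C})}\subseteq\overline{\mathcal{C}'}$.

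\emph{From one class to all classes.} The Thurston-norm unit ball of $M_0$ has only finitely many codimension--$0$ cones $\sigma_1,\dots,\sigma_m$, and $H^1(M_0;\Real)=\sigma_1\cup\dots\cup\sigma_m$. For each $i$ one chooses a rational class $\phi_i$ in the interior of $\sigma_i$ and, by Agol's RFRS fibering criterion applied to $M_0$, a finite cover $M_i\to M_0$ on which $p^*\phi_i$ lies in the closure of a fibered cone $\mathcal{C}_i$ of $M_i$. By Theorem~\ref{TN_cover} the Thurston norm of $M_i$ is linear on $p^*(\sigma_i)$, and $p^*\phi_i$ is a relative-interior point of $p^*(\sigma_i)$ lying in $\overline{\mathcal{C}_i}$; the convexity fact then yields $p^*(\sigma_i)\subseteq\overline{\mathcal{C}_i}$. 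Finally, let $M'\to M$ be the finite regular cover corresponding to the normal core in $\pi_1(M)$ of $\pi_1(M_1)\cap\dots\cap\pi_1(M_m)$, so that $M'$ finitely covers $M_0$ and each $M_i$. By the inheritance statement, the image in $H^1(M';\Real)$ of each $\overline{\mathcal{C}_i}$, hence of each $p^*(\sigma_i)$, hence of all of $H^1(M_0;\Real)=\bigcup_i\sigma_i$, lies in the closure of a fibered cone of $M'$; since the pullback $H^1(M;\Real)\to H^1(M';\Real)$ factors through $H^1(M_0;\Real)$, every nontrivial $\phi\in H^1(M;\Real)$ pulls back into the closure of a fibered cone of $M'$, as required.

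\emph{Main obstacle.} The substantive content lies in the two external inputs: the virtual RFRS property of $\pi_1(M)$ in the full nonpositively curved generality (which packages the cubulation of hyperbolic, graph, and mixed $3$--manifolds), and Agol's criterion, which by itself produces a cover adapted only to a single cohomology class. Once those are available, the passage to a single cover valid for all classes is the elementary packaging above; the points that require care are the use of Theorem~\ref{TN_cover} to control the Thurston norm on the pulled-back subspace, and the passage to normal cores so as to keep $M'\to M$ regular without destroying quasi-fiberedness.
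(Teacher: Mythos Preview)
The proposal is correct and follows essentially the same approach as the paper, which does not give a proof but simply cites Agol's RFRS criterion and the virtual specialization of nonpositively curved $3$--manifold groups (referring the reader to \cite[Sections 4.7, 4.8, and 5.4.3]{AFW_book_group}). Your argument spells out the standard packaging implicit in those references---finiteness of Thurston-norm cones, inheritance of quasi-fiberedness under finite covers via Theorem~\ref{TN_cover}, and passage to a common regular cover via normal cores---and the convexity lemma you isolate is exactly the right tool to promote Agol's class-by-class output to a single cover working for all classes.
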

	
	Theorem \ref{quasi-fibered} follows from 
	Agol's RFRS criterion for virtual fibering \cite{Agol_RFRS},
	and the virtual specialization of nonpositively curved $3$--manifold groups
	\cite{Agol_VHC,Liu_npc,Przytycki--Wise_graph,Przytycki--Wise_mixed}.
	Note that $\phi'$ must lie on the boundary of a fibered cone if $\phi$ is not fibered,
	and such $\phi'$ is sometimes called \emph{quasi-fibered} in the literature.
	See \cite[Sections 4.7, 4.8, and 5.4.3]{AFW_book_group}.
	
	
	The cones in $H_1(M;\Real)$ dual to the fibered cones 
	are intimately related to flow structures on $M$ transverse to the surface fibers.
	We describe the picture below,
	focusing on the case of fibered closed hyperbolic $3$--manifolds,
	and following Fried's expository article \cite[{Expos\'e 14}]{FLP_book}.	
	
	Suppose that $M$ is an orientable closed $3$--manifold which admits a hyperbolic metric.
	If $\phi\in H^1(M;\Integral)$ is a primitive fibered class,
	then $M$ can be identified homeomorphically with the mapping torus
	of a pseudo-Anosov automorphism, such that $\phi$ is identified with
	the distinguished cohomology class dual to the surface, (see Remark \ref{pA_remark}).
	D. Fried gives a characterization of 
	the linear dual	in $H_1(M;\Real)$ of the fibered cone $\mathcal{C}_{\mathtt{Th}}(M,\phi)$,
	which we denote as
	$$\mathcal{C}^{\mathtt{Fr}}(M,\phi)=
	\left\{x\in H_1(M;\Real)\colon \psi(x)\geq 0\mbox{ for all }\psi\in \mathcal{C}_{\mathtt{Th}}(M,\phi)\right\}.$$
	Considering all the homology classes represented by the forward periodic trajectories
	of the suspension flow, 
	Fried shows that the radial rays in $H_1(M;\Real)$ that pass through those homology classes
	form a dense subset of $\mathcal{C}^{\mathtt{Fr}}(M,\phi)$.
	Therefore, 	
	$\mathcal{C}^{\mathtt{Fr}}(M,\phi)$
	is the smallest convex polyhedral cone in $H_1(M;\Real)$ 
	that contains those homology classes,
	and it has codimension zero in $H_1(M;\Real)$.
	Up to isotopy,
	the fibered cone $\mathcal{C}_{\mathtt{Th}}(M,\phi)$ actually determines
	the unparametrized flow,
	by which we mean
	the oriented $1$--foliation of $M$ given by the flow trajectories.
	Different primitive cohomology classes in $\mathcal{C}_{\mathtt{Th}}(M,\phi)$ are 
	just the Poincar\'e dual to different cross-sections of the unparametrized flow.
		
	\begin{remark}\label{pA_remark}\
	\begin{enumerate}
	\item
	For any orientable connected compact surface $S$, possibly with boundary,
	and for	any orientation-preserving self-homeomorphism
	$f\colon S\to S$,
	the \emph{mapping torus} of $(S,f)$ refers to the $3$--manifold
	$M_f={S\times\Real}/\sim$, 
	such that ${(x,r+1)\sim(f(x),r)}$ for all $(x,r)\in S\times\Real$.
	The \emph{suspension flow} on $M_f$ refers the continuous family of homeomorphisms
	$\theta_t\colon M_f\to M_f$,
	parametrized by $t\in\Real$ and determined by $(x,r)\mapsto (x,r+t)$.	
	The distinuished cohomology class of $M_f$ refers to 
	the cohomology class $\phi_f\in H^1(M_f;\Integral)$ 
	represented by the distinguished projection $M_f\to \Sph^1$,
	where $\Sph^1$ is identified with $\Real/\Integral$ 
	and the projection is determined by $(x,r)\mapsto r$.
	\item
	Moreover, when $S$ is closed,
	$f$ is said to be a \emph{pseudo-Anosov} automorphism
	if there exist a constant $\lambda>1$ and a pair of measured foliations 
	$(\mathscr{F}^{\mathtt{s}},\mu^{\mathtt{s}})$ and $(\mathscr{F}^{\mathtt{u}},\mu^{\mathtt{u}})$
	of $S$,
	such that
	$f\cdot(\mathscr{F}^{\mathtt{u}},\mu^{\mathtt{u}})=(\mathscr{F}^{\mathtt{u}},\lambda\mu^{\mathtt{u}})$
	and
	$f\cdot(\mathscr{F}^{\mathtt{s}},\mu^{\mathtt{s}})=(\mathscr{F}^{\mathtt{s}},\lambda^{-1}\mu^{\mathtt{s}})$.
	The constant $\lambda$ is called the \emph{stretch factor} of 
	the pseudo-Anosov automorphism $f$.
	The measured foliations $(\mathscr{F}^{\mathtt{s}},\mu^{\mathtt{s}})$ and $(\mathscr{F}^{\mathtt{u}},\mu^{\mathtt{u}})$
	are called the \emph{stable} and the \emph{unstable} measured foliations of $f$, respectively.
	The unmeasured invariant foliations are transverse to each other,
	except at finitely many common singular points.
	At each singular point, 
	both of the invariant foliations have a $k$--prong singularity,
	for some positive integer $k\geq3$.
	\end{enumerate}
	\end{remark}

\subsection{Twisted invariants}\label{Subsec-twisted_invariants}
	Let $M$ be a connected compact $3$--manifold.
	Denote by $\pi_1(M)$ the fundamental group of $M$
	with respect to an implicitly fixed basepoint.
	Let $R$ be a (commutative) Noetherian unique factorization domain.
	Denote by $R[t^{\pm1}]$ the ring of Laurent polynomials over $R$ in an indeterminate $t$.
	We recall the following twisted invariants from 
	Friedl and Vidussi's survey \cite{Friedl--Vidussi_survey}.

	Let	$\rho\colon\pi_1(M)\to \mathrm{GL}(k,R)$ 
	be a linear representation of $\pi_1(M)$ 
	on the free $R$--module $R^k$ of finite rank $k$.
	Let $\phi\in H^1(M;\Integral)$ be a cohomology class,
	regarded meanwhile as a homomorphism $\phi\colon\pi_1(M)\to \Integral$.
	For any $n\in\Integral$,
	the $n$--th \emph{twisted homology} $H_n^{\rho,\phi}(M;R[t^{\pm1}]^k)$
	refers to the twisted (singular) homology of $M$ with respect to the representation
	$t^\phi\rho\colon \pi_1(M)\to \mathrm{GL}(k,R[t^{\pm1}])$,
	such that $g\mapsto t^{\phi(g)}\cdot\rho(g)$.
	This is a finitely generated $R[t^{\pm1}]$--module,
	vanishing unless $n=0,1,2,3$.
	In this paper,
	we sometimes simplify the notation as 
	$H^{\rho,\phi}_n$,	if $M$ and $R[t^{\pm1}]^k$ are apparent from the context.
			
	The $n$--th \emph{twisted Alexander polynomial}
	$\Delta_n^{\rho,\phi}(M;R[t^{\pm1}]^k)$	of $M$ with respect to $(\rho,\phi)$,
	or simply $\Delta_n^{\rho,\phi}(t)$,
	is defined as the order of the finitely generated $R[t^{\pm1}]$--module
	$H_n^{\rho,\phi}(M;R[t^{\pm1}]^k)$,
	(see Remark \ref{order_remark}).
	We treat $\Delta_n^{\rho,\phi}(t)$ as a Laurent polynomial in $t$ over $R$,
	uniquely defined up to monomial factors and units of $R$.
	
	If the rank of $H_n^{\rho,\phi}(M;R[t^{\pm1}]^k)$ 
	over $R[t^{\pm1}]$ is zero for all $n$, 
	the \emph{twisted Reidemeister torsion} $\tau^{\rho,\phi}(M;R[t^{\pm1}]^k)$
	of $M$ with respect to $(\rho,\phi)$, or simply $\tau^{\rho,\phi}(t)$,
	can be characterized as the alternating product
	\begin{equation}\label{TRT_def}
	\tau^{\rho,\phi}(t)
	\doteq 
	\frac{\prod_{n\mbox{ odd}}\Delta^{\rho,\phi}_n(t)}{
	\prod_{n\mbox{ even}}\Delta^{\rho,\phi}_n(t)},
	\end{equation}
	where the dotted equality symbol
	means being equal up to monomial factors with nonzero $R$--fractional coefficients,
	in the field of rational functions in $t$ over $R$,
	(see Remark \ref{order_remark}).
	Note that the products in (\ref{TRT_def}) 
	essentially only involves	the factors with $n=0,1,2,3$.
	If some $H_n^{\rho,\phi}(M;R[t^{\pm1}]^k)$ has $R[t^{\pm1}]$--rank nonzero,
	it is customary to define $\tau^{\rho,\phi}\left(M;R[t^{\pm1}]^k\right)$
	to be zero.

	\begin{remark}\label{order_remark}
	For any (commutative) domain $S$ and and any finitely generated $S$--module $V$,
	the \emph{rank} of $V$ refers to	the dimension of $V\otimes_S\mathrm{Frac}(S)$ 
	over the field of fractions $\mathrm{Frac}(S)$.
	When $S$ is a Noetherian unique factorization domain,
	the \emph{order} of $V$ refers to 
	the greatest common divisor of all the largest-size minors
	in a finite-presentation matrix of $V$,
	(that is, a matrix over $S$ 
	whose cokernel is isomorphic to $V$).
	The order of $V$ is well-defined up to a unit of $S$,
	and it depends only on the isomorphism type of $V$.
	Therefore, the order of $V$ equals zero
	if and only if the rank of $V$ is nonzero.
	See \cite[Chapter I, Section 4]{Turaev_book_torsion}.
	\end{remark}
	
	
	We collect the following facts 
	from \cite{Friedl--Vidussi_survey},
	see Section 3.3.1, Proposition 2 (1), (4), (7) and Section 3.3.4, Propositions 4 and 5
	therein.
	
	\begin{theorem}\label{TAP_properties}
	Let $M$ be an orientable connected compact $3$--manifold with empty or tori boundary.
	Let $R$ be a Noetherian UFD and $k$ be a natural number.
	Suppose that $\phi\in H^1(M;\Integral)$ is a nontrivial cohomology class
	and $\rho\colon \pi_1(M)\to\mathrm{GL}(k,R)$ is a representation.
	If $\Delta^{\rho,\phi}_1(t)\neq0$,
	then $\Delta^{\rho,\phi}_n(t)\neq0$ for all $n$.
	In this case,	the following statements all hold true.
	\begin{enumerate}
	\item $\Delta^{\rho,\phi}_n(t)\doteq 1$ for all $n$ other than $0,1,2$. 
	\item  
	If $\partial M\neq\emptyset$, 
	then 
	$\Delta^{\rho,\phi}_0(t)\doteq \Delta^{\bar{\rho},\phi}_0(t^{-1})$, and
	$\Delta^{\rho,\phi}_1(t)\doteq \Delta^{\bar{\rho},\phi}_1(t^{-1})$, and
	$\Delta^{\rho,\phi}_2(t)\doteq1$.
	\item If $\partial M=\emptyset$, then
	$\Delta^{\rho,\phi}_0(t)\doteq \Delta^{\bar{\rho},\phi}_2(t^{-1})$, and
	$\Delta^{\rho,\phi}_1(t)\doteq \Delta^{\bar{\rho},\phi}_1(t^{-1})$, and
	$\Delta^{\rho,\phi}_2(t)\doteq\Delta^{\bar{\rho},\phi}_0(t^{-1})$.	
	\end{enumerate}
	Here, $\bar{\rho}$ stands for the representation $\pi_1(M)\to \mathrm{GL}(k,R)$, such that 
	$\bar{\rho}(g)$ is the transpose of $\rho(g)^{-1}$ for all $g\in\pi_1(M)$;
	the dotted equality symbol means an equality up to monomial factors and units of $R$.
	\end{theorem}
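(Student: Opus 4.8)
\emph{Proof sketch.} These are standard consequences of Poincar\'e--Lefschetz duality and the universal coefficient theorem over the Noetherian UFD $\Lambda=R[t^{\pm1}]$, recorded in \cite[Section 3.3]{Friedl--Vidussi_survey}; I outline the argument. Fix a finite CW or handle structure on $M$ and let $C_*$ denote the resulting finite free $\Lambda$--chain complex computing $H^{\rho,\phi}_*(M)$, concentrated in degrees $0,1,2,3$. Since $M$ is compact orientable of dimension $3$ with empty or toral boundary, $\chi(M)=\chi(\partial M)/2=0$, so with $F=\mathrm{Frac}(\Lambda)$ one gets $\sum_n(-1)^n\dim_F(C_n\otimes_\Lambda F)=k\,\chi(M)=0$, hence $\sum_n(-1)^n\,\mathrm{rank}_\Lambda H_n^{\rho,\phi}(M)=0$. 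Because $\phi$ is nontrivial, choose $g_0\in\pi_1(M)$ with $\phi(g_0)=m\ne 0$; then $t^m\rho(g_0)-I$ has nonzero determinant in $\Lambda$ (its lowest and highest coefficients in $t$ are $\pm1$ and $t^{mk}\det\rho(g_0)$), hence becomes invertible over $F$, forcing $H_0^{\rho,\phi}(M)\otimes_\Lambda F=0$. The same observation applied to $\bar\rho$ via Poincar\'e duality gives $H_3^{\rho,\phi}(M)\otimes_\Lambda F=0$ when $\partial M=\emptyset$, while $H_3^{\rho,\phi}(M)=0$ outright when $\partial M\ne\emptyset$ since $M$ then has the homotopy type of a $2$--complex. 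Combined with the hypothesis $\mathrm{rank}_\Lambda H_1^{\rho,\phi}(M)=0$ and the Euler relation, this shows $\mathrm{rank}_\Lambda H_n^{\rho,\phi}(M)=0$ for all $n$, i.e.\ $\Delta_n^{\rho,\phi}(t)\ne 0$ for all $n$.

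For part (1): $\Delta_n^{\rho,\phi}\doteq1$ is automatic when $n\notin\{0,1,2,3\}$ because $C_n=0$. For $n=3$, if $\partial M\ne\emptyset$ then $H_3^{\rho,\phi}(M)=0$ as above, whereas if $\partial M=\emptyset$ then $H_3^{\rho,\phi}(M)=\ker\partial_3$ is a submodule of the free module $C_3$, hence $\Lambda$--torsion-free, and being also $\Lambda$--torsion it vanishes; either way $\Delta_3^{\rho,\phi}\doteq1$, completing (1). The identical ``torsion-free submodule of a free module, and torsion, hence zero'' argument applied to $H_2^{\rho,\phi}(M)=\ker\partial_2\subseteq C_2$ when $C_3=0$, i.e.\ after retracting $M$ onto a $2$--complex (possible precisely when $\partial M\ne\emptyset$), gives $\Delta_2^{\rho,\phi}\doteq1$, the last clause of (2).

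For the duality statements in (2) and (3): Poincar\'e--Lefschetz duality gives $H_n^{\rho,\phi}(M)\cong H^{3-n}(M,\partial M;\,t^\phi\rho)$ (drop $\partial M$ in the closed case), and the universal coefficient theorem computes the order of the right-hand side. Since every twisted homology module in sight is $\Lambda$--torsion, the $\mathrm{Hom}$--terms of the universal coefficient sequence vanish; the higher $\mathrm{Ext}^p$ with $p\ge2$ are supported in codimension $\ge2$ and so have trivial order; and $\mathrm{ord}_\Lambda\mathrm{Ext}^1_\Lambda(T,\Lambda)\doteq\mathrm{ord}_\Lambda T$ for a finitely generated torsion $\Lambda$--module $T$ (localize at each height-one prime, where $\Lambda$ becomes a discrete valuation ring and $\mathrm{Ext}^1$ has the same length as $T$). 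Tracking this through the cochain complex, whose conjugate coefficient system is $t^{-\phi}\bar\rho$, yields $\Delta_n^{\rho,\phi}(M)(t)\doteq\Delta_{2-n}^{\bar\rho,\phi}(M,\partial M)(t^{-1})$. In the closed case this is already (3) (consistent with $\Delta_3^{\rho,\phi}\doteq1$). In the bounded case one converts the relative polynomials back to absolute ones via the long exact sequence of the pair $(M,\partial M)$ together with the explicit twisted homology of the boundary tori; combined with $\Delta_2^{\rho,\phi}\doteq1$, this produces the relations $\Delta_0^{\rho,\phi}(t)\doteq\Delta_0^{\bar\rho,\phi}(t^{-1})$ and $\Delta_1^{\rho,\phi}(t)\doteq\Delta_1^{\bar\rho,\phi}(t^{-1})$ of (2).

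The conceptual content is routine; the delicate points are purely order-theoretic. First, $\Lambda=R[t^{\pm1}]$ can have Krull dimension greater than $1$ when $R$ is not a field, but this causes no trouble, since orders are detected at the height-one primes, where $\Lambda$ localizes to a discrete valuation ring and the universal coefficient spectral sequence collapses to the single relevant $\mathrm{Ext}^1$--term up to factors of trivial order. Second, and slightly more annoying, one must control the contribution of the boundary tori in the long exact sequence of the pair, including the degenerate case where $\phi$ restricts trivially to some boundary component; here the twisted homology of a torus in the relevant degrees is computed directly, and the resulting order factors cancel in the combination forming the absolute degree-$0$ and degree-$1$ invariants. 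Both points are carried out in detail in \cite{Friedl--Vidussi_survey,Turaev_book_torsion}, from which the statement is quoted.
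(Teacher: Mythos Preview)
The paper does not give its own proof of this theorem; it simply cites \cite[Section 3.3.1, Proposition 2 (1), (4), (7) and Section 3.3.4, Propositions 4 and 5]{Friedl--Vidussi_survey} and moves on. Your sketch is a correct outline of the standard Poincar\'e--Lefschetz duality and universal-coefficient argument that underlies those cited propositions, and you end by pointing to the same reference, so there is no discrepancy to discuss.
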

	
	With respect to a fibered class, it is possible to interprete
	the twisted Reidemeister torsion (\ref{TRT_def}) 
	through the dynamics of a fiber-transverse flow.
	Again we describe in the case of pseudo-Anosov mapping tori.
	
	Suppose that $M$ is an orientable closed $3$--manifold which admits a hyperbolic metric.
	If $\phi\in H^1(M;\Integral)$ is a primitive fibered class,
	we adopt the notations of Remark \ref{pA_remark}
	and identify $(M,\phi)$ with $(M_f,\phi_f)$,
	where $f\colon S\to S$ is 
	a pseudo-Anosov automorphism of a connected closed orientable surface.
	In this setting,
	periodic orbits of $f$ correspond to periodic trajectories of the suspension flow,
	and they have well-defined periodic indices.
		
	For any natural number $m\in\Natural$,
	an $m$--periodic point of $f$ refers to a fixed point $p\in S$ of $f^m$,
	and it gives rise to an $m$--periodic trajectory of the suspension flow on $M_f$,
	namely, the loop $\Real/m\Integral\to M_f$ determined by 
	the map $\Real\to S\times \Real\colon r\mapsto (p,r)$.
	The free-homotopy class of an $m$--periodic trajectory
	depends only on the $f$--iteration orbit of an $m$--periodic point.
	We denote by $\mathrm{Orb}_m(f)$ the set of $m$--periodic orbits of $f$,
	and for any $m$--periodic orbit $\mathbf{O}\in\mathrm{Orb}_m(f)$,
	denote by $\ell_m(f;\mathbf{O})$
	the free-homotopy class of the $m$--periodic trajectory
	determined by $\mathbf{O}$. 
	Since the homotopy set $[\Real/m\Integral,M_f]$ is naturally identified 
	with the set of conjugacy classes of $\pi_1(M_f)$,
	denoted as $\mathrm{Orb}(\pi_1(M_f))$,
	we write
	\begin{equation}\label{ell_m}
	\ell_m(f;\mathbf{O})\in\mathrm{Orb}(\pi_1(M_f)),
	\end{equation}
	referring to it as an (essential) \emph{$m$--periodic trajectory class}.
	
	For any $m$--periodic point $p\in S$ of $f$,
	the $m$--periodic index of $f$ at $p$ refers to the fixed point index of $f^m$ at $p$.
	Explicitly, when $p$ is a $k$--prong singularity of 
	the stable (or unstable) invariant foliation for some $k\geq3$,
	the $m$--periodic index of $f$ at $p$ equals $1-k$ if $f^m$ 
	preserves every prong	of the foliation at $p$,
	or it equals $1$ otherwise;
	when $p$ is a regular point,
	the $m$--periodic index of $f$ at $p$ equals $-1$ or $1$,
	according to a similar rule as if $k=2$.
	Note that
	the $m$--periodic index of $f$ depends only	on the $f$--iteration orbit of $m$--periodic points.
	For any $m$--periodic orbit $\mathbf{O}\in\mathrm{Orb}_m(f)$,
	we define the \emph{$m$--periodic index} of $f$ at $\mathbf{O}$ using any point $p\in\mathbf{O}$,
	and denote it as
	\begin{equation}\label{ind_m}
	\mathrm{ind}_m(f;\mathbf{O})\in\Integral\setminus\{0\}.
	\end{equation}
	
	For any representation $\rho\colon \pi_1(M_f)\to \mathrm{GL}(k,R)$,
	the character $\chi_\rho\colon\pi_1(M_f)\to R$, 
	defined as $g\mapsto \mathrm{tr}_R(\rho(g))$,
	is constant on conjugacy classes of $\pi_1(M_f)$.
	So we treat $\chi_\rho$ as a function $\mathrm{Orb}(\pi_1(M_f))\to R$.	
	For any $m\in\Natural$,
	we define the $m$--th \emph{twisted Lefschetz number} with respect to $\chi_\rho$ and $f$
	as the following value in $R$:
	\begin{equation}\label{L_m_rho}
	L_m(f;\chi_\rho)=\sum_{\mathbf{O}\in\mathrm{Orb}_m(f)} \chi_\rho\left(\ell_m(f;\mathbf{O})\right)\cdot\mathrm{ind}_m(f;\mathbf{O}).
	\end{equation}
	
	The following theorem reformulates a result due to B.~Jiang \cite[Theorem 1.2]{Jiang_periodic},
	(see \cite[Lemma 8.2]{Liu_procongruent_conjugacy}):
	
	\begin{theorem}\label{tau_zeta}
	Let $M$ be an orientable connected closed $3$--manifold
	which admits a hyperbolic metric.
	Let $\Field$ be a (commutative) field of characteristic $0$.
	Suppose that $\phi\in H^1(M;\Integral)$ is a primitive fibered class.
	Adopt the notations of Remark \ref{pA_remark},
	and identify $(M,\phi)$ with $(M_f,\phi_f)$ for some pseudo-Anosov automorphism $f\colon S\to S$.	
	Suppose that $\rho\colon \pi_1(M_f)\to \mathrm{GL}(k,\Field)$ 
	is a finite-dimensional linear representation over $\Field$.
	Then, the following formula holds true:
	$$\tau^{\rho,\phi}\left(M;\Field[t^{\pm1}]^k\right)\doteq
	\exp\left(\sum_{m\in\Natural} \frac{L_m(f;\chi_\rho)}m\cdot t^m\right),$$
	where $\exp(z)$ stands for the formal power series $\sum_{m=0}^\infty \frac{z^m}{m!}$.
	
	More precisely,
	the right-hand side is a formal power series in $t$
	over $\Field$ with constant term $1$;
	it is the formal power series expansion
	of a unique rational function in $t$ over $\Field$;
	and it equals the left-hand side, 
	up to monomial factors with coefficients in $\Field^\times$,
	as a rational function in $t$ over $\Field$.
	\end{theorem}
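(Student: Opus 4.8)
The plan is to reduce the stated identity to the classical dictionary relating the twisted Reidemeister torsion of a mapping torus, the characteristic polynomials of the monodromy on the twisted homology of the fiber, and a Lefschetz-type zeta series; the only genuinely dynamical ingredient, namely the twisted fixed-point count for the pseudo-Anosov iterates, is exactly the content of B.~Jiang's theorem \cite[Theorem 1.2]{Jiang_periodic}, so much of the argument is bookkeeping, as in \cite[Lemma 8.2]{Liu_procongruent_conjugacy}.

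First I would relate the twisted homology of $M=M_f$ with $\Field[t^{\pm1}]^k$ coefficients to that of the fiber, via the Wang (mapping-torus) exact sequence. Choosing a loop $\gamma\in\pi_1(M_f)$ with $\phi(\gamma)=1$ exhibits $\rho_0=\rho|_{\pi_1(S)}$ as a finite-dimensional $\Field$--representation, and conjugation by $\rho(\gamma)$ realizes $f$ as an isomorphism of the associated local systems, hence induces an operator $f_*$ on each twisted homology group $H_n(S;\rho_0)$. The Wang sequence then reads
$$\cdots\longrightarrow H_n(S;\rho_0)\otimes_\Field\Field[t^{\pm1}]\xrightarrow{\, t\cdot f_*-\mathrm{id}\,}H_n(S;\rho_0)\otimes_\Field\Field[t^{\pm1}]\longrightarrow H_n^{\rho,\phi}\left(M;\Field[t^{\pm1}]^k\right)\longrightarrow\cdots.$$
Since the polynomial $\det\left(t\cdot f_*|_{H_n(S;\rho_0)}-\mathrm{id}\right)$ has constant term $(-1)^{\dim H_n(S;\rho_0)}\neq0$, it is a nonzero element of $\Field[t^{\pm1}]$ for every $n$; therefore each connecting map is injective, the Wang sequence splits into short exact sequences, and $\Delta^{\rho,\phi}_n(t)\doteq\det\left(\mathrm{id}-t\cdot f_*|_{H_n(S;\rho_0)}\right)$. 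Feeding this into the definition (\ref{TRT_def}), or equivalently invoking the torsion of the algebraic mapping cone of the chain self-map $t\cdot f_\#-\mathrm{id}$ (see \cite[Chapter I]{Turaev_book_torsion} and \cite[Section 3.3]{Friedl--Vidussi_survey}), gives
$$\tau^{\rho,\phi}(t)\ \doteq\ \prod_{n}\det\left(\mathrm{id}-t\cdot f_*|_{H_n(S;\rho_0)}\right)^{(-1)^{n+1}}.$$
The right-hand side is a quotient of polynomials each with constant term $1$, hence a rational function in $t$ over $\Field$ with constant term $1$; this already establishes the rationality assertions of the ``more precisely'' clause.

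Next I would turn this alternating product of determinants into the claimed exponential. Over a field of characteristic $0$ one has the formal identity $\det(\mathrm{id}-tA)^{-1}=\exp\left(\sum_{m\geq1}\mathrm{tr}(A^m)\,t^m/m\right)$ in $\Field[[t]]$, obtained by expanding $-\log\det(\mathrm{id}-tA)=\mathrm{tr}\sum_{m\geq1}(tA)^m/m$ and exponentiating. Applying it with $A=f_*$ on each $H_n(S;\rho_0)$ and multiplying over $n$ yields $\tau^{\rho,\phi}(t)\doteq\exp\left(\sum_{m\geq1}\Lambda_m\,t^m/m\right)$, where $\Lambda_m=\sum_{n}(-1)^n\,\mathrm{tr}\left(f_*^m|_{H_n(S;\rho_0)}\right)$ is the twisted Lefschetz number of the iterate $f^m$. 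It then remains to identify $\Lambda_m$ with $L_m(f;\chi_\rho)$ of (\ref{L_m_rho}). This is the twisted Lefschetz--Hopf formula applied to the pseudo-Anosov iterate $f^m$, whose fixed-point set is a finite union of isolated points grouped into the orbits $\mathbf{O}\in\mathrm{Orb}_m(f)$: each orbit contributes $\mathrm{ind}_m(f;\mathbf{O})\cdot\chi_\rho\left(\ell_m(f;\mathbf{O})\right)$, the trace weight being $\chi_\rho$ on the suspension-trajectory loop $\ell_m(f;\mathbf{O})$ (which absorbs the twist carried by $\rho(\gamma^m)$), and the local index being precisely the prong-counting quantity recalled in Remark \ref{pA_remark} and (\ref{ind_m}). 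Summing over orbits gives $\Lambda_m=L_m(f;\chi_\rho)$, whence the theorem.

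The hard part is exactly this last identification, which has two aspects: the combinatorial matching of the monodromy twist propagated through $f_*^m$ with the conjugacy classes $\ell_m(f;\mathbf{O})$ appearing in (\ref{ell_m})--(\ref{ind_m}); and the geometric fact that a pseudo-Anosov map has no ``excess'' periodic points, so that the homological twisted Lefschetz number is accounted for by exactly the indices of Remark \ref{pA_remark}, with no further cancellation. Both are supplied by Jiang's analysis of periodic points of pseudo-Anosov surface automorphisms \cite[Theorem 1.2]{Jiang_periodic}. With everything else reduced to the mapping-torus chain complex and the $\doteq$--ambiguities, I would invoke that result directly, exactly as in \cite[Lemma 8.2]{Liu_procongruent_conjugacy}.
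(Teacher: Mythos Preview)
The paper does not supply its own proof of this theorem; it is stated in the preliminaries with the sentence ``The following theorem reformulates a result due to B.~Jiang \cite[Theorem 1.2]{Jiang_periodic}, (see \cite[Lemma 8.2]{Liu_procongruent_conjugacy})'', followed only by Remark~\ref{tau_zeta_remark}. Your proposal is a correct unpacking of what those references contain: the Wang sequence computation of $\Delta_n^{\rho,\phi}$ in terms of the monodromy on $H_*(S;\rho_0)$ (which the paper itself records later, over $\Rational$, as Lemma~\ref{Delta_to_P}), the characteristic-zero identity $\det(\mathrm{id}-tA)^{-1}=\exp\bigl(\sum_{m\geq1}\mathrm{tr}(A^m)\,t^m/m\bigr)$, and the identification of the homological twisted Lefschetz number with the periodic-orbit sum $L_m(f;\chi_\rho)$, the last step being exactly where Jiang's theorem is invoked. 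There is nothing to compare against beyond the citation, and your sketch matches the standard route through \cite[Lemma 8.2]{Liu_procongruent_conjugacy}.
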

	
	\begin{remark}\label{tau_zeta_remark}
		The formula in Theorem \ref{tau_zeta} works in general 
		for mapping classes	of orientable closed (or compact) surfaces,
		in terms of essential periodic orbit classes and their indices.
		For pseudo-Anosov automorphisms of closed surfaces,
		any essential periodic orbit class consists of a unique periodic orbit.
		See Jiang's book \cite{Jiang_book} for a comprehensive introduction
		to the Nielsen fixed point theory; 
		see also \cite[Sections 7 and 10]{Liu_procongruent_conjugacy}
		for a quick summary.
	\end{remark}

\section{Matrix coefficients of profinite linear transformations}\label{Sec-MC}
In this section, we introduce matrix coefficient modules and their specializations.
For any finitely generated free $\Integral$--module $H$ of finite rank, 
denote by $H^*$ the dual $\Integral$--module $\mathrm{Hom}_\Integral(H,\Integral)$.
We identify the abelian profinite group completion $\widehat{H}$ of $H$ 
naturally as the (abstract) $\widehat{\Integral}$--module $H\otimes_\Integral \widehat{\Integral}$.
For any pair of finitely generated free $\Integral$--modules $H_A$ and $H_B$,
we identify the abelian profinite group $\mathrm{Hom}(\widehat{H_A},\widehat{H_B})$
of continuous homomorphisms $\widehat{H_A}\to \widehat{H_B}$
naturally as the $\widehat{\Integral}$--module
$\mathrm{Hom}_{\Integral}(H_A,H_B)\otimes_\Integral\widehat{\Integral}$.

\begin{definition}\label{MC_def}
Let $H_A$ and $H_B$ be a pair of finitely generated free $\Integral$--modules.
Let $\Phi\colon \widehat{H_A}\to \widehat{H_B}$ be a continuous homomorphism of the profinite completions.
We define the \emph{matrix coefficent module} for $\Phi$ with respect to $H_A$ and $H_B$
to be the smallest $\Integral$--submodule $L$ of $\widehat{\Integral}$,
such that $\Phi(H_A)$ lies in the submodule $H_B\otimes_\Integral L$ 
of $\widehat{H_B}=H_B\otimes_\Integral\widehat{\Integral}$.
We denote the matrix coefficient module as 
$$\mathrm{MC}(\Phi;H_A,H_B)\subset\widehat{\Integral},$$
or simply $\mathrm{MC}(\Phi)$ when there is no danger of confusion.
We denote by 
$$\Phi^{\mathrm{MC}}\colon H_A\to H_B\otimes_\Integral \mathrm{MC}(\Phi)$$
the homomorphism uniquely determined by the restriction of $\Phi$ to $H_A$.
\end{definition}


\begin{proposition}\label{MC_property}
Let $H_A$ and $H_B$ be a pair of finitely generated free $\Integral$--modules.
Let $\Phi\colon\widehat{H_A}\to\widehat{H_B}$ be a continuous homomorphism.
\begin{enumerate}
\item 
With respect to any bases $u_1,\cdots,u_a$ of $H_A$ and $v_1,\cdots,v_b$ of $H_B$,
represent $\Phi$ uniquely as a matrix $(\varphi_{ij})_{b\times a}$ over $\widehat{\Integral}$.
Then $\mathrm{MC}(\Phi;H_A,H_B)$ is $\Integral$--linearly spanned by all the entries 
$\varphi_{ij}$ in $\widehat{\Integral}$.
In particular, 
$\mathrm{MC}(\Phi;H_A,H_B)$ 
is a free $\Integral$--submodule of $\widehat{\Integral}$ of finite rank.
\item
If $\mathrm{rank}_\Integral(\mathrm{MC}(\Phi;H_A,H_B))=1$,
then for each generator $z\in\widehat{\Integral}$ of $\mathrm{MC}(\Phi;H_A,H_B)$,
there is a unique homomorphism $F_z\colon H_A\to H_B$,
such that $\Phi=z\widehat{F_z}$ holds in 
$\mathrm{Hom}(\widehat{H_A},\widehat{H_B})$.
Hence, the other generator $-z$ gives rise to $F_{-z}=-F_z$.
\item
For any $\Integral$--submodules $L_A$ of $H_A$ and $L_B$ of $H_B$,
such that $\Phi(\widehat{L_A})$ is contained in $\widehat{L_B}$, 
restrict $\Phi$ to obtain a continuous homomorphism $\widehat{L_A}\to\widehat{L_B}$,
still denoted as $\Phi$.
Then $\mathrm{MC}(\Phi;L_A,L_B)$ is commensurable with
a $\Integral$--submodule of $\mathrm{MC}(\Phi;H_A,H_B)$ in $\widehat{\Integral}$.
In particular, 
$$\mathrm{rank}_\Integral(\mathrm{MC}(\Phi;L_A,L_B))\leq\mathrm{rank}_\Integral(\mathrm{MC}(\Phi;H_A,H_B)).$$
Moreover, if $L_A$ and $L_B$ both have finite index,
then $\mathrm{MC}(\Phi;L_A,L_B)$ is commensurable with $\mathrm{MC}(\Phi;H_A,H_B)$ in $\widehat{\Integral}$,
and the equality is achieved.
\end{enumerate}
\end{proposition}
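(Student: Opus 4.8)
The plan is to fix $\Integral$--bases of $H_A$ and $H_B$ and translate the whole statement into matrix algebra over the commutative ring $\widehat{\Integral}$, using repeatedly that $\widehat{\Integral}$ is torsion-free as a $\Integral$--module (so that if $z\in\widehat{\Integral}$ generates a rank--one $\Integral$--submodule, then $nz=0$ with $n\in\Integral$ forces $n=0$), that a finitely generated torsion-free $\Integral$--module is free of finite rank, and that tensoring with a free $\Integral$--module is exact. For part (1), I would write $\Phi$ as a matrix $(\varphi_{ij})$ with respect to bases $u_1,\dots,u_a$ of $H_A$ and $v_1,\dots,v_b$ of $H_B$, and let $L'\subseteq\widehat{\Integral}$ be the $\Integral$--span of the entries $\varphi_{ij}$. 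Since $\Phi(u_j)=\sum_i\varphi_{ij}v_i$ lies in $H_B\otimes_\Integral L'$ and the $u_j$ generate $H_A$, minimality gives $\mathrm{MC}(\Phi)\subseteq L'$; conversely, for any $\Integral$--submodule $L$ with $\Phi(H_A)\subseteq H_B\otimes_\Integral L$, freeness of $H_B$ identifies $H_B\otimes_\Integral L$ with $\bigoplus_i v_i\otimes L$ inside $\widehat{H_B}$, so reading off the coordinates of $\Phi(u_j)$ forces $\varphi_{ij}\in L$ and $L'\subseteq L$. Thus $L'=\mathrm{MC}(\Phi)$, and being a finitely generated submodule of $\widehat{\Integral}$ it is free of finite rank. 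For part (2), when $\mathrm{MC}(\Phi)=\Integral z$ each $\varphi_{ij}$ equals a unique $n_{ij}z$ with $n_{ij}\in\Integral$, and the homomorphism $F_z\colon H_A\to H_B$ with matrix $(n_{ij})$ satisfies $\Phi=z\widehat{F_z}$ since both sides have matrix $(n_{ij}z)$; uniqueness holds because $z\widehat{F}=z\widehat{F'}$ makes $z$ kill every entry of the integer matrix of $F-F'$, and $F_{-z}=-F_z$ is immediate.

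For part (3), I would choose $\Integral$--bases $u'_1,\dots,u'_c$ of $L_A$ and $v'_1,\dots,v'_d$ of $L_B$, together with integer matrices $P$ (size $a\times c$) and $Q$ (size $b\times d$) of full column rank, with $u'_j=\sum_i P_{ij}u_i$ and $v'_l=\sum_k Q_{kl}v_k$. Fixing a rational left inverse $Q^{+}$ of $Q$, the restricted homomorphism $\widehat{L_A}\to\widehat{L_B}$ is represented in the primed bases by $Q^{+}\varphi P$, and the hypothesis $\Phi(\widehat{L_A})\subseteq\widehat{L_B}$ is precisely what forces these a priori merely rational--adic entries to lie in $\widehat{\Integral}$; by part (1), $\mathrm{MC}(\Phi;L_A,L_B)$ is the $\Integral$--span of the entries of $Q^{+}\varphi P$. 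Those entries are $\Rational$--linear combinations of the $\varphi_{ij}$, so clearing the denominators of the finitely many generators yields a nonzero $N\in\Integral$ with $N\cdot\mathrm{MC}(\Phi;L_A,L_B)\subseteq\mathrm{MC}(\Phi;H_A,H_B)$, while $N\cdot\mathrm{MC}(\Phi;L_A,L_B)$ is commensurable with $\mathrm{MC}(\Phi;L_A,L_B)$; this gives the first assertion and the rank inequality. If moreover $L_A$ and $L_B$ have finite index, then $P$ and $Q$ are square and invertible over $\Rational$, and $\varphi=Q(Q^{+}\varphi P)P^{-1}$ conversely expresses the $\varphi_{ij}$ as $\Rational$--combinations of the entries of $Q^{+}\varphi P$; the two matrix coefficient modules then span the same $\Rational$--subspace of $\widehat{\Integral}\otimes_\Integral\Rational$, are full lattices in it, and hence are commensurable of equal rank, with the literal equality realized in the tautological case $L_A=H_A$, $L_B=H_B$.

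Almost everything here is a mechanical translation of the definition of $\mathrm{MC}(\Phi)$ into coordinates, so I expect the main obstacle to be only the passage to a sublattice in part (3): the change of basis $Q^{+}$ is merely rational, so one must invoke $\Phi(\widehat{L_A})\subseteq\widehat{L_B}$ at exactly the right moment to keep the matrix entries in $\widehat{\Integral}$, and then phrase the conclusion as commensurability of lattices in the finite-dimensional $\Rational$--vector space $\widehat{\Integral}\otimes_\Integral\Rational$ rather than as an equality of submodules of $\widehat{\Integral}$.
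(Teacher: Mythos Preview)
Your proof is correct and follows essentially the same approach as the paper: translate everything to matrix algebra over $\widehat{\Integral}$ with respect to chosen bases, identify $\mathrm{MC}(\Phi)$ with the $\Integral$--span of the matrix entries, and handle part (3) via change-of-basis. The only organizational difference is in part (3): the paper first reduces the general case to the finite-index case by passing to the smallest direct summands $L'_A\subseteq H_A$ and $L'_B\subseteq H_B$ containing $L_A$ and $L_B$, and then for finite index uses the adjugate-matrix identity $S_BT'S_A^*=\det(S_A)\cdot T$ to stay entirely inside $\widehat{\Integral}$; you instead work directly with a rational left inverse $Q^{+}$ for arbitrary sublattices, passing through $\widehat{\Integral}\otimes_\Integral\Rational$ and invoking torsion-freeness of $\widehat{\Integral}$ to recover integrality. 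Both routes yield the same commensurability conclusion; yours is slightly more uniform, while the paper's avoids ever leaving $\widehat{\Integral}$. (Your final remark about ``the literal equality realized in the tautological case'' is unnecessary---the phrase ``the equality is achieved'' in the statement refers to the rank inequality, which you have already established via commensurability.)
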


\begin{proof}
	For any $u\in H_A$ and $v^*\in H^*_B=\mathrm{Hom}_\Integral(H_B,\Integral)$,
	we obtain an element $\widehat{v^*}(\Phi(u))$ in $\mathrm{MC}(\Phi)$,
	where $\widehat{v^*}\colon \widehat{H_B}\to\widehat{\Integral}$ is the completion of $v^*$.
	Moreover, all such elements form a $\Integral$--submodule of $\widehat{\Integral}$,
	so it is by definition $\mathrm{MC}(\Phi;H_A,H_B)$.
	For any given basis $u_1,\cdots,u_a$ of $H_A$,
	and any basis $v^*_1,\cdots,v^*_b$ of $H_B^*$
	dual to a given basis of $H_B$,
	the elements $u_i\otimes v_j^*$ form a basis
	of $H_A\otimes_\Integral H_B^*$,
	so the matrix entries $\varphi_{ij}=\widehat{v^*_j}(\Phi(u_i))$ 
	span $\mathrm{MC}(\Phi;H_A,H_B)$ over $\Integral$.
	This proves the assertion (1).
	
	If the rank of $\mathrm{MC}(\Phi;H_A,H_B)$ equals $1$, 
	every entry $\varphi_{ij}$ is a unique $\Integral$--multiple $t_{ij}$ of 
	any fixed generator $z\in\widehat{\Integral}$
	of $\mathrm{MC}(\Phi;H_A,H_B)$. So the matrix of $\Phi$ takes the form
	$zT$ where $T$ is the matrix with entries $t_{ij}$.
	This proves the assertion (2).
	
	It remains to prove the assertion (3).	
	In fact, it suffices to prove the finite-index case,
	then the general case follows from the assertion (1),
	by considering the smallest direct-summand modules $L'_A$ of $H_A$ that contains $L_A$,
	and $L'_B$ of $H_B$ that contains $L_B$.
	Suppose below that $L_A$ and $L_B$ have finite index in $H_A$ and $H_B$, respectively.
	Take bases $u_1,\cdots,u_a$ of $H_A$, and $v_1,\cdots,v_b$ of $H_B$,
	and represent $\Phi\colon \widehat{H_A}\to \widehat{H_B}$
	as a $b\times a$--matrix $T$ over $\widehat{\Integral}$,
	namely, $(\Phi(u_1),\cdots,\Phi(u_a))=(v_1,\cdots,v_b)T$.
	Similarly, take bases $u'_1,\cdots,u'_a$ of $L_A$, and $v'_1,\cdots,v'_b$ of $H_B$,
	and represent $\Phi\colon \widehat{L_A}\to \widehat{L_B}$ as a matrix $T'$.
	Let $S_A$ and $S_B$ be the square matrices over $\Integral$
	such that $(u'_1,\cdots,u'_a)=(u_1,\cdots,u_a)S_A$ and $(v'_1,\cdots,v'_b)=(v_1,\cdots,v_b)S_B$.
	We obtain a matrix equation $S_BT'=TS_A$ over $\widehat{\Integral}$.
	Then $S_BT'S^*_A=\mathrm{det}(S_A)\cdot T$, where $S^*_A$ is the adjoint matrix of $S_A$,
	and $\mathrm{det}(S_A)$ equals the index $[H_A:L_A]$ of $L_A$ in $H_A$.
	Applying the assertion (1), we see that $[H_A:L_A]\cdot \mathrm{MC}(\Phi;H_A,H_B)$
	is contained in $\mathrm{MC}(\Phi;L_A,L_B)$.
	Similarly, $[H_B:L_B]\cdot \mathrm{MC}(\Phi;L_A,L_B)$
	is contained in $\mathrm{MC}(\Phi;H_A,H_B)$.
	Therefore, the assertion (3) follows.	
\end{proof}

\begin{definition}\label{specialization_def}
Adopt the notations of Definition \ref{MC_def}.
For any homomorphism $\varepsilon\in\mathrm{Hom}_\Integral(\mathrm{MC}(\Phi),\Real)$,
the \emph{$\varepsilon$--specialization} of $\Phi$
refers to the unique $\Real$--linear extension of the composite homomorphism 
$$\xymatrix{H_A \ar[r]^-{\Phi^{\mathrm{MC}}} & H_B\otimes_\Integral \mathrm{MC}(\Phi)
\ar[r]^-{1\otimes\varepsilon} & H_B\otimes_\Integral\Real,}$$
denoted as 
\begin{equation*}\label{specialization}
\Phi^\varepsilon\colon H_A\otimes_\Integral\Real\to H_B\otimes_\Integral\Real;
\end{equation*}
the \emph{dual $\varepsilon$--specialization} of $\Phi$
refers to the $\Real$--linear homomorphism dual to the $\varepsilon$--specialization,
denoted as
\begin{equation*}\label{dual_specialization}
\Phi^{\mathtt{dual}}_\varepsilon\colon \mathrm{Hom}_\Integral(H_B;\Real)\to\mathrm{Hom}_\Integral(H_A;\Real).
\end{equation*}
\end{definition}


	%
%
%
%

\section{Detecting fibered cones with a profinite epimorphism}\label{Sec-fibered_cones}
In this section, we revisit Jaikin-Zapirain's work on profinite invariance of fiberedness,
with some slight generalization. We reformulate the result in terms of matrix coefficient modules,
(see Theorem \ref{fibered_cone} and Remark \ref{fibered_cone_remark}).
We introduce the following general setting,
which is frequently invoked in subsequent sections.

\begin{convention}\label{profinite_morphism_setting}
	By declaring $(M_A,M_B,\Psi)$ as a \emph{profinite morphism setting of a 3-manifold pair},
	we agree to adopt the following hypotheses and notations.
	\begin{enumerate}
	\item
	Suppose that $M_A$ and $M_B$ are 
	a pair of orientable connected compact $3$--manifolds.	
	Suppose that $\Psi\colon\widehat{\pi_A}\to\widehat{\pi_B}$
	is a continuous homomorphism 
	between the profinite completions of the fundamental groups 
	$\pi_A=\pi_1(M_A)$ for $M_A$, and similarly $\pi_B$ for $M_B$,
	with respect to implicitly fixed basepoints.
	\item
	Denote by $\Psi_*\colon \widehat{H_A}\to \widehat{H_B}$ the induced continuous homomorphism
	between the profinite completions of the first integral homology modulo torsion
	$H_A=H_1(M_A;\Integral)_{\mathtt{free}}$, and similarly $H_B$ for $M_B$.
	Denote by $\mathrm{MC}(\Psi_*)$ the matrix coefficient module for $\Psi_*$
	with respect to $H_A$ and $H_B$, (see Definition \ref{MC_def}).
	\item
	For any homomorphism $\varepsilon\in\mathrm{Hom}_\Integral(\mathrm{MC}(\Psi_*),\Real)$,
	denote by 
	$\Psi_*^\varepsilon\colon H_1(M_A;\Real)\to H_1(M_B;\Real)$
	the $\varepsilon$--specialization of $\Psi_*$,
	naturally identifying 
	$H_1(M_A;\Real)\cong H_A\otimes_\Integral\Real$, and similarly for $M_B$;
	and denote by
	$\Psi^*_\varepsilon\colon H^1(M_B;\Real)\to H^1(M_A;\Real)$
	the dual $\varepsilon$--specialization of $\Psi_*$,
	naturally identifying $H^1(M_A;\Real)\cong \mathrm{Hom}_\Integral(H_A,\Real)$ for $M_A$,
	and similarly for $M_B$,
	(see Definition \ref{specialization_def}).
	\end{enumerate}
\end{convention}

For most of the time, we only discuss profinite isomorphism settings,
and we often put extra assumptions on the $3$--manifolds,
such as nonpositively curved, or hyperbolic. 
This section is an exception, where we consider a profinite epimorphism setting.

\begin{theorem}\label{fibered_cone}
Let $(M_A,M_B,\Psi)$ be a profinite morphism setting of a $3$--manifold pair
(Convention \ref{profinite_morphism_setting}).
Suppose that $\Psi\colon\widehat{\pi_A}\to\widehat{\pi_B}$ is an epimorphism.
Suppose $\varepsilon\in\mathrm{Hom}_\Integral(\mathrm{MC}(\Psi_*),\Real)$.

Then, 
with respect to $\Psi^*_{\varepsilon}\colon H^1(M_B;\Real)\to H^1(M_A;\Real)$,
the preimage of any fibered cone for $M_A$
is either empty or contained in a unique fibered cone for $M_B$.
In other words,
a cohomology class $\phi\in H^1(M_B;\Real)$ lies in a fibered cone for $M_B$
if $\Psi^*_\varepsilon(\phi)$ lies in a fibered cone for $M_A$.
\end{theorem}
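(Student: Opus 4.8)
The plan is to follow Jaikin-Zapirain's strategy for the profinite invariance of fiberedness, but to keep track of the matrix coefficient module and to phrase everything in terms of the specialization $\Psi^*_\varepsilon$. First I would reduce to a generic rational specialization. Write $\mathrm{MC}(\Psi_*)$ as a free $\Integral$-module with basis $z_1,\dots,z_r\in\widehat{\Integral}$, and accordingly write $\Psi_*^{\mathrm{MC}}=z_1\Phi_1+\cdots+z_r\Phi_r$ with $\Phi_j\in\mathrm{Hom}_\Integral(H_A,H_B)$. A given $\varepsilon$ corresponds to a point $(\varepsilon(z_1),\dots,\varepsilon(z_r))\in\Real^r$, and $\Psi_*^\varepsilon=\sum_j\varepsilon(z_j)\Phi_j$. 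Since the hypothesis ``$\Psi^*_\varepsilon(\phi)$ lies in a fibered cone for $M_A$'' and the conclusion ``$\phi$ lies in a fibered cone for $M_B$'' are both open conditions in $\phi$ and in $\varepsilon$ (fibered cones are open codimension-$0$ cones), and since the fibered cone containing $\phi$ — if any — cannot jump under small perturbation, it suffices to prove the statement for $\phi$ a primitive integral class and $\varepsilon$ rational, and then after clearing denominators for $\varepsilon$ integral, i.e.\ $\varepsilon(z_j)=c_j\in\Integral$. So it is enough to show: if $\Psi_*^c=\sum_j c_j\Phi_j\colon H_A\to H_B$ is such that its integral dual $\Psi^*_c\colon H^1(M_B;\Integral)\to H^1(M_A;\Integral)$ sends a primitive integral class $\phi$ to the interior of a fibered cone for $M_A$, then $\phi$ is itself a fibered class for $M_B$.

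Next I would pass to the associated cyclic covers and the $\llbracket\widehat{\Integral}\widehat{\pi}\rrbracket$-homology. The key observation from \cite{JZ}, phrased via the cohomological goodness of $3$-manifold groups, is that fiberedness of a primitive class $\psi\in H^1(M;\Integral)$ is detected by finite generation of the profinite Alexander-type module: $\psi$ is a fibered class if and only if $H_1(\widehat{\ker\psi};\llbracket\widehat{\Integral}\widehat{\pi_1(M)}\rrbracket)$, or equivalently the relevant completed twisted homology in degree one associated to $\psi$, is finitely generated over the appropriate completed Laurent ring (so that the twisted Alexander polynomial is monic of the expected degree). The epimorphism $\Psi\colon\widehat{\pi_A}\to\widehat{\pi_B}$ pulls back $\phi$ on $M_B$ to $\Psi^*\phi=\widehat{\Psi_*^c(\text{--})}$-type data on $M_A$, but more precisely, because $\phi\circ\Psi$ is a (continuous) homomorphism $\widehat{\pi_A}\to\Integral$ whose induced map on $\widehat{H_A}$ is exactly $\Psi^*_c(\phi)$ composed with the inclusion $\Integral\hookrightarrow\widehat{\Integral}$ up to the scalars $c_j$, the kernel $\ker(\phi\circ\Psi)$ in $\widehat{\pi_A}$ corresponds to the profinite cyclic cover of $M_A$ in direction $\Psi^*_c(\phi)$. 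Since $\Psi^*_c(\phi)$ lies in the \emph{interior} of a fibered cone for $M_A$, it is a fibered class, hence $M_A$ fibers in that direction, hence the degree-one completed homology module of that cyclic cover of $M_A$ is finitely generated over $\widehat{\Integral}$. Pushing forward along the epimorphism $\Psi$ — which is surjective on all the relevant profinite homology with $\llbracket\widehat{\Integral}\widehat{\pi}\rrbracket$-coefficients by right-exactness of $H_0$ and surjectivity of $\Psi$ — gives finite generation of the corresponding module for the $\phi$-cyclic cover of $M_B$. By goodness and the fibering criterion in the reverse direction, $\phi$ is a fibered class for $M_B$.

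Finally I would package the conclusion: a fibered cone for $M_A$ is, by Thurston, the unique codimension-$0$ Thurston-norm cone whose interior consists of fibered classes; so once we know every primitive integral $\phi$ with $\Psi^*_\varepsilon(\phi)$ in that cone's interior is fibered, these $\phi$ fill the interior of a union of Thurston-norm cones for $M_B$, which is connected and open (being the $\Psi^*_\varepsilon$-preimage of a convex open cone, intersected with the complement of the walls), hence lies in a single fibered cone $\mathcal{C}_{\mathtt{Th}}(M_B,\phi)$; uniqueness is immediate since distinct fibered cones have disjoint interiors. The main obstacle I expect is the bookkeeping in the middle step: matching the profinite cyclic cover cut out by $\phi\circ\Psi$ on the $M_A$ side with the genuine (abstract) cyclic cover used in the fibering criterion, and checking that the scalars $c_j\in\Integral$ (as opposed to elements of $\widehat{\Integral}$) are exactly what makes $\phi\circ\Psi$ factor through $\Integral$ rather than merely $\widehat{\Integral}$ — this is where the matrix coefficient module formalism does real work, and where one must invoke that $\Psi$ is an epimorphism to transport finite generation forward rather than backward. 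I would rely on \cite[Proposition 3.1]{JZ} and the goodness statement quoted in Section~\ref{Subsec-profinite_completion} for the homological criterion, and on Thurston's results quoted in Section~\ref{Subsec-polytopes_and_cones} for the cone-theoretic repackaging.
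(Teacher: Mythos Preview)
Your reduction to integral $\phi$ and integral $\varepsilon$ is fine and matches the paper. The genuine gap is in the middle step, precisely where you flag ``the main obstacle''. You write that choosing $c_j\in\Integral$ ``makes $\phi\circ\Psi$ factor through $\Integral$'', but this is false: the restriction $\phi_A:=\widehat{\phi_B}\circ\Psi|_{\pi_A}\colon\pi_A\to\widehat{\Integral}$ has image in the $\Integral$--span of the fixed elements $z_1,\dots,z_r\in\widehat{\Integral}$ (that is, in $\mathrm{MC}(\Psi_*)$), not in $\Integral$. The integers $c_j=\varepsilon(z_j)$ are coordinates of $\varepsilon$ in the dual basis; they do not change where $\phi_A$ lands. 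The specialization $\psi_A=\Psi^*_\varepsilon(\phi_B)\colon\pi_A\to\Integral$ is the \emph{further} composition of $\phi_A$ with $\varepsilon\colon\mathrm{MC}(\Psi_*)\to\Integral$, and in general $\phi_A\neq\psi_A$; they only satisfy $\ker(\phi_A)\subset\ker(\psi_A)$.

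This distinction is exactly what the paper's argument turns on. The fibering criterion (Theorem~\ref{fibered_TH}, a Friedl--Vidussi type statement over finite fields) applies to $\psi_A$, giving finite $\mathbb{F}$--dimension of $H_1^{\rho_A,\psi_A}(\pi_A;[\mathbb{F}t^{\Integral}]^k)$. But the object that the epimorphism $\Psi$ relates to $M_B$ is the twisted homology with respect to $\phi_A$ (valued in $\widehat{\Integral}$), not $\psi_A$. The paper bridges this with Proposition~\ref{annihilator_behavior}(2): using $\ker(\phi_A)\subset\ker(\psi_A)$, one shows by a matrix--rank argument under specialization of group rings that a nonzero annihilator for $H_1^{\rho_A,\psi_A}$ forces a nonzero annihilator for $H_1^{\rho_A,\phi_A}$ over $[\mathbb{F}t^{\widehat{\Integral}}]$. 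Only then can one pass to the profinite completion (part (1)), push forward along $\Psi$ via Lyndon--Hochschild--Serre (part (3); your ``right-exactness of $H_0$'' does not by itself give surjectivity on $H_1$), and finally return to finite $\mathbb{F}$--dimension on the $M_B$ side (part (4)). Your sketch collapses $\phi_A$ and $\psi_A$ into one object and so skips the step that carries the actual content of the matrix--coefficient formalism.
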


\begin{corollary}\label{both_fibered}
Under the assumptions of Theorem \ref{fibered_cone},
if $b_1(M_B)$ equals $b_1(M_A)$,
and if $M_A$ fibers over a circle, then $M_B$ also fibers over a circle.
\end{corollary}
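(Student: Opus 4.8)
\textbf{Proof proposal for Corollary \ref{both_fibered}.}

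The plan is to derive the conclusion directly from Theorem \ref{fibered_cone} by choosing a suitable specialization $\varepsilon$ and exploiting a dimension count. Since $M_A$ fibers over a circle, there is a fibered cohomology class in $H^1(M_A;\Integral)$, hence a fibered cone $\mathcal{C}_A$ for $M_A$, and $\mathcal{C}_A$ has nonempty interior in $H^1(M_A;\Real)$; in particular $b_1(M_A)\geq 1$. First I would fix an arbitrary nonzero homomorphism $\varepsilon\in\mathrm{Hom}_\Integral(\mathrm{MC}(\Psi_*),\Real)$ (possible since $\mathrm{MC}(\Psi_*)$ is a nonzero free $\Integral$-module by Proposition \ref{MC_property}(1); it is nonzero because $\Psi$, being an epimorphism of profinite completions, induces a surjection $\Psi_*\colon\widehat{H_A}\to\widehat{H_B}$, and one should first check that $\Psi_*$ is itself surjective and in particular nonzero as soon as $b_1(M_B)\geq 1$, while if $b_1(M_B)=0=b_1(M_A)$ there is nothing to prove since then $M_A$ cannot fiber). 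This gives the dual specialization $\Psi^*_\varepsilon\colon H^1(M_B;\Real)\to H^1(M_A;\Real)$.

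The key step is to show that $\Psi^*_\varepsilon$ is a linear \emph{isomorphism}. The epimorphism $\Psi$ induces a surjection $\Psi_*\colon\widehat{H_A}\to\widehat{H_B}$ of $\widehat{\Integral}$-modules; since $b_1(M_A)=b_1(M_B)$, this is a surjection between free $\widehat{\Integral}$-modules of the same finite rank, hence an isomorphism. I would then argue that a nonzero $\varepsilon$ makes the specialization $\Psi_*^\varepsilon\colon H_1(M_A;\Real)\to H_1(M_B;\Real)$ an isomorphism: working in coordinates as in Proposition \ref{MC_property}(1), write the matrix of $\Psi_*$ over $\widehat{\Integral}$ with entries in $\mathrm{MC}(\Psi_*)$; invertibility over $\widehat{\Integral}$ means its determinant is a unit of $\widehat{\Integral}$, and applying the ring homomorphism-like structure carefully — more precisely, picking $\varepsilon$ generic so that $\det(\Psi_*^\varepsilon)\neq 0$, which is possible because $\det$ is a nonzero polynomial function on $\mathrm{Hom}_\Integral(\mathrm{MC}(\Psi_*),\Real)$ (it is nonzero at the "profinite point" corresponding to the inclusion $\mathrm{MC}(\Psi_*)\hookrightarrow\widehat{\Integral}$, where it evaluates to a unit). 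Dualizing, $\Psi^*_\varepsilon$ is an isomorphism $H^1(M_B;\Real)\xrightarrow{\ \sim\ }H^1(M_A;\Real)$.

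Now I would combine this with Theorem \ref{fibered_cone}. Let $\mathcal{C}_A$ be a fibered cone for $M_A$; it is a codimension-zero cone, so its interior is a nonempty open subset of $H^1(M_A;\Real)$. Since $\Psi^*_\varepsilon$ is an isomorphism, the preimage $(\Psi^*_\varepsilon)^{-1}(\mathrm{int}\,\mathcal{C}_A)$ is a nonempty open subset of $H^1(M_B;\Real)$, hence contains an integral cohomology class $\phi\in H^1(M_B;\Integral)$ with $\Psi^*_\varepsilon(\phi)$ in the interior of $\mathcal{C}_A$, which is in particular a fibered class for $M_A$. By Theorem \ref{fibered_cone}, $\phi$ then lies in a fibered cone for $M_B$; being in the interior of a codimension-zero cone and integral, $\phi$ is a fibered class for $M_B$ (by Thurston's theorem, \cite[Theorem 5]{Thurston-norm}), so $M_B$ fibers over a circle.

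\textbf{Main obstacle.} The routine-looking but genuinely load-bearing point is the passage from "$\Psi_*$ is an isomorphism over $\widehat{\Integral}$" to "some real specialization $\Psi_*^\varepsilon$ is an isomorphism over $\Real$." One must be careful that the matrix coefficient module $\mathrm{MC}(\Psi_*)$ is set up precisely so that $\Psi_*$ lives in $\mathrm{Hom}_\Integral(H_A,H_B)\otimes_\Integral\mathrm{MC}(\Psi_*)$, and that the determinant, a priori an element of some symmetric power construction, does not vanish identically under real specializations — equivalently, that the "tautological" $\widehat{\Integral}$-valued specialization recovers the original $\Psi_*$ and hence has unit determinant. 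Handling the edge case $b_1(M_B)=0$ (forcing $b_1(M_A)=0$, so no fibering is possible and the statement is vacuous) should be dispatched at the outset.
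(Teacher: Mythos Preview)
Your proposal is correct and matches the argument the paper has in mind. The paper states Corollary \ref{both_fibered} without proof, but the intended reasoning is exactly your dimension/genericity argument: the introduction (Section~\ref{Sec-introduction}, paragraph on ``Profinite correspondence of Thurston-norm cones'') already notes that for generic integer tuples $c$ the specialization $\Psi_*^c$ is nondegenerate, and the more elaborate version of this determinant argument reappears in the proof of Lemma~\ref{U_construction}. Your identification of the ``main obstacle'' is accurate, and your resolution --- writing $\det(\Psi_*)$ as a homogeneous polynomial $P\in\Integral[X_1,\dots,X_r]$ in a basis of $\mathrm{MC}(\Psi_*)$, observing that its value at the profinite point is a unit of $\widehat\Integral$ (hence $P\not\equiv 0$), and concluding that generic real specializations are invertible --- is exactly the right mechanism.

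One minor simplification: you do not need to pick an \emph{integral} $\phi$ in $(\Psi^*_\varepsilon)^{-1}(\mathcal{C}_A)$ and invoke \cite[Theorem~5]{Thurston-norm}. As soon as the preimage is nonempty, Theorem~\ref{fibered_cone} already asserts the existence of a fibered cone for $M_B$, and any fibered cone contains primitive fibered integral classes in its interior; so $M_B$ fibers.
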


\begin{remark}\label{fibered_cone_remark}\
	\begin{enumerate}
	\item 
	Any epimorphism between 
	profinite completions of finitely generated groups 
	is continuous, thanks to Nikolov--Segal \cite{Nikolov--Segal},
	(see Section \ref{Subsec-profinite_completion}).
	Our assumption in Theorem \ref{fibered_cone} is partly motivated
	by some well-known facts on knot group epimorphisms.
	Given an epimorphism between knot groups,
	the target knot is fibered if the source knot is fibered.
	This is simply because 
	knot groups all have infinite cyclic abelianization,
	and fibered knot groups are precisely those with finitely generated commutator subgroups.
	See also \cite{KSW_epimorphism,Silver--Whitten_epimorphism}.
	\item
	Corollary \ref{both_fibered} implies \cite[Corollary 1.2]{JZ} as a special case.
	Strictly speaking,
	the use of rank in \cite[Section 2]{JZ} relies on a nontrivial fact, that is,
	the commutative pro--$p$ group ring
	$\llbracket\mathbb{F}_p t^{\Integral_p}\rrbracket$ is a domain,
	(see \cite[Theorem B]{CKL_zero_divisor}). 
	For any prime $p$, 
	the notations $\mathbb{F}_p$ and $\Integral_p$
	here stand for the finite field of $p$ elements
	and the additive group of $p$--adic integers, respectively.
	For proving Theorem \ref{fibered_cone},
	we need to work with $\llbracket\mathbb{F} t^{\widehat{\Integral}}\rrbracket$,
	essentially because we need $\mathrm{MC}(\Psi_*)$	to be defined in $\widehat{\Integral}$,
	in order for some subsequent application, 
	(especially Theorem \ref{profinite_invariance_TRT}).
	On the other hand, the ground finite field $\mathbb{F}$ does not quite matter.
	Not to worry about zero divisors in $\llbracket\mathbb{F} t^{\widehat{\Integral}}\rrbracket$,
	we only speak of rank for finitely generated modules over 
	the dense subdomain $[\mathbb{F}t^{\widehat{\Integral}}]$,
	(namely, the group algebra over $\mathbb{F}$ 
	of the abstract torsion-free abelian group $\widehat{\Integral}$).
	This technical difference from \cite{JZ} is reflected
	on Proposition \ref{annihilator_behavior} including its proof.
	\end{enumerate}
\end{remark}

The rest of this section is devoted to the proof of Theorem \ref{fibered_cone}.
We invoke the following Theorem \ref{fibered_TH},
which translates the topological fibering condition
into an algebraic one, in terms of twisted homology over finite fields.
The `only if' direction is due to Friedl and Kim \cite[Theorem 1.2 and Lemma 2.2]{Friedl--Kim_fibered},
and the `if' direction is due to Friedl and Vidussi \cite{Friedl--Vidussi_vanishing}.

\begin{theorem}\label{fibered_TH}
Let $M$ be an orientable connected compact $3$--manifold and $\mathbb{F}$ be a finite field.
A nonzero cohomology class $\phi\in H^1(M;\Integral)$ is fibered if and only if
for every finite dimensional linear representation 
$\rho\colon \pi\to \mathrm{GL}(k,\mathbb{F})$ over $\mathbb{F}$,
$H^{\rho,\phi}_1(M;[\mathbb{F} t^{\Integral}]^k)$ has finite dimension over $\mathbb{F}$.
\end{theorem}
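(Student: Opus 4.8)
I would establish the two implications by quite different means: the ``only if'' direction by a direct Wang-sequence computation, and the ``if'' direction by invoking the theorem of Friedl and Vidussi \cite{Friedl--Vidussi_vanishing}, whose argument I indicate. For ``only if'', first reduce to a primitive $\phi$: if $\phi=m\phi_{0}$ with $\phi_{0}$ primitive, then $\phi_{0}$ is again fibered, and the change of rings $\mathbb{F}[s^{\pm1}]\to\mathbb{F}[t^{\pm1}]$, $s\mapsto t^{m}$, being free of rank $m$, carries finite $\mathbb{F}$--dimension to finite $\mathbb{F}$--dimension; so assume $M$ is the mapping torus of a homeomorphism $h\colon\Sigma\to\Sigma$ of a connected compact surface, with $\phi$ the distinguished class. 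For any finite-dimensional $\rho\colon\pi_{1}(M)\to\mathrm{GL}(k,\mathbb{F})$, the homology groups $H_{n}(\Sigma;\mathbb{F}^{k})$ with coefficients twisted by $\rho|_{\pi_{1}\Sigma}$ are finite-dimensional over $\mathbb{F}$, as $\Sigma$ is a compact surface. The standard short exact sequence of $\mathbb{F}[t^{\pm1}]$--chain complexes of the mapping torus gives a long exact homology sequence
\[
\cdots\longrightarrow H_{n}(\Sigma;\mathbb{F}^{k})\otimes_{\mathbb{F}}\mathbb{F}[t^{\pm1}]\xrightarrow{\ t\,h_{*}-1\ }H_{n}(\Sigma;\mathbb{F}^{k})\otimes_{\mathbb{F}}\mathbb{F}[t^{\pm1}]\longrightarrow H^{\rho,\phi}_{n}\bigl(M;[\mathbb{F}t^{\Integral}]^{k}\bigr)\longrightarrow\cdots,
\]
where $h_{*}$ is the isomorphism on twisted homology induced by $h$ (together with conjugation by $\rho$ of the loop around the base circle). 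Since $\det(t\,h_{*}-1)$ is a nonzero element of the domain $\mathbb{F}[t^{\pm1}]$ — it has nonzero constant term and nonzero top coefficient $\det(h_{*})$, as $h_{*}$ is invertible — the maps $t\,h_{*}-1$ are injective, the long exact sequence collapses to isomorphisms $H^{\rho,\phi}_{n}(M;[\mathbb{F}t^{\Integral}]^{k})\cong\mathrm{coker}(t\,h_{*}-1)$, and this cokernel, on which $t$ acts by the finite invertible matrix $h_{*}^{-1}$, is finite-dimensional over $\mathbb{F}$ (of dimension $\dim_{\mathbb{F}}H_{n}(\Sigma;\mathbb{F}^{k})$). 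Taking $n=1$ gives the Friedl--Kim direction \cite{Friedl--Kim_fibered}.

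For the converse — finite-dimensionality for all $\rho$ forces $\phi$ to be fibered — I would invoke the theorem of Friedl and Vidussi \cite{Friedl--Vidussi_vanishing} rather than reprove it, the skeleton being as follows. Argue by contraposition: suppose $\phi$ is not fibered. After standard reductions one may assume $M$ irreducible with empty or toroidal boundary, so non-fiberedness means $\ker(\phi\colon\pi_{1}(M)\to\Integral)$ is not finitely generated (Stallings). Fiberedness passes down finite covers — a finitely generated finite-index subgroup forces the ambient group to be finitely generated — so one may pass to a finite regular cover $M'\to M$ supplied by Agol's RFRS virtual-fibering machinery (cf.\ Theorem \ref{quasi-fibered}) and assume moreover that the pullback $\phi'$ lies on the boundary of, but not in the interior of, a fibered cone of $M'$. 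The delicate task is then to detect this failure of fiberedness of a quasi-fibered class, after a further finite cover, by the ordinary $\mathbb{F}_{p}$--homology of a finite cover — equivalently, by $H^{\rho,\phi}_{1}$ for $\rho$ the regular representation of a finite deck group over $\mathbb{F}_{p}$ — the crux being that rational or integral homology of covers does \emph{not} suffice, since there exist non-fibered classes whose infinite cyclic covers have finitely generated rational homology. Friedl and Vidussi accomplish this by analysing the sutured manifold obtained from $M'$ by cutting along a Thurston-norm minimizing surface dual to $\phi'$, which fails to be a product exactly because $\phi'$ is not fibered, and by extracting an infinite-dimensional twisted homology group from that failure, using Gabai's sutured manifold hierarchy together with homological algebra over $\mathbb{F}_{p}[t^{\pm1}]$ and the separability of $3$--manifold groups.

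The ``only if'' direction is routine once the Wang sequence is set up. The entire difficulty lies in the ``if'' direction, and specifically in the step upgrading ``$\phi$ is not fibered'' from a phenomenon visible only over $\Rational$ to a genuine finite-field homological witness; reproducing Friedl and Vidussi's sutured-hierarchy argument for that step is the real content, the topological reductions (to the irreducible, atoroidal, quasi-fibered setting) being comparatively routine.
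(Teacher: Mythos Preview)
Your proposal is correct and matches the paper's treatment exactly: the paper does not prove Theorem~\ref{fibered_TH} but simply cites Friedl--Kim \cite{Friedl--Kim_fibered} for the ``only if'' direction and Friedl--Vidussi \cite{Friedl--Vidussi_vanishing} for the ``if'' direction. You have in fact supplied more than the paper does, giving the Wang-sequence computation for the easy direction and a sketch of the Friedl--Vidussi strategy for the hard one.
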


The next proposition prepares all the algebraic gadgets needed for proving Theorem \ref{fibered_cone}.

\begin{proposition}\label{annihilator_behavior}
	Let $\pi$ be a group which is cohomologically good and which is of type $\mathrm{FP}_\infty$.
	Let $\mathbb{F}$ be a (commutative) finite field.
	Let $\rho\colon \pi\to \mathrm{GL}(k,\mathbb{F})$ be a linear representation of $\pi$ 
	over $\mathbb{F}$ of finite dimension $k$.
	Denote by $\widehat{\rho}\colon \widehat{\pi}\to \mathrm{GL}(k,\mathbb{F})$ 
	the completion of $\rho$.
	\begin{enumerate}
	\item
	Let $\phi\colon \pi\to\widehat{\Integral}$ be a group homomorphism.
	If the $n$-th twisted group homology $H^{\rho,\phi}_n(\pi;[\mathbb{F}t^{\widehat{\Integral}}]^k)$
	has a nonzero annihilator in the group algebra $[\mathbb{F}t^{\widehat{\Integral}}]$,
	then the $n$-th twisted profinite group homology
	$H^{\widehat{\rho},\widehat{\phi}}_n(\widehat{\pi};\llbracket \mathbb{F}t^{\widehat\Integral}\rrbracket^k)$
	has a nonzero annihilator in 
	the completion group algebra $\llbracket \mathbb{F}t^{\widehat{\Integral}}\rrbracket$.
	\item
	Let $\phi,\psi\colon \pi\to\widehat{\Integral}$ be group homomorphisms.
	Suppose that $\mathrm{Ker}(\psi)$ contains $\mathrm{Ker}(\phi)$.
	If $H^{\rho,\psi}_n(\pi;[\mathbb{F}t^{\widehat{\Integral}}]^k)$
	has a nonzero annihilator in $[\mathbb{F}t^{\widehat{\Integral}}]$,
	then $H^{\rho,\phi}_n(\pi;[\mathbb{F}t^{\widehat{\Integral}}]^k)$
	has a nonzero annihilator in $[\mathbb{F}t^{\widehat{\Integral}}]$.
	\item 
	Let $\Pi'$ be a profinite group, 
	and $\Psi\colon\Pi'\to\widehat{\pi}$ be a continuous epimorphism.
	Let $\psi\colon \pi\to \widehat{\Integral}$ be a group homomorphism.
	If 
	$H_1^{\widehat{\rho}',\widehat{\psi}'}(\Pi';\llbracket\mathbb{F}t^{\widehat{\Integral}}\rrbracket^k)$
	has a nonzero annihilator in $\llbracket\mathbb{F}t^{\widehat{\Integral}}\rrbracket$,
	then
	$H_1^{\widehat{\rho},\widehat{\psi}}(\widehat{\pi};\llbracket\mathbb{F}t^{\widehat{\Integral}}\rrbracket^k)$
	also has a nonzero annihilator in $\llbracket\mathbb{F}t^{\widehat{\Integral}}\rrbracket$.
	Here, $\widehat{\rho}'$ and $\widehat{\psi}'$ denote the pullbacks
	$\widehat{\rho}\circ\Psi$ and $\widehat{\psi}\circ\Psi$, respectively.
	\item 
	Let $\phi\colon \pi\to\Integral$ be a group homomorphism.
	Then
	$H^{\widehat{\rho},\widehat{\phi}}_n(\widehat{\pi};\llbracket \mathbb{F}t^{\widehat\Integral}\rrbracket^k)$
	has a nonzero annihilator in $\llbracket \mathbb{F}t^{\widehat{\Integral}}\rrbracket$
	if and only if 
	$H^{\rho,\phi}_n(\pi;[\mathbb{F}t^{\Integral}]^k)$ has finite dimension over $\mathbb{F}$.
	\end{enumerate}
\end{proposition}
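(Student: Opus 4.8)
The plan is to deduce all four parts from the general machinery of profinite group homology with coefficients in completion group algebras, using the cohomological goodness and $\mathrm{FP}_\infty$ hypotheses to translate between abstract and profinite homology. The unifying device throughout is the observation that for a finitely generated module $V$ over the domain $[\mathbb{F}t^{\widehat{\Integral}}]$ (or a profinite module over $\llbracket\mathbb{F}t^{\widehat{\Integral}}\rrbracket$), having a nonzero annihilator is equivalent to being a torsion module, i.e., to having rank zero after base change to the field of fractions of $[\mathbb{F}t^{\widehat{\Integral}}]$. I would first set up this language carefully, taking care with the subtlety flagged in Remark \ref{fibered_cone_remark}(2): since $\llbracket\mathbb{F}t^{\widehat{\Integral}}\rrbracket$ is not known to be a domain, all rank statements are to be interpreted over the dense subdomain $[\mathbb{F}t^{\widehat{\Integral}}]$, and I must check that the relevant modules are finitely presented over that subring so that rank and annihilators behave as expected.

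For part (4), I would argue that $\widehat{\phi}\colon\widehat{\pi}\to\widehat{\Integral}$ factors the module structure so that $H_n^{\widehat{\rho},\widehat{\phi}}(\widehat{\pi};\llbracket\mathbb{F}t^{\widehat{\Integral}}\rrbracket^k)$ is computed from a free resolution of the trivial module over $\llbracket\widehat{\Integral}\widehat{\pi}\rrbracket$; using cohomological goodness and $\mathrm{FP}_\infty$ (as recalled in Section \ref{Subsec-profinite_completion}, via $H_*(\pi;\llbracket\widehat{\Integral}\widehat{\pi}\rrbracket)\cong H_*(\widehat{\pi};\llbracket\widehat{\Integral}\widehat{\pi}\rrbracket)$), one identifies this profinite homology with the completion of the abstract twisted homology $H_n^{\rho,\phi}(\pi;[\mathbb{F}t^{\Integral}]^k)$ appropriately induced up along $\Integral\hookrightarrow\widehat{\Integral}$. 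The point is then elementary commutative algebra: for a finitely generated $[\mathbb{F}t^{\Integral}]$--module $W$, $W$ is finite-dimensional over $\mathbb{F}$ iff its base change along $[\mathbb{F}t^{\Integral}]\to[\mathbb{F}t^{\widehat{\Integral}}]$ (or its completion) is a torsion module, since $[\mathbb{F}t^{\Integral}]\to[\mathbb{F}t^{\widehat{\Integral}}]$ is faithfully flat and $\mathbb{F}$--finiteness is exactly the condition that $W$ is torsion over the one-variable Laurent ring, hence killed by some nonzero Laurent polynomial, hence killed in the larger ring too; conversely a nonzero annihilator in $[\mathbb{F}t^{\widehat{\Integral}}]$ can be cleared of denominators to produce a nonzero annihilator in a finitely generated subring $[\mathbb{F}t^{\Lambda}]$ for $\Lambda\leq\widehat{\Integral}$ finitely generated free, and then one descends to $\Integral$ by a generic-specialization argument analogous to the one sketched for Jaikin--Zapirain's result in the introduction. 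Parts (1), (2), (3) are then formal consequences of functoriality: (1) is the induction/completion comparison combined with the fact that tensoring up preserves nonzero annihilators (clear denominators); (2) uses that $\phi$ factors through $\psi$ when $\mathrm{Ker}(\phi)\subseteq\mathrm{Ker}(\psi)$—wait, one needs $\mathrm{Ker}(\psi)\supseteq\mathrm{Ker}(\phi)$ so that $\psi$ factors through $\phi$, giving a ring map $[\mathbb{F}t^{\widehat{\Integral}}]\to[\mathbb{F}t^{\widehat{\Integral}}]$ through which the $\psi$--twisted module structure is pulled back from the $\phi$--twisted one, so an annihilator of the former pulls back; (3) uses that a continuous epimorphism $\Psi\colon\Pi'\to\widehat{\pi}$ induces, on first homology with these coefficients, a surjection $H_1^{\widehat{\rho}',\widehat{\psi}'}(\Pi';-)\twoheadrightarrow H_1^{\widehat{\rho},\widehat{\psi}}(\widehat{\pi};-)$ of $\llbracket\mathbb{F}t^{\widehat{\Integral}}\rrbracket$--modules (five-term exact sequence / right-exactness of $H_1$ under quotients), so an annihilator of the source annihilates the quotient.

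The main obstacle I anticipate is part (3), the passage across the profinite epimorphism $\Psi$. The surjectivity of the induced map on $H_1$ is not quite as automatic as in the abstract setting: one must use the Lyndon--Hochschild--Serre spectral sequence (or its five-term exact sequence) for the extension $1\to\mathrm{Ker}(\Psi)\to\Pi'\to\widehat{\pi}\to1$ in the category of profinite groups, with the twisted profinite coefficient module $\llbracket\mathbb{F}t^{\widehat{\Integral}}\rrbracket^k$, and verify that $H_0$ of the kernel with these coefficients behaves well enough to give right-exactness at the $H_1$ spot. I would handle this by invoking the profinite LHS spectral sequence from \cite[Chapter 7]{Ribes--Zalesskii_book}, checking the coefficient module is of the right type (profinite $\llbracket\widehat{\Integral}\widehat{\pi}\rrbracket$--module), and extracting the five-term sequence $H_1(\Pi')\to H_1(\widehat{\pi})\to H_0(\mathrm{Ker}\Psi;H_1(\cdots))$—rather, the standard tail $H_2(\widehat{\pi})\to H_0(\widehat{\pi};H_1(\mathrm{Ker}\Psi))\to H_1(\Pi')\to H_1(\widehat{\pi})\to 0$ already gives the surjection $H_1(\Pi')\twoheadrightarrow H_1(\widehat{\pi})$ as modules over the coefficient ring, which is all that is needed. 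A secondary technical point, which I would address early, is making precise the finite-presentation claims over $[\mathbb{F}t^{\widehat{\Integral}}]$ so that "nonzero annihilator" is literally equivalent to "rank zero"; this rests on $\pi$ being $\mathrm{FP}_\infty$, so that $[\mathbb{F}t^{\widehat{\Integral}}]\otimes$ a finite free $\llbracket\widehat{\Integral}\widehat{\pi}\rrbracket$--resolution gives finitely generated, indeed finitely presented, homology modules.
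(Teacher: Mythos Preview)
Your treatment of part (3) via the profinite Lyndon--Hochschild--Serre five-term sequence is exactly what the paper does. Parts (1) and (2), however, each contain a genuine gap.

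For part (1), ``tensoring up preserves nonzero annihilators'' does not do the job. The profinite homology $H_n^{\widehat{\rho},\widehat{\phi}}(\widehat{\pi};\llbracket\mathbb{F}t^{\widehat{\Integral}}\rrbracket^k)$ is computed from the \emph{same} boundary matrices $P_n$ as the abstract $H_n^{\rho,\phi}(\pi;[\mathbb{F}t^{\widehat{\Integral}}]^k)$, but with vectors allowed in the completion; it is not a base change of the abstract homology unless $[\mathbb{F}t^{\widehat{\Integral}}]\hookrightarrow\llbracket\mathbb{F}t^{\widehat{\Integral}}\rrbracket$ is flat, which is precisely the zero-divisor issue you yourself flag. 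The paper supplies the missing mechanism by elementary matrix algebra over the domain $[\mathbb{F}t^{\widehat{\Integral}}]$: one finds $Q_n$ and a nonzero $a_n$ with $P_nQ_nP_n=a_nP_n$ (a generalized inverse up to a scalar), and then for any $X$ in the completed kernel one approximates $X$ by $Y^{(j)}$ in the dense subring and checks that $a_nY^{(j)}-Q_nP_nY^{(j)}$ lies in the abstract kernel and converges to $a_nX$. This yields $a_n\cdot\mathrm{Ker}(\partial_n^{\widehat{\rho},\widehat{\phi}})\subset\mathrm{clos}(\mathrm{Ker}(\partial_n^{\rho,\phi}))$, so if $\Delta_n$ annihilates the abstract homology then $a_n\Delta_n$ annihilates the profinite one.

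For part (2), your direction of inference is wrong. The factorization of $\psi$ through $\phi$ gives a ring \emph{surjection} $[\mathbb{F}t^{\mathrm{Im}(\phi)}]\to[\mathbb{F}t^{\mathrm{Im}(\psi)}]$, under which the $\psi$-twisted chain complex is the \emph{specialization} of the $\phi$-twisted one. An annihilator of the specialized homology does not lift to an annihilator of the original (and the map is not flat, so homologies are not related by tensor). The correct argument, which the paper gives, is a rank comparison: specialization of a matrix over a domain does not increase the rank of its image and does not decrease the rank of its kernel, hence $\mathrm{rank}_{[\mathbb{F}t^{\widehat{\Integral}}]}H_n^{\rho,\phi}\le\mathrm{rank}_{[\mathbb{F}t^{\widehat{\Integral}}]}H_n^{\rho,\psi}$, and rank zero is equivalent to the existence of a nonzero annihilator for a finitely generated module over a domain. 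Your sketch of part (4) is also thin at the ``only if'' step---the hypothesized annihilator lies in $\llbracket\mathbb{F}t^{\widehat{\Integral}}\rrbracket$, not in the dense polynomial subring, so ``clearing denominators'' is not available; the paper instead uses the universal coefficient theorem over the PID $[\mathbb{F}t^{\Integral}]$ and the inverse limit over $\Integral/m\Integral$ to show that positive $[\mathbb{F}t^{\Integral}]$-rank of the abstract homology forces a free $\llbracket\mathbb{F}t^{\widehat{\Integral}}\rrbracket$-summand in the profinite one.
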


\begin{proof}
Since $\pi$ is of type $\mathrm{FP}_\infty$,
we can take a projective resolution of $\Integral$ by finitely generated free left $[\Integral\pi]$--modules:
$$\xymatrix{
\cdots \ar[r] & C_2 \ar[r]^{\partial_2} & C_1 \ar[r]^{\partial_1} & C_0 \ar[r]^{\partial_0} &\Integral}.$$
Since $\pi$ is cohomologically good,
the completion operation yields a projective resolution of $\widehat{\Integral}$ 
by left profinite $\llbracket\widehat{\Integral}\widehat\pi\rrbracket$--modules:
$$\xymatrix{
\cdots \ar[r] & \widehat{C}_2 \ar[r]^{\widehat{\partial}_2} & \widehat{C}_1 \ar[r]^{\widehat{\partial}_1} 
& \widehat{C}_0 \ar[r]^{\widehat{\partial}_0} &\widehat{\Integral}},$$
(see \cite[Proposition 3.1]{JZ}).

For any group homomorphism $\phi\colon \pi\to \widehat{\Integral}$,
$H^{\rho,\phi}_\bullet(\pi;[\mathbb{F}t^{\widehat{\Integral}}]^k)$ and 
$H^{\widehat{\rho},\widehat{\phi}}_\bullet(\widehat{\pi};\llbracket \mathbb{F}t^{\widehat{\Integral}}\rrbracket^k)$
can be computed respectively as the homology of the chain complexes
$$\left(C^{\rho,\phi}_\bullet,\,\partial^{\rho,\phi}_\bullet\right)
=\left([\mathbb{F} t^{\widehat{\Integral}}]^k\otimes_{\rho,\phi}C_\bullet,\,\mathbf{1}_{k\times k}\otimes\partial_\bullet\right)$$
and
$$\left(C^{\widehat{\rho},\widehat{\phi}}_\bullet,\,\partial^{\widehat{\rho},\widehat{\phi}}_\bullet\right)
=\left(\llbracket \mathbb{F}t^{\widehat{\Integral}}\rrbracket^k\otimes_{\widehat{\rho},\widehat{\phi}}\widehat{C}_\bullet,\,
\mathbf{1}_{k\times k}\otimes\widehat{\partial}_\bullet\right).$$
We naturally identify $C_n^{\rho,\phi}$ as a dense $[\mathbb{F}t^{\widehat{\Integral}}]$--submodule of 
$C_n^{\widehat{\rho},\widehat{\phi}}$.
To be more explicit,
we fix a basis $c_n^1,\cdots,c_n^{r_n}$ for each $C_n$ and identify $C_n$ as $[\Integral\pi]^{r_n}$.
Let $\varepsilon^1,\cdots,\varepsilon^k$ be the standard basis of $\mathbb{F}^k$.
We furnish $C_n^{\rho,\phi}$ with the basis
$c_n^1\otimes\varepsilon^1,\cdots,c_n^1\otimes\varepsilon^n,\cdots,c_n^{r_n}\otimes\varepsilon^1,\cdots,c_n^{r_n}\otimes\varepsilon^k$.
Then $C_n^{\rho,\phi}$ is represented as $[\mathbb{F}t^{\widehat{\Integral}}]^{kr_n}$,
regarded as the space of column vectors of size $kr_n$ with entries in $[\mathbb{F}t^{\widehat{\Integral}}]$,
and $\partial_n^{\rho,\phi}$ is represented as a matrix $P_n$ of size $kr_{n-1}\times kr_n$ 
with entries in $[\mathbb{F}t^{\widehat{\Integral}}]$.
Similarly we represent $C_n^{\widehat{\rho},\widehat{\phi}}$ and 
$\partial_n^{\widehat{\rho},\widehat{\phi}}$ over $\llbracket\mathbb{F}t^{\widehat{\Integral}}\rrbracket$.
Plainly speaking,
the matrix of $\partial_n^{\widehat{\rho},\widehat{\phi}}$ is just the same as $P_n$,
while vectors in $C_n^{\widehat{\rho},\widehat{\phi}}$ 
allow more values of entries than in $C_n^{{\rho},{\phi}}$.

To prove the assertion (1),
we claim that there exists some nonzero $a_n\in [\mathbb{F}t^{\widehat{\Integral}}]$, such that
$a_n$ annihilates the topological cokernel 
(namely, the quotient of the range by the closure of the image)
of the natural $[\mathbb{F}t^{\widehat{\Integral}}]$--linear homomorphism 
\begin{equation}\label{H-homomorphism}
H^{\rho,\phi}_n\left(\pi;[\mathbb{F}t^{\widehat{\Integral}}]^k\right)\to
H^{\widehat{\rho},\widehat{\phi}}_n\left(\widehat{\pi};\llbracket \mathbb{F}t^{\widehat{\Integral}}\rrbracket^k\right).
\end{equation}
This is equivalent to saying
\begin{equation}\label{a_nZ}
a_n\, \mathrm{Ker}\left(\partial_n^{\widehat{\rho},\widehat{\phi}}\right)
\subset \mathrm{clos}\left(\mathrm{Ker}\left(\partial_n^{{\rho},{\phi}}\right)\right),
\end{equation}
in the $\llbracket \mathbb{F}t^{\widehat{\Integral}}\rrbracket$--module 
$C_n^{\widehat{\rho},\widehat{\phi}}$.
Assuming the claim for the moment, we can prove the assertion (1) as follows:
If there exists some nonzero $\Delta_n\in [\mathbb{F}t^{\widehat{\Integral}}]$
which annihilates $H^{\rho,\phi}_n$,
it also annihilates the closure of its image in $H^{\widehat{\rho},\widehat{\phi}}_n$.
Then the claim implies 
that the nonzero element $a_n\Delta_n\in [\mathbb{F}t^{\widehat{\Integral}}]$ annihilates
$H^{\widehat{\rho},\widehat{\phi}}_n$,
as asserted.

The proof of the claim reduces to elementary matrix algebra.
To this end, 
we represent $\partial^{\rho,\phi}_n$ and $\partial^{\widehat{\rho},\widehat{\phi}}_n$
as a $kr_{n-1}\times kr_n$--matrix $P_n$ over $[\mathbb{F}t^{\widehat{\Integral}}]$,
with respect to the fixed bases as explained above.
There exist some $kr_n\times kr_{n-1}$--matrix $Q_n$ over $[\mathbb{F}t^{\widehat{\Integral}}]$
and some nonzero $a_n$ in $[\mathbb{F}t^{\widehat{\Integral}}]$, such that 
the matrix equation $P_nQ_nP_n=a_nP_n$ holds:
For example, one may first work with fractional coefficients, 
constructing a matrix $a^{-1}_nQ_n$ as a composite linear transformation,
which first projects the range of $P_n$
onto the image of $P_n$, and then lifts the image of $P_n$ 
to the domain of $P_n$.
Then, take $a_n$ as a common denominator of the entries, 
and take $Q_n$ as the matrix of the nominators.
Back to the claim,
if a vector $X\in \llbracket \mathbb{F}t^{\widehat{\Integral}}\rrbracket^{kr_n}$ 
satisfies the linear equation $P_nX=0$ in $\llbracket \mathbb{F}t^{\widehat{\Integral}}\rrbracket^{kr_n}$,
we construct a sequence of vectors $\{X^{(j)}\}_{j\in\Natural}$
in $[\mathbb{F}t^{\widehat{\Integral}}]^k$ 
by taking $X^{(j)}=a_nY^{(j)}-Q_nP_nY^{(j)}$,
where $\{Y^{(j)}\}_{j\in\Natural}$ 
is a sequence of vectors in $[\mathbb{F}t^{\widehat{\Integral}}]^{kr_n}$
converging to $X$. 
It follows that $P_nX^{(j)}=0$ holds in $[\mathbb{F}t^{\widehat{\Integral}}]^{kr_n}$
and $\{X^{(j)}\}_{j\in\Natural}$ converges to $a_nX$ in 
$\llbracket \mathbb{F}t^{\widehat{\Integral}}\rrbracket^{kr_n}$.
This verifies the claimed relation (\ref{a_nZ}).
So we are done with the assertion (1).

To prove the assertion (2), we observe that 
$\partial^{\rho,\psi}_n$ and $\partial^{\rho,\phi}_n$ can be represented 
as $kr_{n-1}\times kr_n$--matrices with entries 
in the subrings $[\mathbb{F}t^{\mathrm{Im}(\phi)}]$ and $[\mathbb{F}t^{\mathrm{Im}(\phi)}]$
of the domain $[\mathbb{F}t^{\widehat{\Integral}}]$, respectively,
using the fixed bases of $C^{\rho,\psi}_n$ and $C^{\rho,\phi}_n$.
Denote by $\phi^\sharp\colon \pi\to \mathrm{Im}(\phi)$ 
the epimorphism
which yields $\phi$ under the inclusion $\mathrm{Im}(\phi)\to \widehat{\Integral}$,
and similarly $\psi^\sharp\colon \pi\to \mathrm{Im}(\psi)$.
Since $\mathrm{Ker}(\psi)$ contains $\mathrm{Ker}(\phi)$,
$\phi^\sharp$ factorizes as 
the composition of $\psi^{\sharp}$ with
a unique surjective homomorphism $\mathrm{Im}(\phi)\to\mathrm{Im}(\psi)$.
The latter induces a surjective $\mathbb{F}$--algebra homomorphism
$[\mathbb{F}t^{\mathrm{Im}(\phi)}]\to[\mathbb{F}t^{\mathrm{Im}(\psi)}]$,
which makes the matrix of $\partial^{\rho,\psi^\sharp}_n$ abstractly 
a specialization of the matrix of $\partial^{\rho,\phi^\sharp}_n$,
for every dimension $n$.
Recall that the rank of a module over a (commutative) domain 
refers to the maximum cardinality for all subsets of linearly independent elements.
Specialization of a homomorphism 
does not increase rank of the image, and does not decrease rank of the kernel.
We obtain
\begin{eqnarray*}
& &
\mathrm{rank}_{[\mathbb{F}t^{\widehat{\Integral}}]}\left(\mathrm{Ker}\left(\partial^{\rho,\phi}_n\right)\right)
-
\mathrm{rank}_{[\mathbb{F}t^{\widehat{\Integral}}]}\left(\mathrm{Im}\left(\partial^{\rho,\phi}_n\right)\right)
\\
&=&
\mathrm{rank}_{[\mathbb{F}t^{\mathrm{Im}(\phi)}]}\left(\mathrm{Ker}\left(\partial^{\rho,\phi^\sharp}_n\right)\right)
-
\mathrm{rank}_{[\mathbb{F}t^{\mathrm{Im}(\phi)}]}\left(\mathrm{Im}\left(\partial^{\rho,\phi^\sharp}_{n+1}\right)\right) \\
&\leq&
\mathrm{rank}_{[\mathbb{F}t^{\mathrm{Im}(\psi)}]}\left(\mathrm{Ker}\left(\partial^{\rho,\psi^\sharp}_n\right)\right)
-
\mathrm{rank}_{[\mathbb{F}t^{\mathrm{Im}(\psi)}]}\left(\mathrm{Im}\left(\partial^{\rho,\psi^\sharp}_{n+1}\right)\right) \\
&=&
\mathrm{rank}_{[\mathbb{F}t^{\widehat{\Integral}}]}\left(\mathrm{Ker}\left(\partial^{\rho,\psi}_n\right)\right)
-
\mathrm{rank}_{[\mathbb{F}t^{\widehat{\Integral}}]}\left(\mathrm{Im}\left(\partial^{\rho,\psi}_{n+1}\right)\right),
\end{eqnarray*}
or equivalently,
$$
\mathrm{rank}_{[\mathbb{F}t^{\widehat{\Integral}}]}\left(
H^{\rho,\phi}_n\left(\pi;[\mathbb{F}t^{\widehat{\Integral}}]^k\right)\right)
\leq
\mathrm{rank}_{[\mathbb{F}t^{\widehat{\Integral}}]} \left(H^{\rho,\psi}_n\left(\pi;[\mathbb{F}t^{\widehat{\Integral}}]^k\right)\right).
$$
If $H^{\rho,\psi}_n$ has a nonzero annihilator in $[\mathbb{F}t^{\widehat{\Integral}}]$,
the rank of $H^{\rho,\psi}_n$ over $[\mathbb{F}t^{\widehat{\Integral}}]$ is zero.
Then the above inequality shows that 
the rank of $H^{\rho,\phi}_n$ over $[\mathbb{F}t^{\widehat{\Integral}}]$ is also zero.
For any finite generating set of $H^{\rho,\phi}_n$,
each generator is annihilated by some nonzero element of $[\mathbb{F}t^{\widehat{\Integral}}]$,
and their product is a nonzero annihilator of $H^{\rho,\phi}_n$.
This proves the assertion (2).

To prove the assertion (3), 
we consider the continuous $\llbracket\mathbb{F}t^{\widehat{\Integral}}\rrbracket$--module homomorphism
\begin{equation}\label{Psi_induced_H}
H_1^{\widehat{\rho}',\widehat{\psi}'}\left(\Pi';\llbracket\mathbb{F}t^{\widehat{\Integral}}\rrbracket^k\right)
\to
H_1^{\widehat{\rho},\widehat{\psi}}\left(\widehat{\pi};\llbracket\mathbb{F}t^{\widehat{\Integral}}\rrbracket^k\right),
\end{equation}
which is naturally induced by the continuous homomorphism $\Psi\colon \Pi'\to\widehat{\pi}$.
The assumption that $\Psi$ is surjective guarantees that (\ref{Psi_induced_H}) is also surjective.
In fact, this is a consequence of 
the Lyndon--Hochschild--Serre spectral sequence for profinite groups
\cite[Section 7.2]{Ribes--Zalesskii_book},
or more directly, 
an application of \cite[Corollary 7.2.6]{Ribes--Zalesskii_book}:
With notations thereof, we take the profinite group $G=\Pi'$ and its normal closed subgroup 
$K=\mathrm{Ker}(\Psi)$,
and take the projective right $\llbracket \widehat{\Integral}G\rrbracket$--module 
$B=\llbracket\mathbb{F}t^{\widehat{\Integral}}\rrbracket^k$
with the right $G$--action $\widehat{\psi}'t^{\widehat{\rho}'}$;
the topologically cofixed module $B_K=B/\mathrm{clos}(\langle bg-b\colon b\in B,g\in K\rangle)$ 
is the same as $B$ here, since $\widehat{\phi}'t^{\widehat{\rho}'}$ is the pullback of $t^\phi\rho$,
(see \cite[Lemma 6.3.3]{Ribes--Zalesskii_book} for the notation $B_K$). 
Therefore, any nonzero annihilator of 
$H^{\widehat{\rho}',\widehat{\psi}'}_1$ 
also annihilates 
$H^{\widehat{\rho},\widehat{\psi}}_1$.
This proves the assertion (3).

To prove the assertion (4), we notice that $[\mathbb{F}t^\Integral]$ is a principal ideal domain (PID).
Any finitely generated module $M$ over $[\mathbb{F}t^\Integral]$ is the direct sum
of a unique $[\mathbb{F}t^\Integral]$--torsion submodule $M_{\mathtt{tors}}$
and some submodule which is free over $[\mathbb{F}t^\Integral]$ of rank the same as the rank of $M$.
In particular, $M$ has finite dimension over $\mathbb{F}$ if and only if $M$ equals $M_{\mathtt{tors}}$.
For any $m\in\Natural$ we denote by $\phi_m\colon\pi\to \Integral/m\Integral$ 
the residue of $\phi$ modulo $m$.
We obtain a split short exact sequence of $[\mathbb{F}t^\Integral]$--modules
by the universal coefficient theorem:
\begin{equation}\label{UCT_sequence}
\xymatrix{
0 \ar[r] & 
H^{\rho,\phi}_n
\otimes_{[\mathbb{F}t^\Integral]}[\mathbb{F}t^{\Integral/m\Integral}]
\ar[r] &
H^{\rho,\phi_m}_n
\ar[r] &
\mathrm{Tor}^{[\mathbb{F}t^\Integral]}_1\left(
H^{\rho,\phi}_{n-1},\,
[\mathbb{F}t^{\Integral/m\Integral}]\right)
\ar[r] & 0,
}
\end{equation}
where
$H^{\rho,\phi}_n=H^{\rho,\phi}_n(\pi;[\mathbb{F}t^\Integral]^k)$
and
$H^{\rho,\phi_m}_n=H^{\rho,\phi_m}_n(\pi;[\mathbb{F}t^{\Integral/m\Integral}]^k)$.
Note that the splitting is natural in the twisting coefficient modules, 
although unnatural in the untwisted chain complexes,
(see \cite[Chapter V, Theorem 2.5]{Hilton--Stammbach_book}).
The profinite $\llbracket\mathbb{F}t^{\widehat{\Integral}}\rrbracket$--module 
$H^{\widehat{\rho},\widehat{\phi}}$ is the inverse limit of $H^{\rho,\phi_m}$.
If $H^{\rho,\phi}_n$ has finite dimension over $\mathbb{F}$,
it follows that $H^{\widehat{\rho},\widehat{\phi}}$
is isomorphic to $H^{\rho,\phi_m}$ 
for sufficiently divisible $m$,
for example, if $m$ is divisible by both the $\mathbb{F}$--dimension of $H^{\rho,\phi}_n$ and 
the $\mathbb{F}$--dimension of the $[\mathbb{F}t^\Integral]$--torsion submodule of $H^{\rho,\phi}_{n-1}$.
Then $H^{\widehat{\rho},\widehat{\phi}}$ is a finitely generated $[\mathbb{F}t^\Integral]$--torsion module,
and can be annihilated by its order in $[\mathbb{F}t^\Integral]$.
Conversely, if $H^{\rho,\phi}_n$ has infinite dimension over $\mathbb{F}$,
the $\llbracket \mathbb{F}t^{\widehat{\Integral}}\rrbracket$--direct summand 
$H^{\rho,\phi}_n\otimes_{[\mathbb{F}t^\Integral]}\llbracket \mathbb{F}t^{\widehat{\Integral}}\rrbracket$
in $H^{\widehat{\rho},\widehat{\phi}}_n$ 
contains a $\llbracket \mathbb{F}t^{\widehat{\Integral}}\rrbracket$--direct summand
which is nontrivial and free.
This means $H^{\widehat{\rho},\widehat{\phi}}$ cannot be annihilated by any nonzero element of  
$\llbracket \mathbb{F}t^\Integral\rrbracket$.
This proves the assertion (4).
\end{proof}

We proceed to prove Theorem \ref{fibered_cone}.
Let $(M_A,M_B,\Psi)$ be a profinite morphism setting of a $3$--manifold pair (Convention \ref{profinite_morphism_setting}).
Suppose that $\Psi\colon\widehat{\pi_A}\to\widehat{\pi_B}$ is an epimorphism.

Observe that $\Psi^*_\varepsilon(\phi)$ varies continuously in $H^1(M_A;\Real)$
if $(\phi,\varepsilon)$ varies continuously in 
$H^1(M_B;\Real)\times\mathrm{Hom}_\Integral(\mathrm{MC}(\Psi_*);\Real)$.
Since fibered cones are open in the real first cohomology,
it suffices to prove Thereom \ref{fibered_cone} for any pair $(\phi,\varepsilon)$
in the dense subset
$H^1(M_B;\Rational)\times \mathrm{Hom}_{\Integral}(\mathrm{MC}(\Psi_*),\Rational)$.
Moreover, since $\Psi^*_{m\varepsilon}(n\phi)=mn\cdot\Psi^*_{\varepsilon}(\phi)$,
it suffices to prove Theorem \ref{fibered_cone} for any positive integral multiples of $\phi$ and $\varepsilon$.

Therefore, 
we assume without loss of generality that 
$\phi\in H^1(M_B;\Integral)$ and $\varepsilon\in\mathrm{Hom}_\Integral(\mathrm{MC}(\Psi_*);\Integral)$.
In this case, $\Psi^*_\varepsilon\colon H^1(M_B;\Real)\to H^1(M_A;\Real)$ 
takes integral cohomology classes to integral ones,
so we also observe $\Psi^*_\varepsilon(\phi)\in H^1(M_A;\Integral)$.

For convenience,
we rewrite $\phi\in H^1(M_B;\Integral)$ as 
$\phi_B\colon \pi_B\to \Integral$,
and rewrite $\Psi^*_\varepsilon(\phi)\in H^1(M_A;\Integral)$ as 
$\psi_A\colon \pi_A\to \Integral$.
Besides, we denote by 
$\phi_A\colon \pi_A\to\widehat{\Integral}$
the restriction of the completion pullback of $\phi_B$,
namely, the composite homomorphism
$$\xymatrix{
\pi_A \ar[r]^-{\mathrm{incl}} & \widehat{\pi_A} \ar[r]^-{\Psi} & \widehat{\pi_B} \ar[r]^-{\widehat{\phi_B}} & \widehat{\Integral}.
}$$
Observe that the kernel of $\psi_A$ contains the kernel of $\phi_A$.
In fact, we can factorize $\phi_A$ alternatively as 
$$\xymatrix{
\pi_A \ar[r] &
H_A \ar[r]^-{\Psi_*^{\mathrm{MC}}} & 
H_B\otimes_\Integral\mathrm{MC}(\Psi_*) \ar[r]^-{\phi_B\otimes{1}} & 
\Integral\otimes_\Integral\mathrm{MC}(\Psi_*) \ar[r]^-{=} & 
\mathrm{MC}(\Psi_*) \ar[r]^-{\mathrm{incl}} &
\widehat{\Integral}.
}$$
On the other hand, $\psi_A$ factorizes as
$$\xymatrix{
\pi_A \ar[r] &
H_A \ar[r]^-{\Psi_*^{\mathrm{MC}}} & 
H_B\otimes_\Integral\mathrm{MC}(\Psi_*) \ar[r]^-{\phi_B\otimes1} & 
\Integral\otimes_\Integral\mathrm{MC}(\Psi_*) \ar[r]^-{=} & 
\mathrm{MC}(\Psi_*) \ar[r]^-{\varepsilon} &
\Integral
}$$
since we have assumed $\varepsilon$ having image in $\Integral$.
So $\psi_A$ kills at least whatever $\phi_A$ kills.

Suppose that $\psi_A$ is fibered for $M_A$.
For any finite-dimensional representation $\rho_B\colon\pi_B\to \mathrm{GL}(k,\mathbb{F})$
of $\pi_B$ over a finite field $\mathbb{F}$,
denote by $\rho_A\colon\pi_A\to  \mathrm{GL}(k,\mathbb{F})$
the restriction of the completion pullback of $\rho_B$,
namely, the composite homomorphism
$$\xymatrix{
\pi_A \ar[r]^-{\mathrm{incl}} & \widehat{\pi_A} \ar[r]^-{\Psi} & \widehat{\pi_B} \ar[r]^-{\widehat{\rho_B}} & \mathrm{GL}(k,\mathbb{F}).
}$$
Theorem \ref{fibered_TH} applies to $(M_A,\phi_A)$, implying that
$H^{\rho_A,\psi_A}_1(\pi_A;[\mathbb{F}t^\Integral]^k)
\cong H^{\rho_A,\psi_A}_1(M_A;[\mathbb{F}t^\Integral]^k)$ has finite dimension over $\mathbb{F}$.
In particular, $H^{\rho_A,\psi_A}_1(\pi_A;[\mathbb{F}t^{\widehat{\Integral}}]^k)$
has a nonzero annihilator in $[\mathbb{F}t^{\widehat{\Integral}}]$.
Since the kernel of $\psi_A$ contains the kernel of $\phi_A$,
it follows that 
$H^{\rho_A,\phi_A}_1(\pi_A;[\mathbb{F}t^{\widehat{\Integral}}]^k)$
has a nonzero annihilator in $[\mathbb{F}t^{\widehat{\Integral}}]$,
by Proposition \ref{annihilator_behavior} (2).
Then,
$H^{\widehat{\rho_A},\widehat{\phi_A}}_1(\widehat{\pi_A};\llbracket\mathbb{F}t^{\widehat{\Integral}}\rrbracket^k)$
has a nonzero annihilator in $\llbracket\mathbb{F}t^{\widehat{\Integral}}\rrbracket$,
by Proposition \ref{annihilator_behavior} (1).
Since $\Psi$ is an epimorphism,
$H^{\widehat{\rho_B},\widehat{\phi_B}}_1(\widehat{\pi_B};\llbracket\mathbb{F}t^{\widehat{\Integral}}\rrbracket^k)$
also has a nonzero annihilator in $\llbracket\mathbb{F}t^{\widehat{\Integral}}\rrbracket$,
by Proposition \ref{annihilator_behavior} (3).
Therefore,
$H^{\rho_B,\phi_B}_1(\pi_B;[\mathbb{F}t^{\widehat{\Integral}}]^k)
\cong H^{\rho_B,\phi_B}_1(M_B;[\mathbb{F}t^{\widehat{\Integral}}]^k)$
is finite dimensional over $\mathbb{F}$,
by Proposition \ref{annihilator_behavior} (4).
Since $\rho_B$ is arbitrary,
we apply Theorem \ref{fibered_TH} again to $(M_B,\phi_B)$, and conclude that $\phi_B$ is fibered.

This completes the proof of Theorem \ref{fibered_cone}.

\section{Profinite correspondence for Thurston-norm cones}\label{Sec-TN_cones}
In this section, we show that profinitely isomorphic pairs
of nonpositively curved $3$--manifolds with empty or tori boundary
have bijectively corresponding Thurston-norm cones and fibered cones
(Theorem \ref{profinite_isomorphism_npc} and Corollary \ref{fibered_cone_correspondence}).

\begin{theorem}\label{profinite_isomorphism_npc}
Let $(M_A,M_B,\Psi)$ be a profinite morphism setting of a $3$--manifold pair (Convention \ref{profinite_morphism_setting}).
Suppose that $\Psi\colon\widehat{\pi_A}\to\widehat{\pi_B}$ is an isomorphism.
Suppose that $M_A$ and $M_B$ are both irreducible with empty or tori boundary, 
and admit complete metrics of nonpositive curvature in the interior.
Suppose $\varepsilon\in\mathrm{Hom}_\Integral(\mathrm{MC}(\Psi_*),\Real)$.

If $\Psi^*_\varepsilon\colon H^1(M_B;\Real)\to H^1(M_A;\Real)$ is nondegenerate,
then $\Psi^*_\varepsilon$ witnesses 
a dimension-preserving bijective correspondence
between the Thurston-norm cones for $M_A$ and those for $M_B$.
Namely,
under the linear isomorphism $\Psi^*_\varepsilon$,
every Thurston-norm cone for $M_B$ 
projects onto a distinct and unique Thurston-norm cone for $M_A$.
\end{theorem}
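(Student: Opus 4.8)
The plan is to reduce the theorem to the partial correspondence of fibered cones (Theorem \ref{fibered_cone}) by passing to finite covers in which non-fibered pulled-back classes lie on faces of fibered cones. First I would choose, via Theorem \ref{quasi-fibered}, finite regular covers $p_A\colon M_A'\to M_A$ and $p_B\colon M_B'\to M_B$, both with the stated quasi-fibering property, and corresponding to a $\Psi$-matched pair of open normal subgroups of $\widehat{\pi_A}$ and $\widehat{\pi_B}$ (obtained by transporting one such subgroup across $\Psi$ and intersecting with the other); then $\Psi$ restricts to an isomorphism $\Psi'$ of the profinite completions of $\pi_1(M_A')$ and $\pi_1(M_B')$, intertwining the deck transformation groups via the induced isomorphism of the finite quotients. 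Using Proposition \ref{MC_property}(3) I would lift $\varepsilon$ to some $\varepsilon'\in\mathrm{Hom}_\Integral(\mathrm{MC}(\Psi'_*),\Real)$; since $\Psi'_*$ inherits the deck intertwining (the $\Integral$-independence underlying the matrix coefficient module forces each component homomorphism to intertwine), so does the specialization $(\Psi')^*_{\varepsilon'}\colon H^1(M_B';\Real)\to H^1(M_A';\Real)$, which therefore carries the pullback subspace $V_B=p_B^*H^1(M_B;\Real)$ into $V_A=p_A^*H^1(M_A;\Real)$ and, up to the pullback identifications and a positive scalar, restricts there to $\Psi^*_\varepsilon$ — in particular this restriction is an isomorphism $V_B\to V_A$. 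By Theorem \ref{TN_cover}, the Thurston norm of $M_A'$ restricts on $V_A$ to a positive multiple of the pulled-back Thurston norm of $M_A$ (likewise for $B$), so a cone in $H^1(M_A;\Real)$ is a Thurston-norm cone for $M_A$ exactly when its image in $V_A$ is a maximal cone on which the Thurston norm of $M_A'$ is linear; it thus suffices to show that $(\Psi')^*_{\varepsilon'}$ carries these maximal cones inside $V_B$ bijectively and dimension-preservingly onto those inside $V_A$.

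Next, on the covers, I would apply Theorem \ref{fibered_cone} to the epimorphism $\Psi'$ and to $\varepsilon'$: for every fibered cone $\mathcal{E}$ of $M_A'$ there is a unique fibered cone $\Theta(\mathcal{E})$ of $M_B'$ with $((\Psi')^*_{\varepsilon'})^{-1}(\mathcal{E})\subseteq\Theta(\mathcal{E})$, hence, after taking closures and intersecting with $V_B$, with $\overline{\mathcal{E}}\cap V_A\subseteq(\Psi')^*_{\varepsilon'}(\overline{\Theta(\mathcal{E})}\cap V_B)$. The quasi-fibering property says that $V_A$ is exhausted by the closed traces $\overline{\mathcal{E}}\cap V_A$ of fibered cones of $M_A'$, with every pulled-back non-fibered class of $M_A$ sitting on the boundaries of the $\mathcal{E}$'s whose closures contain it, and likewise over $M_B$. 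Summing the displayed inclusion over $\mathcal{E}$ and using that $(\Psi')^*_{\varepsilon'}$ restricts to an isomorphism $V_B\to V_A$, one finds that the two coverings of $V_A$ and $V_B$ by closed fibered-cone traces correspond under $(\Psi')^*_{\varepsilon'}$; since distinct codimension-$0$ Thurston-norm cones have disjoint interiors, this forces $\Theta$ to restrict to a bijection between the fibered cones of $M_A'$ and of $M_B'$ whose closures meet $V_A$ respectively $V_B$ in a set of full dimension, identifying the corresponding closed traces.

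It then remains to descend. For a nonzero $\phi$ in $V_A$, the maximal cone in $V_A$ through $\phi$ on which the Thurston norm of $M_A'$ is linear — that is, the Thurston-norm cone of $M_A$ through $\phi$ — is built out of the closed fibered-cone traces of $M_A'$ that contain $\phi$, because every non-fibered codimension-$0$ Thurston-norm cone of $M_A'$ meets $V_A$ only along faces shared with fibered cones, so the trace of the Thurston-norm fan of $M_A'$ on $V_A$ is governed entirely by the fibered-cone arrangement. The bijection $\Theta$ and the identification of closed traces from the previous step match this data for $M_A'$ with the analogous data for $M_B'$, whence $(\Psi')^*_{\varepsilon'}$ carries the trace of the Thurston-norm fan of $M_B'$ on $V_B$ onto the trace of that of $M_A'$ on $V_A$, bijectively and preserving dimension. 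Unwinding the two reductions yields the theorem; fibered cones correspond to fibered cones because fiberedness of a cohomology class is detected on any finite cover.

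The step I expect to be the main obstacle is this last descent together with the upgrade of $\Theta$ to a bijection: Theorem \ref{fibered_cone} supplies only a one-sided containment of fibered cones, and converting it into an honest bijection that is moreover sensitive to the (possibly coarser) Thurston-norm cone structure of the base forces one to understand precisely how $V_A$ sits relative to the fibered-cone arrangement of $M_A'$ — in particular the degenerate configurations in which $V_A$ lies inside facet hyperplanes of fibered cones of $M_A'$, which are exactly the configurations that produce non-fibered Thurston-norm cones of $M_A$. Verifying that the collection of closed fibered cones of the cover containing $\phi$, together with the way these fit inside $V_A$, faithfully recovers the Thurston-norm cone of the base through $\phi$, and that this recovery is compatible with $\Theta$ — including when several fibered-cone traces merge into a single Thurston-norm cone of the base — is where the substantive work lies.
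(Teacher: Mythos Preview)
Your overall strategy---pass to a quasi-fibered cover, apply Theorem~\ref{fibered_cone} there, and descend---matches the paper's. The commutative square you want, namely that $(\Psi')^*_{\varepsilon'}$ restricted to $V_B\to V_A$ agrees with $\Psi^*_\varepsilon$ under the pullback identifications, is also correct and needs no ``positive scalar'' and no deck-intertwining argument: it comes directly from the block-triangular shape of $\Psi'_*$ forced by $\mathrm{cov}_*\circ\Psi'_*=\Psi_*\circ\mathrm{cov}_*$ (this is the content of Lemma~\ref{MC_cover}). Also, only $M'_A$ needs the quasi-fibering property; $M'_B$ is then taken as the $\Psi$-corresponding cover.

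The genuine gap is exactly where you flagged it. With an arbitrary extension $\varepsilon'$ of $\varepsilon$, the full map $(\Psi')^*_{\varepsilon'}$ on $H^1(M'_B;\Real)$ may well be degenerate. Your argument then needs the inclusion $\overline{\mathcal{E}}\cap V_A\subseteq(\Psi')^*_{\varepsilon'}(\overline{\Theta(\mathcal{E})}\cap V_B)$, and you obtain it by approximating a point of $\overline{\mathcal{E}}\cap V_A$ by points of $\mathcal{E}\cap V_A$. But this fails precisely in the degenerate configuration you yourself describe: if $V_A$ sits inside a facet hyperplane of the Thurston-norm fan of $M'_A$, a non-fibered codimension--$0$ cone $\mathcal{C}_A$ of $M_A$ can have $p_A^*\mathcal{C}_A$ lying entirely on $\partial\mathcal{E}$ for every fibered $\mathcal{E}$, so $\mathcal{E}\cap V_A=\emptyset$ and Theorem~\ref{fibered_cone} gives you nothing (its hypothesis is that $(\Psi')^*_{\varepsilon'}(\phi)$ lies in an \emph{open} fibered cone). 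Your proposed ``exhaustion of $V_A$ by closed traces'' does not rescue this, because $\Theta(\mathcal{E})$ is not even defined when the preimage of $\mathcal{E}$ is empty.

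The paper resolves this by not trying to work with a possibly degenerate lift. Its Lemma~\ref{U_construction} shows that for $\varepsilon$ in an open dense subset $\mathcal{U}'\subset\mathrm{Hom}_\Integral(\mathrm{MC}(\Psi_*),\Real)$ one can choose the extension $\varepsilon'$ so that $(\Psi')^*_{\varepsilon'}$ is \emph{nondegenerate on all of} $H^1(M'_B;\Real)$, while still making the square commute. With a genuine isomorphism on the cover, the closure $\overline{\mathcal{C}'}$ of any fibered cone of $M'_A$ maps under $((\Psi')^*_{\varepsilon'})^{-1}$ into the closure of a fibered cone of $M'_B$ by Theorem~\ref{fibered_cone}, with no need to stay inside $V_A$. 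Intersecting back with the pullback subspaces and counting then gives the bijection on codimension--$0$ cones cleanly: each closed codimension--$0$ cone of $M_A$ maps into one of $M_B$, so $M_B$ has at most as many, and symmetry (using $\Psi^{-1}$) gives equality. The general $\varepsilon$ with nondegenerate $\Psi^*_\varepsilon$ then follows by approximating from $\mathcal{U}'$ and using that the cone correspondence is locally constant. So the missing ingredient in your outline is precisely this genericity lemma for the lifted specialization.
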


\begin{remark}\label{profinite_isomorphism_npc_remarks}
	Let $M$ be an orientable connected compact $3$--manifold 
	which is irreducible with empty or tori boundary.
	Then, 
	$M$	admits a complete (Riemannian) metric of nonpositive sectional curvature
	in the interior
	if and only if $\pi_1(M)$ virtually embeds into a right-angled Artin group,
	(see \cite[Corollary 4.3.3 and Theorem 4.7.4]{AFW_book_group}).
	This condition is satisfied 
	if $M$ contains a hyperbolic piece in its geometric decomposition,
	or if $M$ is a graph manifold with nonempty tori boundary,
	otherwise  
	$M$ is a class of closed graph manifolds which can be characterized topologically.
	This condition turns out to depend only on the isomorphism type of $\widehat{\pi_1(M)}$.
	In fact, the first two cases follows
	because $\widehat{\pi_1(M)}$ 
	determines the geometric decomposition graph and the vertex geometries
	\cite[Theorem B and Theorem C]{WZ_decomposition}.
	The closed graph manifold case is implied by 
	the fact that $\widehat{\pi_1(M)}$ 
	determines the commensurability class of $M$ \cite[Theorem A]{Wilkes_graph_II}.	
\end{remark}

\begin{corollary}\label{fibered_cone_correspondence}
	Under the assumptions of Theorem \ref{profinite_isomorphism_npc},
	any nondegenerate $\Psi^*_\varepsilon$ also witnesses a bijective correspondence
	between the fibered cones for $M_A$ and those for $M_B$.
\end{corollary}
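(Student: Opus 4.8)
The plan is to upgrade the Thurston-norm cone bijection supplied by Theorem \ref{profinite_isomorphism_npc} to a bijection of fibered cones, by feeding in the one-directional information of Theorem \ref{fibered_cone} applied to \emph{both} $\Psi$ and $\Psi^{-1}$, and then closing the loop with a finite cardinality count.

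Write $\beta$ for the dimension-preserving bijection of Thurston-norm cones induced by the nondegenerate map $\Psi^*_\varepsilon$, so that $\Psi^*_\varepsilon(\mathcal{C}_B)=\beta(\mathcal{C}_B)$ is a Thurston-norm cone for $M_A$ whenever $\mathcal{C}_B$ is one for $M_B$ (Theorem \ref{profinite_isomorphism_npc}). Since $\Psi$ is in particular an epimorphism, Theorem \ref{fibered_cone} applies to $(M_A,M_B,\Psi)$ and $\varepsilon$: for any fibered cone $\mathcal{C}_A$ for $M_A$, its $\Psi^*_\varepsilon$-preimage lies in a unique fibered cone for $M_B$. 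But $\Psi^*_\varepsilon$ is a linear isomorphism, so that preimage is precisely $\beta^{-1}(\mathcal{C}_A)$, a codimension-$0$ Thurston-norm cone; since the codimension-$0$ Thurston-norm cones have pairwise disjoint interiors, a codimension-$0$ cone contained in another one must equal it, whence $\beta^{-1}(\mathcal{C}_A)$ is itself a fibered cone for $M_B$. Writing $F_A$ and $F_B$ for the (finite) sets of fibered cones of $M_A$ and $M_B$, this says $\beta^{-1}(F_A)\subseteq F_B$, hence $F_A\subseteq\beta(F_B)$ and in particular $|F_A|\le|F_B|$.

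For the opposite inequality I would run the same argument with $\Psi^{-1}$ in place of $\Psi$. By functoriality $(\Psi^{-1})_*=(\Psi_*)^{-1}$ is again an isomorphism of profinite free modules, and some dual specialization $(\Psi^{-1})^*_{\varepsilon'}$ is nondegenerate: picking a $\Integral$-basis $w_1,\dots,w_{r'}$ of $\mathrm{MC}((\Psi^{-1})_*)$ and writing $(\Psi^{-1})_*=w_1\Theta_1+\cdots+w_{r'}\Theta_{r'}$ with $\Theta_j\in\mathrm{Hom}_\Integral(H_B,H_A)$, the polynomial $P(c_1,\dots,c_{r'})=\det(c_1\Theta_1+\cdots+c_{r'}\Theta_{r'})$, computed in any bases of $H_B$ and $H_A$, has integer coefficients and is not identically zero, because substituting $c_j=w_j$ gives $\det((\Psi^{-1})_*)$, a unit of $\widehat{\Integral}$ and in particular nonzero; hence $P$ is nonzero on a dense open subset of $\mathrm{Hom}_\Integral(\mathrm{MC}((\Psi^{-1})_*),\Real)$, and any $\varepsilon'$ there makes $(\Psi^{-1})^*_{\varepsilon'}$ nondegenerate. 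Theorems \ref{profinite_isomorphism_npc} and \ref{fibered_cone} now apply to $(M_B,M_A,\Psi^{-1})$ and $\varepsilon'$, and the argument of the previous paragraph with the roles of $A$ and $B$ exchanged yields $|F_B|\le|F_A|$. Therefore $|F_A|=|F_B|$, and since $F_A\subseteq\beta(F_B)$ with $|\beta(F_B)|=|F_B|=|F_A|$ among finite sets, $\beta(F_B)=F_A$. Thus $\beta$ restricts to a bijection from the fibered cones of $M_B$ onto those of $M_A$, and dimension-preservation is automatic since fibered cones are codimension-$0$; this is the assertion.

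The only point requiring care is the production of a nondegenerate $(\Psi^{-1})^*_{\varepsilon'}$, i.e., checking that the matrix-coefficient-module formalism of Section \ref{Sec-MC} together with Theorem \ref{profinite_isomorphism_npc} can be re-applied to the inverse isomorphism. I would stress that $(\Psi^{-1})^*_{\varepsilon'}$ need not be a scalar multiple of $(\Psi^*_\varepsilon)^{-1}$ and need not realize the inverse bijection $\beta^{-1}$, which is exactly why the argument is routed through a cardinality count rather than through directly identifying the two cone correspondences.
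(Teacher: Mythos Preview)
Your argument is correct and follows essentially the same route as the paper's proof: use Theorem \ref{profinite_isomorphism_npc} together with Theorem \ref{fibered_cone} to show that each fibered cone of $M_A$ arises as the $\Psi^*_\varepsilon$--image of a fibered cone of $M_B$, then appeal to symmetry via $\Psi^{-1}$ for the reverse inequality on the number of fibered cones, and close with a finite cardinality count. The paper compresses the symmetry step into a single phrase (``by symmetry''), whereas you spell out the existence of a nondegenerate specialization for $(\Psi^{-1})_*$ and correctly flag that the resulting cone bijection for $\Psi^{-1}$ need not be $\beta^{-1}$; your codimension--$0$ argument for upgrading ``contained in a fibered cone'' to ``equal to that fibered cone'' is also a detail the paper leaves implicit.
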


\begin{proof}
	By Theorem \ref{profinite_isomorphism_npc} and Theorem \ref{fibered_cone}, 
	every fibered cone for $M_A$ is projected isomorphically 
	from a fibered cone for $M_B$ under $\Psi^*_\varepsilon$.
	In particular, $M_B$ has at least as many fibered cones as $M_A$ does.
	Since $\Psi$ is an isomorphism, 
	it also follows by symmetry that 
	$M_A$ has no fewer fibered cones than $M_B$ does.
	Therefore, the fibered cones for $M_A$ must correspond exactly to 
	the fibered cones for $M_B$ under $\Psi^*_\varepsilon$.
\end{proof}

Note that the cone correspondence is only locally constant in $\varepsilon$.
For example, $-\varepsilon$ instead of $\varepsilon$ clearly yields a different cone correspondence.
However, we can derive some useful objects 
whose correspondence depends only on $\Psi$.
For any orientable connected compact $3$-manifold $M$,
we say that a linear subspace $V$ of $H^1(M;\Real)$ is a \emph{cone carrier},
if there is some Thurston-norm cone $\mathcal{C}$ 
and if $V$ is the smallest linear subspace which contains $\mathcal{C}$.
Every cone carrier $V$ determines a direct-summand $\Integral$--submodule of
$H_1(M;\Integral)_{\mathtt{free}}$, namely,
\begin{equation}\label{lattice_kernel_def}
K(V)=\{u\in H_1(M;\Integral)_{\mathtt{free}}\colon \phi(u)=0\mbox{ for all }\phi\in V\}.
\end{equation}
We refer to $K(V)$ as the \emph{lattice kernel} of $V$.
Therefore, the dimension of a Thurston-norm cone is the dimension of its cone carrier,
and is the corank of the lattice kernel.

\begin{corollary}\label{cone_carrier_correspondence}
	Under the assumptions of Theorem \ref{profinite_isomorphism_npc},
	$\Psi$ determines a bijective correspondence between the cone carriers
	for $M_A$ and those for $M_B$, as witnessed by any nondegenerate $\Psi^*_\varepsilon$.
	
	Moreover, a cone carrier $V_A$ for $M_A$ corresponds to a cone carrier $V_B$ for $M_B$,
	if and only if $\widehat{K(V_A)}$ projects isomorphically onto $\widehat{K(V_B)}$
	under $\Psi_*\colon \widehat{H_A}\to \widehat{H_B}$.
\end{corollary}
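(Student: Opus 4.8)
The plan is to combine Theorem \ref{profinite_isomorphism_npc} with the elementary linear-algebra duality between cone carriers and lattice kernels; the one new ingredient is a ``finite union of subspaces'' argument that forces the a priori $\varepsilon$-dependent correspondence on cones to become $\varepsilon$-independent once one passes to lattice kernels.

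First I would record the dictionary. For any cone carrier $V\subset H^1(M;\Real)$, which is a rational subspace (the Thurston-norm unit ball being a rational polyhedron), the real span of $K(V)$ equals the annihilator $V^\perp\subset H_1(M;\Real)$; since $K(V)$ is a direct summand of $H_1(M;\Integral)_{\mathtt{free}}$, one recovers $V=(K(V)\otimes_\Integral\Real)^\perp$ and $K(V)=\widehat{K(V)}\cap H_1(M;\Integral)_{\mathtt{free}}$, so $V$ is determined by the $\widehat{\Integral}$-submodule $\widehat{K(V)}$ of the completed homology. Second, fix one nondegenerate $\varepsilon_0$; at least one exists because $\Psi$ is an isomorphism, so $\det\Psi_*$ is a unit and the polynomial whose specializations give $\det\Psi_*^\varepsilon$ is not identically zero, hence the nondegeneracy locus is open and dense in $\mathrm{Hom}_\Integral(\mathrm{MC}(\Psi_*),\Real)$. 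By Theorem \ref{profinite_isomorphism_npc}, the isomorphism $\Psi^*_{\varepsilon_0}$ induces a dimension-preserving bijection on Thurston-norm cones, hence on cone carriers; dualizing each resulting identity $\Psi^*_{\varepsilon_0}(V_B)=V_A$ shows that $\Psi_*^{\varepsilon_0}$ maps $K(V_A)\otimes\Real$ isomorphically onto $K(V_B)\otimes\Real$ for every matched pair, with $\dim V_A=\dim V_B$ and $b_1(M_A)=b_1(M_B)$.

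The core step is: for the pair $V_A\leftrightarrow V_B$ matched by $\varepsilon_0$ one has $\Psi_*^{\mathrm{MC}}\bigl(K(V_A)\bigr)\subseteq K(V_B)\otimes_\Integral\mathrm{MC}(\Psi_*)$. To prove it, for each cone carrier $W$ for $M_B$ with $\dim W=\dim V_A$ put $S_W=\{\varepsilon\in\mathrm{Hom}_\Integral(\mathrm{MC}(\Psi_*),\Real):\Psi_*^\varepsilon(K(V_A)\otimes\Real)\subseteq K(W)\otimes\Real\}$; since $\varepsilon\mapsto\Psi_*^\varepsilon$, being the $\Real$-linear extension of $(1\otimes\varepsilon)\circ\Psi_*^{\mathrm{MC}}$, is linear in $\varepsilon$, each $S_W$ is a linear subspace. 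Every nondegenerate $\varepsilon$ lies in $S_W$ for the cone carrier $W$ that Theorem \ref{profinite_isomorphism_npc} matches with $V_A$ (and such $W$ satisfies $\dim W=\dim V_A$), so by density the whole space $\mathrm{Hom}_\Integral(\mathrm{MC}(\Psi_*),\Real)$ is the union of the finitely many subspaces $S_W$. A real vector space is never a finite union of proper subspaces, hence some $S_{W_0}$ is everything, i.e.\ $\Psi_*^{\mathrm{MC}}(K(V_A))\subseteq K(W_0)\otimes\mathrm{MC}(\Psi_*)$; specializing at $\varepsilon_0$ gives $K(V_B)\otimes\Real=\Psi_*^{\varepsilon_0}(K(V_A)\otimes\Real)\subseteq K(W_0)\otimes\Real$, and since $\dim W_0=\dim V_B$ this forces $W_0=V_B$. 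I expect this to be the main obstacle: the tempting route --- arguing that the cone correspondence is locally constant and that the nondegeneracy locus is connected --- fails, because that locus can genuinely be disconnected; one must instead exploit that there are only finitely many cone carriers.

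Finally I would conclude. Passing to profinite completions and using continuity of $\Psi_*$, the core step gives $\Psi_*(\widehat{K(V_A)})\subseteq\widehat{K(V_B)}$; both sides are direct summands of equal $\widehat{\Integral}$-rank in their respective completed homology modules and $\Psi_*$ is an isomorphism, so a prime-by-prime computation over the $\Integral_p$ (where a finite-index direct summand of a free module is the whole module) upgrades the inclusion to $\Psi_*(\widehat{K(V_A)})=\widehat{K(V_B)}$. Since $\widehat{K(V_B)}=\Psi_*(\widehat{K(V_A)})$ and $V_B$ is recovered from $\widehat{K(V_B)}$, the partner $V_B$ does not depend on the chosen nondegenerate $\varepsilon_0$; this yields the asserted $\varepsilon$-independent bijective correspondence (bijectivity being inherited from the bijection on cone carriers of Theorem \ref{profinite_isomorphism_npc}) together with the ``only if'' half of the last assertion. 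For the ``if'' half, given cone carriers $V_A,V_B$ with $\Psi_*(\widehat{K(V_A)})=\widehat{K(V_B)}$, let $V_B'$ be the partner of $V_A$ produced above; then $\widehat{K(V_B')}=\Psi_*(\widehat{K(V_A)})=\widehat{K(V_B)}$, so $K(V_B')=K(V_B)$ and $V_B'=V_B$, whence $V_A$ and $V_B$ correspond.
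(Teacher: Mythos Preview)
Your proof is correct and close in spirit to the paper's, but the mechanism you use to pin down the partner $V_B$ independently of $\varepsilon$ differs. The paper also observes that the condition $\Psi_*^\varepsilon(K(V_A)\otimes\Real)\subseteq K(V_B)\otimes\Real$ is linear in $\varepsilon$, so its solution set is a linear subspace of $\mathrm{Hom}_\Integral(\mathrm{MC}(\Psi_*),\Real)$; it then invokes \emph{local constancy} of the cone correspondence (small perturbations of a nondegenerate $\varepsilon_0$ give the same bijection, since there are only finitely many cones) to place an entire open neighborhood of $\varepsilon_0$ inside that subspace, which forces the subspace to be everything. Your ``finite union of proper subspaces'' argument reaches the same conclusion by a different, equally valid, route. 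Note, however, that your remark that the local-constancy approach ``fails because the nondegeneracy locus can be disconnected'' misses the mark: the paper never appeals to connectedness, only to the fact that a linear subspace containing a nonempty open set must be the whole space. So the route you warn against is not the paper's route; the paper's is actually shorter than yours.

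Your finishing steps---upgrading the inclusion $\Psi_*(\widehat{K(V_A)})\subset\widehat{K(V_B)}$ to equality via the equal-rank direct-summand argument over each $\Integral_p$, and the explicit ``if'' direction---are correct and spell out details that the paper's proof leaves implicit.
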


\begin{proof}
	For any cone carriers $V_A$ for $M_A$ and $V_B$ for $M_B$,
	take a direct-sum decomposition $H_A=K_A\oplus L_A$ where $K_A=K(V_A)$, and similarly $H_B=K_B\oplus L_B$.
	By considering the matrix of $\Psi_*$ with respect to any chosen bases of the direct summands,
	one may easily verify that $\Psi_*(\widehat{K_A})\subset\widehat{K_B}$ holds if and only if
	$\Psi_*^\varepsilon(K_A\otimes_\Integral\Real)\subset K_B\otimes_\Integral\Real$ holds
	for all $\varepsilon\in\mathcal{U}$, where $\mathcal{U}$ is some, or any, nonempty open subset of
	$\mathrm{Hom}_\Integral(\mathrm{MC}(\Psi_*;H_A,H_B),\Real)$.
	Note that $\Psi_*^\varepsilon(K_A\otimes_\Integral\Real)\subset K_B\otimes_\Integral\Real$
	is equivalent to $V_B\subset\Psi^*_\varepsilon(V_A)$.	
	Since the correspondence between the Thurston-norm cones 
	is fixed under small deformation of $\varepsilon$	(keeping $\Psi^*_\varepsilon$ nondegenerate),
	the induced correspondence between the cone carriers hence depends only on $\Psi$.
	Moreover, the above equivalence conditions imply the asserted characterization with lattice kernels.
\end{proof}

The rest of this section is devoted to the proof of Theorem \ref{profinite_isomorphism_npc}.
The idea is to pass to a corresponding pair of regular finite covers, using Theorem \ref{quasi-fibered},
and apply \ref{fibered_cone} to that pair.
Therefore, we need to relate the matrix coefficient modules associated to $\Psi$
and to its lift to the cover pair.
This is the job of Lemmas \ref{MC_cover} and \ref{U_construction} below.

\begin{definition}\label{corresponding_cover_def}
Let $(M_A,M_B,\Psi)$ be a profinite morphism setting of a $3$--manifold pair
where $\Psi$ is an isomorphism (Convention \ref{profinite_morphism_setting}).
Given any finite-index subgroup $\pi'_B$ of $\pi_B$, 
obtain a finite-index subgroup of $\pi_A$
as $\pi'_A=\pi_A\cap\Psi^{-1}(\mathrm{clos}(\pi'_B))$.
Denote by $M'_A\to M_A$ the finite cover associated to $\pi'_A$, and similarly 
by $M'_B\to M_B$ the finite cover associated to $\pi'_B$.
We refer to any pair of finite covers $M'_A\to M_A$ and $M'_B\to M_B$ 
obtained this way as a \emph{$\Psi$--corresponding pair}.
In this situation,
we denote by $\Psi'\colon\widehat{\pi'_A}\to\widehat{\pi'_B}$ 
the profinite isomorphism restricting $\Psi$.
We also adopt the notations 
$H'_A=H_1(M'_A;\Integral)_{\mathtt{free}}$ and similarly $H'_B$,
and $\mathrm{MC}(\Psi'_*)=\mathrm{MC}(\Psi'_*;H'_A,H'_B)$,
thinking of $(M'_A,M'_B,\Psi')$ 
as a profinite morphism setting of a $3$--manifold pair.
\end{definition}

\begin{lemma}\label{MC_cover}
	Let $(M_A,M_B,\Psi)$ be a profinite morphism setting of a $3$--manifold pair
	where $\Psi$ is an isomorphism (Convention \ref{profinite_morphism_setting}).
	For any $\Psi$--corresponding pair of finite covers 
	$M'_A\to M_A$ and $M'_B\to M_B$,
	$\mathrm{MC}(\Psi_*)$ is commensurable in $\widehat{\Integral}$
	with a $\Integral$--submodule of $\mathrm{MC}(\Psi'_*)$.
	In particular,
	$$\mathrm{rank}_\Integral(\mathrm{MC}(\Psi_*))\leq\mathrm{rank}_\Integral(\mathrm{MC}(\Psi'_*)).$$	
\end{lemma}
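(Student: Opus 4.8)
The plan is to compare $\mathrm{MC}(\Psi_*)$ with $\mathrm{MC}(\Psi'_*)$ by pushing the inclusion defining $\mathrm{MC}(\Psi'_*)$ down along the covering maps, using the transfer to absorb the (finite) discrepancy. First I would set up the relevant functoriality. The covering maps induce, via the subgroup inclusions $\pi'_A\leq\pi_A$ and $\pi'_B\leq\pi_B$ followed by abelianization and passage to torsion-free quotients, $\Integral$--linear maps $j_A\colon H'_A\to H_A$ and $j_B\colon H'_B\to H_B$, whose profinite completions $\widehat{j_A},\widehat{j_B}$ are exactly the maps induced on torsion-free abelianizations by the inclusions $\widehat{\pi'_A}\hookrightarrow\widehat{\pi_A}$ and $\widehat{\pi'_B}\hookrightarrow\widehat{\pi_B}$. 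Since $\Psi'$ is by definition the restriction of $\Psi$ (Definition \ref{corresponding_cover_def}), the square of group homomorphisms with inclusions on the vertical sides and $\Psi,\Psi'$ on the horizontal sides commutes, and applying the functor ``torsion-free abelianization, completed'' gives the commuting square of continuous $\widehat{\Integral}$--module maps $\widehat{j_B}\circ\Psi'_*=\Psi_*\circ\widehat{j_A}\colon\widehat{H'_A}\to\widehat{H_B}$.

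Next I would pass to the finite-index images $L_A=j_A(H'_A)\subseteq H_A$ and $L_B=j_B(H'_B)\subseteq H_B$. Each has finite index: the transfer homomorphism of a degree--$d$ cover satisfies $p_*\circ\mathrm{tr}=d\cdot\mathrm{id}$ on first homology, so $L_A$ contains $[M'_A:M_A]\cdot H_A$, and likewise for $L_B$. By flatness of $\widehat{\Integral}$ over $\Integral$ one has $\widehat{j_A}(\widehat{H'_A})=\widehat{L_A}$ and $\widehat{j_B}(\widehat{H'_B})=\widehat{L_B}$; feeding this into the commuting square and using that $\Psi'_*$ is an isomorphism shows $\Psi_*(\widehat{L_A})=\widehat{L_B}$, in particular $\Psi_*(\widehat{L_A})\subseteq\widehat{L_B}$. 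Proposition \ref{MC_property}(3), applied to these finite-index submodules, then yields that $\mathrm{MC}(\Psi_*;L_A,L_B)$ is commensurable in $\widehat{\Integral}$ with $\mathrm{MC}(\Psi_*;H_A,H_B)=\mathrm{MC}(\Psi_*)$.

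Finally I would exploit that $j_A$ maps \emph{onto} $L_A$. By definition $\Psi'_*(H'_A)\subseteq H'_B\otimes_\Integral\mathrm{MC}(\Psi'_*)$; applying $\widehat{j_B}$, whose restriction to this submodule is $j_B\otimes\mathrm{id}$ with image $L_B\otimes_\Integral\mathrm{MC}(\Psi'_*)$, gives $\widehat{j_B}(\Psi'_*(H'_A))\subseteq L_B\otimes_\Integral\mathrm{MC}(\Psi'_*)$. By the commuting square this set equals $\Psi_*(\widehat{j_A}(H'_A))=\Psi_*(L_A)$, so $\Psi_*(L_A)\subseteq L_B\otimes_\Integral\mathrm{MC}(\Psi'_*)$, and the minimality characterizing $\mathrm{MC}(\Psi_*;L_A,L_B)$ forces $\mathrm{MC}(\Psi_*;L_A,L_B)\subseteq\mathrm{MC}(\Psi'_*)$. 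Combining with the previous paragraph, $\mathrm{MC}(\Psi_*)$ is commensurable in $\widehat{\Integral}$ with the $\Integral$--submodule $\mathrm{MC}(\Psi_*;L_A,L_B)$ of $\mathrm{MC}(\Psi'_*)$, whence the rank inequality, since commensurable finitely generated free $\Integral$--modules have equal rank.

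I do not expect a genuine obstacle here. The only point needing care is that $j_A$ need not be injective — the cover $M'_A$ may carry extra first homology invisible in $M_A$ — which is exactly why the argument is routed through the finite-index image $L_A=j_A(H'_A)$ rather than through $H_A$ itself, invoking Proposition \ref{MC_property}(3) for the commensurability and the surjectivity of $j_A$ onto $L_A$ for the containment inside $\mathrm{MC}(\Psi'_*)$.
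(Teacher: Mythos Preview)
Your argument is correct and follows essentially the same route as the paper's own proof: both pass to the finite-index images $L_A=j_A(H'_A)$ and $L_B=j_B(H'_B)$, invoke Proposition \ref{MC_property}(3) to get $\mathrm{MC}(\Psi_*;L_A,L_B)$ commensurable with $\mathrm{MC}(\Psi_*)$, and then show $\mathrm{MC}(\Psi_*;L_A,L_B)\subseteq\mathrm{MC}(\Psi'_*)$. The only cosmetic difference is that the paper establishes the last containment by choosing splittings $H'_A=K'_A\oplus L'_A$, $H'_B=K'_B\oplus L'_B$ and reading off the $(L'_A,L'_B)$-block of the matrix of $\Psi'_*$, whereas you obtain it coordinate-free by pushing the defining inclusion $\Psi'_*(H'_A)\subseteq H'_B\otimes_\Integral\mathrm{MC}(\Psi'_*)$ through $\widehat{j_B}=j_B\otimes\mathrm{id}$ and using the commuting square.
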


\begin{proof}
	We obtain a $\Integral$--submodule of $H'_A$ as
	\begin{equation}\label{def_K}
		K'_A=\mathrm{Ker}\left(\xymatrix{ H'_A \ar[r]^-{\mathrm{cov}_*} & H_A}\right),
	\end{equation}
	and a $\Integral$--submodule of $H_A$ as
	\begin{equation}\label{def_L}
		L_A=\mathrm{Im}\left(\xymatrix{ H'_A \ar[r]^-{\mathrm{cov}_*} & H_A}\right),
	\end{equation}
	and similarly $K'_B$ and $L_B$.
	Choose a $\Integral$--submodule $L'_A$ of $H'_A$ which projects isomorphically onto $L_A$,
	so we decompose $H'_A$ as the direct sum $K'_A\oplus L'_A$.
	Similary, we decompose $H'_B$ as $K'_B\oplus L'_B$.
	Choose bases of $K'_A$, $L'_A$, $K'_B$, and $L'_B$,
	and also furnish $L_A$ and $L_B$ with the bases projected from $L'_A$ and $L'_B$.
	
	With respect to the above $\Integral$--submodules with chosen bases,
	we represent 
	$\Psi'_*\in\mathrm{Hom}(\widehat{K'_A}\oplus\widehat{L'_A}, \widehat{K'_B}\oplus\widehat{L'_B})$ 
	as a square matrix over $\widehat{\Integral}$.
	The matrix of $\Psi'_*$ 
	can be divided into four blocks representing the homomorphism summands, 
	as indicated by the following arrangement:
	$$
	\mathrm{Hom}\left(\widehat{K'_A}\oplus\widehat{L'_A},\,\widehat{K'_B}\oplus\widehat{L'_B}\right)
	\cong
	\left[\begin{array}{cc}
	\mathrm{Hom}\left(\widehat{K'_A},\widehat{K'_B}\right) & \mathrm{Hom}\left(\widehat{L'_A},\widehat{K'_B}\right)\\
	\mathrm{Hom}\left(\widehat{K'_A},\widehat{L'_B}\right) & \mathrm{Hom}\left(\widehat{L'_A},\widehat{L'_B}\right)
	\end{array}\right].
	$$
	The $\mathrm{Hom}(\widehat{K'_A},\widehat{L'_B})$--block must be zero because of the commutative diagram
	\begin{equation}\label{Psi_cov}
		\xymatrix{
		\widehat{H'_A} \ar[r]^-{\Psi'_*} \ar[d]_{\mathrm{cov}_*} & \widehat{H'_B} \ar[d]^{\mathrm{cov}_*}\\
		\widehat{H_A} \ar[r]^-{\Psi_*} & \widehat{H_B}.		
	}
	\end{equation}
	The profinite submodule $\mathrm{Hom}(\widehat{L'_A},\widehat{L'_B})$ 
	can be identified with $\mathrm{Hom}(\widehat{L_A},\widehat{L_B})$,
	and the corresponding block represents the matrix of 
	the isomorphism $\widehat{L_A}\to \widehat{L_B}$ restricting $\Psi_*$.
	
	The above block description shows that
	$\mathrm{MC}(\Psi'_*;H'_A,H'_B)$ contains
	$\mathrm{MC}(\Psi'_*;L'_A,L'_B)$,
	which equals $\mathrm{MC}(\Psi_*;L_A,L_B)$,
	by Proposition \ref{MC_property} (1).
	Moreover, $\mathrm{MC}(\Psi_*;L_A,L_B)$
	is commensurable in $\widehat{\Integral}$ with $\mathrm{MC}(\Psi_*;H_A,H_B)$,
	by Proposition \ref{MC_property} (3).
	Therefore, we see that $\mathrm{MC}(\Psi_*;H_A,H_B)$
	is commensurable with a $\Integral$--submodule of $\mathrm{MC}(\Psi'_*;H'_A,H'_B)$, 
	as asserted.
	In particular, we obtain the rank inequality
	$$\mathrm{rank}_\Integral(\mathrm{MC}(\Psi_*;H_A,H_B))=
	\mathrm{rank}_\Integral(\mathrm{MC}(\Psi_*;L_A,L_B))\leq
	\mathrm{rank}_\Integral(\mathrm{MC}(\Psi'_*;H'_A,H'_B)),$$
	as asserted.
\end{proof}

\begin{lemma}\label{U_construction}
	Let $(M_A,M_B,\Psi)$ be a profinite morphism setting of a $3$--manifold pair
	where $\Psi$ is an isomorphism (Convention \ref{profinite_morphism_setting}).
	For any $\Psi$--corresponding pair of finite covers 
	$M'_A\to M_A$ and $M'_B\to M_B$,
	there exists some (point-set topological) 
	open dense subset $\mathcal{U}'$ 
	of $\mathrm{Hom}_\Integral(\mathrm{MC}(\Psi_*),\Real)$
	with the following property:
		For any	$\varepsilon\in\mathcal{U}'$,
	there exists some 
	$\varepsilon'\in\mathrm{Hom}_\Integral(\mathrm{MC}(\Psi'_*),\Real)$,
	such that 
	the $\varepsilon'$--specialization $(\Psi')_*^{\varepsilon'}$ of $\Psi'_*$ is nondegenerate,
	and	such that the following diagram commutes:
	$$\xymatrix{
		H_1(M'_A;\Real) \ar[r]^-{(\Psi')_*^{\varepsilon'}} \ar[d]_{\mathrm{cov}_*} & H_1(M'_B;\Real) 
		\ar[d]^{\mathrm{cov}_*}\\
		H_1(M_A;\Real) \ar[r]^-{\Psi^{\varepsilon}_*} & H_1(M_B;\Real)		
	}$$
	Moreover, one may require $\mathcal{U}'$ to be invariant 
	under nonzero real scalar multiplication, and 
	$\varepsilon'$ to depend continuously on $\varepsilon$.
\end{lemma}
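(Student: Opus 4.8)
The plan is to deduce the lemma from a genericity argument, after reducing the commutativity requirement to a single linear constraint on $\varepsilon'$. Throughout I adopt the notation from the proof of Lemma \ref{MC_cover}: $K'_A=\mathrm{Ker}(\mathrm{cov}_*\colon H'_A\to H_A)$, $L_A=\mathrm{Im}(\mathrm{cov}_*\colon H'_A\to H_A)$, and similarly $K'_B$, $L_B$; a choice of $L'_A\subseteq H'_A$ and $L'_B\subseteq H'_B$ projecting isomorphically onto $L_A$ and $L_B$, giving decompositions $H'_A=K'_A\oplus L'_A$ and $H'_B=K'_B\oplus L'_B$; and bases adapted to these decompositions, with respect to which $\Psi'_*$ is block upper-triangular over $\widehat{\Integral}$, with zero lower-left block and with lower-right block equal to the matrix of the isomorphism $\Psi_*|_{\widehat{L_A}\to\widehat{L_B}}$ in the bases of $L_A,L_B$ transported from $L'_A,L'_B$. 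Since $\mathrm{cov}_*$ becomes surjective after tensoring with $\Real$ (compose with the transfer), the submodules $L_A\subseteq H_A$ and $L_B\subseteq H_B$ have finite index. Proposition \ref{MC_property}(1) then exhibits $\mathrm{MC}(\Psi_*;L_A,L_B)\subseteq\mathrm{MC}(\Psi'_*)$ as an honest $\Integral$-submodule (spanned by the entries of the lower-right block), and Proposition \ref{MC_property}(3) gives that $\mathrm{MC}(\Psi_*;L_A,L_B)$ is commensurable in $\widehat{\Integral}$ with $\mathrm{MC}(\Psi_*)$; in particular the two span the same $\Rational$-subspace of $\widehat{\Integral}\otimes_\Integral\Rational$.

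Next I would identify the commutativity of the square with a linear condition. Testing the square on a basis of $K'_A$ yields $0=0$ automatically; testing it on a basis of $L'_A$ and using the block description shows that the square commutes for the pair $(\varepsilon,\varepsilon')$ precisely when $\varepsilon'$, restricted to $\mathrm{MC}(\Psi_*;L_A,L_B)\subseteq\mathrm{MC}(\Psi'_*)$, agrees with the functional obtained by extending $\varepsilon$ $\Rational$-linearly and then restricting to $\mathrm{MC}(\Psi_*;L_A,L_B)\subseteq\mathrm{MC}(\Psi_*)\otimes_\Integral\Rational$. This defines a natural $\Real$-linear map $R\colon\mathrm{Hom}_\Integral(\mathrm{MC}(\Psi'_*),\Real)\to\mathrm{Hom}_\Integral(\mathrm{MC}(\Psi_*),\Real)$, which is surjective because $\Real$ is an injective $\Integral$-module, and with respect to which commutativity of the square becomes exactly the equation $R(\varepsilon')=\varepsilon$.

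Then I would run the genericity step. Since $\Psi'$ is an isomorphism, so is $\Psi'_*$, hence $\det(\Psi'_*)\in\widehat{\Integral}^{\times}$ is nonzero; writing $(\Psi')_*^{\mathrm{MC}}=\sum_k\Phi'_k\otimes z'_k$ for a $\Integral$-basis $z'_1,\dots,z'_s$ of $\mathrm{MC}(\Psi'_*)$ and homomorphisms $\Phi'_k\colon H'_A\to H'_B$, the function $p(\varepsilon')=\det\big((\Psi')_*^{\varepsilon'}\big)$ is a homogeneous polynomial in the values $\varepsilon'(z'_k)$ which, under the substitution $\varepsilon'(z'_k)\mapsto z'_k$, specializes to $\det(\Psi'_*)\neq0$; therefore $p$ is not identically zero on $\mathrm{Hom}_\Integral(\mathrm{MC}(\Psi'_*),\Real)$. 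A short density argument — after fixing one linear section of $R$, the images of all linear sections together form a dense subset of $\mathrm{Hom}_\Integral(\mathrm{MC}(\Psi'_*),\Real)$, so they cannot all lie in the zero locus of $p$ — then produces a linear section $\Lambda$ of $R$ with $p\circ\Lambda\not\equiv0$. I set $\mathcal{U}'=\{\varepsilon\colon p(\Lambda(\varepsilon))\neq0\}$ and, for $\varepsilon\in\mathcal{U}'$, take $\varepsilon'=\Lambda(\varepsilon)$. Then $\mathcal{U}'$ is open (continuity of $p\circ\Lambda$), dense (complement of the zero locus of the nonzero polynomial $p\circ\Lambda$) and invariant under nonzero real scaling (as $p$ is homogeneous and $\Lambda$ is linear); the identity $R(\varepsilon')=\varepsilon$ makes the square commute; $p(\varepsilon')\neq0$ means $\det\big((\Psi')_*^{\varepsilon'}\big)\neq0$, so $(\Psi')_*^{\varepsilon'}$ is nondegenerate (the single determinant condition already handles both diagonal blocks of the triangular shape); and $\varepsilon'=\Lambda(\varepsilon)$ depends linearly, hence continuously, on $\varepsilon$.

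The main obstacle is the second paragraph: verifying that commutativity of the square is equivalent to the linear constraint $R(\varepsilon')=\varepsilon$. This is where the several matrix coefficient modules in play — $\mathrm{MC}(\Psi_*)$, $\mathrm{MC}(\Psi_*;L_A,L_B)$ and $\mathrm{MC}(\Psi'_*)$ — and their commensurabilities from Proposition \ref{MC_property} must be tracked with care, in particular to check that $\Psi_*^{\varepsilon}$ applied to elements of $L_A$ is computed by the $\Rational$-linear extension of $\varepsilon$ evaluated on $\mathrm{MC}(\Psi_*;L_A,L_B)$. Everything else is soft: linear algebra over $\Real$ together with the standard fact that a nonzero polynomial has dense nonvanishing locus.
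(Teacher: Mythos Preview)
Your proof is correct and follows essentially the same approach as the paper's. Both arguments use the block upper-triangular structure from Lemma~\ref{MC_cover}, identify commutativity of the square with a constraint on the lower-right block, note that the determinant polynomial is nonzero because $\Psi'_*$ is invertible, and then choose a linear lift of $\varepsilon$ to $\varepsilon'$ along which this polynomial does not vanish identically. Your framing via the restriction map $R$ and abstract linear sections is a clean repackaging of what the paper does explicitly in coordinates (choosing a basis $X_1,\dots,X_{r+s}$ adapted to a direct summand $E$ commensurable with $\mathrm{MC}(\Psi_*)$ and defining $\varepsilon'(X_i)=C_i(\varepsilon(X_1),\dots,\varepsilon(X_r))$ for explicit linear forms $C_i$); your density argument for the existence of a good section replaces the paper's explicit construction of the $C_i$ from an integer point where $F$ is nonzero.
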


%
	%
%

\begin{proof}

	Remember that $\mathrm{MC}(\Psi'_*)=\mathrm{MC}(\Psi'_*;H'_A,H'_B)$
	is a free $\Integral$--module of finite rank (Proposition \ref{MC_property}),
	and $\mathrm{MC}(\Psi_*)=\mathrm{MC}(\Psi_*;H_A,H_B)$ is commensurable 
	with a $\Integral$--submodule of it (Lemma \ref{MC_cover}).
	We denote by $E$ the smallest direct summand of $\mathrm{MC}(\Psi'_*)$
	that is commensurable with $\mathrm{MC}(\Psi_*)$.
	Take a positive integer $m'$ such that $\mathrm{MC}(\Psi_*)$ contains $m'E$.
		
	Choose a basis $x_1,\cdots,x_r,x_{r+1},\cdots,x_{r+s}\in\widehat{\Integral}$ 
	of $\mathrm{MC}(\Psi'_*)$,
	such that $x_1,\cdots,x_r$ form a basis of $E$.
	To avoid confusion,
	we treat $\mathrm{MC}(\Psi'_*)$ 
	formally as the degree--$1$ homogeneous summand of a polynomial ring $\Integral[X_1,\cdots,X_{r+s}]$,
	and together with 
	a distinguished homomorphism 
	$\xi_0\in \mathrm{Hom}_{\Integral}(\mathrm{MC}(\Psi'_*),\widehat{\Integral})$,
	such that $\xi_0(X_i)=x_i$ for all $i=1,\cdots,r+s$.
	We also adopt the same $\Integral$--modules 
	$L_A$, $K'_A$, $L'_A$ and $L_B$, $K'_B$, $L'_B$
	with fixed choices of their bases,
	as in the proof of Lemma \ref{MC_cover}.
	This way,
	we treat the matrix of $(\Psi'_*)^{\mathrm{MC}}$ 
	as a square matrix over $\Integral[X_1,\cdots,X_{r+s}]$,
	(while thinking of the matrix of $\Psi'_*$ as over $\widehat{\Integral}$; 
	see Definition \ref{MC_def}).
	The determinant of the matrix of $(\Psi'_*)^{\mathrm{MC}}$ 
	is a homogeneous polynomial 
	$F$ in $\Integral[X_1,\cdots,X_{r+s}]$ of degree $b_1(M'_A)=b_1(M'_B)$.
	Moreover, 
	since the block form of the matrix is upper triangular,
	there exists a factorization 
	$F=F_{K'}F_{L'}$ in $\Integral[X_1,\cdots,X_{r+s}]$,
	where $F_{L'}$ is a homogeneous polynomial in $\Integral[X_1,\cdots,X_r]$
	of degree $b_1(M_A)=b_1(M_B)$;
	(compare the commutative diagram (\ref{Psi_cov})).
	We obtain a continuous map as follows:
	\begin{eqnarray*}
	\mathrm{Hom}_\Integral\left(\mathrm{MC}(\Psi'_*),\widehat{\Integral}\right) & \longrightarrow &\widehat{\Integral} \\
	\xi &\mapsto & F(\xi(X_1),\cdots,\xi(X_{r+s}))
	\end{eqnarray*}
	Since $\Psi'_*$ is invertible,
	$\xi_0$ is mapped to a nonzero value $F(x_1,\cdots,x_{r+s})$ in $\widehat{\Integral}$.
	Then there are $c_1,\cdots,c_r,c_{r+1},\cdots,c_{r+s}\in\Integral$,
	such that $F(c_1,\cdots,c_{r+s})\in\Integral$ is also nonzero.
	In particular, $c_1,\cdots,c_r$ are not all zero,
	since $F_{L'}(c_1,\cdots,c_r)$ is nonzero.
	It follows that there exist degree--$1$ homogeneous polynomials 
	$C_{r+1},\cdots,C_{r+s}$ in $\Rational[X_1,\cdots,X_r]$, such that 
	$$F^C(X_1,\cdots,X_r)=F(X_1,\cdots,X_r,C_{r+1}(X_1,\cdots,X_r),\cdots,C_{r+s}(X_1,\cdots,X_r))$$
	is a nonzero homogeneous polynomial in $\Rational[X_1,\cdots,X_r]$.
	(For example, assuming $c_1\neq0$, one may take
	$C_i=c_iX_1/c_1$ for $i=r+1,\cdots,r+s$.)
	We define a continuous map $f$ as follows:
	\begin{eqnarray*}
	\mathrm{Hom}_\Integral\left(\mathrm{MC}(\Psi_*),\Real\right) 
	&\stackrel{f}{\longrightarrow} & \Real \\
	\eta &\mapsto& F^C\left(\frac{\eta(m'X_1)}{m'},\cdots,\frac{\eta(m'X_r)}{m'}\right)
	\end{eqnarray*}
	Note that the degree--$1$ homogeneous summand of the polynomial subring $\Integral[X_1,\cdots,X_r]$	is treated as $E$,
	so $m'X_1,\cdots,m'X_r$ are all contained in $\mathrm{MC}(\Psi_*)$ by our requirement on $m'$.
	Plainly speaking, 
	if one identifies $\mathrm{MC}(\Psi_*)$ as the degree--$1$ homogeneous summand of 
	a polynomial ring $\Integral[Y_1,\cdots,Y_r]$, 
	then $X_1,\cdots,X_r$ can be represented uniquely 
	as some $\Rational$--linear combination of $Y_1,\cdots,Y_r$
	in $\Rational[Y_1,\cdots,Y_r]$ with a common denominator $m'$.
	Then $f$ is explicitly a homogeneous polynomial function of the dual coordinates 
	$\eta=(\eta_1,\cdots,\eta_r)$	in $\Real^r$,	
	and $f$ is not constant zero.
	We take an open dense subset 
	of $\mathrm{Hom}_\Integral(\mathrm{MC}(\Psi_*),\Real)$
	as 
	$$\mathcal{U}'=f^{-1}\left(\Real\setminus\{0\}\right).$$
	Note that $\mathcal{U}'$ is invariant under nonzero scalar multiplication,
	because of the homogeneity of $F_C$.
	
	It remains to check that 
	the $\mathcal{U}'$ satisfy the asserted property of Lemma \ref{U_construction}.
	In fact, for any $\varepsilon$ in $\mathcal{U}'$,
	we take the asserted $\varepsilon'$
	in $\mathrm{Hom}_\Integral(\mathrm{MC}(\Psi'_*),\Real)$ as follows:
	\begin{equation}\label{epsilon_cov}
	\varepsilon'(X_i)=
	\begin{cases}
	{\varepsilon(m'X_i)}/{m'} & i=1,\cdots,r \\
	C_i\left({\varepsilon(m'X_1)}/{m'},\cdots,{\varepsilon(m'X_r)}/{m'}\right) & i=r+1,\cdots,r+s
	\end{cases}
	\end{equation}
	Note that $m'X_1,\cdots,m'X_r$ all lie in $\mathrm{MC}(\Psi'_*)$,
	because of our requirement on the positive integer $m'$.
	We show that $\varepsilon'$ satisfies the asserted properties.
		
	With respect to the fixed basis of $H'_A$ and $H'_B$,
	the matrix of the $\varepsilon'$--specialization 
	$(\Psi'_*)^{\varepsilon'}$
	has determinant 
	$$
	F\left(\varepsilon'(X_1),\cdots,\varepsilon'(X_{r+s})\right)=
	F^C\left(\frac{\varepsilon(m'X_1)}{m'},\cdots,\frac{\varepsilon(m'X_r)}{m'}\right)
	=f(\varepsilon)\neq0,
	$$
	so $(\Psi'_*)^{\varepsilon'}$ is nondegenerate.
	
	To see the asserted commutative diagram,
	we observe the following simplifications:
	It suffices to check for any $\varepsilon$ in
	$\mathrm{Hom}_\Integral(\mathrm{MC}(\Psi_*),\Rational)\cap\mathcal{U}'$,
	since this is a dense subset in $\mathcal{U}'$, 
	while $\varepsilon'$ in (\ref{epsilon_cov}) depends continuously on $\varepsilon$;
	moreover, 
	it suffices to check for any $\varepsilon$ in
	$\mathrm{Hom}_\Integral(\mathrm{MC}(\Psi_*),m'\Integral)\cap\mathcal{U}'$,
	since $\mathcal{U}'$ is rescaling-invariant 
	while $\varepsilon'$ in (\ref{epsilon_cov}) is proportional to $\varepsilon$.
	Below we assume without loss of generality 
	$\varepsilon\in \mathrm{Hom}_{\Integral}(\mathrm{MC}(\Psi_*),m'\Integral)\cap\mathcal{U}'$.
	In this case,	
	we observe that $\varepsilon$ extends uniquely 
	to become a $\Integral$--linear homomorphism $E\to \Integral$,
	since $m'E$ is contained in $\mathrm{MC}(\Psi_*)$.
	We also observe $\varepsilon'(X_i)=\varepsilon(X_i)$
	for all $i=1,\cdots,r$ with the extension of $\varepsilon$,	
	by (\ref{epsilon_cov}).
	It suffices to compare the $\mathrm{Hom}_\Integral(L'_A,L'_B)$--block $P'$
	of the matrix of $(\Psi')_*^{\varepsilon'}$ 
	and	the matrix $P$ of the restricted homomorphism $\Psi_*\colon L_A\to L_B$,
	under the identification $L'_A\cong L_A$ and $L'_B\cong L_B$.
	This is because $L_A$ and $L_B$ are the images of $H'_A$ and $H'_B$ 
	in $H_A$ and $H_B$.
	The matrix coefficient module $\mathrm{MC}(\Psi'_*;L'_A,L'_B)=\mathrm{MC}(\Psi_*;L_A,L_B)$
	is a submodule of $E$, so its elements are all $\Integral$--linear combinations of $X_1,\cdots,X_r$.
	Because $\varepsilon'(X_i)=\varepsilon(X_i)$ holds for all $i=1,\cdots,r$,
	we obtain $P'=P$ from the commutative diagram (\ref{Psi_cov}).
	This shows that 
	$(\Psi')_*^{\varepsilon'}|_{H'_A}\colon H'_A\to H'_B$ 
	descends to $\Psi_*^\varepsilon|_{H_A}\colon H_A\to H_B$,
	and hence,
	$(\Psi')_*^{\varepsilon'}\colon H_1(M'_A;\Real)\to H_1(M'_B;\Real)$ 
	descends to $\Psi_*^\varepsilon\colon H_1(M_A;\Real)\to H_1(M_B;\Real)$.
	In other words, the asserted diagram commutes.
\end{proof}

We proceed to prove Theorem \ref{profinite_isomorphism_npc}.
Let $(M_A,M_B,\Psi)$ be a profinite morphism setting of a $3$--manifold pair (Convention \ref{profinite_morphism_setting}).
Suppose that $\Psi\colon\widehat{\pi_A}\to\widehat{\pi_B}$ is an isomorphism.
Suppose that $M_A$ and hence $M_B$ both have empty or tori boundary,
and admit complete Riemannian metrics of nonpositive sectional curvature
in the interior.

Take a finite cover $M'_A$ of $M_A$ which satisfies the conclusion of Theorem \ref{quasi-fibered}.
We obtain an open dense subset $\mathcal{U}'$ of $\mathrm{Hom}_\Integral(\mathrm{MC}(\Psi_*),\Real)$
as provided by Lemma \ref{U_construction}.
Note that the conclusion in Lemma \ref{U_construction} implies 
the dual commutative diagram
\begin{equation}\label{Psi_dual_cov}
\xymatrix{
	H^1(M_B;\Real) \ar[r]^-{\Psi^*_{\varepsilon}} \ar[d]_{\mathrm{cov}^*} & H^1(M_A;\Real) \ar[d]^{\mathrm{cov}^*}\\
	H^1(M'_B;\Real) \ar[r]^-{(\Psi')^*_{\varepsilon'}}  & H^1(M'_A;\Real),
}
\end{equation}
for any $\varepsilon$ and $\varepsilon'$ as thereof.
In particular, $\Psi^*_\varepsilon$ will necessarily be nondegenerate for any $\varepsilon\in\mathcal{U}'$, 
since $(\Psi')^*_{\varepsilon'}$ is nondegenerate.

We observe the following simplifications for proving Theorem \ref{profinite_isomorphism_npc}:
First, it suffices to prove for any $\varepsilon\in\mathcal{U}'$.
In fact, for the general case where 
$\varepsilon$ is any homomorphism in $\mathrm{Hom}_\Integral(\mathrm{MC}(\Psi_*),\Real)$
with nondegenerate $\Psi^*_\varepsilon$,
we can approach $\varepsilon$ with $\tilde{\varepsilon}\in\mathcal{U}'$,
then $\Psi^*_{\varepsilon}$ will witness the same Thurston-norm cone correspondence
as $\Psi^*_{\tilde{\varepsilon}}$ does when $\tilde{\varepsilon}$ is sufficiently close to $\varepsilon$.
This is because $\Psi^*_{\tilde{\varepsilon}}$ varies continuously 
in $\mathrm{Hom}_\Real(H^1(M_B;\Real),H^1(M_A;\Real))$
as $\tilde{\varepsilon}$ varies in $\mathrm{\mathrm{Hom}_\Integral(\mathrm{MC}(\Psi_*),\Real)}$,
and because being nondegenerate is an open condition about $\tilde{\varepsilon}$.
Moreover, it suffices to show that any codimension--$0$ Thurston-norm cone for $M_B$
projects onto a codimension--$0$ Thurston-norm cone for $M_A$
under $\Psi^*_\varepsilon$.
This is because any lower-dimensional cone can be uniquely specified
using top-dimensional ones by taking suitable intersection of their closures,
and because $\Psi^*_\varepsilon$ is a linear isomorphism under the nondegeneracy assumption.

Without loss of generality, we assume $\varepsilon\in\mathcal{U'}$ below,
and prove the asserted correspondence only for codimension--$0$ Thurston-norm cones.

Under the covering induced embedding $H^1(M_B;\Real)\to H^1(M'_B;\Real)$,
the intersection of the Thurston-norm unit ball for $M'_B$ with the embedded image of $H^1(M_B;\Real)$
coincides with the Thurston-norm unit ball for $M_B$ dilated by the covering degree,
(see Theorem \ref{TN_cover}).
By convexity of Thurston-norm unit balls, 
the intersection of any closed Thurston-norm cone for $M'_B$
with the image of $H^1(M_B;\Real)$ is 
either the origin or the image of a closed Thurston-norm cone for $M_B$.
The same description works for $M'_A\to M_A$ as well.

Given $\varepsilon\in\mathcal{U}'$,
we obtain $\varepsilon'\colon \mathrm{MC}(\Psi'_*)\to\Integral$
as in the conclusion of Lemma \ref{U_construction}. 
For any codimension--$0$ closed Thurston-norm cone $\bar{\mathcal{C}}$ for $M_A$,
there exists some codimension--$0$ closed Thurston-norm cone $\bar{\mathcal{C}}'$ for $M'_A$,
such that the interior of $\bar{\mathcal{C}}'$ is a fibered cone, 
and such that the intersection of $\bar{\mathcal{C}}'$ 
with the image of $H^1(M_A;\Real)$ is the image of $\bar{\mathcal{C}}$,
by the construction of $M'_A$ and Theorem \ref{quasi-fibered}.
Under the inverse of the linear isomorphism $(\Psi')^*_{\varepsilon'}$,
the embedded image of $H^1(M_A;\Real)$ projects isomorphically onto the embedded image of $H^1(M_B;\Real)$,
by the commutative diagram (\ref{Psi_dual_cov}).
The closed cone $\bar{\mathcal{C}}'$ projects into a unique codimension--$0$ closed Thurston-norm cone for $M'_B$,
(which is also the closure of some fibered cone,) by Theorem \ref{fibered_cone}.
It follows that $\bar{\mathcal{C}}$ projects into a unique closed Thurston-norm cone for $M_B$,
which is necessarily of codimension--$0$,
under the inverse of $\Psi^*_{\varepsilon}$.
Since the codimension--$0$ closed Thurston-norm cones for $M_A$
fill $H^1(M_A;\Real)$ altogether, 
the number of codimension--$0$ cones for $M_B$ is at most that number for $M_A$.
It follows by symmetry that $M_A$ has the same number of codimension--$0$ cones 
as $M_B$ does.
Therefore,
every codimension--$0$ closed Thurston-norm cone for $M_A$ must project
onto the codimension--$0$ closed Thurston-norm cone for $M_B$
which contains its image under $\Psi^*_\varepsilon$ inverse.
In other words, every codimension--$0$ Thurston-norm cone for $M_B$
project onto a unique and distinct codimension--$0$ Thurston-norm cone for $M_A$
under $\Psi^*_\varepsilon$, as asserted.

This completes the proof of Theorem \ref{profinite_isomorphism_npc}.

\section{Profinite correspondence of the Thurston-norm unit ball}\label{Sec-TN_unit_ball}
In this section, we prove Theorems \ref{main_xregular} and \ref{main_Thurston_norm},
which are restated in the effective form 
as Theorem \ref{profinite_isomorphism_hyperbolic}
and Corollaries \ref{fibered_cone_correspondence} and \ref{TN_correspondence}.

\begin{theorem}\label{profinite_isomorphism_hyperbolic}
Let $(M_A,M_B,\Psi)$ be a profinite morphism setting of a $3$--manifold pair (Convention \ref{profinite_morphism_setting}).
Suppose that $\Psi\colon\widehat{\pi_A}\to\widehat{\pi_B}$ is an isomorphism.
Suppose that $M_A$ and $M_B$
both have positive first Betti number,
and admit complete hyperbolic metrics of finite volume in the interior.

Then, there exists some unit $\mu\in\widehat{\Integral}^\times$, such that 
$\mathrm{MC}(\Psi_*)$ is the $\Integral$--submodule $\mu\Integral$ of $\widehat{\Integral}$.
Note that the scalar multiplication by $\mu^{-1}$ on $\widehat{\Integral}$ 
defines a homomorphism $1/\mu\in\mathrm{Hom}_\Integral(\mathrm{MC}(\Psi_*),\Real)$
with image in $\Integral$.
Therefore, $\Psi^*_{1/\mu}\colon H^1(M_B;\Real)\to H^1(M_A;\Real)$ 
is the $\Real$--linear extension of 
a $\Integral$--module isomorphism $H^1(M_B;\Integral)\to H^1(M_A;\Integral)$.
\end{theorem}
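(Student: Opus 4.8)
The plan is to prove that $\mathrm{rank}_\Integral(\mathrm{MC}(\Psi_*))=1$, and then to read off the stated conclusion by an elementary determinant computation. Write $b=b_1(M_A)$; since $\Psi$ is an isomorphism, $b=b_1(M_B)>0$. Both $M_A$ and $M_B$ are irreducible, have empty or tori boundary, and admit complete metrics of nonpositive curvature in their interiors, so Theorem \ref{profinite_isomorphism_npc} and Corollaries \ref{fibered_cone_correspondence} and \ref{cone_carrier_correspondence} apply: for any $\varepsilon\in\mathrm{Hom}_\Integral(\mathrm{MC}(\Psi_*),\Real)$ with $\Psi^*_\varepsilon$ nondegenerate, $\Psi^*_\varepsilon$ witnesses a bijective correspondence between the Thurston-norm cones, hence between the fibered cones, and the induced correspondence of cone carriers depends only on $\Psi$. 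In particular, if $\phi_B$ is a primitive fibered class of $M_B$ in the interior of a codimension-$0$ fibered cone $\mathcal{C}_B$ corresponding to a fibered cone $\mathcal{C}_A$ of $M_A$, then $\Psi_*^\varepsilon$ carries the extreme rays of the dual cone $\mathcal{C}^{\mathtt{Fr}}(M_B,\phi_B)$ bijectively onto those of $\mathcal{C}^{\mathtt{Fr}}(M_A,\phi_A)$ for a corresponding fibered class $\phi_A$, and this bijection is locally constant in $\varepsilon$.

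The first step is a linear-algebra criterion. Suppose there exist corresponding codimension-$0$ fibered cones $\mathcal{C}_B$ for $M_B$ and $\mathcal{C}_A$ for $M_A$, a nonempty connected open subset $\mathcal{U}\subset\mathrm{Hom}_\Integral(\mathrm{MC}(\Psi_*),\Real)$ on which $\Psi^*_\varepsilon$ is nondegenerate, and rays $\ell_B\subset\mathrm{int}\,\mathcal{C}^{\mathtt{Fr}}(M_B,\phi_B)$ and $\ell_A\subset\mathrm{int}\,\mathcal{C}^{\mathtt{Fr}}(M_A,\phi_A)$ such that $\Psi_*^\varepsilon(\ell_B)=\ell_A$ for every $\varepsilon\in\mathcal{U}$; then $\mathrm{rank}_\Integral(\mathrm{MC}(\Psi_*))=1$. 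The mechanism is the following. Fix a $\Integral$-basis $z_1,\dots,z_r$ of $\mathrm{MC}(\Psi_*)$, so that $\Psi_*^\varepsilon=\sum_i\varepsilon(z_i)\,\Phi_i$ for fixed $\Phi_i\in\mathrm{Hom}_\Integral(H_A,H_B)$ read off from $\Psi_*^{\mathrm{MC}}$ (Definitions \ref{MC_def} and \ref{specialization_def}). Choose a basis $e_1,\dots,e_b$ of $H_1(M_B;\Real)$ consisting of extreme rays of $\mathcal{C}^{\mathtt{Fr}}(M_B,\phi_B)$, arranged so that the direction of $\ell_B$ has a nonzero component along each $e_j$ (arrangeable using the flexibility in the construction below and further covers). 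Local constancy of the extreme-ray correspondence on $\mathcal{U}$ forces each $\Phi_i$ to be ``diagonal'' with respect to $e_1,\dots,e_b$ and the corresponding extreme rays downstairs in $H_1(M_A;\Real)$; the rigidity $\Psi_*^\varepsilon(\ell_B)=\ell_A$ then forces, coordinate by coordinate, the diagonal entries of $\Phi_1,\dots,\Phi_r$ to be pairwise proportional, so $\Phi_1,\dots,\Phi_r$ are pairwise proportional, so $r=1$.

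The main obstacle is the production of the rigidly corresponding interior rays; this is where hyperbolicity enters. Pass to a $\Psi$-corresponding pair of finite regular covers $M'_A\to M_A$, $M'_B\to M_B$ (Definition \ref{corresponding_cover_def}), chosen via Theorem \ref{quasi-fibered} and virtual specialness so that $M'_A$ admits a fibered cone; then $M'_B$ admits a corresponding fibered cone by Corollary \ref{fibered_cone_correspondence}, and the extreme-ray correspondence upstairs is again locally constant in the upstairs parameter. The key input from \cite{Liu_vhsr} --- resting on Dehn fillings of quasiconvex subgroups in virtually special word-hyperbolic groups \cite{Wise_book,Wise_notes,AGM-MSQT} --- produces, after possibly a further cover, an extreme ray of a fibered cone for $M'_B$ whose image under $\mathrm{cov}_*\colon H_1(M'_B;\Real)\to H_1(M_B;\Real)$ lies in the interior of $\mathcal{C}^{\mathtt{Fr}}(M_B,\phi_B)$, and likewise for $M'_A$, with the two virtual extreme rays corresponding under the upstairs specializations. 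By Lemma \ref{U_construction} there is an open dense $\mathcal{U}'\subset\mathrm{Hom}_\Integral(\mathrm{MC}(\Psi_*),\Real)$ such that for $\varepsilon\in\mathcal{U}'$ one finds $\varepsilon'$ with $(\Psi')_*^{\varepsilon'}$ nondegenerate and descending to $\Psi_*^\varepsilon$ along $\mathrm{cov}_*$; combining this with local constancy of the upstairs extreme-ray correspondence yields, on a nonempty open subset of $\mathcal{U}'$, the data $\ell_B$ and $\ell_A$ demanded by the criterion. For closed hyperbolic $M_A$ and $M_B$ this completes the argument; the cusped case is reduced to the closed one using the profinite correspondence of peripheral subgroups due to Wilton--Zalesskii \cite{WZ_geometry,WZ_decomposition}. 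Hence $\mathrm{rank}_\Integral(\mathrm{MC}(\Psi_*))=1$.

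Finally, write $\mathrm{MC}(\Psi_*)=z\Integral$ for a nonzero $z\in\widehat{\Integral}$. By Proposition \ref{MC_property}(2) there is $F\in\mathrm{Hom}_\Integral(H_A,H_B)$ with $\Psi_*=z\widehat{F}$ in $\mathrm{Hom}(\widehat{H_A},\widehat{H_B})$. Since $\Psi$ is an isomorphism, $\Psi_*$ is a $\widehat{\Integral}$-linear isomorphism of free modules of rank $b$, so, fixing bases, $\det(\Psi_*)=z^{b}\det(F)\in\widehat{\Integral}^\times$. The rational integer $\det(F)$ thereby divides a unit of $\widehat{\Integral}$, and an element of $\Integral$ that is a unit in $\widehat{\Integral}\cong\prod_p\Integral_p$ must be $\pm1$; hence $\det(F)=\pm1$ and $z^{b}\in\widehat{\Integral}^\times$, which forces $z=:\mu\in\widehat{\Integral}^\times$. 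Thus $\mathrm{MC}(\Psi_*)=\mu\Integral$. Scalar multiplication by $\mu^{-1}$ is a homomorphism $1/\mu\in\mathrm{Hom}_\Integral(\mathrm{MC}(\Psi_*),\Real)$ with image $\Integral$, and unwinding Definition \ref{specialization_def} shows that $\Psi_*^{1/\mu}$ is the $\Real$-linear extension of $F\colon H_A\to H_B$, an isomorphism because $\det(F)=\pm1$; dually, $\Psi^*_{1/\mu}\colon H^1(M_B;\Real)\to H^1(M_A;\Real)$ is the $\Real$-linear extension of the $\Integral$-module isomorphism $F^*\colon H^1(M_B;\Integral)\to H^1(M_A;\Integral)$, as asserted.
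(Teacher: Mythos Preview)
Your overall strategy matches the paper's: a rank-one criterion for $\mathrm{MC}(\Psi_*)$ fed by a virtual extreme ray (from \cite{Liu_vhsr}) that projects to the interior of the projective dual polytope, passage to $\Psi$--corresponding covers via Lemma~\ref{U_construction}, the closed case first, then the cusped case, and the final determinant argument is exactly the mechanism behind Lemma~\ref{profinite_isomorphism_hyperbolic_reduction}. (A small notational slip: $\Psi_*^\varepsilon$ goes from $H_1(M_A;\Real)$ to $H_1(M_B;\Real)$, so the roles of $\ell_A,\ell_B$ and of the bases in your criterion are interchanged.)

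The rank-one criterion, however, has a genuine gap. You need a basis $e_1,\dots,e_b$ of extreme rays such that the interior ray has a nonzero component along each $e_j$, and you assert this is ``arrangeable''. It is not in general. Take $b=4$ and suppose $\mathcal{D}(M_A,\mathcal{C}_A)$ is an octahedron in the affine chart, with the produced interior point at its center: any four vertices that span must include an antipodal pair $v,-v$, and then the center is the midpoint of the edge $[v,-v]$, so two barycentric coordinates vanish. Since the dual polytope of a fibered cone can have many more than $b$ vertices and the virtual extreme ray is produced by a discrete construction (not perturbable), this obstruction cannot be dismissed by a genericity wave. The paper's criterion (Lemma~\ref{new_point_criterion}) avoids the issue entirely: instead of a chosen basis, it uses \emph{all} vertices of $\mathcal{D}(M_A,\mathcal{C}_A)$ together with one interior point, and proves by induction on the vertex count that a projective-linear transformation fixing all of these is the identity---this is precisely the strengthening your diagonalization argument is missing. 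Separately, your cusped reduction is too thin: the paper's argument (Lemma~\ref{profinite_isomorphism_hyperbolic_robust}) designs $\Psi$--corresponding hyperbolic Dehn fillings using the peripheral correspondence of \cite{WZ_decomposition}, passes through orbifold fillings to get back to the closed case, and then reassembles the matrix-coefficient information across all admissible filling patterns; a final cover from \cite{CLR_closed_surface} guarantees the boundary-cokernel hypothesis. None of these steps is routine.
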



\begin{corollary}\label{fiber_surface_correspondence}
	Under the assumptions of Theorem \ref{profinite_isomorphism_hyperbolic},
	if $S_A$ is a connected fiber surface of $M_A$, with respect to some fibration over a circle, 
	then there exists a homotopically unique connected fiber surface of $M_B$,
	such that the closure of $\pi_1(S_A)$ projects isomorphically onto the closure of $\pi_1(S_B)$
	under $\Psi$. 
\end{corollary}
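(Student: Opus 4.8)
The plan is to reduce the statement to the profinite correspondence of primitive fibered cohomology classes, and then to read off the fiber subgroups as kernels of the associated maps to $\Integral$. A connected fiber surface $S_A$ of $M_A$ is the fiber of a fibration $M_A\to\Sph^1$ representing a primitive class $\phi_A\in H^1(M_A;\Integral)$, and $\pi_1(S_A)$ is exactly the kernel of $\phi_A\colon\pi_A\to\Integral$. The first point I would record is that the closure of $\pi_1(S_A)$ in $\widehat{\pi_A}$ equals $\ker\bigl(\widehat{\phi_A}\colon\widehat{\pi_A}\to\widehat{\Integral}\bigr)$: the fibration writes $\pi_A$ as $\pi_1(S_A)\rtimes\Integral$, every finite-index subgroup of the finitely generated group $\pi_1(S_A)$ contains a finite-index characteristic one, which is monodromy-invariant and hence is the intersection with $\pi_1(S_A)$ of a finite-index subgroup of $\pi_A$, so the profinite topology of $\pi_A$ induces the full profinite topology on $\pi_1(S_A)$; completing the split exact sequence $1\to\pi_1(S_A)\to\pi_A\to\Integral\to1$ then gives $1\to\widehat{\pi_1(S_A)}\to\widehat{\pi_A}\to\widehat{\Integral}\to1$, identifying $\widehat{\pi_1(S_A)}$ with the closure $\ker(\widehat{\phi_A})$. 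The same applies to connected fiber surfaces of $M_B$.

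Next I would invoke Theorem \ref{profinite_isomorphism_hyperbolic}. Since $M_A$ and $M_B$ are finite-volume hyperbolic, they are irreducible with empty or toral boundary and carry complete nonpositively curved metrics in their interiors, and they have positive first Betti number because $M_A$ fibers; so there is a unit $\mu\in\widehat{\Integral}^\times$ with $\mathrm{MC}(\Psi_*)=\mu\Integral$, and $\Psi^*_{1/\mu}\colon H^1(M_B;\Real)\to H^1(M_A;\Real)$ is the $\Real$-linear extension of an isomorphism $H^1(M_B;\Integral)\to H^1(M_A;\Integral)$. In particular $\Psi^*_{1/\mu}$ is nondegenerate, so Corollary \ref{fibered_cone_correspondence} (with $\varepsilon=1/\mu$) applies and carries the fibered cones for $M_B$ bijectively onto those for $M_A$. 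Consequently the fibered cone containing $\phi_A$ is the $\Psi^*_{1/\mu}$-image of a unique fibered cone for $M_B$; I set $\phi_B=(\Psi^*_{1/\mu})^{-1}(\phi_A)\in H^1(M_B;\Integral)$. Then $\phi_B$ lies in the interior of that fibered cone, hence is fibered, and it is primitive because $\Psi^*_{1/\mu}$ is an isomorphism of integral lattices. Let $S_B$ be the corresponding connected fiber surface, so $\pi_1(S_B)=\ker(\phi_B)$.

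The key computation I would then carry out is the identity $\widehat{\phi_B}\circ\Psi_*=\mu\,\widehat{\phi_A}$ of homomorphisms $\widehat{H_A}\to\widehat{\Integral}$. Taking $\mu$ as the chosen generator of $\mathrm{MC}(\Psi_*)$, Proposition \ref{MC_property}(2) gives a unique $F\colon H_A\to H_B$ with $\Psi_*=\mu\,\widehat{F}$, and unwinding Definition \ref{specialization_def} identifies $F$ with the integral restriction of the specialization $\Psi_*^{1/\mu}$, so the relation $\phi_A=\Psi^*_{1/\mu}(\phi_B)$ reads $\phi_A=\phi_B\circ F$ on $H_A$. Completing and using the $\widehat{\Integral}$-linearity of $\widehat{\phi_B}$ gives $\widehat{\phi_B}\circ\Psi_*=\widehat{\phi_B}\circ(\mu\,\widehat{F})=\mu\,(\widehat{\phi_B}\circ\widehat{F})=\mu\,\widehat{\phi_A}$; composing with the abelianization $\widehat{\pi_A}\to\widehat{H_A}$ yields $\widehat{\phi_B}\circ\Psi=\mu\,\widehat{\phi_A}$ as homomorphisms $\widehat{\pi_A}\to\widehat{\Integral}$. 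As $\mu$ is a unit, $\ker(\widehat{\phi_B}\circ\Psi)=\ker(\widehat{\phi_A})$, and since $\Psi$ is an isomorphism this gives $\Psi(\ker\widehat{\phi_A})=\ker\widehat{\phi_B}$. Combined with the first paragraph, $\Psi$ restricts to an isomorphism $\overline{\pi_1(S_A)}\to\overline{\pi_1(S_B)}$, as required. For uniqueness, if $S_B'$ is another connected fiber surface with $\Psi(\overline{\pi_1(S_A)})=\overline{\pi_1(S_B')}$, then $\overline{\pi_1(S_B')}=\overline{\pi_1(S_B)}$; intersecting these closed subgroups of $\widehat{\pi_B}$ with $\pi_B$ gives $\ker\phi_{B'}=\ker\phi_B$, so $\phi_{B'}=\pm\phi_B$, whence $S_B'$ is isotopic, hence homotopic, to $S_B$.

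I expect the only genuinely substantial input to be the combination of Theorem \ref{profinite_isomorphism_hyperbolic} with Corollary \ref{fibered_cone_correspondence}, used to manufacture the primitive fibered class $\phi_B$ on the $M_B$ side; the remaining ingredients — that the fiber subgroup is isolated in the profinite topology, and the bookkeeping with the ambiguity unit $\mu$ — are routine once those are in hand.
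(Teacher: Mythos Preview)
Your proof is correct and follows essentially the same approach as the paper: produce the corresponding primitive fibered class via Theorem~\ref{profinite_isomorphism_hyperbolic} and Corollary~\ref{fibered_cone_correspondence}, then match the kernel closures using the identity $\widehat{\phi_B}\circ\Psi=\mu\,\widehat{\phi_A}$ (the paper phrases this on the $H$-level via $F_\mu$ and the relation $\Psi_*(\mathrm{clos}(\ker\psi_{A*}))=\mu\,\mathrm{clos}(\ker\phi_{B*})=\mathrm{clos}(\ker\phi_{B*})$, which is the same computation). Your version is slightly more explicit in justifying that the closure of the fiber subgroup coincides with $\ker(\widehat{\phi})$ and in spelling out the uniqueness, both of which the paper leaves implicit.
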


\begin{proof}
	Let $\phi_B\colon\pi_B\to \Integral$ be the surjective homomorphism 
	induced by the fibration of $M_B$ with a connected fiber surface $S_B$.
	Then $\pi_1(S_B)$ can be identified as the kernel of $\phi_B$,
	and $\phi_B$ represents a primitive integral cohomology class in $H^1(M_B;\Integral)$,
	which is fibered for $M_B$.
	By Theorem \ref{profinite_isomorphism_hyperbolic},
	$\Psi^*_{1/\mu}$ maps $\phi_B$ 
	to a primitive integral cohomology class $\psi_A$ in $H^1(M_A;\Integral)$.
	Moreover, $\psi_A$ is fibered for $M_A$ by Corollary \ref{fibered_cone_correspondence}.
	Denote by $S_A$ a connected fiber surface for the fibration of $M_A$ associated to $\psi_A$,
	so $\pi_1(S_A)$ can be identified with the kernel of
	the induced surjective homomorphism $\psi_A\colon\pi_A\to \Integral$.
	
	Denote by $\phi_{B*}\colon H_B\to \Integral$ the homomorphism induced by $\phi_B$,
	and similarly $\psi_{A*}\colon H_A\to \Integral$.
	The preimage of the closure of $\mathrm{Ker}(\phi_{B*})$ in $\widehat{H_B}$,
	with respect to $\widehat{\pi_B}\to\widehat{H_B}$,
	is precisely the closure of $\pi_1(S_B)$ in $\widehat{\pi_B}$.
	The similar description holds for the closure of $\pi_1(S_A)$ in $\widehat{\pi_A}$.
	Therefore, it suffices to show that the isomorphism $\Psi_*\colon \widehat{H_A}\to \widehat{H_B}$	
	maps the kernel closure of $\psi_{A*}$ onto the kernel closure of $\phi_{B*}$.
	In fact, this is an easy consequence of the characterization $\mathrm{MC}(\Psi_*)=\mu\Integral$.
	
	To be precise,
	Theorem \ref{profinite_isomorphism_hyperbolic} implies that
	$\Psi_*$ is the completion of an isomorphism $F_\mu\colon H_A\to H_B$ 
	followed by a $\mu$--scalar multiplication on $\widehat{H_B}=H_B\otimes_\Integral\widehat{\Integral}$;
	see also Proposition \ref{MC_property} (2).
	Then $\psi_{A*}$ is by definition the composite homomorphism
	$$\xymatrix{
	H_A	\ar[r]^-{F_\mu\otimes\mu} & H_B\otimes_\Integral\mu\Integral \ar[r]^-{1\otimes\,\mu^{-1}} 
	& H_B\otimes_\Integral\Integral \ar[r]^-{=} & H_B \ar[r]^{\phi_{B*}} & \Integral,
	}$$
	where $\mu\Integral$ is precisely $\mathrm{MC}(\Phi_*)$.
	We obtain the relation
	$\Psi_*(\mathrm{Ker}(\psi_{A*}))=\mu F_\mu(\mathrm{Ker}(\phi_{A*}))=\mu\mathrm{Ker}(\phi_{B*})$
	in $\widehat{H_B}$.
	Since $\mathrm{Ker}(\phi_{B*})$ is a $\Integral$--submodule of $H_B$,
	the closure of $\mathrm{Ker}(\phi_{B*})$ in $\widehat{H_B}$ 
	is invariant under scalar multiplication by a unit.
	Therefore, we obtain
	$\Psi_*(\mathrm{clos}(\mathrm{Ker}(\psi_{A*})))=\mathrm{clos}(\mu\mathrm{Ker}(\phi_{B*}))
	=\mu\,\mathrm{clos}(\mathrm{Ker}(\phi_{B*}))=\mathrm{clos}(\mathrm{Ker}(\phi_{B*}))$
	in $\widehat{H_B}$, as desired.	
\end{proof}

\begin{corollary}\label{TN_correspondence}
	Under the assumptions of Theorem \ref{profinite_isomorphism_hyperbolic},
	$\Psi^*_{1/\mu}$ 
	witnesses a linear isomorphism between the Thurston-norm unit ball for $M_B$ and that for $M_A$.
	In other words, $\Psi^*_{1/\mu}$ is Thurston-norm preserving.
\end{corollary}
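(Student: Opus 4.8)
\emph{Proof proposal.} The plan is to transport the Thurston norm through a suitable finite cover, on which $\|\cdot\|_{\mathtt{Th}}$ is pinned down, cone by cone, by Euler characteristics of fiber surfaces. First I would choose a finite regular cover $M'_B\to M_B$ as furnished by Theorem~\ref{quasi-fibered}, so that every nontrivial class of $H^1(M'_B;\Real)$ lies in the closure of a fibered cone, and let $M'_A\to M_A$ be the $\Psi$--corresponding cover with restricted isomorphism $\Psi'\colon\widehat{\pi'_A}\to\widehat{\pi'_B}$ (Definition~\ref{corresponding_cover_def}). Since finite covers of finite-volume hyperbolic $3$--manifolds are again finite-volume hyperbolic and $b_1$ cannot drop under covers, Theorem~\ref{profinite_isomorphism_hyperbolic} applies to $(M'_A,M'_B,\Psi')$, so $\mathrm{MC}(\Psi'_*)$ has rank one; using Lemma~\ref{MC_cover} and $\widehat{\Integral}^\times\cap\Rational=\{\pm1\}$ one sees $\mathrm{MC}(\Psi'_*)=\mu\Integral$ as well, and $\Psi'^*_{1/\mu}$ is the $\Real$--linear extension of a $\Integral$--module isomorphism $H^1(M'_B;\Integral)\to H^1(M'_A;\Integral)$. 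Writing $\Psi_*=\mu\widehat{F_\mu}$ and $\Psi'_*=\mu\widehat{F'_\mu}$ by Proposition~\ref{MC_property}(2), the square~(\ref{Psi_cov}) together with the fact that $\mu$ is a non-zero-divisor in $\widehat{\Integral}$ forces $\mathrm{cov}_*\circ F'_\mu=F_\mu\circ\mathrm{cov}_*$ over $\Integral$; dualizing gives the commuting square
\[
\mathrm{cov}^*\circ\Psi^*_{1/\mu}=\Psi'^*_{1/\mu}\circ\mathrm{cov}^*\colon H^1(M_B;\Real)\to H^1(M'_A;\Real),
\]
which is~(\ref{Psi_dual_cov}) specialized to $\varepsilon=1/\mu$.

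Next I would prove that $\Psi'^*_{1/\mu}$ is Thurston-norm preserving for the pair $(M'_A,M'_B)$. By Corollary~\ref{fibered_cone_correspondence} applied to $(M'_A,M'_B,\Psi')$ (whose hypotheses include the nondegeneracy of $\Psi'^*_{1/\mu}$, granted by Theorem~\ref{profinite_isomorphism_hyperbolic}), the linear isomorphism $\Psi'^*_{1/\mu}$ carries each fibered cone of $M'_B$ onto a fibered cone of $M'_A$, closure to closure. Since the closures of the fibered cones of $M'_B$ cover $H^1(M'_B;\Real)$, it suffices to verify $\|\Psi'^*_{1/\mu}(\xi)\|_{\mathtt{Th}}=\|\xi\|_{\mathtt{Th}}$ for $\xi$ in the closure $\overline{\mathcal C'_B}$ of an arbitrary fibered cone $\mathcal C'_B$. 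On $\overline{\mathcal C'_B}$ both $\xi\mapsto\|\xi\|_{\mathtt{Th}}$ and $\xi\mapsto\|\Psi'^*_{1/\mu}(\xi)\|_{\mathtt{Th}}$ are linear (the latter because $\Psi'^*_{1/\mu}$ maps $\overline{\mathcal C'_B}$ linearly onto the closure of a fibered cone of $M'_A$, where $\|\cdot\|_{\mathtt{Th}}$ is linear), so it is enough to check equality on the primitive integral classes in $\mathrm{int}(\mathcal C'_B)$, which span $H^1(M'_B;\Real)$. For such $\xi$, the class $\Psi'^*_{1/\mu}(\xi)$ is again primitive integral in the interior of a fibered cone of $M'_A$; writing $S_\xi$ and $S'_\xi$ for the connected fibers of the two fibrations, Thurston gives $\|\xi\|_{\mathtt{Th}}=-\chi(S_\xi)$ and $\|\Psi'^*_{1/\mu}(\xi)\|_{\mathtt{Th}}=-\chi(S'_\xi)$. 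By Corollary~\ref{fiber_surface_correspondence}, $\Psi'$ restricts to an isomorphism between the closures of $\pi_1(S_\xi)$ and $\pi_1(S'_\xi)$, hence $\widehat{\pi_1(S_\xi)}\cong\widehat{\pi_1(S'_\xi)}$; since the Euler characteristic of a surface group is a profinite invariant (surface groups are residually finite and cohomologically good, and $\chi$ is the alternating sum of the $\mathbb{F}_p$--dimensions of group cohomology), we conclude $\chi(S_\xi)=\chi(S'_\xi)$, and the equality follows.

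Finally I would descend. For any $\phi\in H^1(M_B;\Real)$, Theorem~\ref{TN_cover} and the commuting square of the first paragraph give
\[
[M'_A:M_A]\cdot\|\Psi^*_{1/\mu}(\phi)\|_{\mathtt{Th}}
=\|\mathrm{cov}^*(\Psi^*_{1/\mu}(\phi))\|_{\mathtt{Th}}
=\|\Psi'^*_{1/\mu}(\mathrm{cov}^*\phi)\|_{\mathtt{Th}}
=\|\mathrm{cov}^*\phi\|_{\mathtt{Th}}
=[M'_B:M_B]\cdot\|\phi\|_{\mathtt{Th}},
\]
where the third equality is the content of the second paragraph. Because $\Psi$ is an isomorphism, the covering degrees $[M'_A:M_A]$ and $[M'_B:M_B]$ both equal the index of $\mathrm{clos}(\pi'_B)$ in $\widehat{\pi_B}$ and hence agree, so $\|\Psi^*_{1/\mu}(\phi)\|_{\mathtt{Th}}=\|\phi\|_{\mathtt{Th}}$. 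Combined with the identification of $\Psi^*_{1/\mu}$ as a $\Integral$--linear isomorphism $H^1(M_B;\Integral)\to H^1(M_A;\Integral)$ from Theorem~\ref{profinite_isomorphism_hyperbolic}, this is exactly the asserted linear isomorphism between the Thurston-norm unit balls.

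I expect the main obstacle to be the bookkeeping in the first paragraph: one must ensure that passing to the $\Psi$--corresponding cover does not alter the matrix coefficient module (as a submodule of $\widehat{\Integral}$), so that the \emph{same} $\mu$, and hence the same integral linear map, governs both levels and the covering square commutes on the nose; this is where Proposition~\ref{MC_property}(2), Lemma~\ref{MC_cover}, and the non-zero-divisor property of $\mu$ must be used in concert. Beyond that, the only genuinely new ingredient over the cone-correspondence machinery of Sections~\ref{Sec-TN_cones}--\ref{Sec-TN_unit_ball} is the profinite invariance of the Euler characteristic of fiber surfaces, which reduces $\|\cdot\|_{\mathtt{Th}}$ on fibered cones to a profinitely recognizable quantity.
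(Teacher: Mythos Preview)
Your argument follows essentially the same strategy as the paper's proof: use Corollary~\ref{fiber_surface_correspondence} together with cohomological goodness to pin down the Thurston norm on fibered classes via the Euler characteristic of the fiber, pass to a quasi-fibering cover via Theorem~\ref{quasi-fibered}, match $\mu'$ with $\mu$ via Lemma~\ref{MC_cover}, and descend using Theorem~\ref{TN_cover}. The paper organizes things slightly differently (it first proves norm preservation on fibered classes of $(M_A,M_B)$ and only afterwards climbs to the cover), but the content is the same.

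There is one genuine misstatement you should fix. Theorem~\ref{quasi-fibered} does \emph{not} say that every nontrivial class of $H^1(M'_B;\Real)$ lies in the closure of a fibered cone; it says only that the \emph{pull-back} of every nontrivial class of $H^1(M_B;\Real)$ does. Consequently your claim in the second paragraph that ``the closures of the fibered cones of $M'_B$ cover $H^1(M'_B;\Real)$'' is not justified, and you have not actually proved that $\Psi'^*_{1/\mu}$ is Thurston-norm preserving on all of $H^1(M'_B;\Real)$. Fortunately you never use the full statement: in the third paragraph you only apply the cover-level equality to classes of the form $\mathrm{cov}^*\phi$, and those are precisely the pull-back classes guaranteed to be quasi-fibered. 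So the repair is simply to restrict the second paragraph to $\xi$ in the image of $\mathrm{cov}^*\colon H^1(M_B;\Real)\to H^1(M'_B;\Real)$; each such $\xi$ lies in some $\overline{\mathcal C'_B}$, and your linearity-plus-Euler-characteristic argument then gives $\|\Psi'^*_{1/\mu}(\xi)\|_{\mathtt{Th}}=\|\xi\|_{\mathtt{Th}}$ for exactly the classes you need. (The paper handles this step by continuity---approaching $\mathrm{cov}^*\phi$ by nearby fibered classes in $H^1(M'_B;\Real)$---rather than by linearity on a cone, but either works.)
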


\begin{proof}
	We first show that $\Psi^*_{1/\mu}$ is 
	Thurston-norm preserving on fibered cones, 
	using Corollary \ref{fiber_surface_correspondence}. 
	Note that $\Psi^*_{1/\mu}$ maps fibered cones for $M_B$ isomorphically onto fibered cones for $M_A$,
	(Corollary \ref{fibered_cone_correspondence}).
	It suffices to check the result for primitive integral classes, 
	since the Thurston norm is linear on its cones.
	To this end,
	we use the fact that
	the Thurston norm of any primitive integral fibered class is equal to $-1$ times
	the Euler characteristic of the connected surface fiber,
	with respect to the dual fibration.
	Note that hyperbolic $3$--manifolds of finite volume 
	can only have fiber surfaces of negative Euler characteristic.
	Corollary \ref{fiber_surface_correspondence} shows that $\Psi^*_{1/\mu}$ 
	preserves the isomorphism type of profinite completion of the fiber subgroup, 
	which determines the Euler characteristic, 
	because surface groups are all cohomologically good.
	Therefore, $\Psi^*_{1/\mu}$ is Thurston-norm preserving on fibered cones.
	
	To prove that $\Psi^*_{1/\mu}$ preserve the Thurston norm on all cones,
	we pass to a $\Psi$--corresponding pair of finite covers $M'_A\to M_A$ and $M'_B\to M_B$.
	Applying Theorem \ref{quasi-fibered} to $M_A$, we can require that 
	the image of every $\phi\in H^1(M_A;\Real)$	lies in the boundary of some fibered cone for $M'_A$,
	with respect to the covering-induced homomorphism $H^1(M_A;\Real)\to H^1(M'_A;\Real)$.
	(Hence, the same property holds for $M'_B$ by Corollary \ref{fibered_cone_correspondence}.)
	Applying Theorem \ref{profinite_isomorphism_hyperbolic} to the setting $(M'_A,M'_B,\Psi')$,
	we obtain some unit $\mu'\in\widehat{\Integral}^\times$,
	such that $\mathrm{MC}(\Psi'_*)$ equals $\mu'\Integral$.
	By Lemma \ref{MC_cover}, $\mu\Integral$ 
	is commensurable with a $\Integral$--submodule of $\mu'\Integral$ in $\widehat{\Integral}$,
	so we must have $\mu'=\pm\mu$, since $\mu$ and $\mu'$ are both units.
	Possibly switching to the other generator $-\mu'$ of $\mathrm{MC}(\Psi'_*)$, 
	we assume $\mu'=\mu$ without loss of generality.
	Then we obtain the commutative diagram 
	$$\xymatrix{
	 H^1(M_B;\Real) \ar[r]^-{\Psi^*_{1/\mu}} \ar[d]_{\mathrm{cov}^*} & H^1(M_A;\Real) \ar[d]^{\mathrm{cov}^*} \\
	 H^1(M'_B;\Real) \ar[r]^-{(\Psi')^*_{1/\mu}} & H^1(M'_A;\Real)
	}$$
	
	For any cohomology class $\phi$ in $H^1(M_B;\Real)$, 
	we approach the image of $\phi$ using fibered classes in $H^1(M'_B;\Real)$,
	and apply the Thurston-norm preserving property for fibered classes as we have proved.
	Using the Thurston-norm formula for pullback to finite covers (Theorem \ref{TN_cover}),
	we see that the Thurston norm of $\phi$ with respect to $M_B$ 
	is equal to the Thurston norm of $\Psi^*_{1/\mu}(\phi)$ with respect to $M_A$,
	as desired.
\end{proof}

%
%

The rest of this section is devoted to the proof of Theorem \ref{profinite_isomorphism_hyperbolic}.
Our goal is to show that the matrix coefficient module for $\Psi_*$
with respect to $H_1(M_A;\Integral)_{\mathtt{free}}$ and $H_1(M_B;\Integral)_{\mathtt{free}}$
has rank $1$.
To this end, we give a criterion for checking this property,
in terms of projective dual polytopes (Lemma \ref{new_point_criterion});
it is accompanied with another simple reduction,
which allows passage to finite covers (Lemma \ref{profinite_isomorphism_hyperbolic_reduction}). 
We prove the closed case by applying these criteria
together with techniques from \cite{Liu_vhsr} (Lemma \ref{profinite_isomorphism_hyperbolic_closed}).
We derive the cusped case from the closed case
by techniques of \cite{WZ_decomposition} (Lemma \ref{profinite_isomorphism_hyperbolic_robust}).
We summarize our proof of Theorem \ref{profinite_isomorphism_hyperbolic} in the end of this section.

\subsection{Criteria for rank-one}\label{Subsec-criteria_rank_one}
For any orientable compact $3$--manifold $M$,
we denote by $\mathbf{P}(H_1(M;\Real))$ the projectivization of $H_1(M;\Real)$.
The points of $\mathbf{P}(H_1(M;\Real))$
are considered to be the real $1$--dimensional linear subspaces of $H_1(M;\Real)$.
For any codimension--$0$ cone $\mathcal{C}$ in $H^1(M;\Real)$
with respect to the Thurston norm of $M$,
we introduce a subset of $\mathbf{P}(H_1(M;\Real))$ as follows:
\begin{equation}\label{cone_projective_dual}
\mathcal{D}(M,\mathcal{C})=\left\{ l\in \mathbf{P}(H_1(M;\Real))\colon 
l\cap\mathrm{Ker}(\phi)=\{0\}\mbox{ for all }\phi\in\mathrm{int}(\mathcal{C})\right\}.
\end{equation}
In general, $\mathcal{D}(M,\mathcal{C})$ is 
projectively isomorphic to a polytope, 
(see Remark \ref{polyhedron_remark}).
Its codimension in $\mathbf{P}(H_1(M;\Real))$ equals the kernel dimension of the Thurston norm.
We refer to $\mathcal{D}(M,\mathcal{C})$ as the \emph{projective dual polytope} with respect to $\mathcal{C}$. 

When we already have a corresponding pair of fibered cones,
we will be able to prove Theorem \ref{profinite_isomorphism_hyperbolic} 
if we know an extra corresponding pair of interior points
in the corresponding projective dual polytopes.
This is what the next criterion says.

\begin{lemma}\label{new_point_criterion}
	Adopt the assumptions of Theorem \ref{profinite_isomorphism_hyperbolic}.
	Suppose 
	$\varepsilon\in \mathrm{Hom}_\Integral(\mathrm{MC}(\Psi_*);\Real)$ with nondegenerate 
	$\Psi^*_\varepsilon\colon H_1(M_A;\Real)\to H_1(M_B;\Real)$.
	Suppose that there exists a $\Psi^*_\varepsilon$--corresponding pair of fibered cones 
	$\mathcal{C}_A$ and $\mathcal{C}_B$.
	Then, $\mathrm{MC}(\Psi_*)$ is a free $\Integral$--module of rank $1$
	if the following additional condition is satisfied:
	
	There exist a pair of interior points 
	$q_A\in \mathrm{int}(\mathcal{D}(M_A,\mathcal{C}_A))$ and
	$q_B\in \mathrm{int}(\mathcal{D}(M_B,\mathcal{C}_B))$,
	and some open neighborhood 
	$\mathcal{W}$ of $\varepsilon$ in $\mathrm{Hom}_\Integral(\mathrm{MC}(\Psi_*);\Real)$,
	such that for every $\tilde{\varepsilon}\in\mathcal{W}$,
	$\Psi_*^{\tilde{\varepsilon}}$
	is nondegenerate,
	and $\mathbf{P}(\Psi_*^{\tilde{\varepsilon}})$ maps $q_A$ to $q_B$.
\end{lemma}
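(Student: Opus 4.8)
The plan is to show that the family of specializations $\{\Psi_*^{\tilde\varepsilon}\}$ spans only a one-dimensional subspace of $\mathrm{Hom}_\Real(H_1(M_A;\Real),H_1(M_B;\Real))$, which is equivalent to saying $\mathrm{rank}_\Integral\mathrm{MC}(\Psi_*)=1$. Concretely, I would fix a $\Integral$--basis $z_1,\dots,z_r$ of the (finite-rank free, by Proposition \ref{MC_property}(1)) module $\mathrm{MC}(\Psi_*)$ and write $\Psi_*^{\mathrm{MC}}=\sum_{i=1}^r\Phi_i\otimes z_i$ with $\Phi_i\in\mathrm{Hom}_\Integral(H_A,H_B)$; then $\Psi_*^{\tilde\varepsilon}=\sum_i\tilde\varepsilon(z_i)\,(\Phi_i\otimes_\Integral\Real)$ as $\tilde\varepsilon$ runs through $\mathrm{Hom}_\Integral(\mathrm{MC}(\Psi_*),\Real)\cong\Real^r$. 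The minimality built into Definition \ref{MC_def} forces the $\Phi_i$ to be $\Integral$--linearly independent: if $\sum c_i\Phi_i=0$ with $(c_i)$ a primitive integral vector, a change of $\Integral$--basis of $\mathrm{MC}(\Psi_*)$ would exhibit $\Psi_*(H_A)$ inside $H_B\otimes_\Integral L$ for a rank $r-1$ direct summand $L$, contradicting minimality. Hence the $\Phi_i$ are $\Real$--linearly independent, so $r=\dim_\Real\mathrm{span}_\Real\{\Psi_*^{\tilde\varepsilon}:\tilde\varepsilon\in\mathcal{W}'\}$ for any nonempty open $\mathcal{W}'$ (a linear family restricted to an open set has the same span as the whole family). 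It therefore suffices to prove that all $\Psi_*^{\tilde\varepsilon}$, for $\tilde\varepsilon$ in a neighbourhood of $\varepsilon$, are proportional.

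Next I would exploit the rigid correspondences carried by the $\Psi_*^{\tilde\varepsilon}$. Since $M_A,M_B$ are hyperbolic, they are in particular irreducible, nonpositively curved, with empty or tori boundary, so Theorem \ref{profinite_isomorphism_npc} applies: every nondegenerate $\Psi^*_{\tilde\varepsilon}$ witnesses a bijective correspondence of Thurston-norm cones. Picking $\phi\in\mathrm{int}(\mathcal{C}_B)$ and observing that $\Psi^*_{\tilde\varepsilon}(\phi)$ stays in $\mathrm{int}(\mathcal{C}_A)$ for $\tilde\varepsilon$ near $\varepsilon$ shows this correspondence is locally constant and sends $\mathcal{C}_B$ to $\mathcal{C}_A$ on some connected neighbourhood $\mathcal{W}_0\subseteq\mathcal{W}$ of $\varepsilon$. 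By linear duality, $\Psi_*^{\tilde\varepsilon}$ then carries the dual cone $\mathcal{C}_A^\vee\subset H_1(M_A;\Real)$ isomorphically onto $\mathcal{C}_B^\vee$, and since the extreme rays are discrete data it matches a fixed list of primitive integral generators $w_1,\dots,w_N\in H_A$ of the extreme rays of $\mathcal{C}_A^\vee$ to primitive integral generators $w'_1,\dots,w'_N\in H_B$ of those of $\mathcal{C}_B^\vee$, positively, for every $\tilde\varepsilon\in\mathcal{W}_0$. (Here $\mathcal{C}_A^\vee,\mathcal{C}_B^\vee$ are pointed, full-dimensional, rational polyhedral cones, because the fibered cones $\mathcal{C}_A,\mathcal{C}_B$ are codimension--$0$ and pointed with rational faces.) The additional hypothesis provides primitive integral $v_A\in H_A$, $v_B\in H_B$ lifting $q_A,q_B$ with $\Psi_*^{\tilde\varepsilon}(v_A)\in\Real v_B$ for all $\tilde\varepsilon\in\mathcal{W}$, and with $v_A\in\mathrm{int}(\mathcal{C}_A^\vee)$ (this is what $q_A\in\mathrm{int}(\mathcal{D}(M_A,\mathcal{C}_A))$ means after choosing the sign of the lift).

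Fixing $\tilde\varepsilon_0\in\mathcal{W}_0$, the composite $A_{\tilde\varepsilon}:=(\Psi_*^{\tilde\varepsilon_0})^{-1}\circ\Psi_*^{\tilde\varepsilon}$, for $\tilde\varepsilon\in\mathcal{W}_0$, is a linear automorphism of $H_1(M_A;\Real)$ that fixes each line $\Real w_k$ and fixes $\Real v_A$, and I want to conclude that it is a scalar multiple of the identity. Choosing a basis $w_1,\dots,w_n$ among the $w_k$, we get $A_{\tilde\varepsilon}=\mathrm{diag}(a_1,\dots,a_n)$ in this basis; expanding $w_k=\sum_j\gamma_{kj}w_j$ for $k>n$ forces $a_i=a_j$ whenever $w_i,w_j$ both occur in one such expansion, so $(a_j)$ is constant on the connected components of the resulting ``support graph'' on $\{1,\dots,n\}$. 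If that graph is connected, $A_{\tilde\varepsilon}$ is scalar and we are done. Otherwise the components yield a direct sum $H_1(M_A;\Real)=\bigoplus_\alpha V_\alpha$ in which every generator $w_k$ lies in a single $V_\alpha$, so $\mathcal{C}_A^\vee$ is the direct product of the pointed full-dimensional cones $\mathcal{C}_A^\vee\cap V_\alpha$; an interior point $v_A$ then has a nonzero component in each $V_\alpha$, on which $A_{\tilde\varepsilon}$ acts by a scalar $a_{(\alpha)}$, and $A_{\tilde\varepsilon}(v_A)\in\Real v_A$ forces all the $a_{(\alpha)}$ to coincide. Hence $A_{\tilde\varepsilon}$ is scalar, so $\Psi_*^{\tilde\varepsilon}\in\Real\,\Psi_*^{\tilde\varepsilon_0}$ for every $\tilde\varepsilon\in\mathcal{W}_0$, whence $\mathrm{span}_\Real\{\Psi_*^{\tilde\varepsilon}\}$ is one-dimensional and $r=1$ (the module being nonzero since $\Psi_*$ is an isomorphism).

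The step I expect to require the most care is the polyhedral claim of the last paragraph, especially the non-simplicial case: the extreme-ray data by itself only shows $A_{\tilde\varepsilon}$ is block-scalar along a product decomposition of $\mathcal{C}_A^\vee$, and it is precisely the extra interior point $q_A$ — together with its persistence under small perturbation of $\varepsilon$ — that glues the blocks into a single scalar. A secondary but necessary point is the bookkeeping that the Thurston-norm cone correspondence, and hence the extreme-ray matching $w_k\leftrightarrow w'_k$, is constant throughout a neighbourhood of $\varepsilon$, so the same pair of generator lists works for all of $\mathcal{W}_0$.
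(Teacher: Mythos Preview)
Your overall strategy matches the paper's: both set up the comparison map $A_{\tilde\varepsilon}=(\Psi_*^{\varepsilon})^{-1}\circ\Psi_*^{\tilde\varepsilon}$, use the $\Integral$--linear independence of the $\Phi_i$ to reduce rank~$1$ to the statement that $A_{\tilde\varepsilon}$ is scalar, and then exploit that $A_{\tilde\varepsilon}$ fixes the vertex lines of $\mathcal{D}(M_A,\mathcal{C}_A)$ together with the interior line $q_A$. The difference lies only in how the final projective--linear fact is proved. The paper phrases it as: a projective transformation of $\mathbf{P}^n$ fixing all vertices of a codimension--$0$ polytope and one interior point is the identity, and proves this by induction on the number of vertices, at each step drawing the line through $q$ and the extra vertex to manufacture a new interior point of the smaller polytope. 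Your support--graph argument is a legitimate alternative: diagonalize in a basis of extreme rays, read off the block--scalar structure from the remaining extreme rays, and use the interior point to equate the blocks. Your version is slightly more combinatorial and avoids the inductive line--drawing; the paper's version is more geometric and perhaps makes clearer why a \emph{single} interior point suffices regardless of how many product factors the cone might have. One small slip: you assert that $q_A,q_B$ lift to primitive \emph{integral} vectors $v_A,v_B$, but the lemma's hypothesis places no rationality constraint on $q_A,q_B$; fortunately your argument never uses integrality, only that $v_A\neq 0$ and lies in $\mathrm{int}(\mathcal{C}_A^\vee)$, so simply take any nonzero real lifts.
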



\begin{proof}
	Denote by $\mathrm{PGL}(H_1(M_A;\Real))$ the projective linear transformation group acting on 
	the real projective space $\mathbf{P}(H_1(M_A;\Real))$.
	There is a comparison map 
	$$f\colon \mathcal{W}\to \mathrm{GL}(H_1(M_A;\Real)),$$
	defined as $f(\tilde{\varepsilon})=(\Psi_*^{\varepsilon})^{-1}\circ\Psi_*^{\tilde{\varepsilon}}$
	for all $\tilde{\varepsilon}\in\mathcal{W}$.
	We consider the projective comparison map
	$$\mathbf{P}(f)\colon \mathcal{W}\to \mathrm{PGL}(H_1(M_A;\Real))$$
	defined as the projectivization of $f$,
	namely,
	$\mathbf{P}(f)(\tilde{\varepsilon})
	=\mathbf{P}(\Psi_*^{\varepsilon})^{-1}\circ\mathbf{P}(\Psi_*^{\tilde{\varepsilon}})$
	for all $\tilde{\varepsilon}\in\mathcal{W}$. 
	In particular, $\mathbf{P}(f)(\varepsilon)$ 
	is the identity transformation on $\mathbf{P}(H_1(M_A;\Real))$.
	
	We first show that 
	the rank of $\mathrm{MC}(\Psi_*)$ equals $1$ 
	if the comparison map $\mathbf{P}(f)$ is constant on 
	some open neighborhood of $\varepsilon$ in $\mathcal{W}$.
	To this end, 
	we recall that $\mathrm{MC}(\Psi_*)$ is a free $\Integral$--module of finite rank (Proposition \ref{MC_property}).
	Fix a basis $X_1,\cdots,X_r$ of $\mathrm{MC}(\Psi_*)$ where $r=\mathrm{rank}_\Integral(\mathrm{MC}(\Psi_*))$.
	Then there is a unique decomposition
	$$\Psi_*^{\mathrm{MC}}=\Phi_1\otimes X_1+\cdots+\Phi_r\otimes X_r$$
	in $\mathrm{Hom}_\Integral(H_A,H_B)\otimes_{\Integral}\mathrm{MC}(\Psi_*)$,
	where $\Phi_1,\cdots,\Phi_r$ are homomorphisms in $\mathrm{Hom}_\Integral(H_A,H_B)$,
	(see Definition \ref{MC_def}).
	The definition of $\mathrm{MC}(\Psi_*)$ implies that $\Phi_1,\cdots,\Phi_r$ are $\Integral$--linearly independent.
	(Otherwise, by looking at some matrix entry of $\Psi_*$,
	one would find a nontrivial $\Integral$--linear combination of the values of $X_i$ in $\widehat{\Integral}$
	which equals zero, violating the rank--$r$ assumption.)
	Since $\Psi^\varepsilon_*$ is nondegenerate, it follows that the endomorphisms 
	$(\Psi^\varepsilon_*)^{-1}\circ\Phi_1,\cdots,(\Psi^\varepsilon_*)^{-1}\circ\Phi_r$
	in $\mathrm{End}_\Real(H_1(M_A;\Real))$ are $\Real$--linearly independent.
	We observe
	$$f(\tilde{\varepsilon})=
	\tilde{\varepsilon}(X_1)\cdot F_1+\cdots+	\tilde{\varepsilon}(X_r)\cdot F_r,$$
	where $F_i=(\Psi^\varepsilon_*)^{-1}\circ\Phi_i$ for $i=1,\cdots,r$.
	This means that $f$ is actually the restriction to $\mathcal{W}$ of a linear embedding
	$\mathrm{Hom}_\Integral(\mathrm{MC}(\Psi_*),\Real)\to \mathrm{End}_\Real(H_1(M_A;\Real))$.
	Passing to the projectivization,
	we see that 
	the image of $\mathbf{P}(f)$ has dimension $r-1$ everywhere.
	In particular, $r=1$ holds if (and only if) $\mathbf{P}(f)$ is constant near $\varepsilon$.
	
	It remains to show that the additional condition offered in Lemma \ref{new_point_criterion}
	implies that $\mathbf{P}(f)$ is constant near $\varepsilon$.
	Observe that 
	for any $\tilde{\varepsilon}$ sufficiently close to $\varepsilon$
	in $\mathrm{Hom}_\Integral(\mathrm{MC}(\Psi_*),\Real)$,
	$\Psi^*_{\tilde{\varepsilon}}$ 
	witnesses the same bijective correspondence between Thurston-norm cones
	as $\Psi^*_\varepsilon$ does,
	because 
	$\Psi^*_{\tilde{\varepsilon}}$ depends continuously in $\tilde{\varepsilon}$
	while the number of Thurston-norm cones is finite.
	Therefore, 
	possibly replacing $\mathcal{W}$ with a smaller open neighborhood of $\varepsilon$,
	we assume without loss of generality that 
	the above same-correspondence property holds for $\mathcal{W}$.
	With this simplifying assumption,
	the offered condition in Lemma \ref{new_point_criterion} implies that
	$\mathbf{P}(f)(\tilde{\varepsilon})$ fixes every vertex of 
	the projective polytope $\mathcal{D}(M_A,\mathcal{C}_A)$ 
	and also fixes the interior point $q_A$,
	for every $\tilde{\varepsilon}\in\mathcal{W}$.
	Remember that $\mathcal{D}(M_A,\mathcal{C}_A)$ 
	has codimension--$0$ in $\mathbf{P}(H_1(M_A;\Real))$,
	and is projective-linearly isomorphic to an affine polytope.
	
	Therefore, 
	the rest of the proof is reduced to the following claim in projective geometry:
	Given a codimension--$0$ polytope $D$ in $\mathbf{A}^n$ and a point $q\in\mathrm{int}(D)$,
	if a projective linear isomorphism $\tau\colon \mathbf{P}^n\to \mathbf{P}^n$
	fixes all the vertices of $D$ and $q$, then $\tau$ is the identity.
	Here, $\mathbf{P}^n$ denotes the real projective $n$--space 
	with the standard homogeneous coordinates $(x_0:x_1:\cdots:x_n)$,
	(namely, $x_0,\cdots,x_n\in\Real$ are not all zero and
	$(x_0:x_1:\cdots:x_n)=(\lambda x_0:\lambda x_1:\cdots:\lambda x_n)$
	holds	for all nonzero $\lambda\in\Real$,)
	and	$\mathbf{A}^n$ denotes the real affine $n$--subspace complementary to 
	the projective hyperplane defined by $x_0=0$.
	
	The projective-geometric claim can be proved
	by induction on the number of vertices in $D$,
	as follows.
	Note that $D$ has at least $n+1$ vertices, as it has codimension $0$ in $\mathbf{A}^n$.
	Note also that $\tau$ preserves $D$, as it fixes all the vertices of $D$ and an interior point $q$.
	
	When $D$ has exactly $n+1$ vertices,
	it is an $n$--simplex. 
	Its vertices together with the interior point $q$ are in general position
	(namely, any subset of $n+1$ points span a $n$--simplex in $\mathbf{A}^n$).
	This is the same as saying that 
	these $n+2$ points form a frame of $\mathbf{P}^n$, 
	and the projective linear isomorphism $\tau$ is uniquely determined	by their images.
	In particular, $\tau$ is the identity if it fixes all the $n+2$ points.
	
	By induction, 
	suppose that the claim holds when $D$ has exactly $k$ vertices,
	for some $k$ at least $n+1$.
	When $D$ has exactly $k+1$ vertices $v_1,\cdots,v_{k+1}$,
	we may assume that $v_1,\cdots,v_{n+1}$ span a $n$--simplex in $\mathbf{A}^n$.
	Take $D'$ to be the convex hull of $v_1,\cdots,v_k$ in $\mathbf{A}^n$,
	so $D'$ has codimension zero in $\mathbf{A}^n$.
	Take $L$ to be the projective line in $\mathbf{P}^n$ passing through $q$ and $v_{k+1}$,
	so $L$ intersects $\partial D$ at a unique point $w$ other than $v_{k+1}$.
	Since $q$ lies in the interior of $D$, the point $w$ must lie on 
	some (not necessarily unique) codimension--$1$ closed face $F$ of $D$,
	whose vertices all come from $\{v_1,\cdots,v_k\}$.
	This means that $F$ is a codimension--$1$ face of $D'$,
	and moreover, $q$ and $v_{k+1}$ must lie 
	on the same side of the hyperplane in $\mathbf{A}^n$ that contains $F$.
	Therefore, there exists some point $q'$ on $L$ (close to $w$),
	such that $q'$ lies in the interior of $D'$.
	Since $\tau$ fixes $q$ and all vertices of $D$,	
	it preserves the face $F$, and preserves the projective $L$,
	so it fixes their intersection point $w$.
	It follows that $\tau$ fixes the projective line $L$, 
	since $\tau$ fixes three distinct points $v_{k+1}$, $w$, and $q$ on $L$.
	Therefore, $\tau$ fixes $q'$, which lies in the interior of $D'$, 
	but $\tau$ also fixes all the $k$ vertices of $D'$.
	Then the induction hypothesis implies that $\tau$ is trivial, as desired.
	
	This completes the proof of the projective-geometric claim.
	Applying to the projective comparison map $\mathbf{P}(f)$,
	we conclude that the offered condition in Lemma \ref{new_point_criterion}
	implies that $\mathbf{P}(f)$ is constantly the identity near $\varepsilon\in\mathcal{W}$,
	and therefore, $\mathrm{MC}(\Psi_*)$ is a free $\Integral$--module of rank $1$.
\end{proof}

\begin{lemma}\label{profinite_isomorphism_hyperbolic_reduction}
	Let $(M_A,M_B,\Psi)$ be a profinite morphism setting of a $3$--manifold pair (Convention \ref{profinite_morphism_setting}).
	Suppose that $\Psi$ is an isomorphism.
	If there exists some $\Psi$--corresponding pair of finite covers
	$M'_A\to M_A$ and $M'_B\to M_B$,
	and if $\mathrm{MC}(\Psi'_*)$ is a free $\Integral$--module of rank $1$,
	then 
	there exists some unit $\mu\in\widehat{\Integral}^\times$, such that 
	$\mathrm{MC}(\Psi_*)$ is the $\Integral$--submodule $\mu\Integral$ of $\widehat{\Integral}$.
	(See Definition \ref{corresponding_cover_def}.)
\end{lemma}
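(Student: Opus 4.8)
The plan is to derive this from the rank bound of Lemma~\ref{MC_cover}, the structural description in Proposition~\ref{MC_property}, and the observation that $\Psi_*$ is a $\widehat{\Integral}$--linear \emph{isomorphism}, not merely a homomorphism. Write $n=b_1(M_A)=b_1(M_B)$; the two Betti numbers agree because $\Psi_*\colon\widehat{H_A}\to\widehat{H_B}$ is an isomorphism of $\widehat{\Integral}$--modules, identifying $\widehat{H_A}$ and $\widehat{H_B}$ with $\widehat{\Integral}^n$, and we may assume $n>0$.

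First I would show that $\mathrm{MC}(\Psi_*)$ has rank one. By Lemma~\ref{MC_cover}, $\mathrm{MC}(\Psi_*)$ is commensurable in $\widehat{\Integral}$ with a $\Integral$--submodule of $\mathrm{MC}(\Psi'_*)$, so $\mathrm{rank}_\Integral(\mathrm{MC}(\Psi_*))\le\mathrm{rank}_\Integral(\mathrm{MC}(\Psi'_*))=1$. On the other hand $\Psi_*$ is nonzero since $n>0$, so $\mathrm{MC}(\Psi_*)\ne 0$; being a free $\Integral$--module of finite rank by Proposition~\ref{MC_property}(1), it therefore has rank exactly one.

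Next I would pin down a generator. Fix a generator $z\in\widehat{\Integral}$ of the rank--one module $\mathrm{MC}(\Psi_*)$. By Proposition~\ref{MC_property}(2) there is a unique homomorphism $F_z\in\mathrm{Hom}_\Integral(H_A,H_B)$ with $\Psi_*=z\,\widehat{F_z}$ in $\mathrm{Hom}(\widehat{H_A},\widehat{H_B})$. Choosing $\Integral$--bases of the rank--$n$ free modules $H_A$ and $H_B$, represent $F_z$ by an $n\times n$ integer matrix $T$, with $\det T=d\in\Integral$; with respect to the same bases, $\Psi_*$ is then represented over $\widehat{\Integral}$ by the matrix $zT$, whose determinant equals $z^n d$. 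Since $\Psi_*$ is an automorphism of $\widehat{\Integral}^n$, its determinant $z^n d$ is a unit of $\widehat{\Integral}$; hence $z^n\in\widehat{\Integral}^\times$, and consequently $z\in\widehat{\Integral}^\times$. Setting $\mu=z$ gives $\mathrm{MC}(\Psi_*)=\mu\Integral$ with $\mu$ a unit (and, incidentally, $d=\pm1$), which is the assertion.

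I do not expect a genuine obstacle here; this is the ``soft'' reduction complementing Lemma~\ref{new_point_criterion}, and the passage to the finite covers enters only through the rank inequality of Lemma~\ref{MC_cover}, with no compatibility between specializations required. The one point deserving attention is that the rank--one conclusion by itself only produces $\mathrm{MC}(\Psi_*)=z\Integral$ for \emph{some} $z\in\widehat{\Integral}$, and upgrading $z$ to a unit genuinely uses that $\Psi_*$ is invertible (via the determinant computation above) rather than merely injective.
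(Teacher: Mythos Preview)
Your proof is correct and follows essentially the same route as the paper: use Lemma~\ref{MC_cover} to force $\mathrm{MC}(\Psi_*)$ to have rank one, invoke Proposition~\ref{MC_property}(2) to write $\Psi_*=z\,\widehat{F_z}$, and then use invertibility of $\Psi_*$ to see that the generator is a unit. Your determinant computation makes explicit what the paper leaves as the one-line remark that ``$\mu$ must be a unit, (and $F_\mu$ must be an isomorphism,) because $\Psi$ is an isomorphism''.
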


\begin{proof}
	%
	By Lemma \ref{MC_cover}, $\mathrm{MC}(\Psi_*)$ is commensurable in $\widehat{\Integral}$
	with a $\Integral$--submodule of $\mathrm{MC}(\Psi'_*)$.
	If $\mathrm{MC}(\Psi'_*)$ has rank $1$,
	we infer that $\mathrm{MC}(\Psi_*)$ must also have rank $1$.
	Let $\mu\in\widehat{\Integral}$ be a generator of $\mathrm{MC}(\Psi'_*)$.
	By Proposition \ref{MC_property} (2),
	we obtain a homomorphism $F_\mu\colon H_A\to H_B$,
	such that $\Psi_*$ equals the $\mu$--scalar multiple of the completion of $F_\mu$.
	Moreover, 
	$\mu$ must be unit, (and $F'_\mu$ must be an isomorphism,)
	because $\Psi'\colon\widehat{\pi'_A}\to \widehat{\pi'_B}$ is an isomorphism.
	Therefore, $\mathrm{MC}(\Psi_*)$ is a $\Integral$--submodule $\mu\Integral$ of $\widehat{\Integral}$
	generated by the unit element $\mu\in\widehat{\Integral}^\times$, as asserted.
	Moreover, one may check by definition that $\Psi^*_{1/\mu}\colon H^1(M_B;\Real)\to H^1(M_A;\Real)$
	is the $\Real$--linear extension of 
	$F_\mu^*\colon H^1(M_B;\Integral)\to H^1(M_A;\Integral)$,
	which is the $\Integral$--dual of $F_\mu\colon H_A\to H_B$,
	(see Definition \ref{specialization_def}).
\end{proof}

\subsection{The closed case}\label{Subsec-closed_hyp}

The next two lemmas allow us to construct some 
trackable interior point in a projective dual polytope.
In fact, essentially the same construction has been sketched in 
\cite[Problem 1.5 and Outline of the Solution]{Liu_vhsr},
as an illustration of the techniques thereof.
What we actually need for applying Lemma \ref{new_point_criterion}
is Lemma \ref{new_vertex_D_cover}.
Lemma \ref{facial_clusters} 
is separated out from its proof for an expository purpose.

When $M$ is closed hyperbolic with a primitive fibered class $\phi$,
we can identify $M$ with the mapping-torus $M_f$ of a pseudo-Anosov automorphism $f\colon S\to S$,
and $\phi$ with the distinguished cohomology class $\phi_f$, (see Remark \ref{pA_remark}).
Then, 
Fried's dual characterization of the fibered cone $\mathcal{C}_f=\mathcal{C}_{\mathtt{Th}}(M_f,\phi_f)$
is equivalent to saying that
the $1$--subspaces of $H_1(M_f;\Real)$ spanned by the periodic trajectories of the suspension flow
form a dense subset of $\mathcal{D}(M_f,\mathcal{C}_f)$, (see Section \ref{Subsec-polytopes_and_cones}).
We also note that the fundamental group of the mapping torus $\pi_1(M_f)$ is word-hyperbolic.

\begin{lemma}\label{facial_clusters}
	Let $f\colon S\to S$ be a pseudo-Anosov automorphism of an orientable connected closed surface.
	Then, 
	there exist 
	a finite collection $\mathcal{H}$ 
	of infinite-index quasi-convex subgroups of $\pi_1(M_f)$,
	such that the following property is satisfied:
	
	If $\gamma$ is a periodic trajectory of the suspension flow,
	such that the $1$--subspace of $H_1(M_f;\Real)$ spanned by its homology class $[\gamma]$
	is a boundary point of the polytope $\mathcal{D}(M_f,\mathcal{C}_f)$,
	then some finite cyclic cover of $\gamma$, as a free-homotopy loop,
	represents a conjugacy class in $\pi_1(M_f)$
	which has nonempty intersection with some $H\in\mathcal{H}$.
\end{lemma}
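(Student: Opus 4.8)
The plan is to identify $\mathcal{D}(M_f,\mathcal{C}_f)$ with the projectivization of Fried's cone, to reduce the "boundary" condition to the vanishing of one of finitely many cohomological functionals, and then to use the dynamics of the suspension pseudo-Anosov flow to see that the periodic trajectories witnessing such a vanishing are confined to a finite list of degenerate periodic orbits, whose maximal cyclic subgroups will serve as $\mathcal{H}$.

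First, recall that $M_f$ is a closed hyperbolic $3$--manifold, so $\pi_1(M_f)$ is torsion-free and word-hyperbolic; in particular every infinite cyclic subgroup is quasi-isometrically embedded, hence quasi-convex, and of infinite index, since $\pi_1(M_f)$ is not virtually cyclic. By the discussion of Section \ref{Subsec-polytopes_and_cones}, the set $\mathcal{D}(M_f,\mathcal{C}_f)$ is the image in $\mathbf{P}(H_1(M_f;\Real))$ of Fried's cone $\mathcal{C}^{\mathtt{Fr}}(M_f,\phi_f)\subseteq H_1(M_f;\Real)$, a finite-sided, codimension--$0$, pointed polyhedral cone that contains the homology class of every forward periodic trajectory of the suspension flow $\theta$; dually, $\mathcal{C}^{\mathtt{Fr}}(M_f,\phi_f)=\{x: v(x)\geq0\text{ for all }v\in\mathcal{C}_f\}$. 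Since $[\gamma]$ always lies in $\mathcal{C}^{\mathtt{Fr}}(M_f,\phi_f)$ and is nonzero (as $\phi_f([\gamma])>0$), the direction $\Real[\gamma]$ lies on $\partial\mathcal{D}(M_f,\mathcal{C}_f)$ exactly when $v([\gamma])=0$ for some member of the finite set $\{v_1,\dots,v_N\}$ of primitive integral classes spanning the extreme rays of $\mathcal{C}_f$; each such $v_j$ lies on $\partial\mathcal{C}_f$, so it is not fibered but is quasi-fibered in the sense of lying in the closure of the fibered cone $\mathcal{C}_f$. It therefore suffices to produce, for each $j$, a finite collection $\mathcal{H}_j$ of infinite-index quasi-convex subgroups such that every periodic trajectory $\gamma$ with $v_j([\gamma])=0$ has a finite cyclic cover whose conjugacy class meets a member of $\mathcal{H}_j$; then $\mathcal{H}=\bigcup_j\mathcal{H}_j$ works.

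The heart of the argument is a dynamical statement about the suspension flow, along the lines of the construction sketched in \cite{Liu_vhsr} (compare \cite[Expos\'e 14]{FLP_book}). Since $v_j$ lies in the closure of the fibered cone $\mathcal{C}_f$, the pseudo-Anosov flow $\theta$ is \emph{almost transverse} to a Thurston-norm-minimizing surface $\Sigma_j$ Poincar\'e dual to $v_j$: there is a finite set $\mathcal{O}_j$ of periodic orbits of $\theta$, which can be taken among the finitely many singular orbits of $\theta$ (the suspensions of the singular periodic points of $f$), such that after dynamically blowing up $\mathcal{O}_j$ the surface $\Sigma_j$ becomes a genuine cross-section of the resulting flow. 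Consequently every periodic orbit $\gamma$ of $\theta$ outside $\mathcal{O}_j$ crosses $\Sigma_j$ coherently and at least once, so that $v_j([\gamma])=[\Sigma_j]\cdot[\gamma]\geq1$. Hence a periodic trajectory $\gamma$ with $v_j([\gamma])=0$ must be freely homotopic to a finite cyclic cover of one of the finitely many orbits in $\mathcal{O}_j$. Taking $\mathcal{H}_j$ to be the set of maximal cyclic subgroups of $\pi_1(M_f)$ containing a representative of each orbit in $\mathcal{O}_j$ completes this step, and $\mathcal{H}=\bigcup_j\mathcal{H}_j$ is finite, with each member infinite-index and quasi-convex by the first paragraph.

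I expect the extraction and precise formulation of the dynamical input of the third paragraph to be the main obstacle: namely, the almost-transversality of the suspension flow to a norm-minimizing surface dual to a boundary class of the fibered cone, and the resulting confinement of the degenerate periodic orbits to a finite set. This requires carefully importing the relevant facts about pseudo-Anosov flows (as packaged in \cite{Liu_vhsr}), with attention to (i) the coherence of the intersections with $\Sigma_j$, so that no homological cancellation occurs and $v_j([\gamma])$ is genuinely positive for non-degenerate $\gamma$; (ii) whether passing to a finite cyclic cover of $\gamma$ is truly needed (it is at worst a harmless concession, since periodic orbits of $\theta$ are primitive in $\pi_1(M_f)$); and (iii) the reduction, already noted, from an arbitrary boundary direction of $\mathcal{D}$ to the finitely many extreme-ray functionals $v_1,\dots,v_N$ of $\mathcal{C}_f$. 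The remaining verifications are immediate.
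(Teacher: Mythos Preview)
Your approach has a genuine gap in the central dynamical claim. You assert that after dynamically blowing up a finite set $\mathcal{O}_j$ of (singular) periodic orbits, the norm-minimizing surface $\Sigma_j$ dual to the boundary class $v_j$ becomes a \emph{genuine cross-section} of the resulting flow, so that every periodic orbit outside $\mathcal{O}_j$ has $v_j([\gamma])\geq 1$. This is false. Dynamic blow-up does not change the ambient manifold or the cohomology class $v_j$; if $\Sigma_j$ were a cross-section of any flow on $M_f$, then $v_j$ would be a fibered class, contradicting that it lies on $\partial\mathcal{C}_f$. Mosher's almost-transversality says only that $\Sigma_j$ can be made \emph{transverse} to the blown-up flow, not that every orbit meets it. In fact, by Fried's description of $\mathcal{C}^{\mathtt{Fr}}(M_f,\phi_f)$, the periodic-orbit homology directions are dense in every face of the dual cone, so there are \emph{infinitely many} primitive periodic orbits $\gamma$ with $v_j([\gamma])=0$, and they are certainly not all singular. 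Hence the subgroups in $\mathcal{H}$ cannot be taken cyclic.

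The paper's proof handles exactly this difficulty by working combinatorially rather than orbit-by-orbit. One fixes a Markov partition for $f$ and forms the transition graph $T_{f,\mathcal{R}}$ and the associated flow-box complex $X_{f,\mathcal{R}}$ mapping to $M_f$. For each codimension--$1$ face $E$ of $\mathcal{D}(M_f,\mathcal{C}_f)$ there is a \emph{support subgraph} $T_{f,\mathcal{R}}[E]$ that carries precisely the dynamical cycles whose periodic trajectories have projective homology in $E$. The subgroups $H\in\mathcal{H}$ are then the $\pi_1$-images of the components of the corresponding subcomplexes of $X_{f,\mathcal{R}}$; these are finitely generated (coming from finite subgraphs), typically far from cyclic, and each one captures infinitely many periodic orbits at once. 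Quasi-convexity and infinite index follow because each such $H$ is a finitely generated subgroup of $\ker(\psi)$ for some non-fibered $\psi\in\partial\mathcal{C}_f$, hence geometrically finite. The passage to a finite cyclic cover of $\gamma$ in the statement is needed because a single periodic trajectory may correspond to several dynamical cycles in $T_{f,\mathcal{R}}$, each an iterate of the primitive one.
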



\begin{proof}
	The asserted subgroups $H_i$ can be constructed 
	using the techniques in \cite[Sections 5 and 6]{Liu_vhsr}.
	Those techniques are developed based on Fried's original ideas in \cite{Fried-sections,Fried-flowEqv},
	see the notes in \cite[Section 5.4]{Liu_vhsr}.
	To simply put, 
	the subgroups $H_i$ all correspond to some subgraphs of the transition graph
	with respect to any fixed Markov partition, 
	and the dynamical cycles in those subgraphs 
	all represent periodic trajectories in $M_f$
	whose homology classes are projectively carried by $\partial\mathcal{D}(M_f,\mathcal{C}_f)$.	
	Below we present the construction in more detail,
	following \cite[Sections 5 and 6]{Liu_vhsr}.
		
	Take a Markov partition $\mathcal{R}=\{R_1,\cdots,R_k\}$ of $S$ with respect to 
	the pseudo-Anosov automorphism $f$.
	To be precise, each $R_i$ is a \emph{birectangle}, namely,
	it is a subset of $S$, parametrized as $[0,1]\times[0,1]$,
	such that any vertical segment $\{x\}\times[0,1]$ is contained in the union of
	finitely many leaves or singular points of the stable invariant foliation $\mathscr{F}^{\mathtt{s}}$,
	and such that any horizontal segment $[0,1]\times\{y\}$ 
	is compatible with the unstable invariant foliation $\mathscr{F}^{\mathtt{u}}$ similarly.
	As one may check, 
	singular points only occur on boundary of the birectangles, 
	and the stable or unstable leaves
	containing any interior point of a birectangle must be unique.
	Moreover,
	the birectangles $R_i$ cover $S$ altogether without mutual intersection in their interiors,
	and every birectangle $R_i$ is the vertical juxtaposition of finitely many horizontal blocks,
	which are mapped under $f$ onto vertical blocks in mutually distinct birectangles in $\mathcal{R}$.
	(See \cite[Section 5.1]{Liu_vhsr}.)
	
	The \emph{transition graph} $T_{f,\mathcal{R}}$
	is a directed graph, modeled as a CW $1$--complex with oriented $1$--cells,
	such that the vertice $v_i$ of $T_{f,\mathcal{R}}$ 
	corresponds bijectively to the birectangle $R_i$,
	and the edges $e_{ij}$ correspond bijectively 
	to the horizontal blocks	$R_{ij}=R_i\cap f^{-1}(R_j)$,
	such that $\mathrm{int}(R_i)\cap f^{-1}(\mathrm{int}(R_j))\neq\emptyset$.
	The transition graph $T_{f,\mathcal{R}}$ is homotopy equivalent
	to an \emph{abstract flow-box complex} $X_{f,\mathcal{R}}$,
	which is obtained from the disjoint union of the birectangles $R_i$
	by attaching boxes $R_{ij}\times[0,1]$, 
	such that $R_{ij}\times\{0\}$ is attached to $R_i$ by the inclusion $R_{ij}\to R_i$,
	and $R_{ij}\times\{1\}$ is attached to $R_j$ by $f\colon R_{ij}\to R_j$.
	The homotopy equivalence is realized by the natural collapsing map
	$X_{f,\mathcal{R}}\to T_{f,\mathcal{R}}$.
	Meanwhile, there is a natural map $q_{f,\mathcal{R}}\colon X_{f,\mathcal{R}}\to M_f$,
	which puts the building parts of $X_{f,\mathcal{R}}$
	into $M_f$ in the obvious way. (See \cite[Sections 5.2 and 5.3]{Liu_vhsr}.)
	
	For every primitive periodic trajectory $\gamma$ in $M_f$,
	the preimage $q_{f,\mathcal{R}}^{-1}(\gamma)$ in $X_{f,\mathcal{R}}$
	is a union of mutually disjoint	primitive abstract periodic trajectories,
	(meaning that its components are all cyclical concatenations of 
	mutually distinct, directly consecutive segments of the form $\{p\}\times R_{ij}$),
	\cite[Lemma 5.2]{Liu_vhsr}. Moreover,
	the collapsing map witnesses a bijective correspondence between
	the primitive abstract periodic trajectories in $X_{f,\mathcal{R}}$
	and	the primitive dynamical cycles in $T_{f,\mathcal{R}}$,
	\cite[Lemma 5.7]{Liu_vhsr}.
	Here, a \emph{dynamical cycle} in a directed graph 
	refers to a directly immersed finite cyclic directed graph,
	and it is \emph{primitive} if it does not factor through any shorter dynamical cycle.
	In particular, we see that any primitive periodic trajectory in $M_f$
	admits a finite-cyclic cover which can be represented as
	a dynamical cycle in $T_{f,\mathcal{R}}$ (among finitely many choices).
	
	For every closed face $E$ of $\mathcal{D}(M_f,\mathcal{C}_f)$,
	there exists a directed subgraph of $T_{f,\mathcal{R}}$,
	called the \emph{support graph} of $E$ and denoted as $T_{f,\mathcal{R}}[E]$.
	The support graph is characterized by the following property:
	A dynamical cycle of $T_{f,\mathcal{R}}$	
	is contained in $T_{f,\mathcal{R}}[E]$ if and only if
	it represents a periodic trajectory in $M_f$ 
	whose projective homology class in $\mathcal{D}(M_f,\mathcal{C}_f)$	lies in $E$.
	
	In fact,
	there exists a canonical polytope $\mathcal{P}_{f,\mathcal{R}}$,
	whose points are the probability measures on the edge set of $T_{f,\mathcal{R}}$
	balanced at the vertices, in the sense that the total incoming measure
	at any vertex equals the total outgoing measure;
	there exists a canonical, surjective, linear map
	$h_{f,\mathcal{R}}\colon \mathcal{P}_{f,\mathcal{R}}\to\mathcal{D}(M_f,\mathcal{C}_f)$,
	which takes any measure $\mu\in\mathcal{P}_{f,\mathcal{R}}$
	to the projective homology class of the $\Real$--cellular cycle 
	$\sum_{ij}\mu(\{e_{ij}\})\,e_{ij}$,
	(see \cite[Definition 5.9 and (5.4)]{Liu_vhsr}).
	The support of any measure $\mu\in\mathcal{P}_{f,\mathcal{R}}$
	gives rise to a nonwandering subgraph of $T_{f,\mathcal{R}}$,
	and this construction sets up a canonical bijective correspondence between
	the faces of $\mathcal{P}_{f,\mathcal{R}}$ 
	and the nonwandering subgraphs of $T_{f,\mathcal{R}}$,
	\cite[Lemma 5.10]{Liu_vhsr}.
	The support graph $T_{f,\mathcal{R}}[E]$ 
	is precisely the nonwandering subgraph of $T_{f,\mathcal{R}}$
	that corresponds to the closed face $h_{f,\mathcal{R}}^{-1}(E)$,
	(see \cite[Definition 6.6]{Liu_vhsr}).
	The characterizing property of $T_{f,\mathcal{R}}$
	follows from the above description,
	since any dynamical cycle of $T_{f,\mathcal{R}}$ 
	determines a measure in $\mathcal{P}_{f,\mathcal{R}}$,
	which is the normalization of the edge-counting measure.
	
	For every codimension--$1$ closed face $E$ of $\mathcal{D}(M_f,\mathcal{C}_f)$
	and for every component $V$ of the support graph $T_{f,\mathcal{R}}[E]$,
	we obtain a corresponding subcomplex $X_{f,\mathcal{R}}(V)$ of $X_{f,\mathcal{R}}$,
	namely, the union of all $R_i$ if $v_i$ is a vertex of $V$,
	and all $R_{ij}\times[0,1]$ if $e_{ij}$ is an edge of $V$.
	We obtain a subgroup $H_V$ of $\pi_1(M_f)$ 
	as the $\pi_1$--image of the restricted map $q_{f,\mathcal{R}}\colon X_{f,\mathcal{R}}(V)\to M_f$,
	(choosing a representative among all conjugates).
	The construction gives rise to 
	only finitely many subgroups $H_V$,
	and we collect them as $\mathcal{H}$.
	The above characterizing property of support subgraphs
	ensures that $\mathcal{H}$ satisfies the asserted property in Lemma \ref{facial_clusters}.
	Note that $\partial\mathcal{D}(M_f,\mathcal{C}_f)$ is empty if
	$\mathcal{D}(M_f,\mathcal{C}_f)$ has dimension zero.
	
	It remains to argue that any $H\in\mathcal{H}$ is quasi-convex of infinite-index.
	Suppose $V$ is a component of the support graph $T_{f,\mathcal{R}}[E]$ 
	for some codimension--$1$ closed face $E$ of $\mathcal{D}(M_f,\mathcal{C}_f)$,
	and we argue with $H_V$.
	Denote by $L_E$ the codimension--$1$ linear subspace of $H_1(M_f;\Real)$,
	such that 
	the projectivization $\mathbf{P}(L_E)$ 
	is the support hyperplane of $E$ in $\mathbf{P}(H_1(M_f;\Real))$.
	Since the dynamical cycles of the nonwandering subgraph $V$ 
	generate the homology	$H_1(X_{f,\mathcal{R}}(V);\Real)\cong H_1(V;\Real)$,
	(see \cite[Remark 5.12 (1)]{Liu_vhsr}),
	the image of the homomorphism $H_1(X_{f,\mathcal{R}}(V);\Real)\to H_1(M_f;\Real)$
	induced by $q_{f,\mathcal{R}}$ is contained in $L_E$.
	In particular, the image of $H_1(X_{f,\mathcal{R}}(V);\Real)$
	has codimension at least $1$ in $H_1(M_f;\Real)$,
	so the subgroup $H_V$ has infinite index in $\pi_1(M_f)$.
	Moreover, since $E$ is a codimension--$1$ face of $\mathcal{D}(M_f,\mathcal{C}_f)$,
	there is some nonzero cohomology class $\psi\in H^1(M_f;\Integral)$
	which vanishes on $L_E$,
	and $\psi$ lies on the boundary of the fibered cone $\mathcal{C}_f$,
	(see (\ref{cone_projective_dual})).
	In particular, $\psi$ is not a fibered class,
	so the kernel of the homomorphism $\psi\colon \pi_1(M_f)\to \Integral$ is infinitely generated
	(see \cite{Stallings-fibering}, or \cite[{Expos\'e 14, Theorem 14.2}]{FLP_book}).
	However, $H_V$ is finitely generated and is contained in the kernel of $\psi$,
	so $H_V$ must be geometrically finite in the Kleinian group $\pi_1(M_f)$,
	or equivalently, 
	quasi-convex in the word-hyperbolic group $\pi_1(M_f)$,
	(see \cite[Theorem 4.1.2 and Proposition 4.4.2]{AFW_book_group}).
\end{proof}

\begin{lemma}\label{new_vertex_D_cover}
	Let $M$ be an orientable connected closed $3$--manifold
	which admits a hyperbolic metric.
	If $M$ has a fibered cone $\mathcal{C}$,
	then there exists a finite cover $M'$ of $M$,
	such that $\mathcal{D}(M',\mathcal{C}')$ has a vertex 
	which projects into the interior of $\mathcal{D}(M,\mathcal{C})$.
\end{lemma}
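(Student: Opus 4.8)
The plan is to recast the statement as a virtual realization problem for periodic trajectories of a pseudo-Anosov suspension flow, and to solve it with the transition-graph apparatus behind Lemma~\ref{facial_clusters} (in essence the construction sketched in \cite[Problem~1.5 and Outline of the Solution]{Liu_vhsr}). Since $M$ is closed hyperbolic, the Thurston norm is a norm, so $\mathcal{D}(M,\mathcal{C})$ has codimension $0$ in $\mathbf{P}(H_1(M;\Real))$ and nonempty interior. Fix a primitive fibered class $\phi\in\mathrm{int}(\mathcal{C})$ and identify $(M,\phi)$ with a pseudo-Anosov mapping torus $(M_f,\phi_f)$ (Remark~\ref{pA_remark}). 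By Section~\ref{Subsec-polytopes_and_cones}, $\mathcal{D}(M,\mathcal{C})$ is the projectivization of the pointed, full-dimensional Fried cone $\mathcal{C}^{\mathtt{Fr}}(M,\phi)$: its vertices are exactly the projectivized extreme rays of that cone, its interior is the projectivized cone interior, and the rays spanned by homology classes of periodic trajectories of the suspension flow are dense in $\mathcal{D}(M,\mathcal{C})$. For a finite cover $p\colon M'\to M$, the class $\phi'=p^{*}\phi$ is fibered and lies in a unique fibered cone $\mathcal{C}'$, and one checks (using that $p^{*}$ sends fibered classes to fibered classes, that $p^{*}$ is injective, and a transfer argument) that $p^{*}(\mathrm{int}\,\mathcal{C})\subseteq\mathrm{int}\,\mathcal{C}'$, that $p_{*}(\mathcal{C}^{\mathtt{Fr}}(M',\phi'))=\mathcal{C}^{\mathtt{Fr}}(M,\phi)$, and that $p_{*}[\gamma']\neq 0$ whenever $\gamma'$ is a periodic trajectory of the lifted flow. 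Hence it suffices to produce a finite cover $p\colon M'\to M$ and a periodic trajectory $\gamma'$ of the lifted suspension flow such that $p_{*}[\gamma']$ spans an interior ray of $\mathcal{C}^{\mathtt{Fr}}(M,\phi)$ while $[\gamma']$ spans an \emph{extreme} ray of $\mathcal{C}^{\mathtt{Fr}}(M',\phi')$: then $\Real[\gamma']$ is a vertex of $\mathcal{D}(M',\mathcal{C}')$ whose image under $\mathbf{P}(p_{*})$ lies in $\mathrm{int}(\mathcal{D}(M,\mathcal{C}))$.

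\textbf{Construction.} I would work with the apparatus appearing in the proof of Lemma~\ref{facial_clusters}: a Markov partition $\mathcal{R}$ of $(S,f)$, the transition graph $T_{f,\mathcal{R}}$, its abstract flow-box complex, its nonwandering and support subgraphs, the canonical surjection $h_{f,\mathcal{R}}\colon \mathcal{P}_{f,\mathcal{R}}\to\mathcal{D}(M,\mathcal{C})$, and the resulting dictionary between codimension-$1$ faces of $\mathcal{D}(M,\mathcal{C})$, support subgraphs, and the infinite-index quasi-convex subgroups collected in $\mathcal{H}$. Since $\mathrm{int}(\mathcal{D}(M,\mathcal{C}))$ is open and nonempty and periodic directions are dense, fix a primitive periodic trajectory $\gamma$ whose homology ray is interior, and represent a finite-cyclic cover of it by a primitive dynamical cycle $\sigma\subset T_{f,\mathcal{R}}$ (so $\sigma$ lies in no support subgraph of a codimension-$1$ face). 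Because $\pi_1(M)$ is virtually special, hence LERF (see \cite{AFW_book_group}), pass to a regular finite cover $M'=M_{f'}\to M$, with $f'$ a pseudo-Anosov lift of a power of $f$ and $\mathcal{R}$ lifting to a Markov partition $\mathcal{R}'$ so that $T_{f',\mathcal{R}'}$ covers $T_{f,\mathcal{R}}$, chosen so that $\gamma$ has trivial image in the deck group---hence lifts to a periodic trajectory $\gamma'$ with $p_{*}[\gamma']=[\gamma]$, its dynamical cycle $\sigma$ lifting to a cycle $\sigma'$---and chosen, following \cite[Sections~5 and~6]{Liu_vhsr}, rich enough that $\sigma'$ lies in a family of nonwandering subgraphs of $T_{f',\mathcal{R}'}$ whose homology images in $H_1(M';\Real)$ are hyperplanes supporting codimension-$1$ faces of $\mathcal{D}(M',\mathcal{C}')$ and whose intersection is exactly $\Real[\sigma']$. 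Then $\Real[\gamma']=\Real[\sigma']$ lies on $b_1(M')-1$ codimension-$1$ faces of $\mathcal{D}(M',\mathcal{C}')$, hence is a vertex; and since $p_{*}[\gamma']=[\gamma]$ is interior to $\mathcal{D}(M,\mathcal{C})$, this vertex meets none of the pulled-back faces, so $\mathbf{P}(p_{*})$ carries it to the interior point $\Real[\gamma]$, as required.

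\textbf{Main obstacle.} The one genuinely hard step is the last part of the construction: arranging, inside a finite cover and without destroying the liftability of $\gamma$, that $[\gamma']$ spans an \emph{extreme} ray---equivalently, that the cycle $\sigma'$ sits on a full-dimensional family of actual face-supporting hyperplanes of $\mathcal{D}(M',\mathcal{C}')$, rather than on just one or none of them. Cutting out enough genuine face-hyperplanes of the cover through a prescribed cycle is precisely the delicate content of the transition-graph analysis of \cite[Sections~5 and~6]{Liu_vhsr} (the analysis from which Lemma~\ref{facial_clusters} was extracted); Fried's density only supplies an interior starting direction and by itself produces no vertex. Everything else---the cone/polytope dictionary, and the existence of a single finite cover that simultaneously lifts $\gamma$, lifts its Markov partition, and realizes the homological separation---is routine given the separability of $\pi_1(M)$.
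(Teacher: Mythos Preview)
Your reduction is fine, but the mechanism you propose for the hard step is not the one that actually works, and it is not what \cite{Liu_vhsr} or the paper does. You want to fix an interior periodic trajectory $\gamma$, lift it to a cover, and then force $[\gamma']$ to become an extreme ray of the covering Fried cone by making $\sigma'$ lie on $b_1(M')-1$ genuine face-supporting hyperplanes of $\mathcal{D}(M',\mathcal{C}')$. LERF lets you lift $\gamma$ and its Markov data, but it gives you no control whatsoever over the \emph{face structure} of the covering polytope near $\sigma'$: the faces of $\mathcal{D}(M',\mathcal{C}')$ are determined by the global dynamics of the lifted flow, and there is no separability statement that manufactures $b_1(M')-1$ independent codimension-$1$ faces through a prescribed cycle. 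The transition-graph analysis of \cite[Sections~5--6]{Liu_vhsr} that you invoke (which is exactly Lemma~\ref{facial_clusters}) goes in the opposite direction---it associates quasi-convex subgroups to \emph{existing} boundary faces; it does not create new faces through a given cycle. So the ``main obstacle'' you flag is real, but your proposed resolution is a gap.

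The paper's argument is indirect and uses a genuinely different idea: it never tries to promote a chosen $\gamma$ to a vertex. Instead, it takes the finite family $\mathcal{H}$ of infinite-index quasi-convex subgroups from Lemma~\ref{facial_clusters} (which capture all boundary-projecting trajectories) and applies Wise's special quotient theorem \cite[Theorem~12.7]{Wise_book} to produce a quotient $\pi_1(M_f)\to G$ with $G$ nonelementary word-hyperbolic, virtually compact special, and with every $H\in\mathcal{H}$ having \emph{finite} image. One then passes to a finite-index $G'\leq G$ with $b_1(G')>0$ and takes $M'$ to be the cover corresponding to the preimage of $G'$. Now argue contrapositively: any vertex $v'$ of $\mathcal{D}(M',\mathcal{C}')$ is represented by a periodic trajectory $\gamma'$; if $v'$ projects to $\partial\mathcal{D}(M,\mathcal{C})$ then the downstairs trajectory is caught by some $H\in\mathcal{H}$, hence has finite-order image in $G$, hence $v'$ dies in $H_1(G;\Real)$. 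But the vertices of $\mathcal{D}(M',\mathcal{C}')$ span $H_1(M';\Real)$, which surjects onto $H_1(G';\Real)\neq 0$, so \emph{some} vertex must survive in $H_1(G;\Real)$ and therefore project to the interior. The point you are missing is this use of the special quotient theorem to kill the boundary-carrying subgroups while retaining positive virtual first Betti number; that is the engine of the construction, not LERF.
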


Here, $\mathcal{C}'$ denotes the unique fibered cone for $M'$
which contains the image of $\mathcal{C}$ under the induced homomorphism
$H^1(M;\Real)\to H^1(M';\Real)$, and $\mathcal{D}(M',\mathcal{C}')$
denotes the projective dual polytope as defined in (\ref{cone_projective_dual}).
Note that the induced homomorphism $H_1(M';\Real)\to H_1(M;\Real)$ determines
a linear projection of the polytope $\mathcal{D}(M',\mathcal{C}')$ 
onto the polytope $\mathcal{D}(M_f,\mathcal{C}_f)$.

\begin{proof}
	Fix a primitive cohomology class $\phi\in H^1(M;\Integral)$ in $\mathcal{C}$,
	we identify $(M,\phi)$ with the mapping torus of a pseudo-Anosov automorphism $f\colon S\to S$
	and its distinguished cohomology class, namely, $(M_f,\phi_f)$.
	We obtain a finite collection $\mathcal{H}$ of infinite-index quasi-convex subgroups of $\pi_1(M_f)$
	as in Lemma \ref{facial_clusters}.
	
	Recall that a finitely generated group is said to be 
	\emph{virtually compact special} if some finite-index subgroup of the
	group is isomorphic to the fundamental group of a compact special cube complex
	(see \cite[Chapter 4, Definition 4.2]{Wise_book}; 
	compare \cite[Definition 3.2]{Haglund--Wise} and \cite[Definition 2.1]{AGM-MSQT}).
	Since $M_f$ is closed and hyperbolic,
	$\pi_1(M_f)$ is nonelementary word-hyperbolic and virtually compact special \cite[Theorem 9.3]{Agol_VHC}.
	By Wise's special quotient theorem \cite[Theorem 12.7]{Wise_book},	
	there exists a group quotient $\pi_1(M_f)\to G$,
	such that $G$ is again nonelementary word-hyperbolic and virtually compact special,
	and moreover, the image of every $H\in\mathcal{H}$ has finite image in $G$.
	(See also \cite[Proof of Lemma 9.4]{Liu_vhsr} 
	for an alternative approach through \cite{AGM-MSQT,GM-filling},
	and for more explanation about the claim of being nonelementary.)
	In particular, $G$ has some finite-index subgroup $G'$
	with $b_1(G')>0$, \cite[Corollary 1.2]{Agol_VHC}.
		
	We take $M'$ to be the finite cover of $M_f$,
	such that $\pi_1(M')$ is the preimage of $G'$ with respect to $\pi_1(M_f)\to G$.
	The pullback of $\phi_f$ is a (not necessarily primitive) fibered class $\phi'\in H^1(M';\Integral)$,
	and it lies in a unique fibered cone $\mathcal{C}'$,
	which contains the image of $\mathcal{C}=\mathcal{C}_f$.
	The suspension flow on $M_f$ lifts to a flow on $M'$.
	If one identifies $M'$ with the mapping torus of a pseudo-Anosov automorphism
	$f'\colon S'\to S'$, where $S'$ is a preimage component of $S$,
	then the lifted flow is just the suspension flow
	with velocity scaled down by the divisibility of $\phi'$.
	
	For any vertex $v'$ of $\mathcal{D}(M',\mathcal{C}')$,
	there exists a periodic trajectory $\gamma'$ of the lifted flow,
	such that $[\gamma']\in H_1(M';\Real)$ lies in the linear $1$--subspace $v'$,
	by Fried's dual characterization of $\mathcal{C}'$, 
	(see Section \ref{Subsec-polytopes_and_cones}).
	The periodic trajectory $\gamma'$ covers a unique periodic trajectory $\gamma$ in $M_f$.
	The free-homotopy class of $\gamma$ can be represented as
	the conjugacy class of some element $g$ in $\pi_1(M_f)$.
	If the vertex $v'$ projects into 
	a codimension--$1$ closed face of $\mathcal{D}(M_f,\mathcal{C}_f)$,
	the image of $g$ in $G$ has finite order,
	by the above construction and Lemma \ref{facial_clusters}.
	Then, in view of the commutative diagram
	$$\xymatrix{
	H_1(M';\Real) \ar[r] \ar[d] & H_1(G';\Real) \ar[d] \\
	H_1(M_f;\Real) \ar[r] & H_1(G;\Real)	
	}$$
	the $1$--subspace $v'$ of $H_1(M';\Real)$ must have trivial image in $H_1(G;\Real)$.
	Note that the homomorphisms in the above diagram are all surjective.
	
	By Fried's characterization,
	$\mathcal{D}(M',\mathcal{C}')$ has codimension $0$ in $\mathbf{P}(H_1(M';\Real))$,
	and 
	$\mathcal{D}(M_f,\mathcal{C}_f)$ has codimension $0$ in $\mathbf{P}(H_1(M_f;\Real))$.
	Since $b_1(G)>0$,
	there must be some vertex $w'$ of $\mathcal{D}(M',\mathcal{C}')$,
	and as a $1$--subspace of $H_1(M';\Real)$, 
	$w'$ has nonvanishing image in $H_1(G;\Real)$.
	The argument in last paragraph
	shows that $w'$ must project into the interior of $\mathcal{D}(M_f,\mathcal{C}_f)$.
	Therefore, $M'$ is a finite cover as asserted.
\end{proof}

\begin{lemma}\label{profinite_isomorphism_hyperbolic_closed}
	Adopt the assumptions of Theorem \ref{profinite_isomorphism_hyperbolic}.
	If $M_A$ is closed,
	then the conclusion of Theorem \ref{profinite_isomorphism_hyperbolic} holds true.	
\end{lemma}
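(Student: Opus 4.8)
The plan is to reduce, via virtual fibering and Lemma~\ref{profinite_isomorphism_hyperbolic_reduction}, to producing a single $\Psi$--corresponding pair of finite covers whose matrix coefficient module is free of rank one, and then to obtain that rank--one property from the criterion of Lemma~\ref{new_point_criterion}. That criterion requires a $\Psi^*_\varepsilon$--corresponding pair of fibered cones together with a pair of interior points of the associated projective dual polytopes whose matching persists under small perturbations of $\varepsilon$; constructing such a \emph{trackable} interior point is precisely the role of Lemma~\ref{new_vertex_D_cover}, which exhibits, on a further finite cover, a \emph{vertex} of the dual polytope lying over an interior point downstairs. The heuristic is that the matching of vertices is discrete, hence locally constant in $\varepsilon$, so a vertex pushed down to an interior point yields a matched interior point that cannot drift.

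Concretely, since $M_A$ is closed hyperbolic with $b_1(M_A)>0$, Theorem~\ref{quasi-fibered} gives a finite cover $N_A\to M_A$ that fibers over a circle; let $N_B\to M_B$ be the $\Psi$--corresponding cover and $\Psi_1\colon\widehat{\pi_1(N_A)}\to\widehat{\pi_1(N_B)}$ the restriction of $\Psi$. Then $N_A$ and $N_B$ are closed hyperbolic of the same positive first Betti number; here $M_B$, hence $N_B$, is closed by \cite{WZ_geometry}. Fix a fibered cone $\mathcal{C}_A\subset H^1(N_A;\Real)$, and choose $\varepsilon$ in the open dense set $\mathcal{U}'\subset\mathrm{Hom}_\Integral(\mathrm{MC}(\Psi_{1*}),\Real)$ of Lemma~\ref{U_construction}, so that $\Psi^*_{1,\varepsilon}$ is nondegenerate. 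By Corollary~\ref{fibered_cone_correspondence}, $\Psi^*_{1,\varepsilon}$ matches $\mathcal{C}_A$ with a fibered cone $\mathcal{C}_B$ for $N_B$, and by Theorem~\ref{profinite_isomorphism_npc} the Thurston--norm cone correspondence, and hence the induced bijection of vertices of the projective dual polytopes, is locally constant in $\varepsilon$. Apply Lemma~\ref{new_vertex_D_cover} to $(N_A,\mathcal{C}_A)$ to obtain a finite cover $N'_A\to N_A$ such that $\mathcal{D}(N'_A,\mathcal{C}'_A)$ has a vertex $w_A$ whose image $q_A$ under $H_1(N'_A;\Real)\to H_1(N_A;\Real)$ lies in $\mathrm{int}(\mathcal{D}(N_A,\mathcal{C}_A))$, and let $N'_B\to N_B$ be the $\Psi_1$--corresponding cover, with $\Psi_2$ the restriction of $\Psi_1$.

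Applying Lemma~\ref{U_construction} to $(N_A,N_B,\Psi_1)$ and its cover pair $(N'_A,N'_B)$: for $\varepsilon\in\mathcal{U}'$ there is $\varepsilon'$, depending continuously on $\varepsilon$, with $\Psi_{2*}^{\varepsilon'}$ nondegenerate and descending to $\Psi_{1*}^\varepsilon$ under the covering projections. The vertex bijection for $(N'_A,N'_B,\Psi_2)$ is locally constant in $\varepsilon'$ by Theorem~\ref{profinite_isomorphism_npc} (valid since $N'_A,N'_B$ are closed hyperbolic), and $\varepsilon'$ varies continuously with $\varepsilon$, so there is an open neighborhood $\mathcal{W}\subset\mathcal{U}'$ of $\varepsilon$ on which $\mathbf{P}(\Psi_{2*}^{\varepsilon'})$ sends $w_A$ to a fixed vertex $w_B$ of $\mathcal{D}(N'_B,\mathcal{C}'_B)$. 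Projecting to $H_1(N_B;\Real)$ and using the commutative square of Lemma~\ref{U_construction}, the image $q_B$ of $w_B$ satisfies $\mathbf{P}(\Psi_{1*}^{\tilde\varepsilon})(q_A)=q_B$ for every $\tilde\varepsilon\in\mathcal{W}$, with $\Psi_{1*}^{\tilde\varepsilon}$ nondegenerate (as noted in the proof of Theorem~\ref{profinite_isomorphism_npc}); and since $\mathbf{P}(\Psi_{1*}^\varepsilon)$ carries $\mathcal{D}(N_A,\mathcal{C}_A)$ onto $\mathcal{D}(N_B,\mathcal{C}_B)$ preserving interiors, $q_A\in\mathrm{int}(\mathcal{D}(N_A,\mathcal{C}_A))$ forces $q_B\in\mathrm{int}(\mathcal{D}(N_B,\mathcal{C}_B))$. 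Lemma~\ref{new_point_criterion} now applies to $(N_A,N_B,\Psi_1)$, giving that $\mathrm{MC}(\Psi_{1*})$ has rank one; since $(N_A,N_B)$ is a $\Psi$--corresponding pair of finite covers of $(M_A,M_B)$, Lemma~\ref{profinite_isomorphism_hyperbolic_reduction} yields $\mathrm{MC}(\Psi_*)=\mu\Integral$ for a unit $\mu\in\widehat{\Integral}^\times$, which is the assertion of Theorem~\ref{profinite_isomorphism_hyperbolic}.

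The main obstacle is the last link of the chain: producing interior points whose matching survives perturbation of $\varepsilon$. A generic interior point of a projective dual polytope drifts as $\varepsilon$ varies, so the rigidity has to be imported from points forced by discrete data, namely vertices, which are dual to codimension--$1$ faces of Thurston--norm cones and are therefore matched by the locally constant cone correspondence of Theorem~\ref{profinite_isomorphism_npc}. Delivering such a vertex downstairs, as an interior point, is exactly what Lemma~\ref{new_vertex_D_cover} supplies (via the special cube complex and Markov partition techniques of \cite{Liu_vhsr}), and splicing it into the rank--one criterion hinges on the fact, furnished by Lemma~\ref{U_construction}, that specializations on a cover can be chosen to descend compatibly and continuously in $\varepsilon$.
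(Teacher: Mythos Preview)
Your argument is correct and follows essentially the same route as the paper: reduce to a fibered situation via Theorem~\ref{quasi-fibered} and Lemma~\ref{profinite_isomorphism_hyperbolic_reduction}, use Lemma~\ref{new_vertex_D_cover} to produce a cover whose dual polytope has a vertex projecting to an interior point, transport that vertex under the locally constant cone correspondence via Lemma~\ref{U_construction}, and feed the resulting stable interior-point match into Lemma~\ref{new_point_criterion}. The only cosmetic difference is that the paper absorbs the first passage to a fibered cover into a ``without loss of generality'' assumption, whereas you carry the extra layer $(N_A,N_B,\Psi_1)$ explicitly; also note that your first mention of $\mathcal{U}'$ is a forward reference to the set produced only after $N'_A,N'_B$ are in hand, so it would read more cleanly to construct $N'_A$ before fixing~$\varepsilon$.
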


\begin{proof}
Without loss of generality, we assume that $M_A$, and hence $M_B$, fibers over a circle,
and aim at proving $\mathrm{rank}_\Integral(\mathrm{MC}(\Psi_*))=1$.
This is because of Lemma \ref{profinite_isomorphism_hyperbolic_reduction}.
To be precise, one may otherwise pass to a pair of $\Psi$--corresponding finite cover 
$M^\dagger_A\to M_A$ and $M^\dagger_B\to M_B$, such that $M^\dagger_A$ fibers over a circle
(see Theorem \ref{quasi-fibered} and Definition \ref{corresponding_cover_def}).
Hence, $M^\dagger_B$ also fibers over a circle (Lemma \ref{both_fibered}).
Then it will suffice to prove $\mathrm{rank}_\Integral(\mathrm{MC}(\Psi^\dagger_*))=1$,
(Lemma \ref{profinite_isomorphism_hyperbolic_reduction}).

We make the following constructions:
Fix a fibered cone $\mathcal{C}_A$ in $H^1(M_A;\Real)$ for $M_A$.
Obtain a finite cover $M'_A$ of $M_A$ by Lemma \ref{new_vertex_D_cover}.
Thus, $\mathcal{D}(M'_A,\mathcal{C}'_A)$ has some vertex $v'_A$ that projects 
into the interior of $\mathcal{D}(M_A,\mathcal{C}_A)$,
where $\mathcal{C}'_A$ refers to 
the unique fibered cone for $M'_A$ that contains the image of $\mathcal{C}_A$,
with respect to the covering-induced homomorphism $H^1(M_A;\Real)\to H^1(M'_A;\Real)$.
Denote by $q_A$ the image of $v'_A$ in the interior of $\mathcal{D}(M_A,\mathcal{C}_A)$.
Obtain the finite cover $M'_B\to M_B$
which makes a $\Psi$--corresponding pair with $M'_A\to M_A$.
Fix an open dense subset $\mathcal{U}'$ of $\mathrm{Hom}_\Integral(\mathrm{MC}(\Psi_*),\Real)$
as provided in Lemma \ref{U_construction}.
Fix a pair of homomorphisms $\varepsilon\in\mathcal{U}'$ and 
$\varepsilon'\in\mathrm{Hom}_\Integral(\mathrm{MC}(\Psi'_*),\Real)$
which satisfy the conclusion of Lemma \ref{U_construction}.
In particular, $\Psi_*^\varepsilon\colon H^1(M_B;\Real)\to H^1(M_A;\Real)$
and $(\Psi')^{\varepsilon'}_*\colon H^1(M'_B;\Real)\to H^1(M'_A;\Real)$
are both nondegenerate.
Denote by $\mathcal{C}_B$ the image of $\mathcal{C}_A$ under $\Psi_*^\varepsilon$,
which is a fibered Thurston-norm cone for $M_B$ (Corollary \ref{fibered_cone_correspondence}).
Similarly, denote by $\mathcal{C}'_B$ the fibered Thurston-norm cone for $M'_B$
given as the image of $\mathcal{C}'_A$ under $(\Psi')_*^{\varepsilon'}$.
Note that $\mathcal{C}_B$ embeds into $\mathcal{C}'_B$
under $H^1(M_B;\Real)\to H^1(M'_B;\Real)$,
as shown by dualizing the commutative diagram in Lemma \ref{U_construction},
(see also (\ref{Psi_dual_cov})).

We observe the commutative diagram
\begin{equation}\label{Psi_D_cov}
\xymatrix{
\mathcal{D}(M'_A,\mathcal{C}'_A) \ar[rr]^-{\mathbf{P}\left((\Psi')^{\varepsilon'}_*\right)} \ar[d]_{\mathbf{P}(\mathrm{cov}_*)} & &
\mathcal{D}(M'_B,\mathcal{C}'_B) \ar[d]^{\mathbf{P}(\mathrm{cov}_*)} \\
\mathcal{D}(M_A,\mathcal{C}_A) \ar[rr]^-{\mathbf{P}\left(\Psi^{\varepsilon}_*\right)} & &
\mathcal{D}(M_B,\mathcal{C}_B)  
}
\end{equation}
Every codimension--$1$ open face of $\mathcal{C}_A$ is a codimension--$1$ open Thurston--norm cone for $M_A$,
and the linear hyperplane in $H^1(M_A;\Real)$ determines a projective hyperplane in $\mathbf{P}(H^1(M_A;\Real))$,
which is dual to a vertex of $\mathcal{D}(M_A,\mathcal{C}_A)$ in $\mathbf{P}(H_1(M_A;\Real))$.
The same relation holding for $M_B$, 
it follows that the projective linear isomorphism $\mathbf{P}(\Psi^{\varepsilon}_*)$ maps
$\mathcal{D}(M_A,\mathcal{C}_A)$ onto $\mathcal{D}(M_B,\mathcal{C}_B)$.
Similarly, the projective linear isomorphism $\mathbf{P}((\Psi')^{\varepsilon'}_*)$ maps
$\mathcal{D}(M'_A,\mathcal{C}'_A)$ onto $\mathcal{D}(M'_B,\mathcal{C}'_B)$.
Therefore, the image of $v'_A$ under $\mathbf{P}((\Psi')^{\varepsilon'}_*)$
is a vertex $v'_B$ of $\mathcal{D}(M'_B,\mathcal{C}'_B)$,
and the image of $q_A$ under $\mathbf{P}(\Psi^{\varepsilon}_*)$ 
is an interior point $q_B$ of $\mathcal{D}(M_B,\mathcal{C}_B)$.
Moreover, the commutative diagram (\ref{Psi_D_cov}) shows that $v'_B$ projects onto $q_B$
under the covering-induced projection.

For any $\tilde{\varepsilon}$ in $\mathcal{U}'$ sufficiently close to $\varepsilon$,
we require $\tilde{\varepsilon}'$ in $\mathrm{Hom}_\Integral(\mathrm{MC}(\Psi_*),\Real)$
sufficiently close to $\varepsilon'$, as in Lemma \ref{U_construction}.
Then $\Psi^*_{\tilde{\varepsilon}}$ induces the same Thurston-norm cone correspondence as 
$\Psi^*_{\varepsilon}$ does, and similarly
with $(\Psi')^*_{\tilde{\varepsilon}'}$ and $(\Psi')^*_{\varepsilon'}$.
It follows that $\mathbf{P}(\Psi^*_{\tilde{\varepsilon}})$ must also map
$q_A$ to $q_B$, (and also map the vertices of $\mathcal{D}(M_A,\mathcal{C}_A)$
to the vertices of $\mathcal{D}(M_B,\mathcal{C}_B)$ 
the same way as $\Psi^*_{\varepsilon}$ does).
Applying Lemma \ref{new_point_criterion},
we conclude that $\mathrm{MC}(\Psi_*)$ is a free $\Integral$--module of rank $1$,
which implies Theorem \ref{profinite_isomorphism_hyperbolic} 
by Lemma \ref{profinite_isomorphism_hyperbolic_reduction}.
Therefore, the conclusion of Theorem \ref{profinite_isomorphism_hyperbolic}
holds true if both $M_A$ and $M_B$ are closed.
\end{proof}

\subsection{The cusped case}\label{Subsec-cusped_hyp}
We derive the cusped case of Theorem \ref{profinite_isomorphism_hyperbolic}
from the closed case (Lemma \ref{profinite_isomorphism_hyperbolic_closed})
as follows.

\begin{lemma}\label{profinite_isomorphism_hyperbolic_robust}
	Adopt the assumptions of Theorem \ref{profinite_isomorphism_hyperbolic}.
	If the homomorphism $H_1(\partial M_A;\Rational)\to H_1(M_A;\Rational)$ 
	induced by the boundary inclusion has nontrivial cokernel,
	then the conclusion of Theorem \ref{profinite_isomorphism_hyperbolic} holds true.	
\end{lemma}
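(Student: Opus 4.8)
The plan is to reduce the cusped case to the closed case, Lemma \ref{profinite_isomorphism_hyperbolic_closed}, by Dehn filling, used together with the Wilton--Zalesskii profinite correspondence of cusp subgroups \cite{WZ_geometry,WZ_decomposition}. First I would unpack the cusp correspondence: the isomorphism $\Psi$ induces a bijection between the cusps of $M_A$ and of $M_B$, carrying each peripheral profinite subgroup of $\widehat{\pi_A}$ isomorphically onto a conjugate of the corresponding peripheral subgroup of $\widehat{\pi_B}$; in particular $M_A$ and $M_B$ have equally many cusps and equal first Betti number. On the level of first homology this forces $\Psi_*\colon\widehat{H_A}\to\widehat{H_B}$ to carry the closure $\overline{T_A}$ of the peripheral submodule $\mathrm{Im}(H_1(\partial M_A;\Integral)\to H_1(M_A;\Integral)_{\mathtt{free}})$ isomorphically onto $\overline{T_B}$, hence to descend to an isomorphism $\bar\Psi_*\colon\widehat{W_A}\to\widehat{W_B}$, where $W_A$ is the quotient of $H_1(M_A;\Integral)_{\mathtt{free}}$ by the saturation of $T_A$ (a direct summand). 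The cokernel in question is precisely $W_A\otimes_\Integral\Rational$, so the hypothesis just says $\mathrm{rank}_\Integral(W_A)>0$. After possibly replacing $(M_A,M_B,\Psi)$ by a $\Psi$--corresponding pair of finite covers --- harmless by Lemmas \ref{profinite_isomorphism_hyperbolic_reduction} and \ref{MC_cover}, and preserving the cokernel hypothesis --- I may also assume that $M_A$ fibers over a circle, so that Corollary \ref{fibered_cone_correspondence} provides a $\Psi^*_\varepsilon$--corresponding pair of fibered cones for every nondegenerate $\Psi^*_\varepsilon$.

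Next I would bring in Dehn filling. For all but finitely many slopes on each cusp, Thurston's hyperbolic Dehn surgery theorem makes the filling $N_A(\alpha)=M_A(\alpha)$ a closed hyperbolic $3$--manifold; since every filling slope represents a class in $T_A$, the quotient $H_1(M_A;\Integral)_{\mathtt{free}}\to H_1(N_A(\alpha);\Integral)_{\mathtt{free}}$ factors the projection onto $W_A$, so $b_1(N_A(\alpha))\ge\mathrm{rank}_\Integral(W_A)>0$, and as $\alpha$ ranges over the admissible slopes the kernels of these quotients intersect exactly in the saturation of $T_A$. Running the analysis of Lemma \ref{profinite_isomorphism_hyperbolic_closed} --- that is, the criterion of Lemma \ref{new_point_criterion} applied to the fibered cones of the fillings, tracking through their projective dual polytopes an interior point that survives small perturbation of the specialization parameter --- should constrain $\bar\Psi_*$ to have matrix coefficient module of rank one. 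Finally, combining this with the peripheral correspondence from the first step, together with Proposition \ref{MC_property} and Lemma \ref{MC_cover}, I would reassemble $\mathrm{MC}(\Psi_*;H_A,H_B)$ from its reduced part $\mathrm{MC}(\bar\Psi_*)$ and its peripheral part, concluding that it is free of rank one; by Lemma \ref{profinite_isomorphism_hyperbolic_reduction} this gives the conclusion of Theorem \ref{profinite_isomorphism_hyperbolic}.

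The hard part will be the slope mismatch: a Dehn-filling slope of $M_A$ is a rational line inside a peripheral $\Integral^2$, whereas $\Psi$ only identifies the profinite completions $\widehat{\Integral}^2$ of the peripheral groups, so an individual hyperbolic filling of $M_A$ does not literally correspond under $\Psi$ to a manifold filling of $M_B$. This is exactly why the argument has to be carried out on the reduced quotient $W_A$, on which $\Psi_*$ descends canonically, and why the cokernel hypothesis --- guaranteeing $W_A\neq0$ and that every filling retains positive first Betti number --- is indispensable. Accordingly, the two delicate steps I expect to be most involved are transferring the rank-one property from the closed fillings to $\bar\Psi_*$ in spite of the mismatch, and then lifting rank-one from $\bar\Psi_*$ to $\mathrm{MC}(\Psi_*)$ across the peripheral contribution.
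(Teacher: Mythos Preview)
Your plan correctly identifies the slope mismatch as the crux, but it does not actually resolve it, and without that resolution the reduction to Lemma~\ref{profinite_isomorphism_hyperbolic_closed} cannot start. Applying the closed case requires a profinite isomorphism between the fundamental groups of a \emph{pair} of closed fillings $M_{A,s}$ and $M_{B,s}$; working only on the reduced quotient $W_A$ does not produce such a pair, and running the criterion of Lemma~\ref{new_point_criterion} on fillings of $M_A$ alone tells you nothing about $\Psi_*$ or $\bar\Psi_*$. The paper's key idea is to manufacture slopes that \emph{do} correspond: for each $1$--dimensional cone carrier $l_{A,j}$ (with $\Psi$--corresponding $l_{B,j}$, via Corollary~\ref{cone_carrier_correspondence}), the preimage of the lattice kernel $K_{A,j}$ in a peripheral subgroup $P_{A,i}$ is an infinite cyclic subgroup determining a slope $c_{A,i}^{j}$, and since both the peripheral closures and the lattice-kernel closures are carried to their $B$--counterparts by $\Psi$, so is the closure of this cyclic subgroup. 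Filling along an ``admissible filling pattern'' $s\colon I\to J$ of such slopes therefore yields a genuine descended isomorphism $\Psi_s\colon\widehat{\pi_{A,s}}\to\widehat{\pi_{B,s}}$, to which the closed case (after an orbifold hyperbolic Dehn filling step to guarantee hyperbolicity) can be applied.

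Your final ``reassembly'' step is also not how the paper proceeds, and I do not see how to make it work: knowing only that $\mathrm{MC}(\bar\Psi_*)$ has rank one does not control the part of $\mathrm{MC}(\Psi_*)$ coming from the peripheral submodule. Instead, the paper varies the filling pattern $s$ so that the quotient maps $H_A\to H_{A,s}$ jointly embed $H_A$ into $\bigoplus_s H_{A,s}$; since each $\mathrm{MC}(\Psi_{s\,*})$ has rank one and they are all commensurable (each contains $\mathrm{MC}(\bar\Psi_*)$), Proposition~\ref{MC_property}(3) gives $\mathrm{rank}_\Integral(\mathrm{MC}(\Psi_*))\le 1$.
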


\begin{proof}
	We enumerate the boundary tori of $M_A$ as $T_{A,i}$, indexing with a finite set $I$,
	and for each $T_{A,i}$, fix a conjugate $P_{A,i}\cong\pi_1(T_{A,i})$ 
	of the associated peripheral subgroup in $\pi_1(M_A)$.
	Under the assumption of Lemma \ref{profinite_isomorphism_hyperbolic_robust},
	$\Psi$ determines a bijective correspondence
	between the boundary tori of $M_A$ and those of $M_B$.
	In fact, this follows from Wilton and Zalesski \cite[Proposition 3.1]{WZ_decomposition},
	and in particular, $M_A$ and $M_B$ have the same number of boundary tori.
	To be precise, we can choose peripheral conjugates
	$P_{B,i}\cong\pi_1(T_{B,i})$ in $\pi_1(M_B)$, similarly as above,
	and the correspondence of the boundary tori says that upon suitable re-indexing,
	$\Psi\colon\widehat{\pi_A}\to\widehat{\pi_B}$ 
	maps the closure of each $P_{A,i}$
	isomorphically onto a conjugate of the closure of $P_{B,i}$,
	for all $i\in I$.
	Note that any finitely generated subgroup
	is separable in a hyperbolic $3$--manifold group,
	so its closure in the profinite completion 
	is naturally isomorphic to its own profinite completion,
	(see \cite[Corollary 4.2.3]{AFW_book_group}).
	
	We enumerate the $1$--dimensional cone carriers for $M_A$
	as $l_{A,j}$, indexing with a finite set $J$,
	and there are corresponding cone carriers $l_{B,j}$ for $M_B$,
	(Corollary \ref{cone_carrier_correspondence}).
	Denote by $K_{A,j}=K(l_{A,j})$ the lattice kernels in $H_A$,
	and similarly $K_{B,j}$ in $H_B$, (see (\ref{lattice_kernel_def})).
	By Corollary \ref{cone_carrier_correspondence}, 
	the closure of each $K_{A,j}$ projects isomorphically onto 
	the closure of $K_{B,j}$
	under $\Psi_*\colon\widehat{H_A}\to \widehat{H_B}$.
	Note that
	$H^1(M_A;\Real)=\sum_{j\in J} l_{A,j}$ and $H^1(M_B;\Real)=\sum_{j\in J}^h l_{B,j}$,
	since the Thurston-norm unit balls for $M_A$ and $M_B$ are both polytopes,
	and hence,
	$\cap_{j\in J} K_{A,j}=\{0\}$ and $\cap_{j\in J} K_{B,j}=\{0\}$.
		
	We observe some pairs of corresponding boundary slopes as follows.
	By a \emph{slope} on a torus, we mean an essential simple closed curve up to isotopy.
	For each $i\in I$,
	the image of $P_{A,i}$ under the free abelianization $\pi_A\to H_A$	has rank either $1$ or $2$.
	For example, this follows from the well-known fact that
	for any orientable compact $3$--manifold $N$,
	the kernel and the image of the natural homomorphism 
	$H_1(\partial N;\Rational)\to H_1(N;\Rational)$
	both have dimension $b_1(\partial N)/2$,
	applying to a Dehn filling of $M_A$	that leaves only $T_{A,i}$ unfilled.
	Since $H^1(M_A;\Real)=\sum_{j\in J} l_{A,j}$,
	there exists some $j\in J$, such that $K_{A,j}$ does not contain the image of $P_{A,i}$.
	For any such $j$, 
	the preimage of $K_{A,j}$ in $P_{A,i}$ is an infinite cyclic subgroup $C_{A,i}^j$,
	such that $P_{A,i}/C_{A,i}^j$ is infinite cyclic,
	and $C_{A,i}^j$ represents a unique slope $c^j_{A,i}$ on the torus $T_{A,i}$.
	With the same $(i,j)\in I\times J$, the same construction with $P_{B,i}$ and $K_{B,j}$
	yields a slope $c^j_{B,i}$ on $T_{B,i}$,
	which is represented by an infinite cyclic subgroup $C_{B,i}^j$ of $P_{B,i}$.
	The above profinite correspondences imply that 
	$\Psi\colon \widehat{\pi_A}\to\widehat{\pi_B}$ maps the closure of $C_{A,i}^j$
	isomorphically onto a conjugate of the closure of $C_{B,i}^j$,
	and in this sense, 
	we say that the slopes $c_{A,i}^j$ and $c_{B,i}^j$ correspond to each other.
	
	We can design corresponding Dehn fillings	of $M_A$ and $M_B$,
	by choosing corresponding pairs of slopes on all the corresponding pairs of boundary tori.
	To be precise, we say that a map $s\colon I\to J$ is an \emph{admissible filling pattern},
	if $K_{A,s(i)}$ does not contain the image of $P_{A,i}$ for any $i\in I$.
	The above observation shows the existence of admissible filling patterns.
	For any admissible filling pattern $s$,
	we obtain a Dehn filling $M_{A,s}$ of $M_A$ along the slopes $c^{s(i)}_{A,i}$ for all $i\in I$.
	Namely, $M_{A,s}$ is the orientable connected closed $3$--manifold
	obtained from $M_A$, by attaching solid tori to all the boundary tori $T_{A,i}$,
	such that the slopes $c^{s(i)}_{A,i}$ bounds meridian disks in the attached solid tori.
	By the van Kampen theorem, the fundamental group $\pi_{A,s}=\pi_1(M_{A,s})$
	is the quotient of $\pi_A$	by the smallest normal subgroup that contains all $C_{A,i}^{s(i)}$,
	so the profinite completion $\widehat{\pi_{A,s}}$ is the quotient of $\widehat{\pi_A}$
	by the smallest normal closed subgroup that contains all $\mathrm{clos}(C_{A,i}^{s(i)})$.
	Similarly, we obtain the Dehn filling $M_{B,s}$ of $M_B$.
	The above correspondence between slopes shows that $\Psi$ descends
	to a profinite isomorphism $\Psi_s\colon \widehat{\pi_{A,s}}\to \widehat{\pi_{B,s}}$.
	
	Lemma \ref{profinite_isomorphism_hyperbolic_closed} would apply to
	the setting $(M_{A,s},M_{B,s},\Psi_s)$, if 
	the Dehn filling $M_{A,s}$ (and hence $M_{B,s}$) admits a hyperbolic metric.
	In general, the Dehn filling could be exceptional,
	but we claim that 
	the following rank-one property still holds for any admissible filling pattern $s$:
	\begin{equation}\label{filling_rank_one}
	\mathrm{rank}_\Integral (\mathrm{MC}(\Psi_{s\,*};H_{A,s},H_{B,s}))=1
	\end{equation}	
	where $H_{A,s}=H_1(M_{A,s};\Integral)_{\mathtt{free}}$ 
	and $H_{B,s}=H_1(M_{B,s};\Integral)_{\mathtt{free}}$,
	(see Definition \ref{MC_def}).
	To avoid distraction,
	let us assume this claim for the moment,
	and explain how Lemma \ref{profinite_isomorphism_hyperbolic_robust}
	follows from it.
	
	To this end, we denote $\bar{H}_A=H_1(M_A,\partial M_A;\Integral)_{\mathtt{free}}$,
	which can be naturally identified with the quotient of $H_A$ by
	the smallest direct summand that contains the images of all $P_{A,i}$ under $\pi_A\to H_A$.
	Note that $\bar{H}_A$ has positive rank 
	since $H_1(\partial M_A;\Rational)\to H_1(M_A;\Rational)$ has nontrivial cokernel.
	Similarly, we denote $\bar{H}_B=H_1(M_B,\partial M_B;\Integral)_{\mathtt{free}}$.
	Therefore, $\Psi_*$ descends to an isomorphism $\bar{\Psi}_*$	
	between the profinite completions of $\bar{H}_A$ and $\bar{H}_B$.
	We observe that $\bar{H}_A$ and $H_B$ are common quotients
	of $H_{A,s}$ and $H_{B,s}$ for all admissible filling patterns $s$,
	respectively.
	Moreover, the induced quotient homomorphisms of the profinite completions
	naturally commutes with $\bar{\Psi}_*$ and $\Psi_{s\,*}$.
	Then the matrix coefficient module $\mathrm{MC}(\bar{\Psi}_*;\bar{H}_A,\bar{H}_B)$ 
	is contained in all $\mathrm{MC}(\Psi_{s\,*};H_{A,s},H_{B,s})$,
	by Proposition \ref{MC_property} (1), 
	(choosing bases of $H_{A,s}$ by lifting bases of $\bar{H}_A$ and completing, 
	and choosing bases of $H_{B,s}$ similarly).
	Using the claimed (\ref{filling_rank_one}),
	$\mathrm{MC}(\bar{\Psi}_*;\bar{H}_A,\bar{H}_B)$ is commensurable with 
	$\mathrm{MC}(\Psi_{s\,*};H_{A,s},H_{B,s})$ for all admissible filling patterns $s$,
	so these matrix coefficient modules are mutually commensurable 
	as $\Integral$--submodules in $\widehat{\Integral}$.
	In particular, $\mathrm{MC}(\oplus_s\Psi_{s\,*};\,\oplus_s H_{A,s},\,\oplus_s H_{B,s})$
	also has rank $1$, where $s$ ranges over all the admissible filling patterns.
	
	On the other hand, we observe that the quotient homomorphisms $H_A\to H_{A,s}$
	induced by the natural quotient homomorphisms $\pi_A\to \pi_{A,s}$
	give rise to an embedding $H_A\to \oplus_s H_{A,s}$.
	In fact, for any nonzero $u\in H_A$, there is some $K_{A,j}$ which does not contain $u$,
	so there is an admissible filling pattern $t\colon I\to J$ 
	with $t(i)=j$, unless $K_{A,j}$ contains $P_{A,i}$.
	Then the kernel of $H_A\to H_{A,t}$ is contained in $K_{A,j}$,
	so $u$ survives under $H_A\to H_{A,t}$.
	Similarly, $H_B\to \oplus_s H_{B,s}$ is also an embedding.
	By Proposition \ref{MC_property} (3),
	the rank of $\mathrm{MC}(\Psi_*;H_A,H_B)$ is 
	at most the rank of $\mathrm{MC}(\oplus_s\Psi_{s\,*};\,\oplus_s H_{A,s},\,\oplus_s H_{B,s})$,
	which equals $1$.
	Therefore, $\mathrm{MC}(\Psi_*;H_A,H_B)$ has rank $1$,
	and the conclusion of Theorem \ref{profinite_isomorphism_hyperbolic}
	follows from Proposition \ref{MC_property} (2).
	In other words, 
	we have proved Lemma \ref{profinite_isomorphism_hyperbolic_robust},
	assuming the claimed rank-one property (\ref{filling_rank_one}).
	
	It remains to prove the claim (\ref{filling_rank_one}).
	We appeal to the orbifold hyperbolic Dehn filling theorem \cite[Chapter 8, Theorem 8.4]{BMP_book_orbifold}.
	The procedure is conceptually simple but notationally a bit involved.
	The hyperbolic $3$--orbifold showing up below only plays an auxiliary role.
	One may alternatively employ other more general constructions, 
	such as fillings of relatively hyperbolic groups \cite{GM-filling}.
	
	Let $s$ be any admissible filling pattern.
	For simplicity, we fix $s$ below and drop the superscript $s(i)$ in slope related terms,
	writing $c_{A,i}$, $C_{A,i}$, $c_{B,i}$, $C_{B,i}$. 
	For sufficiently large positive integers $m_i$, $i\in I$,
	take the orbifold Dehn filling of $M_A$ along the $m_i$--cyclic covers of the slopes $c_{A,i}$.
	The result is an orientable closed hyperbolic $3$--orbifold $\mathcal{O}_{A,s}$,
	whose underlying $3$--manifold is $M_{A,s}$, and whose singular loci
	are the core curves in the solid tori attached to $T_{A,i}$, 
	with cone singularity of order $m_i$.
	Take a regular finite manifold cover $M'_{A,s}\to \mathcal{O}_{A,s}$.
	so $M'_{A,s}$ is an orientable closed hyperbolic $3$--manifold.
	The composite map $M'_{A,s}\to\mathcal{O}_{A,s}\to M_{A,s}$ is a branched covering map,
	ramifying on the filling core curves of orders $m_i$.
	Equivalently, 
	$M'_{A,s}$ can be obtained as the Dehn filling of a regular finite cover $M'_A$ of $M_A$.
	To be precise, $M'_A\to M_A$ is the pullback of $M'_{A,s}\to \mathcal{O}_{A,s}$ 
	with respect to the orbifold Dehn filling map $M_A\to \mathcal{O}_{A,s}$.
	By construction, every preimage component $c'$ of $c_{A,i}$ in $M'_A$ is a boundary slope,
	which covers $c_{A,i}$ cyclically of degree $m_i$.
	Therefore, $M'_{A,s}$ is the Dehn filling of $M'_A$,	
	such that for any boundary torus $T'_A$ that covers $T_{A,i}$, 
	$T'_A$ is filled along the slope $c'_A$ that covers $c_{A,i}$.
		
	Take $M'_B\to M_B$ to be the regular finite cover corresponding to $M'_A\to M_A$,
	with respect to $\Psi$, (Definition \ref{corresponding_cover_def}). 
	Note that any boundary torus $T'_A$ of $M'_A$ which covers $T_{A,i}$
	is represented by the intersection of $g P_{A,i}g^{-1}$ with $\pi'_A$, for some $g\in \pi_A$,
	and the slope $c'_A$ on $T'_A$ which covers $c_{A,i}$	is represented 
	by the intersection of $g C_{A,i} g^{-1}$ with $\pi'_A$.
	Under $\Psi$, the closure of $g P_{A,i}g^{-1}$ in $\widehat{\pi_A}$
	projects isomorphically onto the closure of $h P_{B,i} h^{-1}$ in $\widehat{\pi_B}$, for some $h\in\widehat{\pi_B}$,
	and the closure of $g C_{A,i} g^{-1}$ projects isomorphically onto the closure of $h C_{B,i} h^{-1}$.
	It follows that the corresponding slope $c'_B$ on the corresponding torus $T'_B$ of $M'_B$
	covers the slope $c_{B,i}$ on $T_{B,i}$ with degree $m_i$.
	Therefore, we can construct the Dehn filling $M'_{B,s}$ of $M'_B$ along the slopes $c'_B$
	on all the boundary tori, and a branched covering map $M'_{B,s}\to M_{B,s}$,
	ramifying on the filling core curves of orders $m_i$.
	It follows that the restricted isomorphism $\Psi'$ between the profinite completions of $\pi'_A$ and $\pi'_B$
	descends to an isomorphism $\Psi'_s$ 
	between the profinite completions of $\pi'_{A,s}=\pi_1(M'_{A,s})$ and $\pi'_{B,s}=\pi_1(M'_{B,s})$.
	In particular, $M'_{B,s}$ is an orientable connected closed hyperbolic $3$--manifold,
	by \cite[Proposition 3.1]{WZ_decomposition}.
	Moreover, we obtain a commutative diagram of profinite completions of fundamental groups,
	and another of profinite completions of the free abelianizations:
	$$\xymatrix{
	\widehat{\pi'_{A,s}} \ar[r]^-{\Psi'_s} \ar[d] & \widehat{\pi'_{B,s}} \ar[d] 
		& & \widehat{H'_{A,s}} \ar[r]^-{\Psi'_{s\,*}} \ar[d] & \widehat{H'_{B,s}} \ar[d] \\
	\widehat{\pi_{A,s}} \ar[r]^-{\Psi_s} & \widehat{\pi_{B,s}}  
		& & \widehat{H_{A,s}} \ar[r]^-{\Psi_{s\,*}} & \widehat{H_{B,s}}
	}$$
	
	As Lemma \ref{profinite_isomorphism_hyperbolic_closed} applies to the setting
	$(M'_{A,s},M'_{B,s},\Psi'_s)$,
	the rank of $\mathrm{MC}(\Psi'_{s\,*};H'_{A,s},H'_{B,s})$ equals $1$.
	By the above construction, $\pi'_{A,s}$ projects onto a finite-index normal subgroup of $\pi_{A,s}$,
	so $H'_{A,s}$ projects onto a finite-index $\Integral$--submodule of $H_{A,s}$,
	and $H'_{B,s}\to H_{B,s}$ has the same property.
	It follows that $\mathrm{MC}(\Psi_{s\,*};H_{A,s},H_{B,s})$
	is commensurable with a $\Integral$--submodule of $\mathrm{MC}(\Psi'_{s\,*};H'_{A,s},H'_{B,s})$
	in $\widehat{\Integral}$, by Proposition \ref{MC_property} (1) and (3).
	Therefore, $\mathrm{MC}(\Psi_{s\,*};H_{A,s},H_{B,s})$ has rank $1$,
	as claimed in (\ref{filling_rank_one}).
	This completes the proof of Lemma \ref{profinite_isomorphism_hyperbolic_robust}.
\end{proof}

We conclude the proof of Theorem \ref{profinite_isomorphism_hyperbolic_robust} as follows.
Let $(M_A,M_B,\Psi)$ be a profinite morphism setting of a $3$--manifold pair.
Suppose that $\Psi\colon\widehat{\pi_A}\to\widehat{\pi_B}$ is an isomorphism.
Suppose that $M_A$ and $M_B$ both admit complete hyperbolic metrics of finite volume in the interior.

Take a finite cover of $M'_A\to M_A$, 
such that $M'_A$ contains a nonseparating, embedded, orientable closed subsurface,
for example, as constructed by Cooper, Long, and Reid \cite[Theorem 2.3]{CLR_closed_surface}.
This property guarantees that $H_1(\partial M'_A;\Rational)\to H_1(M'_A;\Rational)$
has nontrivial cokernel,
because
some loop in $M'_A$ intersects the surface exactly once,
whereas any loop on $\partial M'_A$ has no intersection with the surface.
Obtain the corresponding finite cover $M'_B\to M_B$ with respect to $\Psi$, (Definition \ref{corresponding_cover_def}).

By Lemma \ref{profinite_isomorphism_hyperbolic_robust},
the matrix coefficient module $\mathrm{MC}(\Psi'_*;H'_A,H'_B)$ has rank $1$.
By Lemma \ref{profinite_isomorphism_hyperbolic_reduction},
$\mathrm{MC}(\Psi_*;H_A,H_B)$ also has rank $1$, as asserted.

This completes the proof of Theorem \ref{profinite_isomorphism_hyperbolic}.

\section{Profinite invariance of twisted Reidemeister torsions}\label{Sec-TRT}
In this section, we show that for profinitely isomorphic pairs
of orientable finite-volume hyperbolic $3$--manifolds,
corresponding fibered classes give rise to the same twisted Reidemeister torsion,
with respect to corresponding $\Rational$--representations
with finite image (Theorem \ref{profinite_invariance_TRT}).

\begin{definition}\label{corresponding_quotient_def}
Let $(M_A,M_B,\Psi)$ be a profinite morphism setting of a $3$--manifold pair
where $\Psi$ is an isomorphism (Convention \ref{profinite_morphism_setting}).
Let $\Gamma$ be a finite group.
A pair of homomorphisms $\gamma_A\colon \pi_A\to\Gamma$ and 
$\gamma_B\colon \pi_B\to \Gamma$
are said to be a \emph{$\Psi$--corresponding pair} if
$\gamma_A$ is the composite homomorphism
$$\xymatrix{\pi_A \ar[r]^-{\mathrm{incl}} & \widehat{\pi_A} \ar[r]^-{\Psi} & \widehat{\pi_B} \ar[r]^-{\widehat{\gamma_B}} 
& \Gamma.}$$
\end{definition}

\begin{theorem}\label{profinite_invariance_TRT}
Let $(M_A,M_B,\Psi)$ be a profinite morphism setting of a $3$--manifold pair (Convention \ref{profinite_morphism_setting}).
Suppose that $\Psi\colon\widehat{\pi_A}\to\widehat{\pi_B}$ is an isomorphism.
Suppose that $M_A$ and hence $M_B$
both have positive first Betti number, 
and admit complete hyperbolic metrics of finite volume in the interior.
Let $\mu\in\widehat{\Integral}^\times$ be a unit as provided in Theorem \ref{profinite_isomorphism_hyperbolic}.
Suppose that $\gamma_A\colon\pi_A\to\Gamma$ and $\gamma_B\colon\pi_B\to \Gamma$
be a $\Psi$--corresponding pair of finite quotients.

Suppose that $\phi_B\in H^1(M_B;\Integral)$ is a fibered cohomology class.
Let $\psi_A\in H^1(H_A;\Integral)$ be the fibered cohomology class $\Psi^*_{1/\mu}(\phi_B)$.
For any representation $\rho\colon\Gamma\to \mathrm{GL}(k,\Rational)$,
let $\rho_A\colon \pi_A\to \mathrm{GL}(k,\Rational)$ 
be the pullback $\gamma^*_A(\rho)$, and $\rho_B$ be $\gamma^*_B(\rho)$.
Then, the following equality holds in $\Rational[t^{\pm1}]$ up to monomial factors 
with coefficient in $\Rational^\times$:
$$\tau^{\rho_A,\psi_{A}}(t)\doteq\tau^{\rho_B,\phi_B}(t).$$
%
%
\end{theorem}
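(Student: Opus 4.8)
The plan is to deduce the identity $\tau^{\rho_A,\psi_A}(t)\doteq\tau^{\rho_B,\phi_B}(t)$ from a profinite invariance statement at every finite cyclic level, and then to reconstruct the rational function from that collection of finite-level data by a reciprocity argument in the spirit of Fried \cite{Fried_resultant} and Ueki \cite{Ueki}. Write $\phi_A\colon\pi_A\to\widehat{\Integral}$ for the restriction of the $\Psi$-pullback $\widehat{\phi_B}\circ\Psi$. Since $\mathrm{MC}(\Psi_*)=\mu\Integral$ by Theorem~\ref{profinite_isomorphism_hyperbolic}, the factorizations recorded there (compare the discussion preceding the proof of Theorem~\ref{fibered_cone}) give $\phi_A=\mu\cdot\psi_A$. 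For each $m\in\Natural$ let $\phi_{A,m},\psi_{A,m},\phi_{B,m}$ be the reductions modulo $m$, so that $\phi_{A,m}=u_m\cdot\psi_{A,m}$ with $u_m=\mu\bmod m\in(\Integral/m)^\times$; likewise $\rho_A=\rho\circ\gamma_A$ is, through $\gamma_A=\widehat{\gamma_B}\circ\Psi|_{\pi_A}$, the $\Psi$-pullback of the extension of $\rho_B$ to $\widehat{\pi_B}$. In particular the representation $\phi_{A,m}\rho_A\colon\pi_A\to\mathrm{GL}(k,\Rational[\Integral/m])$ extends to $\widehat{\pi_A}$ and is the $\Psi$-pullback of the corresponding extension for $\pi_B$.

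The first step is the finite-level comparison. Since finitely generated $3$-manifold groups are cohomologically good and of type $\mathrm{FP}_\infty$, the twisted group homology with finite coefficients $(\Integral/\ell^e)[\Integral/m]^k$ agrees with its profinite analogue, which $\Psi$ carries isomorphically between $\pi_A$ and $\pi_B$; reassembling over the primes $\ell$ and the exponents $e$ (and inverting denominators) shows that $H_n^{\rho_A,\phi_{A,m}}(M_A;\Rational[\Integral/m]^k)\cong H_n^{\rho_B,\phi_{B,m}}(M_B;\Rational[\Integral/m]^k)$ as $\Rational[\Integral/m]$-modules. On the other hand, the equality $\phi_{A,m}=u_m\psi_{A,m}$ says that the $\phi_{A,m}$-twisted chain complex is the base change of the $\psi_{A,m}$-twisted one along the ring automorphism $t\mapsto t^{u_m}$ of $\Rational[\Integral/m]\cong\prod_{d\mid m}\Rational(\zeta_d)$ (well defined since $\gcd(u_m,m)=1$), which preserves each cyclotomic factor $\Rational(\zeta_d)$ and acts on it by an element of $\mathrm{Gal}(\Rational(\zeta_d)/\Rational)$. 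Since $\phi_B$ is fibered we have $\Delta_1^{\rho_B,\phi_B}(t)\neq0$, so the level-$m$ complex is acyclic for all $m$ coprime to a fixed integer $N$ (the product of the orders of the roots of unity occurring among the zeros of $\Delta_1^{\rho_A,\psi_A}$ and $\Delta_1^{\rho_B,\phi_B}$), and functoriality of Reidemeister torsion under $\Rational[t^{\pm1}]\to\Rational(\zeta_d)$ identifies the level-$m$ torsion with the tuple $(\tau^{\rho,\phi}(\zeta_d))_{d\mid m}$. Combining, for every $m$ coprime to $N$ the tuples $(\tau^{\rho_A,\psi_A}(\zeta_d))_{d\mid m}$ and $(\tau^{\rho_B,\phi_B}(\zeta_d))_{d\mid m}$ agree up to a per-factor Galois twist (and the usual sign ambiguity). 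Taking norms down to $\Rational$ annihilates the Galois twist, so the cyclic resultants $\mathrm{Res}(t^m-1,\tau^{\rho_A,\psi_A})$ and $\mathrm{Res}(t^m-1,\tau^{\rho_B,\phi_B})$ agree up to sign; and comparing the $\Rational(\zeta_d)$-dimensions of the homologies (which the Galois twist preserves) shows that the two torsions have the same cyclotomic part.

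The last step is the reconstruction. By Theorem~\ref{TAP_properties}, together with the fact that any finite-dimensional $\Rational$-representation $\rho$ of a finite group is self-dual --- its contragredient $\bar\rho$ has the same character, which is $\Rational$-valued, so that $\mathrm{tr}(\rho(g^{-1}))=\mathrm{tr}(\rho(g))$ and hence $\bar\rho\cong\rho$ --- one finds $\tau^{\rho_A,\psi_A}(t)\doteq\tau^{\rho_A,\psi_A}(t^{-1})$ and likewise for $B$; that is, both sides are reciprocal (the cases $\partial M=\emptyset$ and $\partial M$ a union of tori giving reciprocity alike). Dividing out the common cyclotomic part determined in the previous step, it remains to reconstruct a reciprocal rational function having no root of unity among its zeros or poles from its sequence of cyclic resultants along the cofinal family of levels coprime to $N$ --- for which every such resultant is nonzero --- and Fried's theorem on cyclic resultants of reciprocal polynomials \cite{Fried_resultant}, in the form extracted by Ueki \cite{Ueki}, recovers it up to the monomial-and-sign ambiguity permitted by the statement; reciprocity of both sides excludes the spurious involution $\tau(t)\leftrightarrow\tau(1/t)$, and one further comparison removes the sign. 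Assembling the cyclotomic and non-cyclotomic parts yields $\tau^{\rho_A,\psi_A}(t)\doteq\tau^{\rho_B,\phi_B}(t)$ in $\Rational[t^{\pm1}]$ up to monomial factors with coefficient in $\Rational^\times$.

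The main obstacle is precisely this reconstruction: cyclic resultants do not determine a polynomial in general (already $f(t)$ and $t^{\deg f}f(1/t)$ share all of them), so the reciprocity of twisted Reidemeister torsion with respect to fibered classes --- itself resting on the self-duality of $\Rational$-representations of finite groups, which is why the hypothesis that $\rho$ has finite image cannot be dropped --- must be used in an essential way, and the residual sign and cyclotomic ambiguities must be controlled; this is exactly where Ueki's sharpening of Fried's resultant identities enters. A secondary technical point, absent from the knot case treated by Ueki, is the systematic bookkeeping of the substitution $t\mapsto t^{u_m}$ forced by having only $\widehat{\Integral}^\times$-regularity rather than regularity (Theorem~\ref{profinite_isomorphism_hyperbolic}); passing to norms down to $\Rational$ for the non-cyclotomic part, and separately tracking the cyclotomic part through homology dimensions, is what neutralizes it.
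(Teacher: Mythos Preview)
Your overall strategy --- compare at each finite cyclic level, extract cyclic resultants, then reconstruct via Fried/Ueki using reciprocity --- is in the right spirit, and the endgame (self-duality of $\rho$ over $\Rational$, hence $\tau(t^{-1})\doteq\tau(t)$, hence squares agree, hence the $\tau$'s agree) matches the paper. But there is a genuine gap in your finite-level comparison. From cohomological goodness you get that the $(\Integral/\ell^e)[\Integral/m]$-twisted homologies agree, and then you ``reassemble and invert denominators'' to conclude only that the $\Rational[\Integral/m]$-homologies agree. For $m$ coprime to your $N$ these $\Rational[\Integral/m]$-homologies are both \emph{zero}, and an isomorphism of zero modules says nothing about Reidemeister torsion, which is precisely a secondary invariant of acyclic complexes. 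You never explain how to pass from ``level-$m$ homologies agree'' to ``the tuples $(\tau(\zeta_d))_{d\mid m}$ agree up to Galois twist''. One could try to repair this by keeping the integral data and relating the orders $|H_n(\pi;(\Integral/\ell^e)[\Integral/m]^k)|$ to the $\ell$-adic valuations of the level-$m$ torsion (in the spirit of Fox's formula for cyclic covers, which is what makes Ueki's knot argument go), but that step is absent and in the twisted multi-factor setting needs real care; even then you would recover the cyclic resultants only up to sign, which is not obviously enough input for Fried's criterion.

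The paper sidesteps this by exploiting the fibered hypothesis much more directly. Writing $(M_B,\phi_B)$ as the mapping torus of $f_B\colon S_B\to S_B$ (and likewise for $A$), the Wang exact sequence gives $\Delta_n^{\rho_B,\phi_B}(t)\doteq\det(\mathbf{1}-t\, f^{\mathtt{free}}_{B,n})=:P_{B,n}(t)$, where $f_{B,n}$ is the monodromy on $H_n^{\sigma_B}(\Sigma_B;\Integral)$ for an integral model $\sigma$ of $\rho$ (Lemma~\ref{Delta_to_P}). Corollary~\ref{fiber_surface_correspondence} says $\Psi$ restricts to an isomorphism $\widehat{\Sigma_A}\to\widehat{\Sigma_B}$; since surface groups are good this identifies the profinite fiber homologies, and under that identification $f^{\mathtt{free}}_{A,n}$ becomes $\widehat\Integral$-conjugate to $(f^{\mathtt{free}}_{B,n})^\mu$ (Lemma~\ref{P_ideal_equal}). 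Lemma~\ref{Ueki_lemma_matrix} then turns this matrix conjugacy into the ideal equality $(P_{A,n}(t^\mu))=(P_{B,n}(t))$ in $\llbracket\widehat\Integral\, t^{\widehat\Integral}\rrbracket$, and Theorem~\ref{Ueki_lemma} applies after symmetrizing to reciprocal polynomials. The point is that the characteristic-polynomial description produces a concrete integral object --- the monodromy matrix --- whose profinite conjugacy class is visibly a profinite invariant, so one never has to squeeze torsion out of vanishing rational homology.
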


The rest of this section is devoted to the proof of Theorem \ref{profinite_invariance_TRT}.
Our proof invokes the following criterion for identifying reciprocal polynomials over $\Integral$
through principal ideals of the completion group algebra 
$\llbracket\widehat{\Integral}t^{\widehat{\Integral}}\rrbracket$,
due to J.~Ueki \cite{Ueki}:

\begin{theorem}\label{Ueki_lemma}
	Let $a(t),b(t)\in\Integral[t]$ be a pair of reciprocal polynomials,
	and $\mu\in\widehat{\Integral}^\times$ be a unit.
	If the principal ideals $(a(t^\mu))$ and $(b(t))$
	of $\llbracket\widehat{\Integral}t^{\widehat{\Integral}}\rrbracket$ are equal to each other,
	then $a(t)\doteq b(t)$ holds in $\Integral[t^{\pm1}]$ 
	up to monomial factors with coefficients $\pm1$.
\end{theorem}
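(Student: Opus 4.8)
The plan is to reduce the statement about the pro-$\widehat{\Integral}$ group algebra to a classical fact of D.~Fried about cyclic resultants of reciprocal integral polynomials. First I would recall what it means for $(a(t^\mu))$ and $(b(t))$ to be equal ideals in $\llbracket\widehat{\Integral}t^{\widehat{\Integral}}\rrbracket$: there exist $u,v\in\llbracket\widehat{\Integral}t^{\widehat{\Integral}}\rrbracket$ with $a(t^\mu)=u\cdot b(t)$ and $b(t)=v\cdot a(t^\mu)$, hence $a(t^\mu)=uv\cdot a(t^\mu)$, so $uv$ restricts to the identity on the principal ideal. Because $a(t)$ is reciprocal with $a(\pm1)\neq0$ (or, in the degenerate cases where $a$ has a cyclotomic-at-$1$ factor, one argues separately by matching those factors first), $a(t^\mu)$ is a non-zero-divisor in the relevant completed group algebra, and therefore $uv=1$; that is, $u$ is a unit. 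The next step is to understand the units and the ``evaluation'' maps of $\llbracket\widehat{\Integral}t^{\widehat{\Integral}}\rrbracket$. For each positive integer $m$ there is a continuous ring homomorphism $\llbracket\widehat{\Integral}t^{\widehat{\Integral}}\rrbracket\to\widehat{\Integral}[t]/(t^m-1)$ obtained by reducing the exponent group $\widehat{\Integral}\to\widehat{\Integral}/m\widehat{\Integral}\cong\Integral/m\Integral$; applying this to the factorization $a(t^\mu)=u\,b(t)$ and taking determinants of multiplication-by-$(t^m-1)$-companion style, or more directly taking the ``norm down to $\widehat{\Integral}$'' (the resultant $\mathrm{Res}(t^m-1,\,\cdot\,)$), converts the unit $u$ into a unit of $\widehat{\Integral}$ and yields
$$\mathrm{Res}(t^m-1,\,a(t^\mu))\doteq\mathrm{Res}(t^m-1,\,b(t))$$
up to $\widehat{\Integral}^\times$, for every $m$.

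The third step is to turn $\mu$-twisting into a controllable operation on resultants. Here I would use that $\mu\in\widehat{\Integral}^\times$ means $\mu$ is a unit modulo every integer $m$; writing $\mu\equiv\mu_m\pmod m$ with $\mu_m\in(\Integral/m\Integral)^\times$, one has a ring automorphism of $\widehat{\Integral}[t]/(t^m-1)$ sending $t\mapsto t^{\mu_m}$, under which $a(t^\mu)\bmod(t^m-1)$ is carried to $a(t)\bmod(t^{m'}-1)$ for the appropriate $m'$ with $m'\mu_m\equiv m$-type bookkeeping; concretely this identifies $\mathrm{Res}(t^m-1,a(t^\mu))$ with $\mathrm{Res}(t^m-1,a(t))$ up to sign and up to units. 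This is precisely the kind of ``cyclic resultant is invariant under the Galois action $t\mapsto t^\mu$'' statement that appears in Fried's work on cyclic resultants of reciprocal polynomials; citing \cite{Fried_resultant} (as the excerpt advertises Ueki does), one concludes that $a(t)$ and $b(t)$ have the same cyclic resultants $\{\mathrm{Res}(t^m-1,a(t))\}_{m\geq1}=\{\mathrm{Res}(t^m-1,b(t))\}_{m\geq1}$ up to sign, after clearing away any roots of unity. Finally, Fried's theorem states that a reciprocal integer polynomial is determined up to $\pm t^{\mathbb Z}$ by its sequence of cyclic resultants (the reciprocity hypothesis is exactly what removes the ambiguity between $a(t)$ and $t^{\deg a}a(1/t)$), so $a(t)\doteq b(t)$ in $\Integral[t^{\pm1}]$ up to monomial factors with coefficients $\pm1$, as claimed.

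The main obstacle, I expect, is the bookkeeping in the third step: making rigorous the passage from the single unit $\mu\in\widehat{\Integral}^\times$ to a coherent family of cyclotomic-twist automorphisms and checking that the resultant identities it produces are exactly those governed by Fried's cyclic-resultant rigidity, including the careful handling of the (finitely many) cyclotomic factors of $a$ and $b$ where $t^m-1$ and $a(t)$ share roots and the naive resultant vanishes. A secondary technical point is justifying that $a(t^\mu)$ is a non-zero-divisor in $\llbracket\widehat{\Integral}t^{\widehat{\Integral}}\rrbracket$ (equivalently that the ideal equality forces $u$ to be a genuine unit rather than merely a unit on the ideal); this should follow from the structure of $\llbracket\widehat{\Integral}t^{\widehat{\Integral}}\rrbracket$ as an inverse limit of $\widehat{\Integral}[t]/(t^m-1)$ together with the reciprocal, hence self-dual, nature of $a$, but it deserves an explicit argument rather than being taken for granted. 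Since the excerpt tells us this is Ueki's theorem \cite{Ueki}, in the actual paper I would simply invoke it; the above is the route I would reconstruct if pressed for a self-contained proof.
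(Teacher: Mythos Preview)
The paper does not prove this theorem: it is quoted as \cite[Lemma 3.6]{Ueki}, and Remark~\ref{Ueki_lemma_remark} merely sketches Ueki's argument. Your reconstruction follows that sketch faithfully---reduce the ideal equality to an equality (up to $\widehat{\Integral}^\times$) of cyclic resultants $\mathrm{Res}(t^m-1,\,\cdot\,)$ for all $m$, use that $t\mapsto t^{\mu_m}$ permutes the $m$-th roots of unity to replace $a(t^\mu)$ by $a(t)$, and invoke Fried's rigidity for reciprocal polynomials \cite{Fried_resultant}. So your approach is the same as the paper's cited source.

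One point worth sharpening: the remark names the specific device for the cyclotomic factors that you flag only as ``the main obstacle''. Ueki proves directly that for every $m$ the ratio $\Phi_m(t^\mu)/\Phi_m(t)$ exists and is a unit in $\llbracket\widehat{\Integral}t^{\widehat{\Integral}}\rrbracket$ \cite[Lemma 3.4]{Ueki}; this lets one peel off the cyclotomic parts of $a$ and $b$ and match them before comparing the non-cyclotomic remainders via Fried. This is cleaner than trying to read off cyclotomic multiplicities from the vanishing pattern of cyclic resultants. Also, your worry about $a(t^\mu)$ being a non-zero-divisor is largely avoidable: you never need the global identity $uv=1$. Working with the two relations $a(t^\mu)=u\,b(t)$ and $b(t)=v\,a(t^\mu)$ simultaneously in each finite quotient already forces the integer resultants $\mathrm{Res}(t^m-1,a)$ and $\mathrm{Res}(t^m-1,b)$ to vanish together and to satisfy $\mathrm{Res}_a=\pm\mathrm{Res}_b$ when nonzero (an integer that is a $\widehat{\Integral}^\times$-multiple of another integer must equal it up to sign), which is all Fried's criterion requires.
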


\begin{remark}\label{Ueki_lemma_remark}
	Recall that a polynomial $f(t)\in\Integral[t]$ is said to be \emph{reciprocal}
	if it takes the form $c_0+c_1t+\cdots+c_rt^r$, such that $c_i=c_{r-i}$ for all $i=0,\cdots,r$.
	Theorem \ref{Ueki_lemma} is quoted from \cite[Lemma 3.6]{Ueki}.
	In fact, Ueki shows that for any $\mu\in\widehat{\Integral}^\times$,
	the fraction $\Phi_m(t^\mu)/\Phi_m(t)$ is defined and is a unit
	in $\llbracket\widehat{\Integral}t^{\widehat{\Integral}}\rrbracket$,
	where $\Phi_m(t)\in\Integral[t]$ denotes the $m$--th cyclotomic polynomial,
	(see \cite[Lemma 3.4]{Ueki}).
	This allows one to identify the cyclotomic factors
	of the compared reciprocal polynomials.
	The other factors are identified using a criterion of D.~Fried \cite{Fried_resultant},
	which determines reciprocal polynomials through their cyclic resultants,
	(see also \cite[Proposition 3.1]{Ueki}). 
\end{remark}

In certain situations we are able to obtain polynomial principal ideals of 
$\llbracket\widehat{\Integral}t^{\widehat{\Integral}}\rrbracket$
and compare them easily.
For any profinite group $G$, 
it makes sense to speak of profinite integral powers of elements.
Namely, for any $\nu\in\widehat{\Integral}$ and $g\in G$, 
the \emph{$\nu$--power} of $g$ refers to the unique element in $G$ as follows, 
\begin{equation}\label{g_to_nu}
	g^\nu=\varprojlim_{N}\,g^n\bmod N, 
\end{equation}
where $N$ ranges over the inverse system of the open normal subgroups of $G$, and 
$n\in\Integral$ is congruent to $\nu$ modulo the order of the quotient group $|G/N|$.
Note that $hg^\nu h^{-1}=(hgh^{-1})^\nu$ holds for all $h\in G$.

\begin{lemma}\label{Ueki_lemma_matrix}
	Suppose $A,B\in\mathrm{GL}(r,\Integral)$ and $\mu\in\widehat{\Integral}^\times$.
	Denote $a(t)=\mathrm{det}_{\Integral[t]}(1-tA)$ and $b(t)=\mathrm{det}_{\Integral[t]}(1-tB)$.
	If $A$ is conjugate to the $\mu$--power matrix $B^\mu$ in $\mathrm{GL}(r,\widehat{\Integral})$,
	then $(a(t^\mu))=(b(t))$ holds as principal ideals in $\llbracket\widehat{\Integral}t^{\widehat{\Integral}}\rrbracket$.
\end{lemma}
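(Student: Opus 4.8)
The plan is to reduce the matrix statement to the scalar statement of Theorem \ref{Ueki_lemma} by computing the characteristic polynomials inside the completion group algebra $\llbracket\widehat{\Integral}t^{\widehat{\Integral}}\rrbracket$ and identifying them with the ideal generators. First I would observe that the element $t\in\llbracket\widehat{\Integral}t^{\widehat{\Integral}}\rrbracket$ is a central unit, and more generally that for any $\nu\in\widehat{\Integral}$ the symbol $t^\nu$ makes sense as an element of the topological group ring (it is the profinite power of $t$ in the sense of (\ref{g_to_nu}), taken inside the unit group $\llbracket\widehat{\Integral}t^{\widehat{\Integral}}\rrbracket^\times$), and that the assignment $f(t)\mapsto f(t^\mu)$ extends to a continuous ring automorphism of $\llbracket\widehat{\Integral}t^{\widehat{\Integral}}\rrbracket$ induced by the group automorphism $x\mapsto\mu x$ of the abstract abelian group $\widehat{\Integral}$. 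Hence the principal ideal $(a(t^\mu))$ is literally the image of $(a(t))$ under this automorphism, which is well defined.

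Next I would pass to the hypothesis. We are given $P\in\mathrm{GL}(r,\widehat{\Integral})$ with $PAP^{-1}=B^\mu$, where $B^\mu$ is the profinite power matrix. The key computational point is that $\mathrm{det}_{\llbracket\widehat{\Integral}t^{\widehat{\Integral}}\rrbracket}(1-t\cdot B^\mu)$ equals $b(t^\mu)=\mathrm{det}_{\Integral[t]}(1-t^\mu B)$ after the substitution $t\mapsto t^\mu$; this follows because $B^\mu$ is the limit of $B^{n}$ over compatible residues $n\equiv\mu$, the determinant is continuous, and $\mathrm{det}(1-t\cdot B^{n})$ — viewed with $t$ formal — specializes correctly, so by continuity $\mathrm{det}(1-tB^\mu)$ is obtained from $b(s)=\mathrm{det}(1-sB)$ by the substitution $s=t^\mu$. (One must be a little careful: $B^\mu$ has entries in $\widehat{\Integral}$, but the characteristic polynomial is a polynomial identity that holds entrywise in the limit; alternatively one diagonalizes $B$ over an extension and uses that $\mu$-th powers of eigenvalues are the eigenvalues of $B^\mu$, which is legitimate inside a suitable completed ring.) Conjugation by $P$ does not change the determinant, so $\mathrm{det}_{\llbracket\widehat{\Integral}t^{\widehat{\Integral}}\rrbracket}(1-tA)=\mathrm{det}_{\llbracket\widehat{\Integral}t^{\widehat{\Integral}}\rrbracket}(1-tB^\mu)$, i.e. $a(t)=b(t^\mu)$ as elements of $\llbracket\widehat{\Integral}t^{\widehat{\Integral}}\rrbracket$. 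Applying the automorphism $t\mapsto t^\mu$ to both sides (and using $(t^\mu)^{\mu^{-1}}=t$) we get $a(t^{\mu})=b(t)$, hence in particular the equality of principal ideals $(a(t^\mu))=(b(t))$, which is exactly the assertion.

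The step I expect to be the main obstacle is the rigorous justification of the identity $\mathrm{det}(1-tB^\mu)=b(t^\mu)$ — that is, commuting the profinite power operation past the characteristic polynomial. The cleanest route is to argue congruence-wise: for every modulus $l\Integral$ of $\widehat{\Integral}$ and every $N$ (here $G=\Sph^1$-type completion is not needed; the relevant "profinite group" is just $\widehat{\Integral}$ acting on $t$), the matrix $B^\mu\bmod l$ equals $B^{n}\bmod l$ for $n\equiv\mu$ modulo the order of $t\bmod l$ in the finite quotient ring; then $\mathrm{det}(1-tB^{n})=b(t^{n})$ is an honest polynomial identity over $\Integral$, and $t^{n}\equiv t^\mu$ in the corresponding finite quotient of $\llbracket\widehat{\Integral}t^{\widehat{\Integral}}\rrbracket$; taking the inverse limit over all such quotients yields the claimed equality. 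Once this is in place, everything else is formal, and I would not even need to invoke Theorem \ref{Ueki_lemma} itself for this lemma — it is the sharper scalar statement of Theorem \ref{Ueki_lemma} that will be used afterwards (in the proof of Theorem \ref{profinite_invariance_TRT}) to descend from the ideal equality back to the relation $a(t)\doteq b(t)$ over $\Integral$.
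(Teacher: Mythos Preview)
There is a genuine error at the heart of your argument: the claimed identity
\[
\mathrm{det}\bigl(1-t\,B^{\mu}\bigr)=b(t^{\mu})=\mathrm{det}\bigl(1-t^{\mu}B\bigr)
\]
is false, already for integer exponents. Take any $n\geq 2$: the polynomial $\mathrm{det}_{\Integral[t]}(1-tB^{n})$ has degree $r$ in $t$, while $b(t^{n})=\mathrm{det}_{\Integral[t]}(1-t^{n}B)$ has degree $rn$, so they cannot be equal in $\Integral[t]$ (and there is no reason for them to become equal in the completed group ring). Your ``honest polynomial identity over $\Integral$'' $\mathrm{det}(1-tB^{n})=b(t^{n})$ is simply not an identity; the eigenvalue heuristic you offer confirms this, since $\prod_{i}(1-t\lambda_{i}^{n})\neq\prod_{i}(1-t^{n}\lambda_{i})$. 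Consequently the chain $a(t)=\mathrm{det}(1-tA)=\mathrm{det}(1-tB^{\mu})=b(t^{\mu})$ breaks at the last step. (There is also a secondary slip: even granting $a(t)=b(t^{\mu})$, the substitution $t\mapsto t^{\mu^{-1}}$ would yield $a(t^{\mu^{-1}})=b(t)$, not $a(t^{\mu})=b(t)$.)

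The paper's proof avoids this by never asserting an equality of elements. It evaluates $a$ at $t^{\mu}$ rather than at $t$: since $a(t^{\mu})=\mathrm{det}(1-t^{\mu}A)=\mathrm{det}(1-t^{\mu}B^{\mu})$, one can pass to a finite quotient $R_{l,d}$ in which $t^{\mu}B^{\mu}=t^{m}B^{m}=(tB)^{m}$ for a suitable integer $m$, and then use the genuine matrix factorization
\[
1-(tB)^{m}=(1-tB)\bigl(1+tB+\cdots+(tB)^{m-1}\bigr),
\]
which upon taking determinants gives only the \emph{divisibility} $b_{l,d}(t)\mid a_{l,d}(t^{\mu})$. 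Passing to the inverse limit yields the containment $(a(t^{\mu}))\subset(b(t))$; the reverse containment uses that $\mu$ is a unit, running the same argument with $A$ and $B$ interchanged and $\mu$ replaced by $\mu^{-1}$, and then transporting back via the ring automorphism $t\mapsto t^{\mu}$. The upshot is that the lemma is about ideals precisely because one only gets divisibility, not equality, at each finite level.
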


\begin{proof}
	We rewrite $\llbracket\widehat{\Integral}t^{\widehat{\Integral}}\rrbracket$ as $\widehat{R}$.
	For any $l,d\in\Natural$,
	denote by $R_{l,d}$ the group algebra
	$[(\Integral/l\Integral)t^{\Integral/m\Integral})]$,
	which is a finite ring quotient of $\widehat{R}$.
	Denote by $a_{l,d}(t^\mu)$ and $b_{l,d}(t)$ the image of 
	$a(t^\mu)$ and $b(t)$ in $R_{l,d}$.
	So we have 
	$b_{m,l}(t)=\mathrm{det}_{l,d}(1-t B)$,
	where $\mathrm{det}_{l,d}$ denotes the determinant on square matrices over $R_{l,d}$.
	We have
	$a_{l,d}(t^\mu)=\mathrm{det}_{l,d}(1-t^\mu A)=\mathrm{det}_{l,d}(1-t^\mu B^\mu)$,
	since $B^\mu$ is conjugate to $A$ in $\mathrm{GL}(r,\widehat{\Integral})$. 
	Let $m$ be some positive integer 
	which is congruent to $\mu$ modulo $d\cdot|\mathrm{GL}(r,\Integral/l\Integral)|$.
	Since $1-t^\mu B^\mu=1-t^m B^m=(1-tB)(1+tB+\cdots+t^{m-1}B^{m-1})$ holds
	as square matrices over $R_{m,l}$,
	the principal ideal $(a_{m,l}(t^\mu))$ of $R_{m,l}$ 
	is contained in $(b_{m,l}(t))$.
	Passing to the inverse limit, 
	it follows that the principal ideal $(a(t^{\mu}))$ of $\widehat{R}$
	is contained in $(b(t))$.
	Since $\mu$ is invertible, 
	the $\mu^{-1}$--power matrix $A^{\mu^{-1}}$ is conjugate to $B$ in $\mathrm{GL}(r,\widehat{\Integral})$,
	so the same argument shows that $(b(t^{\mu^{-1}}))$ is contained in $(a(t))$ in $\widehat{R}$.
	Using the profinite ring isomorphism $\widehat{R}\to\widehat{R}$
	determined by $t\mapsto t^\mu$,
	we see that $(b(t))$ is contained in $(a(t^\mu))$.
	Therefore, $(a(t^\mu))=(b(t))$ holds in $\widehat{R}$.	
\end{proof}


%
	%

In order to prove Theorem \ref{profinite_invariance_TRT},
one may essentially work with representations
$\Gamma\to\mathrm{GL}(k,\Integral)$.
We start by clarifying this point,
explaining a relation between twisted invariants over $\Integral$ and over $\Rational$.

\begin{lemma}\label{Q_to_Z}
		Let $\Gamma$ be a finite group.
		Then, any representation $\rho\colon\Gamma\to\mathrm{GL}(k,\Rational)$
		is isomorphically realizable over $\Integral$.
		Namely, $\rho$ is conjugate to the induced representation $\sigma_\Rational$ over $\Rational$
		of some representation $\sigma\colon\Gamma\to\mathrm{GL}(k,\Integral)$.
\end{lemma}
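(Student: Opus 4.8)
The plan is to realize a given rational representation integrally by an elementary lattice-stabilization argument, standard in the integral representation theory of finite groups. Let me describe it.

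\medskip

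\noindent\textbf{Approach.}
Fix a representation $\rho\colon\Gamma\to\mathrm{GL}(k,\Rational)$ acting on the $\Rational$--vector space $V=\Rational^k$. First I would choose any $\Integral$--lattice $\Lambda_0$ in $V$ of full rank $k$, for instance $\Lambda_0=\Integral^k$ with respect to the standard basis. This lattice need not be $\Gamma$--invariant, but since $\Gamma$ is finite, one forms the sum
\begin{equation*}
\Lambda=\sum_{g\in\Gamma} \rho(g)\,\Lambda_0.
\end{equation*}
This is again a finitely generated subgroup of $V$; it is torsion-free (being a subgroup of a $\Rational$--vector space), hence free of finite rank, and it spans $V$ over $\Rational$ because it contains $\Lambda_0$, so it has rank exactly $k$. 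By construction $\rho(g)\Lambda\subseteq\Lambda$ for every $g\in\Gamma$, and applying $\rho(g^{-1})$ gives the reverse inclusion, so $\Lambda$ is a $\Gamma$--invariant full-rank lattice. Choosing an $\Integral$--basis $e_1,\dots,e_k$ of $\Lambda$ gives an integral matrix $\sigma(g)\in\mathrm{GL}(k,\Integral)$ describing the action of $\rho(g)$ on $\Lambda$ (its inverse is $\sigma(g^{-1})$, also integral, so indeed $\sigma(g)\in\mathrm{GL}(k,\Integral)$), and $g\mapsto\sigma(g)$ is a homomorphism $\Gamma\to\mathrm{GL}(k,\Integral)$. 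The change-of-basis matrix $P\in\mathrm{GL}(k,\Rational)$ expressing $e_1,\dots,e_k$ in terms of the standard basis of $V$ then conjugates $\sigma_\Rational$ to $\rho$, i.e. $\rho(g)=P\,\sigma(g)\,P^{-1}$ for all $g$, which is exactly the assertion.

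\medskip

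\noindent\textbf{Key steps, in order.}
(1) Pick an arbitrary full-rank $\Integral$--lattice $\Lambda_0\subset V$. (2) Average over the group orbit to produce $\Lambda=\sum_{g\in\Gamma}\rho(g)\Lambda_0$, and verify it is finitely generated, torsion-free, of rank $k$, and $\Gamma$--invariant. (3) Pick a $\Integral$--basis of $\Lambda$ and read off the integral representation $\sigma$; check $\sigma(g)\in\mathrm{GL}(k,\Integral)$ using that $\sigma(g^{-1})$ is its inverse. (4) Record the conjugating matrix $P$ and conclude $\rho\cong\sigma_\Rational$ over $\Rational$.

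\medskip

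\noindent\textbf{Main obstacle.}
Honestly, there is no serious obstacle here: the statement is the classical fact that a $\Rational[\Gamma]$--module of finite $\Rational$--dimension contains a $\Gamma$--stable full lattice, which holds because $\Gamma$ is finite. The only point requiring a line of care is confirming that the averaged lattice $\Lambda$ still has rank exactly $k$ (it could a priori be larger as an abstract group only if it had torsion, which it cannot, or smaller in rank, which it cannot since $\Lambda_0\subseteq\Lambda$) and that the resulting matrices lie in $\mathrm{GL}(k,\Integral)$ rather than merely $\mathrm{M}(k,\Integral)$ — both handled by invoking invertibility of $\rho(g)$ with inverse $\rho(g^{-1})$, which also preserves $\Lambda$. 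So the proof is short and the write-up is essentially the four steps above.
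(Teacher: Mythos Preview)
Your proposal is correct and follows essentially the same standard lattice-stabilization argument as the paper: both produce a $\Gamma$--invariant full-rank lattice in $\Rational^k$ and read off $\sigma$ from a $\Integral$--basis. The only cosmetic difference is that the paper takes the \emph{intersection} $\bigcap_{g\in\Gamma}\rho(g)(\Integral^k)$ rather than your sum $\sum_{g\in\Gamma}\rho(g)(\Integral^k)$; either choice yields an invariant lattice and the rest of the argument is identical.
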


\begin{proof}
	Let $H$ be the intersection in $\Rational^k$ of 
	the finitely many lattices $\rho(g)(\Integral^k)$,
	where $g$ ranges over $\Gamma$.
	Then $H$ is a lattice of $\Rational^k$
	and is invariant under  $\Gamma$.
	Let $T\in\mathrm{GL}(k,\Rational)$ 
	be a square matrix whose column vectors form a basis of $H$.
	Then $T^{-1}\rho(g)T$ is the matrix of $\rho(g)$ with respect to that basis of $H$.
	It follows that $\sigma(g)=T^{-1}\rho(g)T$
	defines a representation $\sigma\colon\Gamma \to \mathrm{GL}(k,\Integral)$ as asserted.
\end{proof}

Since twisted Reidemeister torsions are invariant under conjugation of the representation
(see \cite[Section 3.3.2, Lemma 1]{Friedl--Vidussi_survey}), 
we henceforth assume without loss of generality that 
the representation $\rho$ in Theorem \ref{profinite_invariance_TRT}
satisfies
\begin{equation}\label{sigma_def}
\rho=\sigma_\Rational
\end{equation}
for some representation $\sigma\colon\Gamma\to \mathrm{GL}(k,\Integral)$,
(Lemma \ref{Q_to_Z}).
We denote $\sigma_A\colon\pi_A\to \mathrm{GL}(k,\Integral)$ the pull-back representation $\gamma^*_A(\sigma)$,
and likewise $\sigma_B$.

To understand the situation of Theorem \ref{profinite_invariance_TRT},
we identify $(M_A,\psi_A)$ with a mapping torus of 
an orientation-preserving self-homeomorphism $f_A\colon S_A\to S_A$
and its distinguished cohomology class (see Remark \ref{pA_remark}).
Similarly we identify $(M_B,\phi_B)$ for some homeomorphism $f_B\colon S_B\to S_B$.
The normal subgroup $\Sigma_A=\pi_1(S_A)$ of $\pi_A$ corresponds to 
the normal subgroup $\Sigma_B=\pi_1(S_B)$ of $\pi_B$,
in the sense of Corollary \ref{fiber_surface_correspondence}.
In other words,
the closure of $\Sigma_A$ in $\widehat{\pi}_A$ projects isomorphically onto
the closure of $\Sigma_B$ in $\widehat{\pi}_B$,
under the profinite group isomorphism $\Psi\colon\widehat{\pi_A}\to\widehat{\pi_B}$.
These closures can be identified as $\widehat{\Sigma}_A$ and $\widehat{\Sigma}_B$.
Denote by $\Psi_{\Sigma}\colon\widehat{\Sigma}_A\to \widehat{\Sigma}_B$
the restricted profinite group isomorphism.
Therefore, we obtain the following commutative diagram of groups,
where the rows are all short exact sequences:
\begin{equation}\label{diagram_fibration}
\xymatrix{
1 \ar[r] & \Sigma_A \ar[r] \ar[rd]^-{\mathrm{incl}} 
	& \pi_A \ar[r]^{\psi_A} \ar[rd]^-{\mathrm{incl}} & \Integral \ar[r] \ar[rd]^-{\mathrm{incl}} & 0\\
& 1 \ar[r] & \widehat{\Sigma}_A \ar[r] \ar[d]_{\Psi_{\Sigma}}^-{\cong}
	& \widehat{\pi}_A \ar[r]^{\widehat{\psi}_A} \ar[d]_{\Psi}^-{\cong} & \widehat{\Integral} \ar[r] \ar[d]^{\mu} & 0\\
& 1 \ar[r] & \widehat{\Sigma}_B \ar[r]  
	& \widehat{\pi}_B \ar[r]^{\widehat{\phi}_B}  & \widehat{\Integral} \ar[r] & 0\\
1 \ar[r] & \Sigma_B \ar[r] \ar[ru]_-{\mathrm{incl}} 
	& \pi_B \ar[r]_{\phi_B} \ar[ru]_-{\mathrm{incl}} & \Integral \ar[r] \ar[ru]_-{\mathrm{incl}} & 0\\
}
\end{equation}

Strictly speaking, the self-homeomorphism $f_A$ of $S_A$ only determines
a group automorphism of $\Sigma_A$ up to conjugacy.
This is because we need to modify $f_A$ with some isotopy,
so that it fixes the (implicitly assumed) basepoint of $S_A$.
For convenience,
we fix a choice of the modification, and denote by $f_A\colon \Sigma_A\to\Sigma_A$
the induced group automorphism.
This determines a unique element $t_A\in \pi_A$, such that $f_A(g)=t_A^{-1}gt_A$ holds for all $g\in\Sigma_A$.
We observe that $\psi_A(t_A)=1$ and that $\pi_A=\Sigma_A\rtimes\langle t_A\rangle$ as a semi-direct product.
Similarly, we fix $f_B\colon \Sigma_B\to \Sigma_B$,
and $\pi_B=\Sigma_B\rtimes \langle t_B\rangle$, where $\phi_B(t_B)=1$.
With these notations, the diagram (\ref{diagram_fibration})
is equivalent to saying that $\Psi(t_A)$ is conjugate to the $\mu$--power
$t_B^{\mu}$ of $t_B$ in $\widehat{\pi}_B$, (see (\ref{g_to_nu})).
One may also say that $\Psi_{\Sigma*}(\widehat{f}_A)$
and $\widehat{f}_B^\mu$ are conjugate in the profinite automorphism group $\mathrm{Aut}(\widehat{\pi}_B)$,
(compare \cite[Section 3]{Liu_procongruent_conjugacy}).

The $n$--th twisted homology 
$H^{\sigma_A}_n(\Sigma_A;\Integral^k)$ is a finitely generated $\Integral$--module,
where $\sigma_A$ is considered as the restricted representation $\Sigma_A\to\mathrm{GL}(k,\Integral)$.
Moreover, $f_A$ naturally induces a $\Integral$--linear isomorphism
$f_{A,n}\colon H^{\sigma_A}_n(\Sigma_A;\Integral^k)\to H^{\sigma_A}_n(\Sigma_A;\Integral^k)$,
which actually depends only on the conjugacy class of the group automorphism $f_A\colon \Sigma_A\to\Sigma_A$,
regardless of our chosen modification,
(see \cite[Chapter III, Corollary 8.2]{Brown_book}).
In other words, $f_{A,n}$ depends only on the representation $\sigma_A\colon\pi_A\to\mathrm{GL}(k,\Integral)$
and the primitive fibered class $\psi_A\in H^1(\pi_A;\Integral)$.
We obtain the following commutative diagram of $\Integral$--modules:
\begin{equation}\label{diagram_monodromy}
\xymatrix{
0 \ar[r] & H^{\,\sigma_A}_n\left(\Sigma_A;\Integral^k\right)_{\mathtt{tors}} \ar[r] \ar[d]_-{f_{A,n}^{\mathtt{tors}}}
	& H^{\,\sigma_A}_n\left(\Sigma_A;\Integral^k\right) \ar[r] \ar[d]_-{f_{A,n}} 
	& H^{\,\sigma_A}_n\left(\Sigma_A;\Integral^k\right)_{\mathtt{free}} \ar[r] \ar[d]^-{f_{A,n}^{\mathtt{free}}} & 0\\
0 \ar[r]  & H^{\,\sigma_A}_n\left(\Sigma_A;\Integral^k\right)_{\mathtt{tors}} \ar[r] 
	& H^{\,\sigma_A}_n\left(\Sigma_A;\Integral^k\right) \ar[r]  
	& H^{\,\sigma_A}_n\left(\Sigma_A;\Integral^k\right)_{\mathtt{free}} \ar[r] & 0
}	
\end{equation}
where the rows are canonical short exact sequences.
Note that $f_{A,n}^{\mathtt{free}}$ can be represented as 
an invertible square matrix over $\Integral$,
if we fix a basis of the free $\Integral$--module 
$H^{\sigma_A}_n(\Sigma_A;\Integral^k)_{\mathtt{free}}$.
We introduce the following polynomial in $\Integral[t]$ with leading coefficient $\pm1$:
\begin{equation}\label{P_A_def}
P_{A,n}(t)=\mathrm{det}_{\Integral[t]}\left(\mathbf{1}-t\cdot f_{A,n}^{\mathtt{free}}\right).
\end{equation}
Similarly, we introduce
\begin{equation}\label{P_B_def}
P_{B,n}(t)=\mathrm{det}_{\Integral[t]}\left(\mathbf{1}-t\cdot f_{B,n}^{\mathtt{free}}\right).
\end{equation}

\begin{lemma}\label{Delta_to_P}
	Adopt the notations in Theorem \ref{profinite_invariance_TRT}
	and (\ref{sigma_def}), (\ref{P_A_def}), (\ref{P_B_def}).
	For all $n$,
	the following equalities hold in $\Rational[t^{\pm}]$ up to monomials with coefficients in $\Rational^\times$:	
	\begin{eqnarray*}
	\Delta^{\rho_A,\psi_A}_n(t)&\doteq& P_{A,n}(t);\\
	\Delta^{\rho_B,\phi_B}_n(t)&\doteq& P_{B,n}(t).
	\end{eqnarray*}
\end{lemma}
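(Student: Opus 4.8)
The plan is to express the twisted Alexander polynomials $\Delta^{\rho_A,\psi_A}_n(t)$ in terms of the monodromy action on the twisted homology of the fiber surface, using the standard Wang-sequence / mapping-torus description of twisted homology for a fibered $3$--manifold. First I would recall that, since $(M_A,\psi_A)$ is identified with the mapping torus of $f_A\colon S_A\to S_A$ with $\pi_A=\Sigma_A\rtimes\langle t_A\rangle$, the twisted chain complex $C^{\rho_A,\psi_A}_\bullet(M_A;\Rational[t^{\pm1}]^k)$ is chain homotopy equivalent to the algebraic mapping cone of $(\,\mathbf{1}-t\cdot f_{A,\bullet}\,)$ acting on $C^{\sigma_A}_\bullet(\Sigma_A;\Rational[t^{\pm1}]^k)\cong C^{\sigma_A}_\bullet(\Sigma_A;\Rational^k)\otimes_\Rational\Rational[t^{\pm1}]$; concretely, the long exact (Wang) sequence
$$
\cdots\to H^{\sigma_A}_n(\Sigma_A;\Rational^k)\otimes_\Rational\Rational[t^{\pm1}]
\xrightarrow{\ \mathbf{1}-t\,f_{A,n}\otimes\Rational\ }
H^{\sigma_A}_n(\Sigma_A;\Rational^k)\otimes_\Rational\Rational[t^{\pm1}]
\to H^{\rho_A,\psi_A}_n(M_A;\Rational[t^{\pm1}]^k)\to\cdots
$$
holds. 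Here $H^{\sigma_A}_n(\Sigma_A;\Rational^k)\cong H^{\sigma_A}_n(\Sigma_A;\Integral^k)_{\mathtt{free}}\otimes_\Integral\Rational$, so the rational monodromy is $f_{A,n}^{\mathtt{free}}\otimes\Rational$ after a basis choice. The hypothesis $\Delta^{\rho_A,\psi_A}_1(t)\neq 0$ (guaranteed since $\psi_A$ is fibered, by Theorem \ref{TAP_properties} combined with Theorem \ref{fibered_TH}) forces each connecting map $\mathbf{1}-t\,f_{A,n}^{\mathtt{free}}\otimes\Rational$ to be injective over $\Rational[t^{\pm1}]$, so the Wang sequence breaks into short exact sequences
$$
0\to \mathrm{coker}\bigl(\mathbf{1}-t\,f^{\mathtt{free}}_{A,n}\otimes\Rational\bigr)\to H^{\rho_A,\psi_A}_n\to \ker\bigl(\mathbf{1}-t\,f^{\mathtt{free}}_{A,n-1}\otimes\Rational\bigr)=0.
$$

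Next I would compute the order of $H^{\rho_A,\psi_A}_n$. Over the PID $\Rational[t^{\pm1}]$, the module $\mathrm{coker}(\mathbf{1}-t\,f^{\mathtt{free}}_{A,n}\otimes\Rational)$ is presented by the square matrix $\mathbf{1}-t\,f^{\mathtt{free}}_{A,n}$, so its order is exactly the determinant
$$
\det_{\Rational[t]}\bigl(\mathbf{1}-t\,f^{\mathtt{free}}_{A,n}\bigr)=P_{A,n}(t),
$$
up to a unit of $\Rational[t^{\pm1}]$, i.e. up to a monomial factor with coefficient in $\Rational^\times$. This gives $\Delta^{\rho_A,\psi_A}_n(t)\doteq P_{A,n}(t)$; the identical argument with $B$ in place of $A$, using that $\phi_B$ is fibered and hence $\Delta^{\rho_B,\phi_B}_1(t)\neq0$, gives $\Delta^{\rho_B,\phi_B}_n(t)\doteq P_{B,n}(t)$.

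The main point requiring care — though it is routine rather than deep — is the identification of the twisted chain complex of the mapping torus with the algebraic mapping cone, together with the claim that the induced map $f_{A,n}$ on homology is well defined independently of the basepoint-fixing modification of $f_A$; the latter is \cite[Chapter III, Corollary 8.2]{Brown_book} as already noted around (\ref{diagram_monodromy}). One also has to be slightly careful that $\sigma_A$ restricted to $\Sigma_A$ is the coefficient system used for the fiber homology and that $t^{\psi_A}\rho_A$ restricted to $\Sigma_A$ equals $\rho_A|_{\Sigma_A}$ with trivial $t$--twisting, which holds because $\psi_A$ kills $\Sigma_A$ by construction (see diagram (\ref{diagram_fibration})); this is what makes $H^{\sigma_A}_n(\Sigma_A;\Rational[t^{\pm1}]^k)$ a free module over $\Rational[t^{\pm1}]$ on which $t$ acts by scalars while $f_{A,n}$ acts $\Rational$--linearly. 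I expect no genuine obstacle here beyond bookkeeping, since everything reduces to the standard mapping-torus spectral sequence collapsing to a Wang sequence, and the order computation over a PID is elementary.
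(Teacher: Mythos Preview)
Your proposal is correct and follows essentially the same route as the paper: identify the fiber homology over $\Rational$ with $H^{\sigma_A}_n(\Sigma_A;\Integral^k)_{\mathtt{free}}\otimes_\Integral\Rational$, run the Wang exact sequence for the mapping torus, observe that the map $1-t\,f^{\mathtt{free}}_{A,n}\otimes\Rational$ is injective so that each $H^{\rho_A,\psi_A}_n$ is presented by the square matrix $1-t\,f^{\mathtt{free}}_{A,n}$, and read off the order as its determinant $P_{A,n}(t)$. The only cosmetic difference is that the paper justifies injectivity directly by noting $P_{A,n}(t)=\det(1-t\,f^{\mathtt{free}}_{A,n})$ has constant term $1$ and is therefore nonzero in $\Rational[t^{\pm1}]$, whereas you route this through $\Delta_1\neq 0$ and Theorem~\ref{TAP_properties}; your path works but is unnecessarily indirect (and your citation of Theorem~\ref{fibered_TH}, which is stated over finite fields, is not quite the right pointer for the $\Rational$--coefficient statement).
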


\begin{proof}
	We prove the equality with $\Delta^{\rho_A,\psi_A}_n(t)$,
	and the other one with $\Delta^{\rho_B,\phi_B}_n(t)$ can be proved similarly.
	By the universal coefficient theorem,
	there are natrual isomorphisms of $\Rational$--modules:
	$$H^{\,\sigma_A}_n\left(\Sigma_A;\Rational^k\right)
	\cong H^{\,\sigma_A}_n\left(\Sigma_A;\Integral^k\right)\otimes_\Integral\Rational
	\cong H^{\,\sigma_A}_n\left(\Sigma_A;\Integral^k\right)_{\mathtt{free}}\otimes_\Integral\Rational,$$
	(see \cite[Chapter V, Theorem 2.5]{Hilton--Stammbach_book}).
	Since $\rho_A=(\sigma_A)_\Rational$,
	we obtain natural isomorphisms of $\Rational[t^{\pm1}]$--modules:
	\begin{equation}\label{TH_Q}
	H^{\rho_A,\psi_A}_n\left(\Sigma_A;\Rational[t^{\pm1}]^k\right)
	\cong H^{\rho_A}_n\left(\Sigma_A;\Rational^k\right)\otimes_\Rational\Rational[t^{\pm1}]
	\cong H^{\,\sigma_A}_n\left(\Sigma_A;\Integral^k\right)_{\mathtt{free}}\otimes_\Integral\Rational[t^{\pm1}].
	\end{equation}
	Here, the left-hand side refers to the twisted homology 
	with respect to the restricted representation $t^{\psi_A}\rho_A\colon \Sigma\to\mathrm{GL}(k,\Rational[t^{\pm1}])$,
	such that $g\mapsto t^{\psi(g)}\cdot\rho_A(g)$.
	So the the first isomorphism in (\ref{TH_Q}) follows 
	since $\Sigma_A$ is the kernel of $\psi_A\colon\pi_A\to \Integral$.
	We obtain the Wang exact sequence of $\Rational[t^{\pm1}]$--modules:
	\begin{equation}\label{Wang_LES}
	\xymatrix{
	& & \cdots \ar[r] & H^{\rho_A,\psi_A}_{n+1}\left(\pi_A;\Rational[t^{\pm1}]^k\right)   \ar[llld]\\
	H^{\rho_A,\psi_A}_n\left(\Sigma_A;\Rational[t^{\pm1}]^k\right) \ar[rr]_-{1-t f^{\Rational}_{A,n}} & &
	H^{\rho_A,\psi_A}_n\left(\Sigma_A;\Rational[t^{\pm1}]^k\right) \ar[r] & 
	H^{\rho_A,\psi_A}_n\left(\pi_A;\Rational[t^{\pm1}]^k\right) \ar[llld]  \\
	H^{\rho_A,\psi_A}_{n-1}\left(\Sigma_A;\Rational[t^{\pm1}]^k\right) \ar[rr]_-{1-t f^{\Rational}_{A,n-1}} & &
	H^{\rho_A,\psi_A}_{n-1}\left(\Sigma_A;\Rational[t^{\pm1}]^k\right) \ar[r] & \cdots
	}
	\end{equation}
	where $f^{\Rational}_{A,n}$ denotes the scalar extension of $f^{\mathtt{free}}_{A,n}$ over $\Rational[t^{\pm1}]$,
	in view of (\ref{TH_Q}).
	In fact, this long exact sequence is a special case of 
	\cite[Chapter VII, Section 9, (9.4)]{Brown_book}.
	Note that 
	$H^{\rho_A,\psi_A}_n\left(\Sigma_A;\Rational[t^{\pm1}]^k\right)$ 
	is a free $\Rational[t^{\pm1}]$--module of finite rank,
	and the $\Rational[t^{\pm1}]$--endomorphism $1-t f^{\Rational}_{A,n}$ is injective,
	since it has determinant $P_{A,n}(t)\neq0$ in $\Rational[t^{\pm1}]$.
	It follows that the rows in (\ref{Wang_LES}) are presentations
	of the $\Rational[t^{\pm1}]$--modules
	$H^{\rho_A,\psi_A}_n(\pi_A;\Rational[t^{\pm1}]^k)$, for all $n$.
	Since the presentation matrix $1-t f^{\Rational}_{A,n}$ is a square matrix,
	its determinant is by definition the order of
	$H^{\rho_A,\psi_A}_n(\pi_A;\Rational[t^{\pm1}]^k)$, (see Remark \ref{order_remark}).
	Therefore, 
	the $n$--th twisted Alexander polynomial $\Delta^{\rho_A,\phi_A}_n(t)$ is by definition
	equal to $P_{A,n}(t)$ in $\Rational[t^{\pm1}]$
	up to monomial factors with coefficients in $\Rational^\times$,
	as asserted.
\end{proof}

\begin{lemma}\label{P_ideal_equal}
	Adopt the notations in Theorem \ref{profinite_invariance_TRT}
	and (\ref{sigma_def}), (\ref{P_A_def}), (\ref{P_B_def}).
	For all $n$,
	the following equality of principal ideals
	holds in $\llbracket\widehat{\Integral}t^{\widehat{\Integral}}\rrbracket$:
	\begin{eqnarray*}
	\left(P_{A,n}(t^\mu)\right)&=& \left(P_{B,n}(t)\right).
	\end{eqnarray*}	
\end{lemma}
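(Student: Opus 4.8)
The plan is to reduce Lemma~\ref{P_ideal_equal} to the matrix criterion of Lemma~\ref{Ueki_lemma_matrix}. Precisely, I would show that the integral matrix $f^{\mathtt{free}}_{A,n}$ occurring in (\ref{P_A_def}) is conjugate, in $\mathrm{GL}(r,\widehat{\Integral})$, to the $\mu$--power matrix of the integral matrix $f^{\mathtt{free}}_{B,n}$ occurring in (\ref{P_B_def}) (in particular the two free ranks coincide and equal a common $r$). Granting this, Lemma~\ref{Ueki_lemma_matrix} applied with $A=f^{\mathtt{free}}_{A,n}$ and $B=f^{\mathtt{free}}_{B,n}$ has $a(t)=\det_{\Integral[t]}(\mathbf{1}-t f^{\mathtt{free}}_{A,n})=P_{A,n}(t)$ and $b(t)=P_{B,n}(t)$, and yields at once $(P_{A,n}(t^\mu))=(P_{B,n}(t))$ in $\llbracket\widehat{\Integral}t^{\widehat{\Integral}}\rrbracket$.

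First I would build a profinite model for the twisted homology of the fibers. The group $\Sigma_A=\pi_1(S_A)$ is a finitely generated surface group, hence a finitely generated $3$--manifold group (that of $S_A\times[0,1]$), so of type $\mathrm{FP}_\infty$ and cohomologically good. Thus a finite free resolution $C_\bullet$ of $\Integral$ over $\Integral[\Sigma_A]$ completes to a resolution of $\widehat{\Integral}$ over $\llbracket\widehat{\Integral}\widehat{\Sigma}_A\rrbracket$ (as in \cite[Proposition~3.1]{JZ}); since each $C_n$ is finitely generated free, the complex computing $H^{\sigma_A}_\bullet(\widehat{\Sigma}_A;\widehat{\Integral}^k)$ is literally $(\Integral^k\otimes_{\sigma_A}C_\bullet)\otimes_\Integral\widehat{\Integral}$ with the same differential matrices. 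Flatness of $\widehat{\Integral}$ over $\Integral$ then gives a canonical $\widehat{\pi}_A$--equivariant identification $H^{\sigma_A}_n(\Sigma_A;\Integral^k)\otimes_\Integral\widehat{\Integral}\cong H^{\sigma_A}_n(\widehat{\Sigma}_A;\widehat{\Integral}^k)$, and likewise for $B$. Tensoring the diagram (\ref{diagram_monodromy}) with $\widehat{\Integral}$ then exhibits $f^{\mathtt{free}}_{A,n}$, viewed in $\mathrm{GL}(r,\widehat{\Integral})$, as the automorphism the monodromy (the $t_A$--action) induces on the quotient of $H^{\sigma_A}_n(\widehat{\Sigma}_A;\widehat{\Integral}^k)$ by its $\Integral$--torsion submodule, the latter being the canonical image of $H^{\sigma_A}_n(\Sigma_A;\Integral^k)_{\mathtt{tors}}\otimes_\Integral\widehat{\Integral}$.

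Next I would transport everything through $\Psi$. Since $\gamma_A$ and $\gamma_B$ form a $\Psi$--corresponding pair, $\widehat{\sigma}_A=\widehat{\sigma}_B\circ\Psi$; hence the restricted isomorphism $\Psi_\Sigma\colon\widehat{\Sigma}_A\to\widehat{\Sigma}_B$ intertwines the coefficient actions and induces an isomorphism of $\widehat{\Integral}$--modules $H^{\sigma_A}_n(\widehat{\Sigma}_A;\widehat{\Integral}^k)\to H^{\sigma_B}_n(\widehat{\Sigma}_B;\widehat{\Integral}^k)$. This isomorphism carries $\Integral$--torsion to $\Integral$--torsion, so it descends to an isomorphism of the two torsion-free quotients; in particular their ranks coincide, fixing the common $r$. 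By $\widehat{\pi}$--equivariance of $\Psi$, the descended isomorphism conjugates $f^{\mathtt{free}}_{A,n}\in\mathrm{GL}(r,\widehat{\Integral})$ to the automorphism induced by the action of $\Psi(t_A)$. Now $\Psi(t_A)$ is conjugate to $t_B^\mu$ in $\widehat{\pi}_B$ (so in particular it lies in the coset $\widehat{\Sigma}_B\,t_B^\mu$), as recorded just below (\ref{diagram_fibration}); and the $\widehat{\pi}_B$--action on $H^{\sigma_B}_\bullet(\widehat{\Sigma}_B;\widehat{\Integral}^k)$ factors through $\widehat{\pi}_B/\widehat{\Sigma}_B\cong\widehat{\Integral}$, since the analogous statement holds for $\pi_B/\Sigma_B\cong\Integral$ and passes to completions (inner automorphisms acting trivially on a group's homology with coefficients in one of its own modules, \cite[Chapter~III, Corollary~8.2]{Brown_book}). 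Therefore $\Psi(t_A)$ acts as the $\mu$--th power of the $t_B$--action — the $\mu$--power being obtained from integer powers by continuity as in (\ref{g_to_nu}) — which, after passing to torsion-free quotients, is precisely the $\mu$--power matrix of $f^{\mathtt{free}}_{B,n}$. This gives the required conjugacy in $\mathrm{GL}(r,\widehat{\Integral})$ and completes the proof.

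I expect the main obstacle to be the bookkeeping of the middle two paragraphs: making the identification $H^{\sigma_A}_n(\Sigma_A;\Integral^k)\otimes_\Integral\widehat{\Integral}\cong H^{\sigma_A}_n(\widehat{\Sigma}_A;\widehat{\Integral}^k)$ genuinely functorial in the $\widehat{\pi}_A$--action (so that the monodromy, and then its $\mu$--power, match correctly under $\Psi$), and checking that the canonical $\Integral$--torsion submodules on the two sides correspond, so that the matrices $f^{\mathtt{free}}_{A,n}$ and the $\mu$--power of $f^{\mathtt{free}}_{B,n}$ really are the images of the transported automorphisms. The profinite-homological inputs — goodness and $\mathrm{FP}_\infty$ of the fiber group, flatness of $\widehat{\Integral}$, and the factorization of the outer action through $\widehat{\Integral}$ — are standard and already invoked in the excerpt; the genuinely delicate point is the compatibility of profinite completion with the monodromy action and with forming profinite-integral powers.
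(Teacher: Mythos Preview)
Your proposal is correct and follows essentially the same route as the paper: reduce to Lemma~\ref{Ueki_lemma_matrix} by showing that $f^{\mathtt{free}}_{A,n}$ and $(f^{\mathtt{free}}_{B,n})^\mu$ are conjugate in $\mathrm{GL}(r,\widehat{\Integral})$, via the $\Psi_\Sigma$--induced isomorphism on the torsion-free quotients of the profinite twisted homology of the fibers. The only real difference is in how the identification $H^{\sigma_A}_n(\Sigma_A;\Integral^k)_{\mathtt{free}}\otimes_\Integral\widehat{\Integral}\cong H^{\widehat{\sigma}_A}_n(\widehat{\Sigma}_A;\widehat{\Integral}^k)_{\mathtt{free}}$ is obtained: you use flatness of $\widehat{\Integral}$ over $\Integral$ applied to the finite free chain complex (a clean one-line argument), whereas the paper computes via the universal coefficient theorem modulo $N$ and passes to the inverse limit. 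Both yield the same free quotient and the same monodromy on it. One small point: your justification that the $\widehat{\pi}_B$--action on $H^{\sigma_B}_n(\widehat{\Sigma}_B;\widehat{\Integral}^k)$ factors through $\widehat{\Integral}$ should appeal directly to the profinite analogue of inner-automorphism triviality (e.g.\ \cite[Proposition~6.7.12]{Ribes--Zalesskii_book}), rather than ``passes to completions''; the paper invokes exactly this reference.
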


\begin{proof}
	For any $N\in\Natural$ divisible by 
	the order of $H^{\sigma_A}_{n-1}(\Sigma_A;\Integral^k)_{\mathtt{tors}}$, 
	the universal coefficient theorem implies
	natural isomorphisms of finite $\Integral$--modules:
	\begin{equation*}\label{UCT_mod_N}
	\xymatrix{
	0 \ar[r] &
	H^{\,\sigma_A}_n\left(\Sigma_A;\Integral^k\right)\otimes_\Integral(\Integral/N\Integral)
	\ar[r] &
	H^{\,\sigma_{A,N}}_n\left(\Sigma_A;(\Integral/N\Integral)^k\right)
	\ar[r] &
	H^{\,\sigma_A}_{n-1}\left(\Sigma_A;\Integral^k\right)_{\mathtt{tors}}
	\ar[r] & 0,
	}
	\end{equation*}
	where $\sigma_{A,N}\colon \Sigma_A\to\mathrm{GL}(k,\Integral/N\Integral)$
	denotes the reduction modulo $N$ of $\sigma_A$ restricted to $\Sigma_A$,
	and the short exact sequence splits naturally with respect to 
	the variable $N$, 
	(see \cite[Chapter V, Theorem 2.5]{Hilton--Stammbach_book}).
	So we obtain
	\begin{equation}\label{UCT_mod_N_splitting}
	H^{\,\sigma_{A,N}}_n\left(\Sigma_A;(\Integral/N\Integral)^k\right)
	\cong
	H^{\,\sigma_{A}}_n\left(\Sigma_A;\Integral^k\right)\otimes_\Integral(\Integral/N\Integral)
	\,\oplus\,
	H^{\,\sigma_A}_{n-1}\left(\Sigma_A;\Integral^k\right)_{\mathtt{tors}},
	\end{equation}
	and hence 
	\begin{equation}\label{UCT_profinite_splitting}
	H^{\,\widehat{\sigma}_{A}}_n\left(\widehat{\Sigma}_A;\widehat{\Integral}^k\right)
	\cong
	H^{\,\sigma_{A}}_n\left(\Sigma_A;\Integral^k\right)\otimes_\Integral\widehat{\Integral}
	\,\oplus\,
	H^{\,\sigma_A}_{n-1}\left(\Sigma_A;\Integral^k\right)_{\mathtt{tors}},	
	\end{equation}
	as an isomorphism of profinite $\widehat{\Integral}$--modules.
	Note the left-hand side of (\ref{UCT_profinite_splitting}) can be identified with
	the inverse limit of the left-hand side of (\ref{UCT_mod_N_splitting}),
	since $\Sigma_A$ is cohomologically good,
	(see also \cite[Chapter 6, Corollary 6.1.10(a) and Section 6.3]{Ribes--Zalesskii_book}).
	Therefore,
	we obtain a canonical $\widehat{\Integral}$--submodule
	$H^{\widehat{\sigma}_A}_n(\widehat{\Sigma}_A;\widehat{\Integral}^k)$,
	consisting of all the $\Integral$--torsion elements,
	and a (non-canonical) isomorphism
	$$H^{\,\widehat{\sigma}_A}_n\left(\widehat{\Sigma}_A;\widehat{\Integral}^k\right)_{\mathtt{tors}}
	\cong
	H^{\,\sigma_A}_n\left(\Sigma_A;\Integral^k\right)_{\mathtt{tors}}\oplus
	H^{\,\sigma_A}_{n-1}\left(\Sigma_A;\Integral^k\right)_{\mathtt{tors}}.
	$$
	Moreover, factoring this out, 
	we obtain a canonical quotient free $\widehat{\Integral}$--module
	and a canonical isomorphism
	$$H^{\,\widehat{\sigma}_A}_n\left(\widehat{\Sigma}_A;\widehat{\Integral}^k\right)_{\mathtt{free}}
	\cong
	H^{\,\sigma_A}_n\left(\Sigma_A;\Integral^k\right)_{\mathtt{free}}\otimes_\Integral\widehat{\Integral}.
	$$
	The scalar extension of the $\Integral$--module automorphism $f^{\mathtt{free}}_{A,n}$ 
	over $\widehat{\Integral}$ thereby yields a $\widehat{\Integral}$--module automorphism
	of $H^{\widehat{\sigma}_A}_n(\widehat{\Sigma}_A;\widehat{\Integral}^k)_{\mathtt{free}}$,
	which we denote as $F_{A,n}$.
	Similarly, 
	we obtain $H^{\widehat{\sigma}_B}_n(\widehat{\Sigma}_B;\widehat{\Integral}^k)_{\mathtt{free}}$
	and its automorphism $F_{B,n}$.
	
	By construction, $F_{A,n}$ coincides with the automorphism 
	induced from the completion automorphism $\widehat{f}_{A,n}$
	on $H^{\widehat{\sigma}_A}_n(\widehat{\Sigma}_A;\widehat{\Integral}^k)$,
	which depends only on the conjugacy class of $\widehat{f}_A$ on $\widehat{\Sigma}_A$,
	(see \cite[Chapter 6, Proposition 6.7.12]{Ribes--Zalesskii_book}).
	Then the diagram \ref{diagram_fibration} implies the commutative diagram of free $\widehat{\Integral}$--modules:	
	\begin{equation}\label{FFmu}
	\xymatrix{
	H^{\,\widehat{\sigma}_A}_n\left(\widehat{\Sigma}_A;\widehat{\Integral}^k\right)_{\mathtt{free}}
	\ar[r]^-{F_{A,n}} \ar[d]_{\Psi_{\Sigma*}} &
	H^{\,\widehat{\sigma}_A}_n\left(\widehat{\Sigma}_A;\widehat{\Integral}^k\right)_{\mathtt{free}}
	\ar[d]^{\Psi_{\Sigma*}}\\
	H^{\,\widehat{\sigma}_B}_n\left(\widehat{\Sigma}_B;\widehat{\Integral}^k\right)_{\mathtt{free}}
	\ar[r]^-{F_{B,n}^\mu} &
	H^{\,\widehat{\sigma}_B}_n\left(\widehat{\Sigma}_B;\widehat{\Integral}^k\right)_{\mathtt{free}}
	}
	\end{equation}
	where $F_{B,n}^\mu$ is the $\mu$--power of $F_{B,n}$, 
	(not to be confused with our notation of $\varepsilon$--specializations;
	see (\ref{g_to_nu})).
	
	If one fixes a basis of $H^{\sigma_A}_n(\Sigma_A;\Integral^k)_{\mathtt{free}}$,
	$F_{A,n}$ becomes an invertible square matrix over $\widehat{\Integral}$,
	which is the same as the matrix of $f^{\mathtt{free}}_{A,n}$.	
	Similarly, $F_{B,n}$ becomes a square matrix the same as that of $f^{\mathtt{free}}_{B,n}$.
	The diagram (\ref{FFmu}) implies that 
	$\Psi_{\Sigma*}\circ F_{A,n}\circ \Psi_{\Sigma*}^{-1}$ 
	is conjugate to $F_{B,n}^\mu$ as square matrices over $\widehat{\Integral}$.
	Then Lemma \ref{Ueki_lemma_matrix} applies,
	implying $(P_A(t^\mu))=(P_B(t))$ in $\llbracket\widehat{\Integral}t^{\widehat{\Integral}}\rrbracket$,
	as asserted.	
\end{proof}

By Lemmas \ref{Delta_to_P} and \ref{P_ideal_equal} and Theorem \ref{Ueki_lemma},
we obtain the equality in $\Rational[t^{\pm1}]$ up to monomial factors with coefficients in $\Rational^\times$:
\begin{equation}\label{Delta_reciprocal_equal}
	\Delta^{\rho_A,\psi_A}_n(t)\cdot\Delta^{\rho_A,\psi_A}_n(t^{-1})\doteq
	\Delta^{\rho_B,\phi_B}_n(t)\cdot\Delta^{\rho_B,\phi_B}_n(t^{-1}),
\end{equation}
for all $n$.
By (\ref{TRT_def}), in $\Rational(t)$ up to monomial factors with coefficients in $\Rational^\times$:
\begin{equation}\label{tau_reciprocal_equal}
	\tau^{\rho_A,\psi_A}(t)\cdot\tau^{\rho_A,\psi_A}(t^{-1})\doteq
	\tau^{\rho_B,\phi_B}(t)\cdot\tau^{\rho_B,\phi_B}(t^{-1}).
\end{equation}

By Theorem \ref{TAP_properties} and (\ref{TRT_def}),
we obtain 
$\tau^{\rho_A,\psi_A}(t^{-1})\doteq\tau^{\bar{\rho}_A,\psi_A}(t)$,
where $\bar{\rho}_A$ is the $\gamma_A$--pullback of $\bar\rho\colon \Gamma\to\mathrm{GL}(k,\Rational)$,
such that $\bar{\rho}(g)$ is the transpose of $\rho(g)^{-1}$.
However, since $\Gamma$ is finite,
$\rho$ is conjugate to a representation $\Gamma\to \mathrm{O}(k)$ over $\Real$,
which is fixed under the bar involution.
It follows that $\rho$ is conjugate to $\bar{\rho}$ over $\Rational$.
This implies
$\tau^{\bar{\rho}_A,\psi_A}(t)\doteq\tau^{\rho_A,\psi_A}(t)$,
since twisted Reidemeister torsions are invariant under conjugation of the representation,
(see \cite[Section 3.3.2, Lemma 1]{Friedl--Vidussi_survey}).
Similar manipulation works for $\tau^{\rho_B,\phi_B}$.
Therefore, (\ref{tau_reciprocal_equal}) implies
that the square of $\tau^{\rho_A,\psi_A}(t)$ equals the square of $\tau^{\rho_B,\phi_B}(t)$,
up to monomial factors with coefficient in $\Rational^\times$.
By unique factorization in $\Rational(t)$, we obtain
$$\tau^{\rho_A,\psi_A}(t)\doteq\tau^{\rho_B,\phi_B}(t).$$

This completes the proof of Theorem \ref{profinite_invariance_TRT}.

\section{Profinite correspondence of Nielsen numbers}\label{Sec-Nielsen}

Suppose that $M$ is an orientable connected closed $3$--manifold that admits a hyperbolic metric.
Suppose that $\phi\in H^1(M;\Integral)$ is a primitive fibered class.
We identify $(M,\phi)$ with 
the mapping torus of some pseudo-Anosov automorphism
$f\colon S\to S$ and its distinguished cohomology class,
namely, $(M_f,\phi_f)$, (see Remark \ref{pA_remark}).
For all $m\in\Natural$ and $i\in\Integral\setminus\{0\}$,
we denote the number of index--$i$ $m$--periodic orbits of $f$ as
\begin{equation}\label{nu_m_i_def}
\nu_m(M,\phi;i)=\nu_m(f;i)=\#\left\{\mathbf{O}\in\mathrm{Orb}_m(f)\colon \mathrm{ind}_m(f;\mathbf{O})=i\right\},
\end{equation}
and denote the number of (essential) $m$--periodic orbits of $f$ as
\begin{equation}\label{N_m_def}
N_m(M,\phi)=N_m(f)=\sum_{i\in\Integral\setminus\{0\}} \nu_m(f;i)=\#\,\mathrm{Orb}_m(f),
\end{equation}
(see (\ref{ind_m})).
The number $N_m(f)$ is called the \emph{$m$--th orbit Nielsen number} of $f$.
It is known that the orbit Nielsen numbers growth exponentially fast as $m$ tends to $\infty$,
and they determine the stretch factor $\lambda(f)$ as
\begin{equation}\label{N_m_and_lambda}
\lambda(f)=\limsup_{m\to\infty} N_m(f)^{1/m},
\end{equation}
(see \cite[Section 2, Example 2]{Jiang_periodic}).

\begin{theorem}\label{profinite_invariance_nu}
	Let $(M_A,M_B,\Psi)$ be a profinite morphism setting of a $3$--manifold pair (Convention \ref{profinite_morphism_setting}).
	Suppose that $\Psi\colon\widehat{\pi_A}\to\widehat{\pi_B}$ is an isomorphism.
	Suppose that $M_A$ and $M_B$ are both closed, fibered, and hyperbolic.
	Let $\mu\in\widehat{\Integral}^\times$ be a unit as provided in Theorem \ref{profinite_isomorphism_hyperbolic}.
	
	Let $\phi_B\in H^1(M_B;\Integral)$ be a primitive fibered class.
	Let $\psi_{A}\in H^1(M_A;\Integral)$
	be the primitive fibered class $\Psi^*_{1/\mu}(\phi_B)$, as guaranteed by Theorem \ref{profinite_isomorphism_npc}.
	Then, the following equality holds for all $m\in\Natural$ and $i\in\Integral\setminus\{0\}$:
	$$\nu_m\left(M_A,\psi_A;i\right)=\nu_m\left(M_B,\phi_B;i\right).$$
	Hence, for all $m\in\Natural$,
	$$N_m\left(M_A,\psi_A\right)=N_m\left(M_B,\phi_B\right).$$
\end{theorem}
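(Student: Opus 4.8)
The plan is to reduce the statement to a counting argument over twisted Lefschetz numbers, exactly as in the strategy of \cite{Liu_procongruent_conjugacy}, but now driven by the profinite invariance of twisted Reidemeister torsions over $\Rational$ (Theorem \ref{profinite_invariance_TRT}) rather than over $\Complex$. First I would set up the correspondence: by Theorem \ref{profinite_isomorphism_hyperbolic} there is a unit $\mu\in\widehat{\Integral}^\times$ with $\mathrm{MC}(\Psi_*)=\mu\Integral$, and $\psi_A=\Psi^*_{1/\mu}(\phi_B)$ is a primitive fibered class by Corollary \ref{fibered_cone_correspondence} (together with the primitivity bookkeeping implicit in Theorem \ref{profinite_isomorphism_hyperbolic}). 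Identify $(M_A,\psi_A)\cong(M_{f_A},\phi_{f_A})$ and $(M_B,\phi_B)\cong(M_{f_B},\phi_{f_B})$ for pseudo-Anosov $f_A\colon S_A\to S_A$, $f_B\colon S_B\to S_B$. By Corollary \ref{fiber_surface_correspondence} the fiber subgroups correspond, so (as spelled out in the discussion around diagram \eqref{diagram_fibration}) $\Psi$ carries the conjugacy class of the monodromy generator $t_A$ to that of $t_B^\mu$, i.e.\ $[\widehat{f_A}]$ and $[\widehat{f_B}^\mu]$ are conjugate outer automorphisms of $\widehat{\Sigma_B}$.

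Next I would extract periodic-orbit data from twisted Lefschetz numbers. Fix any finite group $\Gamma$ and any $\Rational$-representation $\rho\colon\Gamma\to\mathrm{GL}(k,\Rational)$; let $\gamma_B\colon\pi_B\to\Gamma$ be any finite quotient and $\gamma_A\colon\pi_A\to\Gamma$ the $\Psi$-corresponding one (Definition \ref{corresponding_quotient_def}), with pullbacks $\rho_A,\rho_B$. Theorem \ref{profinite_invariance_TRT} gives $\tau^{\rho_A,\psi_A}(t)\doteq\tau^{\rho_B,\phi_B}(t)$, and Theorem \ref{tau_zeta} rewrites each side as the zeta-type power series $\exp(\sum_m \tfrac{L_m(f;\chi_\rho)}{m}t^m)$. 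Comparing logarithmic derivatives (and absorbing the monomial ambiguity, which only affects the $m=0$ normalization) yields $L_m(f_A;\chi_{\rho_A})=L_m(f_B;\chi_{\rho_B})$ for all $m\in\Natural$. Now unwind the definition \eqref{L_m_rho}: grouping periodic orbits by their index value $i$ and by the $\Gamma$-conjugacy class into which $\ell_m(f;\mathbf{O})$ maps under $\gamma$, one gets for each $m$
\[
\sum_{c}\ \sum_{i\neq0}\ i\cdot\nu_m(f_A;i,c)\cdot\chi_\rho(c)
=\sum_{c}\ \sum_{i\neq0}\ i\cdot\nu_m(f_B;i,c)\cdot\chi_\rho(c),
\]
where $c$ runs over conjugacy classes of $\Gamma$ and $\nu_m(f;i,c)$ counts $m$-periodic orbits of index $i$ whose trajectory class maps to $c$. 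Since $\rho$ ranges over all $\Rational$-representations of $\Gamma$, the $\Rational$-characters span the space of $\Rational$-valued class functions that are constant on $\Rational$-classes (unions of conjugacy classes of the same cyclic subgroup up to Galois action); this is the point where one only controls the coarser $\Rational$-class partition, so extra care — via conjugacy separability of $3$-manifold groups \cite{HWZ_conjugacy_separability} and a limiting argument over larger and larger finite quotients $\Gamma$ — is needed to separate individual trajectory classes, exactly as the introduction warns.

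The main obstacle, and the bulk of the work, is this separation step: passing from equality of the $\Rational$-weighted sums above (over all finite quotients $\Gamma$ and all $\Rational$-representations) to the pointwise equality $\nu_m(f_A;i)=\nu_m(f_B;i)$ for each fixed $i$. The approach is to fix $m$, let $\Gamma$ exhaust the finite quotients of $\pi_B$ (pulled back compatibly to $\pi_A$), and observe that the finitely many $m$-periodic trajectory classes of $f_A$ and of $f_B$ — all realized by closed geodesics, hence by non-torsion elements with finite-order images separated by suitable finite quotients — become pairwise distinguished in some $\Gamma$; conjugacy separability ensures two trajectory classes that map the same way into every finite quotient are genuinely conjugate, and since $\Psi$ induces a bijection on conjugacy classes of the profinite completions restricted to these finite sets, the corresponding trajectory classes (and their indices, which are profinitely determined via the torsion identity applied to well-chosen $\rho$ that isolate a single $\Rational$-class) match up. Once one $\Rational$-class at a time is isolated, the index weights $i\neq0$ on each side must agree, giving $\nu_m(f_A;i)=\nu_m(f_B;i)$. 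Summing over $i$ yields $N_m(M_A,\psi_A)=N_m(M_B,\phi_B)$ for all $m$, which is the assertion; I would flag that the delicate bookkeeping of $\Rational$-versus-$\Complex$ characters, and the interplay of the $\mu$-twist $t_B\mapsto t_B^\mu$ with periodic-orbit counting, are where the argument genuinely improves on \cite{Liu_procongruent_conjugacy} and must be checked line by line.
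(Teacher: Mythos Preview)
Your setup and the reduction to the equality of twisted Lefschetz numbers $L_m(f_A;\gamma_A^*\chi_\rho)=L_m(f_B;\gamma_B^*\chi_\rho)$ for every finite quotient and every $\Rational$-representation is correct and coincides with the paper. Taking $\Rational$-linear combinations (Lemma \ref{Q_rep_finite_group}) one indeed gets $L_m(f_A;\gamma_A^*\chi_\omega)=L_m(f_B;\gamma_B^*\chi_\omega)$ for every $\widehat{\Integral}^\times$-equivalence class $\omega\in\Omega(\Gamma)$, exactly as you anticipate.

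The gap is in your ``separation step''. You try to pass from these $\omega$-by-$\omega$ identities to a direct bijection between the $m$-periodic orbits of $f_A$ and of $f_B$, invoking conjugacy separability and the bijection $\Psi$ induces on profinite conjugacy classes. This does not work as stated: because $\Psi(t_A)$ is conjugate to $t_B^{\mu}$, an element $g\in\pi_A$ with $\psi_A(g)=m$ is sent by $\Psi$ to something at $\widehat{\phi_B}$-level $\mu m$, not $m$, so $\Psi$ does \emph{not} carry $m$-periodic trajectory classes of $f_A$ to $m$-periodic trajectory classes of $f_B$. Moreover, even granting that in some large $\Gamma$ the $m$-periodic orbits of $f_A$ occupy pairwise distinct conjugacy classes, $\Rational$-characters only separate $\widehat{\Integral}^\times$-equivalence classes; nothing in an ``exhausting'' argument guarantees that distinct conjugacy classes at the same $\phi$-level do not collapse into a single $\widehat{\Integral}^\times$-class in $\Gamma$.

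The paper avoids any direct matching. It first proves a general inequality (Lemma \ref{N_m_inequality}): for \emph{any} finite quotient $\Gamma$, $N_m(f)\geq\#\{\omega\in\Omega(\Gamma):L_m(f;\gamma^*\chi_\omega)\neq0\}$, with the $\nu_m(f;i)$ readable from the values $L_m(f;\gamma^*\chi_\omega)$ when equality holds. It then constructs, for each $m$, a \emph{specific} quotient $\Gamma_m$ of $\pi_1(M_f)$ in which the $m$-periodic orbits hit pairwise distinct $\widehat{\Integral}^\times$-classes (Lemma \ref{N_m_equality}); the construction takes $\Gamma_m=(\pi_1(S)/K_m)\rtimes(\Integral/dm\Integral)$ with $K_m$ characteristic and $d$ chosen so that every element at level $m$ has order exactly $d$, which forces $\bar g^\nu$ conjugate to $\bar g'$ to imply $\bar g$ conjugate to $\bar g'$. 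Applying this $\Gamma_m$ built from $f_A$ gives $N_m(f_A)=\#\{\omega:L_m(f_A;\chi_\omega)\neq0\}=\#\{\omega:L_m(f_B;\chi_\omega)\neq0\}\leq N_m(f_B)$; the reverse inequality comes from the analogous $\Gamma_m$ built from $f_B$. Once $N_m(f_A)=N_m(f_B)$, the equality case of Lemma \ref{N_m_inequality} yields $\nu_m(f_A;i)=\nu_m(f_B;i)$. This squeeze-and-count argument, together with the order computation in Lemma \ref{N_m_equality}, is the missing ingredient in your proposal.
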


%

\begin{corollary}\label{pA_features_correspondence}
	Under the assumptions of Theorem \ref{profinite_invariance_nu},
	denote by $f_A\colon S_A\to S_A$ and $f_B\colon S_B\to S_B$
	the pseudo-Anosov automorphisms of the fiber surfaces
	as determined by $\psi_A$ and $\phi_B$, respectively.
	Then the following characteristic features are all identical for $f_A$ and $f_B$:
	\begin{itemize}
	\item The stretch factor
	\item The number of index--$i$ fixed points, $i\neq0$.
	\item The number of $k$--prong singular points in the invariant stable/unstable foliation, $k\geq3$
	\item The transverse orientability of the invariant stable/unstable foliation
	\end{itemize}
\end{corollary}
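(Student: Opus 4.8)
The plan is to deduce each of the four features from Theorem~\ref{profinite_invariance_nu}, directly for the first two and after passing to a cyclic cover for the last two.

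The stretch factor is immediate: Theorem~\ref{profinite_invariance_nu} gives $N_m(M_A,\psi_A)=N_m(M_B,\phi_B)$ for every $m\in\Natural$, i.e.\ $N_m(f_A)=N_m(f_B)$, so formula~(\ref{N_m_and_lambda}) forces $\lambda(f_A)=\lambda(f_B)$. The number of index--$i$ fixed points is the case $m=1$: every element of $\mathrm{Orb}_1(f)$ is a fixed point of $f$, viewed as its own $f$--orbit, and $\mathrm{ind}_1(f;\cdot)$ is the ordinary fixed point index, so $\nu_1(f;i)$ counts precisely the index--$i$ fixed points; hence $\nu_1(f_A;i)=\nu_1(f_B;i)$ by Theorem~\ref{profinite_invariance_nu}.

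For the number of $k$--prong singular points, write $s_k(f)$ for it. The point is that, for $m$ sufficiently divisible, this number is read off at period $1$. Since $f$ permutes its finitely many singular points and cyclically permutes the prongs at each of them, there is a positive integer $m_0(f)$ such that $f^{m_0(f)}$ fixes every singular point and preserves every prong of its invariant foliations there. For $m$ divisible by $m_0(f)$, every $k$--prong singular point is then a fixed point of $f^m$ of index $1-k$, while every regular fixed point of $f^m$ has index $\pm1$ and every $k'$--prong fixed point has index in $\{1,1-k'\}$; since $k\geq3$, it follows that $\nu_1(f^m;1-k)=s_k(f)$. I would now apply Theorem~\ref{profinite_invariance_nu} to a cover. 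Fix $m$ divisible by both $m_0(f_A)$ and $m_0(f_B)$, and let $M_A^{(m)}\to M_A$ be the connected $m$--fold cyclic cover dual to $\psi_A$; it is the mapping torus of $f_A^m\colon S_A\to S_A$, hence a closed hyperbolic $3$--manifold fibering with connected fiber $S_A$ and primitive fibered class $\phi_{f_A^m}$, and similarly for $M_B^{(m)}\to M_B$. Because $\widehat{\psi_A}=\mu^{-1}\cdot(\widehat{\phi_B}\circ\Psi)$ on $\widehat{\pi_A}$ and $\mu\in\widehat\Integral^\times$ is invertible modulo $m$, the isomorphism $\Psi$ carries the closure of $\ker(\psi_A\bmod m)$ onto the closure of $\ker(\phi_B\bmod m)$, so $M_A^{(m)}\to M_A$ and $M_B^{(m)}\to M_B$ form a $\Psi$--corresponding pair (Definition~\ref{corresponding_cover_def}) with restricted profinite isomorphism $\Psi^{(m)}$. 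By Lemma~\ref{MC_cover} and Theorem~\ref{profinite_isomorphism_hyperbolic}, the unit associated to $\Psi^{(m)}$ may be taken to be $\mu$, and the commutative square relating $(\Psi^{(m)})^*_{1/\mu}$ to $\Psi^*_{1/\mu}$ through the covering maps (exactly as in the proof of Corollary~\ref{TN_correspondence}) gives $(\Psi^{(m)})^*_{1/\mu}(\phi_{f_B^m})=\phi_{f_A^m}$. Applying Theorem~\ref{profinite_invariance_nu} to $(M_A^{(m)},M_B^{(m)},\Psi^{(m)})$ and specializing to period $m'=1$ with $i=1-k$ then yields $s_k(f_A)=\nu_1(f_A^m;1-k)=\nu_1(f_B^m;1-k)=s_k(f_B)$.

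Finally, the transverse orientability of the invariant foliations is a property of the prong data already matched above: on an orientable closed surface $\mathscr{F}^{\mathtt{u}}$ is transversely orientable if and only if $\mathscr{F}^{\mathtt{s}}$ is leaf--orientable, which holds if and only if every singular point has an even number of prongs (see \cite{FLP_book}); by symmetry the same criterion describes the transverse orientability of $\mathscr{F}^{\mathtt{s}}$ and the leaf--orientability of $\mathscr{F}^{\mathtt{u}}$, since the two foliations have the same singular points with the same numbers of prongs. As $s_k(f_A)=s_k(f_B)$ for all $k\geq3$, the map $f_A$ has a singular point with an odd number of prongs if and only if $f_B$ does, so the four equivalent orientability conditions hold for $f_A$ exactly when they hold for $f_B$. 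The step I expect to require the most care is the cover bookkeeping in the third paragraph: identifying the $\Psi$--corresponding cyclic covers, checking that their monodromies are the asserted powers, and transporting the unit $\mu$ together with the primitive fibered class to the covers so that Theorem~\ref{profinite_invariance_nu} genuinely applies there.
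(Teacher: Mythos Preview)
Your argument is correct. The paper itself does not give a detailed proof here: Remark~\ref{pA_features_correspondence_remark} simply records that all four features are determined by the full collection $\{\nu_m(f;i)\}_{m,i}$ and defers the extraction to \cite[Theorem~11.1]{Liu_procongruent_conjugacy}. Your treatment of the stretch factor, the index--$i$ fixed point counts, and the transverse orientability is the obvious one and matches that approach. For the $k$--prong count, however, you take a genuinely different and more self-contained route. Rather than disentangle $s_k(f)$ from the family $\{\nu_m(f;i)\}$ directly---which requires separating, for each singular orbit, the orbit size from the order of the prong rotation, since $\nu_m(f;1-k)$ only counts $f$--orbits of $k$--prong points, not the points themselves---you pass to the $m$--fold cyclic cover and read off $s_k(f)=\nu_1(f^m;1-k)$ as a fixed-point count there.

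The cover bookkeeping you flag as delicate is fine. From $\widehat{\phi_B}\circ\Psi=\mu\,\widehat{\psi_A}$ and the invertibility of $\mu$ modulo $m$, the open subgroups $\ker(\psi_A\bmod m)$ and $\ker(\phi_B\bmod m)$ do form a $\Psi$--corresponding pair; the argument in the proof of Corollary~\ref{TN_correspondence} shows that the unit for $\Psi^{(m)}$ may be taken equal to $\mu$ and yields the commuting square relating $(\Psi^{(m)})^*_{1/\mu}$ to $\Psi^*_{1/\mu}$ through the covering maps, whence $(\Psi^{(m)})^*_{1/\mu}(\phi_{f_B^m})=\phi_{f_A^m}$ after dividing the pulled-back relation $(\Psi^{(m)})^*_{1/\mu}(m\phi_{f_B^m})=m\phi_{f_A^m}$ by $m$. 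The upshot is that your proof stays entirely within this paper, avoiding the appeal to \cite{Liu_procongruent_conjugacy}, at the modest cost of the cyclic-cover passage.
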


\begin{remark}\label{pA_features_correspondence_remark}
	The similar conclusion is proved in \cite[Section 11]{Liu_procongruent_conjugacy},
	when $f_A,f_B\colon S\to S$ 
	are pseudo-Anosov automorphisms whose induced profinite outer automorphisms
	$[f_A],[f_B]\in\mathrm{Out}(\widehat{\pi_1(S)})$ are conjugate.
	With Theorem \ref{profinite_invariance_nu},
	the former proof of \cite[Theorem 11.1]{Liu_procongruent_conjugacy}
	works for Corollary \ref{pA_features_correspondence_remark} without change.
	In fact, the listed features are all determined by
	the collection of all the indexed periodic Nielsen numbers.
\end{remark}

The rest of this section is devoted to the proof of Theorem \ref{profinite_invariance_nu}.

We need some representation theory of finite groups over the rational field.
The reader may consult Serre's textbook \cite{Serre_book_representation} for general reference.
Let $\Gamma$ be a finite group of order $|\Gamma|$. 
The power operation gives rise to
a set-theoretic permutation action of $\widehat{\Integral}^\times$ on $\Gamma$,
namely, $\widehat{\Integral}^\times\times \Gamma\to \Gamma\colon
(\nu,g)\mapsto g^\nu$, 
and $g^\nu$ stands for the element $g^n\in \Gamma$ 
where $n\in\Integral$ is congruent to $\nu$ modulo $|\Gamma|$,
(compare (\ref{g_to_nu})).
With this action in mind,
we say that $g,g'\in\Gamma$ are \emph{$\widehat{\Integral}^\times$--conjugate},
if there exists some $\nu\in\widehat{\Integral}^\times$ such that $g^\nu$ and $g'$ are conjugate in $\Gamma$.
Being $\widehat{\Integral}^\times$--conjugate is clearly an equivalence relation on $\Gamma$,
and every equivalence class is the union of finitely many conjugacy classes of $\Gamma$.
Therefore, we obtain an equivalence relation on the set of conjugacy classes $\mathrm{Orb}(\Gamma)$,
which we call \emph{$\widehat{\Integral}^\times$--equivalence},
such that $\mathbf{c},\mathbf{c}'\in\mathrm{Orb}(\Gamma)$ are $\widehat{\Integral}^\times$--equivalent 
if and only if their representatives are $\widehat{\Integral}^\times$--conjugate in $\Gamma$.
We introduce the notation
\begin{equation}\label{Omega_def}
\Omega(\Gamma)=\left\{\widehat{\Integral}^\times\mbox{--equivalence classes of }\mathrm{Orb}(\Gamma)\right\},
\end{equation}
so $\mathrm{Orb}(\Gamma)$ is the disjoint union of all $\omega\in\Omega(\Gamma)$.
For any $\omega\in\Omega(\Gamma)$, denote by $\chi_\omega\colon \mathrm{Orb}(\Gamma)\to\Rational$
the characteristic function of $\omega$, namely,
\begin{equation}\label{chi_omega}
\chi_\omega(\mathbf{c})=\begin{cases} 1 & \mathbf{c}\in\omega \\ 0 & \mbox{otherwise}\end{cases}
\end{equation}

\begin{lemma}\label{Q_rep_finite_group}\
	Let $\Gamma$ be a finite group.
	A function $\mathrm{Orb}(\Gamma)\to \Rational$
	is constant on every $\widehat{\Integral}^\times$--equivalence class
	$\omega\in\Omega(\Gamma)$
	if and only if
	it is a	$\Rational$--linear combinations of the $\Rational$--irreducible characters of $\Gamma$.
\end{lemma}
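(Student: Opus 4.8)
The plan is to deduce the lemma from the classical description of the $\Rational$--irreducible characters of a finite group as Galois--orbit sums of $\Complex$--irreducible characters, the only nontrivial external input being the count of rational conjugacy classes. As preliminaries I would identify $\Complex$--valued (resp.\ $\Rational$--valued) functions on $\mathrm{Orb}(\Gamma)$ with $\Complex$--valued (resp.\ $\Rational$--valued) class functions on $\Gamma$, recall that $\mathrm{Irr}_\Complex(\Gamma)$ is a $\Complex$--basis of the space of $\Complex$--valued class functions, and set $\mathcal{G}=\mathrm{Gal}(\Rational(\zeta)/\Rational)$ for a primitive $|\Gamma|$--th root of unity $\zeta$, identified with $(\Integral/|\Gamma|\Integral)^\times$ so that the automorphism $\sigma_\nu$ attached to a unit $\nu$ sends $\zeta$ to $\zeta^\nu$. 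Two elementary facts drive the argument: for any $\Complex$--valued class function $\chi$ one has $\chi(g^\nu)=\sigma_\nu(\chi(g))$, since the eigenvalues of a complex representation of $g$ are $|\Gamma|$--th roots of unity and $g^\nu$ (the power operation of the excerpt) depends only on $\nu\bmod|\Gamma|$; and $\mathcal{G}$ acts on $\mathrm{Irr}_\Complex(\Gamma)$ by Galois conjugation and on $\mathrm{Orb}(\Gamma)$ by $\mathbf{c}\mapsto\{g^\nu\colon g\in\mathbf{c}\}$, the orbit set of the latter action being exactly $\Omega(\Gamma)$ by the very definition of $\widehat{\Integral}^\times$--equivalence.

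For the \emph{if} direction I would note that the character $\chi_\rho$ of a representation $\rho\colon\Gamma\to\mathrm{GL}(k,\Rational)$ is $\Rational$--valued and a class function, so $\chi_\rho(g^\nu)=\sigma_\nu(\chi_\rho(g))=\chi_\rho(g)$; hence $\chi_\rho$, and with it every $\Rational$--linear combination of $\Rational$--irreducible characters, is $\Rational$--valued and constant on the $\widehat{\Integral}^\times$--equivalence classes. For the \emph{only if} direction, let $V$ be the $\Complex$--space of $\Complex$--valued functions on $\mathrm{Orb}(\Gamma)$ that are constant on the $\widehat{\Integral}^\times$--equivalence classes, so $\dim_\Complex V=|\Omega(\Gamma)|$ with basis the characteristic functions $\chi_\omega$. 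For each $\mathcal{G}$--orbit $\mathcal{O}\subseteq\mathrm{Irr}_\Complex(\Gamma)$ put $\theta_{\mathcal{O}}=\sum_{\psi\in\mathcal{O}}\psi$; then $\theta_{\mathcal{O}}$ is $\mathcal{G}$--invariant (hence $\Rational$--valued) and constant on $\widehat{\Integral}^\times$--equivalence classes, so $\theta_{\mathcal{O}}\in V$, and the various $\theta_{\mathcal{O}}$ are $\Complex$--linearly independent because distinct orbits partition the basis $\mathrm{Irr}_\Complex(\Gamma)$. Here I would invoke the standard fact that the number of $\mathcal{G}$--orbits on $\mathrm{Irr}_\Complex(\Gamma)$ equals the number of $\mathcal{G}$--orbits on $\mathrm{Orb}(\Gamma)$, namely $|\Omega(\Gamma)|$ --- equivalently, that $\Gamma$ has as many $\Rational$--irreducible representations as $\Rational$--classes (Brauer's permutation lemma applied to the two $\mathcal{G}$--actions, using the invertibility of the character table; see \cite{Serre_book_representation}). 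It follows that the $\theta_{\mathcal{O}}$ form a $\Complex$--basis of $V$. Given a $\Rational$--valued $f\in V$, expanding $f=\sum_{\mathcal{O}}a_{\mathcal{O}}\,\theta_{\mathcal{O}}$ and applying an arbitrary $\sigma\in\mathrm{Gal}(\overline{\Rational}/\Rational)$, the uniqueness of the expansion together with the rationality of $f$ and of each $\theta_{\mathcal{O}}$ forces $a_{\mathcal{O}}\in\Rational$. Since every $\Rational$--irreducible character is a positive integer (its Schur index) multiple of some $\theta_{\mathcal{O}}$, with distinct $\Rational$--irreducible characters involving distinct orbits, the $\Rational$--span of the $\Rational$--irreducible characters coincides with the $\Rational$--span of the $\theta_{\mathcal{O}}$, which is exactly the set of $\Rational$--valued members of $V$; combined with the \emph{if} direction, this is the asserted equivalence.

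The main obstacle is the single nonelementary ingredient above: the equality between the number of Galois orbits on $\mathrm{Irr}_\Complex(\Gamma)$ and the number of $\widehat{\Integral}^\times$--equivalence classes of conjugacy classes. I plan to quote this from the standard theory rather than reprove it; the accompanying care point is to verify that the excerpt's ``$\widehat{\Integral}^\times$--conjugacy'' ($g'$ conjugate in $\Gamma$ to $g^\nu$ for a unit $\nu$) is the classical notion of $\Rational$--conjugacy --- equivalently, conjugacy of the cyclic subgroups $\langle g\rangle$ and $\langle g'\rangle$ --- so that the cited theorem applies verbatim. Everything else is formal Galois descent.
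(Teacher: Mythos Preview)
Your proposal is correct and rests on the same classical representation-theoretic input as the paper's proof: the identification of $\widehat{\Integral}^\times$--conjugacy with Serre's $\Gamma_K$--conjugacy for $K=\Rational$, together with the theorem that class functions over $K$ constant on $\Gamma_K$--classes are exactly the $K$--linear combinations of $K$--irreducible characters. The only difference is packaging: the paper simply quotes \cite[Section 12.4, Corollary 1]{Serre_book_representation} as a black box, whereas you unpack its proof (Galois-orbit sums $\theta_{\mathcal{O}}$, Brauer's permutation lemma for the orbit count, Schur indices to pass from the $\theta_{\mathcal{O}}$ to the $\Rational$--irreducible characters). Your explicit argument is self-contained and perhaps more illuminating for a reader without Serre at hand, but the paper's one-line citation is shorter and avoids re-deriving standard material; neither gains any mathematical ground over the other.
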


\begin{proof}
	This follows from \cite[Section 12.4, Corollary 1]{Serre_book_representation}.
	To adapt with notations thereof, 
	we rewrite the finite group as $G$, and its order as $m$.
	Let $K$ be the rational field $\Rational$,	
	and $L$ be the cyclotomic extension of $K$ joining a primitive $m$--th root of unity.
	Since the Galois group $\Gamma_K=\mathrm{Gal}(L/K)$
	is isomorphic to $(\Integral/m\Integral)^\times$,
	the `$\Gamma_K$--conjugacy' relation on $G$ in Serre's notations
	agrees with our $\widehat{\Integral}^\times$--conjugacy relation.
	Then \cite[Section 12.4, Corollary 1]{Serre_book_representation} says
	that a conjugation-invariant function $G\to K$ is a $K$--linear combination
	of $K$--irreducible characters if and only if it is constant on 
	every $\Gamma_K$--conjugacy class of $G$. 
	Lemma \ref{Q_rep_finite_group} says exactly the same thing 
	on the level of $\mathrm{Orb}(G)\to K$.
	(See also \cite[Section 13.1]{Serre_book_representation} for further discussions about
	linear representations of finite groups over $\Rational$.)
\end{proof}

Let $M_f$ be the mapping torus of a pseudo-Anosov automorphism
$f\colon S\to S$. 
For any function $\xi\colon \mathrm{Orb}(\pi_1(M_f))\to\Rational$,
we introduce an analogous twisted Lefschetz number as the following value in $\Rational$:
\begin{equation}\label{L_m_omega}
L_m(f;\xi)=
\sum_{\mathbf{O}\in\mathrm{Orb}(\pi_1(M_f))} \xi(\ell_m(f;\mathbf{O}))\cdot\mathrm{ind}_m(f;\mathbf{O}),
\end{equation}
(compare (\ref{L_m_rho})).

\begin{lemma}\label{N_m_inequality}
	Let $M_f$ be the mapping torus of a pseudo-Anosov automorphism
	$f\colon S\to S$, (see Remark \ref{pA_remark}).
	Then, for any finite quotient $\gamma\colon \pi_1(M_f)\to \Gamma$,
	the following estimate holds for all $m\in\Natural$:
	$$N_m(f)\geq\#\left\{\omega\in\Omega(\Gamma)\colon L_m(f;\gamma^*\chi_\omega)\neq0\right\},$$
	where $\gamma^*\chi_\omega\colon \Omega(\pi_1(M_f))\to \Rational$ denotes the pullback of 
	the characteristic function (\ref{chi_omega}).
	Moreover, assume that the equality is achieved for some given $\gamma$ and $m$, then the following equalities
	hold for all $i\in\Integral$:
	$$\nu_m(f,i)=\#\left\{\omega\in\Omega(\Gamma)\colon L_m(f;\gamma^*\chi_\omega)=i\right\}.$$
\end{lemma}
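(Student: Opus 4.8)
The plan is to exploit the structure of pseudo-Anosov periodic orbits together with the representation theory of $\Gamma$ over $\Rational$ set up in Lemma \ref{Q_rep_finite_group}. First I would recall that, for a pseudo-Anosov $f$, each essential $m$-periodic orbit class $\ell_m(f;\mathbf{O})$ consists of a single periodic orbit, and that distinct orbits in $\mathrm{Orb}_m(f)$ give \emph{distinct} conjugacy classes in $\pi_1(M_f)$ (this is the mapping-torus analogue of the fact that periodic trajectories of the suspension flow are freely homotopic only if they coincide; see Remark \ref{tau_zeta_remark}). Thus the assignment $\mathbf{O}\mapsto \ell_m(f;\mathbf{O})$ is an injection $\mathrm{Orb}_m(f)\hookrightarrow\mathrm{Orb}(\pi_1(M_f))$, and hence composing with $\gamma_*\colon\mathrm{Orb}(\pi_1(M_f))\to\mathrm{Orb}(\Gamma)$ and then with the partition of $\mathrm{Orb}(\Gamma)$ into $\widehat{\Integral}^\times$-equivalence classes, we get a map $\mathrm{Orb}_m(f)\to\Omega(\Gamma)$. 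The key observation is that for each $\omega\in\Omega(\Gamma)$,
$$
L_m(f;\gamma^*\chi_\omega)=\sum_{\substack{\mathbf{O}\in\mathrm{Orb}_m(f)\\ \gamma_*(\ell_m(f;\mathbf{O}))\in\omega}}\mathrm{ind}_m(f;\mathbf{O}),
$$
so the only orbits contributing to $L_m(f;\gamma^*\chi_\omega)$ are those landing in $\omega$; since indices are nonzero integers but need not have the same sign, the sum can cancel, which is exactly why we only get an inequality in general.

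Next I would derive the inequality. If $L_m(f;\gamma^*\chi_\omega)\neq0$, then there must be at least one orbit $\mathbf{O}$ with $\gamma_*(\ell_m(f;\mathbf{O}))\in\omega$; different $\omega$'s use disjoint sets of orbits because the fibers of $\mathrm{Orb}_m(f)\to\Omega(\Gamma)$ partition $\mathrm{Orb}_m(f)$. Summing over all $\omega$ with $L_m(f;\gamma^*\chi_\omega)\neq0$ we get at least that many orbits, i.e. $N_m(f)=\#\mathrm{Orb}_m(f)\geq \#\{\omega: L_m(f;\gamma^*\chi_\omega)\neq 0\}$, which is the first assertion. For the ``moreover'' part, suppose the equality $N_m(f)=\#\{\omega: L_m(f;\gamma^*\chi_\omega)\neq0\}$ holds. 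Since $N_m(f)=\sum_\omega \#(\text{fiber over }\omega)$, and each nonzero-$L_m$ fiber has size $\geq1$ while each zero-$L_m$ fiber has size $\geq0$, the equality forces every nonzero-$L_m$ fiber to have size exactly $1$ and every zero-$L_m$ fiber to be empty. In particular, for $\omega$ with $L_m(f;\gamma^*\chi_\omega)\neq0$ the unique orbit $\mathbf{O}_\omega$ over $\omega$ satisfies $L_m(f;\gamma^*\chi_\omega)=\mathrm{ind}_m(f;\mathbf{O}_\omega)$. Hence for any $i\in\Integral\setminus\{0\}$, the orbits of index $i$ are precisely the $\mathbf{O}_\omega$ with $L_m(f;\gamma^*\chi_\omega)=i$, and this correspondence is a bijection; for $i=0$ both sides are empty since periodic indices of a pseudo-Anosov are nonzero. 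This gives $\nu_m(f,i)=\#\{\omega\in\Omega(\Gamma): L_m(f;\gamma^*\chi_\omega)=i\}$ for all $i$.

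The main obstacle I anticipate is justifying cleanly that distinct $m$-periodic orbits of a pseudo-Anosov map yield distinct conjugacy classes in $\pi_1(M_f)$, and more precisely that the index $\mathrm{ind}_m$ is genuinely attached to the orbit (not just the trajectory class) — this is standard in Nielsen fixed-point theory for pseudo-Anosov maps (each essential periodic orbit class is a singleton, cf.\ Remark \ref{tau_zeta_remark} and \cite{Jiang_book}), but it must be invoked carefully, since the whole combinatorial bookkeeping above rests on the fibers of $\mathrm{Orb}_m(f)\to\Omega(\Gamma)$ being an honest partition. A secondary point to handle is purely notational: the statement writes $\gamma^*\chi_\omega\colon\Omega(\pi_1(M_f))\to\Rational$, whereas $\chi_\omega$ is defined on $\mathrm{Orb}(\Gamma)$; I would read $\gamma^*\chi_\omega$ as $\chi_\omega\circ\gamma_*$ on $\mathrm{Orb}(\pi_1(M_f))$ and note that $\gamma^*\chi_\omega(\ell_m(f;\mathbf{O}))=1$ exactly when $\gamma_*(\ell_m(f;\mathbf{O}))\in\omega$, which is what the displayed reorganization of $L_m$ uses. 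Once these are in place, the argument is just a counting argument with signs, requiring no serious computation.
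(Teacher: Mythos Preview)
Your argument is correct and is exactly the paper's: define the map $\mathrm{Orb}_m(f)\to\Omega(\Gamma)$, observe that $L_m(f;\gamma^*\chi_\omega)$ is the sum of indices over the fiber above $\omega$, deduce the inequality from nonemptiness of nonzero-$L_m$ fibers, and in the equality case conclude that each nonzero-$L_m$ fiber is a singleton so that $L_m(f;\gamma^*\chi_\omega)=\mathrm{ind}_m(f;\mathbf{O}_\omega)$.

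Two small remarks. First, the ``main obstacle'' you anticipate is not needed here: the counting argument only uses that the fibers of $\mathrm{Orb}_m(f)\to\Omega(\Gamma)$ partition $\mathrm{Orb}_m(f)$, which is automatic for any map of sets; you never use that distinct orbits give distinct conjugacy classes in $\pi_1(M_f)$, nor do you need to invoke the singleton-class property of pseudo-Anosov Nielsen theory. The paper's proof accordingly does not mention it. Second, your claim that for $i=0$ ``both sides are empty'' is not right as stated: under the equality hypothesis the right-hand side $\#\{\omega:L_m(f;\gamma^*\chi_\omega)=0\}$ counts the $\omega$'s hit by no orbit, which can be positive. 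The lemma is really only used (and only makes sense given the definition of $\nu_m$) for $i\neq 0$, and the paper's own proof is equally silent on $i=0$; just restrict the conclusion to nonzero $i$.
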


\begin{proof}
	The idea is similar as with \cite[Lemma 9.3]{Liu_procongruent_conjugacy}.
	We say that an $m$--periodic orbit $\mathbf{O}\in\mathrm{Orb}_m(f)$ 
	\emph{hits} a $\widehat{\Integral}^\times$--equivalence class $\omega\in\Omega(\Gamma)$,
	if $\gamma_*\colon\mathrm{Orb}(\pi_1(M_f))\to \mathrm{Orb}(\Gamma)$ 
	maps $\ell_m(f;\mathbf{O})$ into the subset $\omega$.
	
	The asserted inequality about $N_m(f)$ follows from 
	the observation that $L_m(f;\gamma^*\chi_{\omega})\neq0$ occurs
	only if at least one $m$--periodic orbit hits $\omega$.
	Indeed, the right-hand side is bounded by
	the total number of $m$--periodic orbits, namely, $N_m(f)$.
	
	Moreover, when the equality is achieved, 
	every $\mathbf{O}\in\mathrm{Orb}_m(f)$
	must hit a distinct $\omega\in\Omega(\Gamma)$.
	So $L_m(f;\gamma^*\chi_{\omega})=\mathrm{ind}_m(f;\mathbf{O})$ holds
	for each hitting pair $(\mathbf{O},\omega)$.
	This yields the asserted equality about $\nu_m(f;i)$.
\end{proof}

\begin{lemma}\label{N_m_equality}
	Let $M_f$ be the mapping torus of a pseudo-Anosov automorphism
	$f\colon S\to S$, (see Remark \ref{pA_remark}).
	Then, for any $m\in\Natural$, 
	there exists a finite quotient $\gamma\colon \pi_1(M_f)\to \Gamma_m$,
	such that the following equality holds 
	$$N_m(f)=\#\left\{\omega\in\Omega(\Gamma_m)\colon L_m(f;\gamma^*\chi_\omega)\neq0\right\}.$$
\end{lemma}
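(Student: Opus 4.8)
The plan is to construct a finite quotient $\gamma\colon\pi_1(M_f)\to\Gamma_m$ with the property that distinct $m$--periodic orbits of $f$ hit distinct $\widehat{\Integral}^\times$--equivalence classes of $\Gamma_m$, in the sense of the proof of Lemma \ref{N_m_inequality}. Granting such a $\Gamma_m$, the conclusion is immediate: for every $\omega\in\Omega(\Gamma_m)$ that is hit by some orbit, that orbit $\mathbf{O}\in\mathrm{Orb}_m(f)$ is unique, so $L_m(f;\gamma^*\chi_\omega)=\mathrm{ind}_m(f;\mathbf{O})\neq0$ by (\ref{ind_m}); while for every $\omega$ hit by no orbit, $L_m(f;\gamma^*\chi_\omega)$ is an empty sum, hence $0$. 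Thus $\#\{\omega\in\Omega(\Gamma_m)\colon L_m(f;\gamma^*\chi_\omega)\neq0\}$ equals the number of hit classes, which by injectivity of the orbit-to-class assignment equals $\#\mathrm{Orb}_m(f)=N_m(f)$.

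So the task reduces to separating the classes $\ell_m(f;\mathbf{O})$, $\mathbf{O}\in\mathrm{Orb}_m(f)$, up to $\widehat{\Integral}^\times$--equivalence, inside a single finite quotient. First recall that, $f$ being pseudo--Anosov on a closed surface, $M_f$ is a closed hyperbolic $3$--manifold, so $\pi_1(M_f)$ is torsion free and conjugacy separable \cite{HWZ_conjugacy_separability}, $\mathrm{Orb}_m(f)$ is finite, and the assignment $\mathbf{O}\mapsto\ell_m(f;\mathbf{O})$ embeds $\mathrm{Orb}_m(f)$ into the set of conjugacy classes $\mathrm{Orb}(\pi_1(M_f))$ (distinct periodic orbits of the suspension flow are not freely homotopic, imprimitive trajectory classes being handled via uniqueness of primitive roots; compare Remark \ref{tau_zeta_remark} and \cite{Jiang_book}). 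Fix representatives $g_1,\dots,g_N\in\pi_1(M_f)$ of these conjugacy classes. Upon identifying $(M_f,\phi_f)$ as in Remark \ref{pA_remark}, the key observation is that every $m$--periodic trajectory class has the same image under $\phi_f$, namely $\phi_f(g_j)=m$ for all $j$. I claim that $g_j$ and $g_k$ are not $\widehat{\Integral}^\times$--conjugate in $\widehat{\pi_1(M_f)}$ whenever $j\neq k$, i.e.\ $g_j^\nu$ is conjugate to $g_k$ in $\widehat{\pi_1(M_f)}$ for no $\nu\in\widehat{\Integral}^\times$ (with $g_j^\nu$ the profinite power of (\ref{g_to_nu})). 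Indeed, applying the continuous homomorphism $\widehat{\phi_f}$, which is constant on conjugacy classes, to a hypothetical relation $g_j^\nu\sim g_k$ would give $\nu m=m$ in $\widehat{\Integral}$; since the positive integer $m$ is a non--zero--divisor in $\widehat{\Integral}$, this forces $\nu=1$, hence $g_j\sim g_k$ in $\widehat{\pi_1(M_f)}$, hence in $\pi_1(M_f)$ by conjugacy separability, contradicting $\mathbf{O}_j\neq\mathbf{O}_k$.

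Next I would pass from this profinite statement to a single finite quotient, one pair at a time, by a compactness argument. Fix $j\neq k$, write $\bar{\cdot}$ for the image map into whichever finite quotient is under discussion, and suppose for contradiction that the images of $g_j$ and $g_k$ were $\widehat{\Integral}^\times$--conjugate in every finite quotient $Q$ of $\pi_1(M_f)$. Put $U_Q=\{\nu\in\widehat{\Integral}^\times\colon \bar g_j^{\,\nu}\text{ is conjugate to }\bar g_k\text{ in }Q\}$; since $\bar g_j^{\,\nu}$ depends only on $\nu$ modulo $|Q|$, each $U_Q$ is open, closed, and (by hypothesis) nonempty, and $U_{Q'}\subseteq U_Q$ whenever $Q'$ surjects onto $Q$, by naturality of the power operation. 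By compactness of $\widehat{\Integral}^\times$ the intersection $\bigcap_Q U_Q$ is nonempty; any $\nu$ in it satisfies $g_j^\nu\sim g_k$ in every finite quotient, hence $g_j^\nu\sim g_k$ in $\widehat{\pi_1(M_f)}$, conjugacy in a profinite group being detected by its finite quotients \cite[Chapter 6]{Ribes--Zalesskii_book}, contradicting the previous paragraph. Hence there is a finite quotient $\Gamma_{jk}$ in which the images of $g_j$ and $g_k$ are not $\widehat{\Integral}^\times$--conjugate, and this persists in any finite quotient surjecting onto $\Gamma_{jk}$. Taking $\Gamma_m$ to be the quotient of $\pi_1(M_f)$ onto its image in $\prod_{j\neq k}\Gamma_{jk}$ (a finite product, since $N_m(f)<\infty$; the empty-product case $N_m(f)=0$ being trivial), all the images of $g_1,\dots,g_N$ become pairwise non--$\widehat{\Integral}^\times$--conjugate in $\Gamma_m$, i.e.\ the orbit-to-class map $\mathrm{Orb}_m(f)\to\Omega(\Gamma_m)$ is injective, and the lemma follows as in the first paragraph.

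I expect the separation step to be the crux. The $\phi_f$--reduction is what makes it tractable: it strips the profinite exponent $\nu$ out of the picture and reduces the entire problem to ordinary conjugacy separability of finitely generated $3$--manifold groups. The points that need care are that a positive integer $m$ genuinely is a non--zero--divisor in $\widehat{\Integral}=\prod_p\Integral_p$ (so $\nu m=m$ really does force $\nu=1$), that the embedding $\mathbf{O}\mapsto\ell_m(f;\mathbf{O})$ on $\mathrm{Orb}_m(f)$ is valid for every $m$ including orbits of proper sub-period, and that the monotonicity $U_{Q'}\subseteq U_Q$ is oriented correctly so that the finite-intersection property applies and produces the desired $\nu$.
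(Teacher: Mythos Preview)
Your proof is correct, and the overall reduction---show that distinct $m$--periodic orbits land in distinct $\widehat{\Integral}^\times$--equivalence classes in some finite quotient---is exactly what the paper aims for. The key observation that $\phi_f(g_j)=m$ for every $j$ is also the crux in the paper. Where you diverge is in how you exploit it. You argue at the profinite level first: applying $\widehat{\phi_f}$ to a hypothetical relation $g_j^\nu\sim g_k$ forces $\nu m=m$ in $\widehat{\Integral}$, hence $\nu=1$, and then conjugacy separability rules this out; a compactness argument on $\widehat{\Integral}^\times$ then extracts a single finite quotient. The paper instead builds an explicit quotient $\Gamma_m=\pi_1(M_f)/(K_m\rtimes\langle t^{dm}\rangle)$, with $K_m$ characteristic in $\pi_1(S)$ and $d$ chosen so that every $\bar g$ with $\phi_f(g)=m$ has order exactly $d$ in $\Gamma_m$; then $\widehat{\Integral}^\times$--conjugacy of $\bar g_j$ and $\bar g_k$ means their cyclic subgroups are conjugate, and a coset--intersection trick (each $\langle\bar g\rangle$ meets the coset $(\pi_1(S)/K_m)\bar t^{\,m}$ at the single point $\bar g$) collapses this to ordinary conjugacy of $\bar g_j,\bar g_k$. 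Your route is shorter and more conceptual, and it makes transparent why the exponent $\nu$ is forced to be $1$; the paper's route is more constructive, producing a concrete $\Gamma_m$ without appeal to compactness. Both rest on the same two external inputs: conjugacy separability of $\pi_1(M_f)$ and the fact (Remark~\ref{tau_zeta_remark}) that distinct $m$--periodic orbits of a pseudo-Anosov map give distinct free-homotopy classes.
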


\begin{proof}
	We invoke the fact
	that the fundamental group of any orientable connected compact $3$--manifold
	is conjugacy separable \cite[Theorem 1.3]{HWZ_conjugacy_separability},
	(see also \cite{Minasyan_RAAG}).
	In particular, this means that there exists some finite quotient
	$\pi_1(M_f)\to \tilde{\Gamma}_m$, such that for all $m$--periodic orbits $\mathbf{O}\in\mathrm{Orb}_m(f)$,
	the conjugacy classes $\ell_m(f;\mathbf{O})\in\mathrm{Orb}(\pi_1(M_f))$
	project onto mutually distinct conjugacy classes in $\mathrm{Orb}(\tilde{\Gamma}_m)$.
	Note that if $\Gamma_m$ is any finer finite quotient,
	through which $\pi_1(M_f)\to \tilde{\Gamma}_m$ factors,
	the same property also holds for $\pi_1(M_f)\to \Gamma_m$.
	Below we construct $\Gamma_m$ more carefully,
	so that all the $\ell_m(f;\mathbf{O})$ actually hit
	distinct $\widehat{\Integral}^\times$--equivalence classes in $\Omega(\Gamma_m)$.
	Then the asserted equality holds in this case,
	because of (\ref{N_m_def}) and (\ref{L_m_omega}).
	
	We construct the asserted finite quotient $\pi_1(M_f)\to \Gamma_m$ as follows.
	Let $\tilde{\pi}$ be the kernel of $\pi_1(M_f)\to \tilde{\Gamma}_m$. 
	Let $K_m$ be a finite-index characteristic subgroup
	of $\pi_1(S)$ which is contained in $\pi_1(S)\cap \tilde{\pi}$.
	For example, one may take $K_m$ as the intersection of all the subgroups of $\pi_1(S)$
	of index $[\pi_1(S):\pi_1(S)\cap \tilde{\pi}]$. 
	Then $K_m$ is a normal subgroup of $\pi_1(M_f)$.
	Fixing some $t\in\pi_1(M_f)$ with $\phi_f(t)=1$,
	we write $\pi_1(M_f)$ as the semi-direct product $\pi_1(S)\rtimes\langle t\rangle$.	
	Denote by $a=|\mathrm{Aut}(\pi_1(S)/K_m)|$ the order of the automorphism group.
	We observe that $t^a$ projects into the center of the quotient group
	$\pi_1(M_f)/K_m=(\pi_1(S)/K_m)\rtimes\langle t\rangle$.
	Denote by $c=|\mathrm{Z}(\pi_1(S)/K_m)|$ be the order of the center
	of $\pi_1(S)/K_m$.
	Let $d$ be some positive integral multiple of $ac$,
	such that $t^d$ lies in $\tilde{\pi}$.
	Therefore,
	we construct $\Gamma_m$	as the quotient of $\pi_1(M_f)$ 
	by the finite-index normal subgroup $K_m\rtimes \langle t^{dm}\rangle$.
	In particular, $\pi_1(M_f)\to\tilde{\Gamma}_m$ factors through $\Gamma_m$.	
	Moreover, 
	we obtain the following commutative diagram of groups
	\begin{equation}\label{Gamma_m_diagram}
	\xymatrix{
	1 \ar[r] & \pi_1(S) \ar[r] \ar[d]
		& \pi_1(M_f) \ar[r]^-{\phi_f} \ar[d] & \Integral \ar[r] \ar[d] & 1 \\
	1 \ar[r] & \pi_1(S)/K_m \ar[r] & \Gamma_m \ar[r] & \Integral/dm\Integral \ar[r] & 1
	}
	\end{equation}
	where the rows are short exact sequences and 
	the vertical arrows are quotient homomorphisms.

	For any $g\in\pi_1(M_f)$, we denote by $\bar{g}$ its image in $\Gamma_m$.
	We observe that if $\phi_f(g)=m$, then $\bar{g}$ has order $d$ in $\Gamma_m$.
	In fact, 
	the order of $\bar{g}$ is at least $d$ since it projects onto
	$m \bmod dm$ in $\Integral/dm\Integral$.
	On the other hand,	
	$\bar{g}^a$ can be written as $\bar{t}^{\,am}\bar{z}$ for some $\bar{z}$
	in the center of $\pi_1(S)/K_m$;
	and since $\bar{t}^a$ is central in $\Gamma_m$, 
	we obtain $\bar{g}^{d}=\bar{t}^{dm}\bar{z}^{d/a}$,
	which is trivial in $\Gamma_m$. 
	So the order of $\bar{g}$ is exactly $d$.
	
	Let $\mathbf{O},\mathbf{O}'\in\mathrm{Orb}_m(f)$ be a pair of $m$--periodic orbits.
	Let $g\in\pi_1(M_f)$ be a representative of the conjugacy class $\ell_m(f;\mathbf{O})$,
	and similarly a representative $g'$ of $\ell_m(f;\mathbf{O}')$.
	Suppose that $\bar{g}^\mu$ is conjugate to $\bar{g}'$ in $\Gamma_m$,
	for some $\mu\in\widehat{\Integral}^\times$.
	This is equivalent to saying that the cyclic subgroups $\langle \bar{g}\rangle$
	and $\langle \bar{g}'\rangle$ of $\Gamma_m$ are conjugate to each other.
	However, since $\phi_f(g)=m$, the cyclic subgroup $\langle \bar{g}\rangle$ has order $d$,
	and it intersects the coset $(\pi_1(S)/K_m)\bar{t}^m$ of $\Gamma_m$ at the unique element $\bar{g}$.
	Similarly, $\langle \bar{g}'\rangle$ intersects $(\pi_1(S)/K_m)\bar{t}^m$ at $\bar{g}'$.
	Since $(\pi_1(S)/K_m)\bar{t}^m$ is invariant under conjugations of $\Gamma_m$,
	it follows that $\bar{g}$ is conjugate to $\bar{g}'$ in $\Gamma_m$.
	This implies $\mathbf{O}=\mathbf{O}'$,
	since $\Gamma_m$ is a finer quotient than $\tilde{\Gamma}_m$.
\end{proof}

We prove Theorem \ref{profinite_invariance_nu} as follows.
Let $(M_A,M_B,\Psi)$ be a profinite morphism setting (Convention \ref{profinite_morphism_setting})
where $\Psi\colon \widehat{\pi}_A\to \widehat{\pi}_B$ is an isomorphism
and where $M_A$ and $M_B$ are both closed and hyperbolic.
Let $\mu\in\widehat{\Integral}^\times$ be a unit as provided in Theorem \ref{profinite_isomorphism_hyperbolic}.

For any primitive fibered class $\phi_B$ in $H^1(M_B;\Integral)$,
the cohomology class $\psi_A=\Psi^*_{1/\mu}(\phi_B)$ in $H^1(M_A;\Integral)$
is also primitive and fibered (Theorem \ref{profinite_isomorphism_npc}).
Moreover, we can identify $(M_A,\psi_A)$ with the mapping torus and its distinguished cohomology class
for some pseudo-Anosov automorphism $f_A\colon S_A\to S_A$,
and identify $(M_B,\phi_B)$ similarly for some $f_B\colon S_B\to S_B$.
Then $\widehat{\pi_1(S_A)}$ projects isomorphically onto $\widehat{S_B}$ under $\Psi$, 
(Corollary \ref{fiber_surface_correspondence}).
For any $\Psi$--corresponding pair of finite quotients
$\gamma_A\colon \pi_A\to \Gamma$ and $\gamma_B\colon \pi_B\to \Gamma$,
and for any finite-dimensional representation $\rho\colon \Gamma\to\mathrm{GL}(k,\Rational)$,
we obtain the following equality for all $m\in\Natural$:
$$L_m(f_A;\gamma_A^*\chi_\rho)=L_m(f_B;\gamma_B^*\chi_\rho),$$
by Theorems \ref{tau_zeta}, \ref{profinite_invariance_TRT} and (\ref{L_m_rho}).
Suitable $\Rational$--linear combinations of the above equalties over all the $\Rational$--irreducible $\rho$
yield the equality
$$L_m(f_A;\gamma_A^*\chi_\omega)=L_m(f_B;\gamma_B^*\chi_\omega),$$
for all $m\in\Natural$ and all $\omega\in\Omega(\Gamma)$,
by (\ref{chi_omega}), (\ref{L_m_omega}) and Lemma \ref{Q_rep_finite_group}.
Applying Lemmas \ref{N_m_equality} and \ref{N_m_inequality}, taking $\Gamma_m$ with respect to $f_A$,
we obtain a comparison
$N_m(f_A)=\#\left\{\omega\in\Omega(\Gamma)\colon L_m(f_A;\gamma_A^*\chi_\omega)\neq0\right\}
= \#\left\{\omega\in\Omega(\Gamma)\colon L_m(f_B;\gamma_B^*\chi_\omega)\neq0\right\}
\leq N_m(f_B)$.
Similarly, 
we obtain $N_m(f_B)\leq N_m(f_A)$ by taking $\Gamma_m$ with respect to $f_B$.
Therefore, we obtain the equality for the orbit Nielsen numbers
$$N_m(M_A,\psi_A)=N_m(f_A)=N_m(f_B)=N_m(M_B,\phi_B)$$
for all $m\in\Natural$, as asserted.
Since this forces the equality to be achieved in the above comparisons,
we obtain the equality for the indexed orbit Nielsen numbers
$$\nu_m(M_A,\psi_A;i)=\nu_m(M_B,\phi_B;i)$$
for all $m\in\Natural$ and $i\in\Integral\setminus\{0\}$,
as asserted, by Lemma \ref{N_m_inequality}.

This completes the proof of Theorem \ref{profinite_invariance_nu}.

\section{Profinite almost rigidity}\label{Sec-profinite_almost_rigidity}

In this section, we prove Theorem \ref{main_profinite_almost_rigidity},
and indeed, we prove a slightly more topological statement as follows.

\begin{theorem}\label{profinite_almost_rigidity_hyperbolic}
	For any finite-volume hyperbolic $3$--manifold $M$,
	there exists a finite collection $\mathcal{M}$ of finite-volume hyperbolic $3$--manifolds,
	such that the following property holds:
	
	If $N$ is a connected compact $3$--manifold and if the profinite completion $\widehat{\pi_1(N)}$
	is isomorphic to $\widehat{\pi_1(M)}$, then,
	possibly after capping off all boundary spheres with $3$--balls,
	the interior of $N$ is homeomorphic to some member from $\mathcal{M}$.
\end{theorem}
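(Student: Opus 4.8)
The plan is to combine the results already established in this paper, especially Theorems \ref{main_xregular}, \ref{main_Thurston_norm}, and \ref{profinite_invariance_nu}, with the Wilton--Zalesskii geometrization results and classical finiteness theorems from mapping class group theory. First I would reduce to the case where $N$ itself is a finite-volume hyperbolic $3$--manifold. Indeed, if $\widehat{\pi_1(N)}\cong\widehat{\pi_1(M)}$, then $N$ is aspherical once boundary spheres are capped off (the profinite completion detects the prime decomposition and the fact that there is a single aspherical prime factor), and by \cite{WZ_geometry,WZ_decomposition} the geometric decomposition graph is trivial with a single hyperbolic vertex; hence the interior of $N$ admits a complete finite-volume hyperbolic metric. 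So from now on we may assume $M$ and $N$ are both orientable (passing to orientation double covers corresponds to passing to an index-two subgroup, which only changes things by finite ambiguity) finite-volume hyperbolic $3$--manifolds with $\widehat{\pi_1(M)}\cong\widehat{\pi_1(N)}$, and we must show $N$ lies in a finite list depending only on $M$.

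Next I would handle the non-fibered-virtually-or-$b_1=0$ cases. If $b_1(M)=0$, then $b_1(N)=0$ as well, and one appeals to a separate finiteness input: finite-volume hyperbolic $3$--manifolds with $\widehat{\pi_1}$ isomorphic to a fixed one and with $b_1=0$ — here I would invoke that the volume is bounded (e.g.\ via the Wilton--Zalesskii detection of the cusp number together with Jørgensen--Thurston-type finiteness, or via known arguments that the profinite completion bounds the Gromov norm), so that Jørgensen--Thurston finiteness gives a finite list. The main content, however, is the case $b_1(M)>0$. Here, by passing to a $\Psi$-corresponding pair of finite covers (of bounded degree, using Theorem \ref{quasi-fibered} applied to $M$ to arrange virtual fibering and Corollary \ref{fibered_cone_correspondence} to transport it), I reduce to the situation where $M$ and $N$ are both fibered. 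Then I fix a primitive fibered class $\phi$ for the cover of $M$; by Theorems \ref{main_xregular}, \ref{main_Thurston_norm}, and Corollary \ref{fiber_surface_correspondence}, the isomorphism $\Psi$ carries $\phi$ to a primitive fibered class $\psi$ for the cover of $N$, and the fiber surfaces $S_M,S_N$ have isomorphic profinite completions of $\pi_1$, hence $S_M\cong S_N=:S$ (the genus is a profinite invariant of surface groups).

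Now write $M_\phi=M_{f_M}$ and $N_\psi=M_{f_N}$ as mapping tori of pseudo-Anosov $f_M,f_N\colon S\to S$. By Theorem \ref{profinite_invariance_nu}, the indexed periodic Nielsen numbers $\nu_m(f_M;i)$ and $\nu_m(f_N;i)$ agree for all $m$ and $i$; in particular, by \eqref{N_m_and_lambda}, the stretch factors coincide: $\lambda(f_M)=\lambda(f_N)$. At this point the key classical input is the finiteness theorem for pseudo-Anosov mapping classes of bounded stretch factor: on a fixed surface $S$, there are only finitely many conjugacy classes in $\mathrm{Mod}(S)$ of pseudo-Anosov elements with stretch factor below any given bound (this follows from Arnoux--Yoccoz / Ivanov, or from the fact that small-dilatation pseudo-Anosov mapping tori form finitely many "parents" under Dehn filling, by work of Farb--Leininger--Margalit and Agol). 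Since $f_M$ and $f_N$ have equal stretch factor, $f_N$ lies in one of finitely many conjugacy classes, so $N_\psi$ lies in a finite list; unwinding the bounded-degree cover (there are only finitely many $3$--manifolds covering a given one with bounded degree, and only finitely many choices of cover of $M$) then pins $N$ itself to a finite list depending only on $M$. I expect the main obstacle to be organizing the bookkeeping of the two covering reductions — arranging that the finite cover used to make $M$ fiber is $\Psi$-corresponding, that the degree is controlled, and that one can descend the finite list back down to $N$ — together with citing the appropriate form of the bounded-dilatation finiteness theorem in a way that applies to the possibly non-closed fiber $S$ coming from a cusped $M$; the cusped case requires the version of Theorem \ref{profinite_invariance_nu} and of the stretch-factor finiteness for surfaces with punctures, which should follow from the same circle of ideas but needs to be stated carefully.
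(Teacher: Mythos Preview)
Your overall strategy matches the paper's: reduce to a pair of fibered hyperbolic $3$--manifolds, use Corollary \ref{fiber_surface_correspondence} and Theorem \ref{profinite_invariance_nu} to pin down the fiber Euler characteristic and the stretch factor, and then invoke the Arnoux--Yoccoz/Ivanov finiteness (Theorem \ref{pA_finiteness}). The descent from the cover back to $N$ via Mostow rigidity and finiteness of the isometry group is exactly the content of Lemma \ref{p_a_r_cover}.

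There is, however, a genuine gap in your treatment of the case $b_1(M)=0$. You propose to invoke that the profinite completion bounds the volume or the Gromov norm, but no such result is available; this is an open problem, and in fact the paper explicitly notes in the introduction that its techniques do not decide whether profinitely isomorphic hyperbolic $3$--manifolds have equal volume. The paper sidesteps this entirely: by Agol's virtual fibering (Theorem \ref{quasi-fibered}), \emph{every} finite-volume hyperbolic $3$--manifold has a regular finite cover that fibers, regardless of $b_1$. So one first applies Lemma \ref{p_a_r_cover} to reduce to such a cover, and there is no separate $b_1=0$ case at all.

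On the cusped case, you are right that care is needed, and it is more than a matter of restating things. Theorem \ref{profinite_invariance_nu} is proved only for closed $M_A,M_B$, because the argument passes through the twisted Reidemeister torsion identity of Theorem \ref{profinite_invariance_TRT} and the zeta-function formula of Theorem \ref{tau_zeta} in the closed setting. The paper's Lemma \ref{p_a_r_cusped} does not redo that analysis for punctured fibers; instead it reduces to the closed case by passing to a characteristic finite cover $S^\dagger$ of the fiber on which no simple closed curve of $S$ lifts (so there are no $1$--prong singularities at the punctures), taking a suitable iterate $f^\dagger$ that fixes all punctures, and then filling the punctures to obtain closed pseudo-Anosov maps $\bar f^\dagger_M,\bar f^\dagger_N$ with $\lambda(\bar f^\dagger)=\lambda(f)^D$. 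The profinite isomorphism descends to the filled mapping tori using the cusp correspondence of \cite{WZ_decomposition}, and then Corollary \ref{pA_features_correspondence} in the closed case gives $\lambda(f_M)=\lambda(f_N)$. You should expect to need a construction of this kind rather than a direct punctured analogue of Theorem \ref{profinite_invariance_nu}.
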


We prove Theorem \ref{profinite_almost_rigidity_hyperbolic} in the rest of this section.
Our argument relies eventually on the following well-known finiteness result in mapping class group theory,
due to Arnoux--Yoccoz \cite{Arnoux--Yoccoz_pA} and Ivanov \cite{Ivanov_pA},
(see also \cite[Chapter 14, Theorem 14.9]{Farb--Margalit_book}).

\begin{theorem}\label{pA_finiteness}
	For any integers $g,p\geq0$ with $2-2g-p<0$, and for any real constant $C>1$,
	there are at most finitely many pseudo-Anosov automorphisms
	$f\colon S\to S$ of an orientable surface $S$ 
	with genus $g$ and $p$ punctures,	
	up to topological equivalence,
	such that the stretch factor $\lambda(f)$ is at most $C$.
\end{theorem}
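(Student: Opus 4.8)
The plan is to derive Theorem~\ref{pA_finiteness} from the proper discontinuity of the action of the mapping class group on Teichm\"uller space, via the interpretation of the stretch factor as a translation length. Fix the topological type of $S$ (genus $g$, $p$ punctures, $2-2g-p<0$); write $\mathcal{T}(S)$ for its Teichm\"uller space with the Teichm\"uller metric $d$, and $\mathcal{M}(S)=\mathcal{T}(S)/\mcg(S)$ for moduli space. Recall that a pseudo-Anosov $f\colon S\to S$ preserves a Teichm\"uller geodesic $\gamma_f\subset\mathcal{T}(S)$ -- the one spanned by the quadratic differential underlying the invariant measured foliations of Remark~\ref{pA_remark} -- on which $f$ acts as translation by $\log\lambda(f)$; in particular $d(X,f(X))=\log\lambda(f)$ for every $X\in\gamma_f$, and the image $\bar\gamma_f$ of $\gamma_f$ in $\mathcal{M}(S)$ is a closed geodesic of length $\log\lambda(f)$. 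Since topological equivalence of pseudo-Anosov maps of $S$ is conjugacy in the extended mapping class group $\mcg^{\pm}(S)$, which contains $\mcg(S)$ with finite index, it suffices to bound the number of $\mcg(S)$-conjugacy classes of pseudo-Anosov $f$ with $\lambda(f)\le C$.

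The first step is to produce a compact set $K\subset\mathcal{M}(S)$, depending only on $S$ and $C$, which $\bar\gamma_f$ must meet whenever $\lambda(f)\le C$. For $\epsilon>0$ let $\mathcal{M}(S)_{\ge\epsilon}$ be the $\epsilon$-thick part, compact by Mumford compactness. The key claim is that for $\epsilon=\epsilon(S,C)$ small enough, a closed Teichm\"uller geodesic of length at most $\log C$ cannot lie entirely in the $\epsilon$-thin part: if a simple closed curve $\alpha$ were shorter than $\epsilon$ at every point of $\bar\gamma_f$, then lifting to $\mathcal{T}(S)$ and applying Minsky's product-regions theorem to the $\alpha$-thin region -- in which the Teichm\"uller metric is, up to bounded additive error, a sup of the metric on the $\mathbb{H}^2$-factor coordinatized by the $\alpha$-twist and $1/\ell_\alpha$ with a metric on the complementary factor -- one finds that the only bi-infinite geodesics remaining in the horoball $\{1/\ell_\alpha>1/\epsilon\}$ of that $\mathbb{H}^2$-factor are the ``vertical'' ones, forcing $f$ to act on this factor by a power of the Dehn twist along $\alpha$ and hence to fix $[\alpha]$, contradicting that $f$ is pseudo-Anosov; the relevant estimates are quantitative in the length bound $\log C$. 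Hence $\bar\gamma_f$ meets $K:=\mathcal{M}(S)_{\ge\epsilon(S,C)}$.

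Now choose $\bar X_f\in\bar\gamma_f\cap K$. Since $K$ is compact, its preimage in $\mathcal{T}(S)$ is covered by $\mcg(S)$-translates of a fixed compact set $\widetilde K\subset\mathcal{T}(S)$, so after replacing $f$ by a $\mcg(S)$-conjugate we may assume its axis $\gamma_f$ contains a point $X_f\in\widetilde K$. Then $d(X_f,f(X_f))=\log\lambda(f)\le\log C$, so $f(X_f)$ lies in the compact $\log C$-neighborhood $\widetilde K'$ of $\widetilde K$, whence $f(\widetilde K)\cap\widetilde K'\ne\emptyset$. Because $\mcg(S)$ acts properly discontinuously on $\mathcal{T}(S)$, only finitely many $g\in\mcg(S)$ satisfy $g(\widetilde K)\cap\widetilde K'\ne\emptyset$; every pseudo-Anosov $f$ with $\lambda(f)\le C$ is conjugate in $\mcg(S)$ to one of these, so there are finitely many such conjugacy classes, and therefore finitely many topological equivalence classes of pseudo-Anosov maps of $S$ with stretch factor at most $C$. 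This proves Theorem~\ref{pA_finiteness}.

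The main obstacle is the compactness step of the second paragraph -- the assertion that a short closed Teichm\"uller geodesic must penetrate a uniformly thick part -- which I would supply either by citing the quantitative product-region and thick--thin estimates of Minsky and Rafi, or, to keep the argument self-contained, by substituting the original combinatorial argument of Arnoux--Yoccoz and Ivanov: every pseudo-Anosov $f$ is carried by an invariant train track $\tau$ with $f(\tau)\prec\tau$; there are finitely many train tracks on $S$ up to the $\mcg(S)$-action, and the transition matrix of the carrying is a primitive nonnegative integral matrix of size bounded in terms of $S$ with Perron--Frobenius eigenvalue $\lambda(f)\le C$; such matrices have entries bounded by a function of $C$ and their size, hence form a finite set, and $f$ is determined up to finite ambiguity by the pair $(\tau,\text{carrying matrix})$, so finiteness follows.
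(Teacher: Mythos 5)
First, a point of comparison: the paper does not prove Theorem \ref{pA_finiteness} at all -- it is imported as a classical result of Arnoux--Yoccoz and Ivanov (with Farb--Margalit, Theorem 14.9, as a textbook reference) and then applied in Section \ref{Sec-profinite_almost_rigidity}. So there is no in-paper argument to match your proposal against; it has to be judged on its own. Your reduction via proper discontinuity is sound: once every pseudo-Anosov $f$ with $\lambda(f)\le C$ has been conjugated so that its axis meets a fixed compact set $\widetilde K\subset\mathcal{T}(S)$, the bound $d(X_f,f(X_f))=\log\lambda(f)\le\log C$ together with proper discontinuity of the $\mcg(S)$-action yields finitely many conjugacy classes, and the passage between $\mcg(S)$-conjugacy, $\mcg^{\pm}(S)$-conjugacy and topological equivalence of the pseudo-Anosov representatives is harmless. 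The genuine gap is exactly where you place it, and it is not a small one: the claim that a closed Teichm\"uller geodesic of length at most $\log C$ in moduli space must meet an $\epsilon(S,C)$-thick part. As written, your product-regions sketch does not close this. The hypothesis to be contradicted is only that \emph{every point} of the axis is $\epsilon$-thin, not that a single curve $\alpha$ is short along the whole axis; one must first control how the short curve can change (distortion of extremal length over a fundamental segment of length $\log C$, disjointness of simultaneously short curves, finiteness and $f$-invariance of the resulting curve system) before any statement about a fixed $\alpha$-thin region applies. Moreover, Minsky's product-regions theorem is only a coarse sup-metric comparison up to additive error, so the assertions that the only bi-infinite geodesics remaining in the horoball of the $\mathbb{H}^2$-factor are vertical, and that $f$ must therefore act on that factor as a power of the twist and fix $[\alpha]$, do not follow formally; they require genuine quantitative input, e.g.\ Rafi's short-curve criteria or lower bounds for $\mathrm{Ext}_{X_t}(\alpha)$ along the axis in terms of $e^{\pm 2t}$ times squared intersection numbers with the invariant foliations. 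So the Teichm\"uller route is viable, but at its crucial step your argument is a heuristic, not a proof, unless you cite or supply these estimates.

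Your fallback, on the other hand, is essentially the proof in the sources the paper actually cites: Arnoux--Yoccoz and Ivanov bound the data of an invariant train track or Markov partition with a uniformly bounded number of branches, note that a primitive nonnegative integral matrix of bounded size with Perron--Frobenius eigenvalue at most $C$ has bounded entries (hence lies in a finite set), and recover $f$ up to finite ambiguity from the combinatorial data. The one caveat is the last step: the transition matrix alone does not determine the carrying map, so you need the full combinatorial description of $f(\tau)\to\tau$ (which is finite once the entries are bounded) together with the standard fact that this data determines the pseudo-Anosov up to conjugacy. With that understood, the fallback is correct and coincides with the cited proof; if instead you want the Teichm\"uller-geometric argument to stand on its own, the thick-part lemma must be proved or properly referenced rather than asserted.
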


We need the punctured case of Theorem \ref{pA_finiteness}
to prove the cusped case of Theorem \ref{profinite_almost_rigidity_hyperbolic}.
When $S$ has punctures, a \emph{pseudo-Anosov} automorphism $f\colon S\to S$
is similarly defined as in Remark \ref{pA_remark},
preserving a transverse pair of measured foliations $(\mathscr{F}^\mathtt{s},\mu^{\mathtt{s}})$
and $(\mathscr{F}^\mathtt{u},\mu^\mathtt{u})$, shrinking or stretching the measure with 
reciprocal factors, and we allow $k$--prong singularities in the interior with $k\geq3$,
or at the punctures with $k\geq1$.
A pair of pseudo-Anosov automorphisms $f_A\colon S_A\to S_A$ and $f_B\colon S_B\to S_B$
are said to be \emph{topologically equivalent} if there exists a homeomorphism $h\colon S_A\to S_B$,
such that $f_B\circ h= h\circ f_A$.

The following Lemma \ref{p_a_r_cover} is stated in the finite-volume hyperbolic case,
which suffices for our reduction.
As kindly pointed out by the anonymous referee,
a similar proof also works for compact $3$--manifolds with empty or tori boundary,
by invoking B.~Zimmermann's result on finite group actions
on Haken $3$--manifolds, (see \cite[Corollary 4.2]{Zimmermann}).

\begin{lemma}\label{p_a_r_cover}
	Let $M$ be a finite-volume hyperbolic $3$--manifold.
	The conclusion of Theorem \ref{profinite_almost_rigidity_hyperbolic} holds true for $M$
	if it holds true for some regular finite cover of $M$.
\end{lemma}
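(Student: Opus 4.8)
The plan is to pull the classification problem for $M$ up to the given cover, invoke the hypothesis there, and push the answer back down using Mostow rigidity.

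Let $p\colon\tilde M\to M$ be a regular finite cover for which the conclusion of Theorem~\ref{profinite_almost_rigidity_hyperbolic} is known, with deck transformation group $Q$, so that $\pi_1(\tilde M)$ is the kernel of an epimorphism $q\colon\pi_1(M)\twoheadrightarrow Q$ onto a finite group; fix the finite collection $\tilde{\mathcal M}$ of finite-volume hyperbolic $3$--manifolds provided for $\tilde M$. I will produce a finite collection $\mathcal M$ for $M$. Given a connected compact $3$--manifold $N$ with $\widehat{\pi_1(N)}\cong\widehat{\pi_1(M)}$, first cap off every boundary sphere of $N$ with a $3$--ball; this changes neither $\pi_1(N)$ nor its profinite completion, so we may assume that $\partial N$ has no sphere component. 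Fix an isomorphism $\Psi\colon\widehat{\pi_1(N)}\to\widehat{\pi_1(M)}$, compose it with the continuous extension $\widehat{\pi_1(M)}\to Q$ of $q$, and restrict to the dense subgroup $\pi_1(N)$; the resulting homomorphism $\pi_1(N)\to Q$ has dense, hence full, image, so it corresponds to a regular finite cover $r\colon\tilde N\to N$, a connected compact $3$--manifold carrying a free $Q$--action with quotient $N$. By construction the covers $\tilde N\to N$ and $\tilde M\to M$ form a $\Psi$--corresponding pair in the sense of Definition~\ref{corresponding_cover_def}, so $\Psi$ restricts to an isomorphism $\widehat{\pi_1(\tilde N)}\to\widehat{\pi_1(\tilde M)}$ (closures of finite-index subgroups in profinite completions being canonically their own profinite completions).

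Since $\partial N$ has no sphere component, neither does $\partial\tilde N$, its components being finite covers of non-spherical surfaces. Applying the conclusion of Theorem~\ref{profinite_almost_rigidity_hyperbolic} to $\tilde N$, the interior $\mathrm{int}(\tilde N)$ is homeomorphic to some member $\tilde M'\in\tilde{\mathcal M}$, and in particular is finite-volume hyperbolic. Hence $\mathrm{int}(N)$, being a compact $3$--manifold interior with a finite cover of finite-volume hyperbolic interior, is itself finite-volume hyperbolic --- alternatively this follows from the Wilton--Zalesskii detection of the geometric decomposition from $\widehat{\pi_1(N)}$. The hyperbolic structure on $\mathrm{int}(N)$ pulls back along $r$ to a finite-volume hyperbolic structure on $\mathrm{int}(\tilde N)$, which by Mostow rigidity is isometric to the standard structure on $\tilde M'$, and under this isometry the deck transformations of $r$ become isometries of $\tilde M'$. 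Thus $\mathrm{int}(N)\cong\tilde M'/\bar Q$ for some subgroup $\bar Q\le\mathrm{Isom}(\tilde M')$, isomorphic to $Q$ and acting freely on $\tilde M'$.

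It remains to observe that $\mathrm{Isom}(\tilde M')$ is a finite group, hence has only finitely many subgroups, so only finitely many manifolds $\tilde M'/\bar Q$ arise as $\tilde M'$ ranges over the finite set $\tilde{\mathcal M}$ and $\bar Q$ over the freely acting subgroups of $\mathrm{Isom}(\tilde M')$; collecting these yields the desired finite collection $\mathcal M$ for $M$. The one step that is not bookkeeping is the descent: one must know that the finite covering $\mathrm{int}(\tilde N)\to\mathrm{int}(N)$ is, up to homeomorphism, a Riemannian covering of hyperbolic manifolds, so that the deck group is forced into the \emph{finite} isometry group of $\tilde M'$. This is exactly where Mostow rigidity enters, together with the input that $\mathrm{int}(N)$ is hyperbolic of finite volume; the remaining ingredients --- the correspondence of finite covers under $\Psi$ and the behaviour of the boundary spheres --- are routine.
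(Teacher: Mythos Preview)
Your proof is correct and follows essentially the same strategy as the paper: pass to the $\Psi$--corresponding regular cover, apply the hypothesis there, and use Mostow--Prasad rigidity together with finiteness of isometry groups to bound the possible quotients. The only cosmetic difference is that you cap off boundary spheres of $N$ at the outset (so $\tilde N$ automatically has none), whereas the paper caps off after passing to the cover, using a torsion-freeness argument to control how boundary spheres of $N'$ sit over those of $N$.
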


\begin{proof}
	Suppose that we have obtained a finite collection $\mathcal{M}'$ as asserted
	for some finite cover $M'$ of $M$. If $N$ is any connected compact $3$--manifold
	with $\pi_1(N)$ profinitely isomorphic to $\pi_1(M)$,
	then there is some regular finite cover $N'$ of $N$,
	such that $\pi_1(N')$ is profinitely isomorphic $\pi_1(M')$.
	By assumption, there is some $X'\in\mathcal{M}'$,
	such that the interior of $N'$ is homeomorphic to $X'$ with finitely many punctures.
	Note that the boundary spheres of $N'$ compactifying those punctures
	must project homeomorphically onto boundary spheres of $N$.
	In fact, otherwise $\pi_1(N)$ would contain torsion,
	but $\widehat{\pi_1(N)}\cong\widehat{\pi_1(M)}$ is torsion-free,
	since $\pi_1(M)$ is a residually finite, good group of finite cohomological dimension
	\cite[Corollary 7.6]{Reid_discrete}.
	So, possibly after capping of the boundary spheres of $N$ and $N'$,
	we may assume that the interior of $N'$ is homeomorphic to $X'$.
	By the Perelman--Thurston geometrization theorem,
	we infer that $N$ is finite-volume hyperbolic,
	then it follows from the Mostow--Prasad rigidity theorem
	that the finite cover $N'$ is isometric to $X'$,
	(see \cite[Theorems 1.7.1 and 1.7.5]{AFW_book_group});
	one may also infer the hyperbolicity of $N$ 
	as a particular case of \cite{WZ_geometry}.
	This means that $N$ is an isometric quotient of $N'\cong X'$.
	Therefore, we can form $\mathcal{M}$ by taking 
	all the finite-volume hyperbolic $3$--manifolds $X$,
	such that $X$ is some isometric quotient of some $X'\in\mathcal{M}'$.
	Since the isometry group of any finite-volume hyperbolic $3$--manifold is finite,
	$\mathcal{M}$ is a finite collection as desired.
\end{proof}

\begin{lemma}\label{p_a_r_closed}
	The conclusion of Theorem \ref{profinite_almost_rigidity_hyperbolic} holds if $M$ is closed.
\end{lemma}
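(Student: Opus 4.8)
\emph{Proof plan.} The plan is to reduce the topological identification of candidate manifolds to the identification of pseudo-Anosov monodromies of bounded stretch factor on a fixed closed surface, and then to quote the finiteness Theorem \ref{pA_finiteness}. First I would arrange that $M$ itself fibers over a circle: since $M$ is closed hyperbolic, Agol's virtual fibering theorem provides a finite cover of $M$ fibering over $\Sph^1$, and passing to its normal core (whose fibration is the pullback) yields a \emph{regular} finite cover $M'$ of $M$ that fibers; by Lemma \ref{p_a_r_cover} it suffices to prove the conclusion for $M'$. So from now on assume $M$ is closed hyperbolic with a primitive fibered class $\phi\in H^1(M;\Integral)$, identified via Remark \ref{pA_remark} with the mapping torus of a pseudo-Anosov automorphism $f\colon S\to S$ of a connected closed surface $S$ of genus $g$ and stretch factor $\lambda(f)=:C$.

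Next, given a connected compact $3$--manifold $N$ with $\widehat{\pi_1(N)}\cong\widehat{\pi_1(M)}$, I would pin down the topology of $N$. Because $\pi_1(M)$ is residually finite, cohomologically good, and of finite cohomological dimension, $\widehat{\pi_1(M)}\cong\widehat{\pi_1(N)}$ is torsion-free, so $\pi_1(N)$ is torsion-free; in particular $N$ has no fake $\Sph^3$ or $\Sph^1\times\Sph^2$ prime summands, and after capping off the boundary spheres of $N$ we may assume $N$ is irreducible and aspherical with $\partial N$ a (possibly empty) union of tori. Since $\widehat{\pi_1(N)}$ is the profinite completion of a \emph{closed hyperbolic} $3$--manifold group, the Wilton--Zalesskii correspondence of prime decompositions and geometric decompositions \cite{WZ_geometry,WZ_decomposition} forces $N$ to be closed hyperbolic as well. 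Fix an isomorphism $\Psi\colon\widehat{\pi_1(N)}\to\widehat{\pi_1(M)}$, regarded as a profinite morphism setting $(N,M,\Psi)$.

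Now I would transport $\phi$ and its dynamical invariants across $\Psi$. By Theorem \ref{profinite_isomorphism_hyperbolic} there is a unit $\mu\in\widehat{\Integral}^\times$ with $\mathrm{MC}(\Psi_*)=\mu\Integral$, so $\Psi^*_{1/\mu}\colon H^1(M;\Integral)\to H^1(N;\Integral)$ is a $\Integral$--module isomorphism; by the fibered-cone correspondence (Corollary \ref{fibered_cone_correspondence}) it sends the primitive fibered class $\phi$ to a primitive fibered class $\psi_N\in H^1(N;\Integral)$, so $N$ is the mapping torus of a pseudo-Anosov automorphism $f_N\colon S_N\to S_N$ of a connected closed surface $S_N$. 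By the Thurston-norm preservation (Corollary \ref{TN_correspondence}) one gets $-\chi(S_N)=\|\psi_N\|_{\mathtt{Th}}=\|\phi\|_{\mathtt{Th}}=-\chi(S)$, hence $S_N$ has genus $g$; and by Theorem \ref{profinite_invariance_nu}, together with $\lambda(f_N)=\limsup_m N_m(f_N)^{1/m}$ (see \eqref{N_m_and_lambda}) — equivalently by Corollary \ref{pA_features_correspondence} — one gets $\lambda(f_N)=\lambda(f)=C$.

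Finally, Theorem \ref{pA_finiteness} yields finitely many pseudo-Anosov automorphisms of a genus $g$ closed surface with stretch factor at most $C$, up to topological equivalence; let $\mathcal{M}$ be the (finite) collection of their mapping tori, each closed hyperbolic by Thurston's hyperbolization for fibered $3$--manifolds. Then $N\cong M_{f_N}$ is homeomorphic to a member of $\mathcal{M}$, which proves the lemma for the fibered cover $M'$, and hence for $M$ by Lemma \ref{p_a_r_cover}. The only genuinely delicate points, both already resolved by results quoted above, are (i) extracting ``$N$ is closed hyperbolic and fibers over $\Sph^1$ with a primitive fibered class matching $\phi$'' from purely profinite data, which rests on the Wilton--Zalesskii geometry detection and on the $\widehat{\Integral}^\times$--regularity of $\Psi$, and (ii) verifying that the fiber genus and the stretch factor are profinite invariants; once these are in hand the remaining argument is assembly rather than new input.
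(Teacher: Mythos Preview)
Your proposal is correct and follows essentially the same route as the paper: reduce via Lemma \ref{p_a_r_cover} to a fibered cover, use Wilton--Zalesskii to force $N$ closed hyperbolic, transport the primitive fibered class via $\widehat{\Integral}^\times$--regularity, match the fiber Euler characteristic and the stretch factor, and finish with Theorem \ref{pA_finiteness}. The only cosmetic difference is that you read off $\chi(S_N)=\chi(S)$ from Thurston-norm preservation (Corollary \ref{TN_correspondence}), whereas the paper extracts it from the profinite correspondence of fiber subgroups (Corollary \ref{fiber_surface_correspondence}); both are valid.
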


\begin{proof}
		With Lemma \ref{p_a_r_cover}, we may assume without loss of generality that $M$ is 
		orientable, and fibers over a circle (Theorem \ref{quasi-fibered}).
		Identify $M$ with the mapping torus of some pseudo-Anosov automorphism $f_M\colon S_M\to S_M$.
		Possibly after cap-off,	we may assume $N$ has no boundary spheres.
		Then the assumption $\widehat{\pi_1(N)}\cong\widehat{\pi_1(M)}$ implies that
		$N$ is homeomorphic to an orientable closed hyperbolic $3$--manifold \cite{WZ_decomposition}.
		By Corollaries \ref{fiber_surface_correspondence} and \ref{pA_features_correspondence},
		$N$ is homeomorphic to the mapping torus of some pseudo-Anosov automorphism $f_N\colon S_N\to S_N$,
		with $\chi(S_N)=\chi(S_M)$ and $\lambda(f_N)=\lambda(f_M)$.
		Therefore, Theorem \ref{pA_finiteness} implies that 
		$(S_M,f_M)$ determines the homeomorphism type of
		$N$ up to finitely many possibilities after cap-off.
\end{proof}

\begin{lemma}\label{p_a_r_cusped}
	The conclusion of Theorem \ref{profinite_almost_rigidity_hyperbolic} holds if $M$ is cusped.
\end{lemma}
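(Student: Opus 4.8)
The plan is to reduce the cusped case to the closed case (Lemma~\ref{p_a_r_closed}) by a Dehn filling argument, in the same spirit as the proof of Lemma~\ref{profinite_isomorphism_hyperbolic_robust}, and then to apply the pseudo-Anosov finiteness Theorem~\ref{pA_finiteness}. First, by Lemma~\ref{p_a_r_cover} it suffices to prove the conclusion for a regular finite cover of $M$. By virtual fibering (Agol; compare Theorem~\ref{quasi-fibered}) we may assume $M$ is orientable and fibers over the circle, and we identify $M$ with the mapping torus $M_{f_M}$ of a pseudo-Anosov automorphism $f_M\colon S_M\to S_M$ of the connected fiber, fixing the primitive fibered class $\psi_A\in H^1(M;\Integral)$ it determines. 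Passing if necessary to a further regular finite cover, we arrange in addition that $S_M$ has genus $g\geq2$: this can be done by pulling back a cyclic cover of $S_M$ in which every boundary circle lifts with large degree, then replacing it with a characteristic refinement of $\pi_1(S_M)$ so that the cover extends over $M_{f_M}$; the fiber of the resulting cover has arbitrarily large $|\chi|$, and once its genus exceeds $1$ it stays $\geq2$ in every further cover. Since $M$ is cusped, $S_M$ has $p\geq1$ punctures.

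Now suppose $\widehat{\pi_1(N)}\cong\widehat{\pi_1(M)}$, realized by $\Psi$, so that $(M,N,\Psi)$ (or $(M_A,M_B,\Psi)$ with $M_A=M$, $M_B=N$) fits Convention~\ref{profinite_morphism_setting}. After capping off boundary spheres, $N$ is the compact core of an orientable finite-volume hyperbolic $3$--manifold (Wilton--Zalesskii), and $\Psi$ induces a bijection between the cusps of $M$ and those of $N$ carrying peripheral subgroups to conjugates of peripheral subgroups (\cite[Proposition 3.1]{WZ_decomposition}, cf.\ the proof of Lemma~\ref{profinite_isomorphism_hyperbolic_robust}). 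By Corollary~\ref{fiber_surface_correspondence} the fibration of $M$ by $\psi_A$ corresponds to a fibration of $N$ by a primitive fibered class $\phi_B$ with $\psi_A=\Psi^*_{1/\mu}(\phi_B)$ up to the central involution (Theorem~\ref{profinite_isomorphism_hyperbolic}), with connected fiber $S_N$ whose closed-up fundamental group is carried by $\Psi$ onto that of $S_M$. Since surface groups are cohomologically good, $\chi(S_N)=\chi(S_M)$; and since, cusp by cusp, the $\Integral$--module isomorphism $F_\mu$ underlying $\Psi_*$ restricts to an isomorphism of peripheral lattices sending the restriction of $\psi_A$ to the restriction of $\phi_B$, the fiber boundary slope of $M$ on each cusp corresponds to that of $N$ on the matching cusp, with equal divisibility of the restricted fibered classes. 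Hence $S_N$ has the same number of boundary components, so the same number $p$ of punctures, so the same genus $g\geq2$; and the monodromy $f_N\colon S_N\to S_N$ is pseudo-Anosov because $N$ is hyperbolic.

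Capping off the punctures of $S_M$ yields a closed surface $\bar S_M$ of genus $g\geq2$ with a pseudo-Anosov monodromy $\bar f_M$ (the invariant measured foliations of $f_M$ extend over the capping points), i.e.\ $\bar M:=M_{\bar f_M}$ is the Dehn filling of $M$ along the fiber boundary slope on every cusp, and is closed hyperbolic; form $\bar N$ from $N$ the same way. Since the fiber boundary slopes correspond under $\Psi$, the van Kampen theorem shows that $\Psi$ descends to an isomorphism $\bar\Psi\colon\widehat{\pi_1(\bar M)}\to\widehat{\pi_1(\bar N)}$, and $\psi_A$, $\phi_B$ descend to primitive fibered classes $\bar\psi_A$, $\bar\phi_B$ on $\bar M$, $\bar N$. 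By Proposition~\ref{MC_property}(1) applied to the filling quotient homomorphisms, $\mathrm{MC}(\bar\Psi_*)\subseteq\mathrm{MC}(\Psi_*)=\mu\Integral$; since both modules are free of rank $1$ with unit generators (Theorem~\ref{profinite_isomorphism_hyperbolic}, closed case), the generator of $\mathrm{MC}(\bar\Psi_*)$ is $\pm\mu$, and one checks that $\bar\psi_A$ and $\bar\phi_B$ are $\bar\Psi$--corresponding fibered classes. Then Corollary~\ref{pA_features_correspondence} applies to $(\bar M,\bar N,\bar\Psi)$ and gives $\lambda(\bar f_M)=\lambda(\bar f_N)$, hence $\lambda(f_M)=\lambda(f_N)$ since capping punctures does not change the stretch factor.

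It follows that $(S_N,f_N)$ is a pseudo-Anosov automorphism of an orientable surface with the same genus and number of punctures as $(S_M,f_M)$ and with $\lambda(f_N)=\lambda(f_M)$. By Theorem~\ref{pA_finiteness} there are only finitely many such up to topological equivalence, and topologically equivalent pseudo-Anosov maps have homeomorphic mapping tori; letting $\mathcal{M}$ be the finite set of the interiors of these mapping tori, the interior of $N$ (after capping off boundary spheres) is homeomorphic to a member of $\mathcal{M}$, which proves the lemma for the chosen cover and hence, by Lemma~\ref{p_a_r_cover}, for $M$. The main obstacle is Step~3: one must verify that the profinite isomorphism, the chosen fibered classes, the unit $\mu$, and the fiber boundary slopes all descend \emph{coherently} to the closed fillings $\bar M,\bar N$, so that Corollary~\ref{pA_features_correspondence} is genuinely applicable; this rests on the cusp-by-cusp slope correspondence of Step~2, for which the rank-one conclusion of Theorem~\ref{profinite_isomorphism_hyperbolic} and the Wilton--Zalesskii peripheral correspondence are both essential. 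A secondary technical point is the genus-boosting in Step~1, needed to guarantee that $\bar M$ and $\bar N$ are hyperbolic rather than, say, Sol torus bundles.
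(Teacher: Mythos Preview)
Your overall strategy---fill along fiber-boundary slopes to reduce to the closed case---is the same as the paper's, and your handling of the cusp/slope correspondence and the descent of $\Psi$ to the filled manifolds is essentially correct. But there is a genuine gap in Step~3, and it is not the genus issue you flag at the end.

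The problem is the parenthetical ``the invariant measured foliations of $f_M$ extend over the capping points.'' A pseudo-Anosov map on a \emph{punctured} surface may have $1$--prong singularities at punctures; capping such a puncture does \emph{not} produce a pseudo-Anosov map on the closed surface (interior singularities must be $k$--pronged with $k\geq 3$), and the filled mapping torus need not be hyperbolic. Your genus-boosting cover in Step~1 may, as a side effect, eliminate $1$--prongs on the $M$ side (since you arrange boundary circles to lift with large degree), but this says nothing about the $N$ side: after applying Lemma~\ref{p_a_r_cover} you are considering an \emph{arbitrary} $N$ with $\widehat{\pi_1(N)}\cong\widehat{\pi_1(M)}$, and its pseudo-Anosov monodromy $f_N$ on $S_N$ (which only has the correct $\chi$, genus and puncture count) can perfectly well have $1$--prongs at some punctures. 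Then $\bar f_N$ is not pseudo-Anosov, $\bar N$ is not hyperbolic, and Corollary~\ref{pA_features_correspondence} is inapplicable; the equality $\lambda(f_M)=\lambda(f_N)$ is left unproved.

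The paper fixes this by performing the cover-and-iterate construction \emph{symmetrically} and \emph{after} the profinite isomorphism is given: one passes to the characteristic cover $S^\dagger$ defined by iterated mod-$2$ abelianization of $\pi_1(S)$, which is a $\Psi$--natural construction (its defining kernel is determined by the profinite completion of $\pi_1(S)$). Hence the $\Psi$--corresponding cover $S_N^\dagger$ is exactly the same construction applied to $S_N$, and in both covers no simple closed curve---in particular no boundary circle---lifts with degree $1$, so all $1$--prongs are eliminated on \emph{both} sides simultaneously. Only then does capping produce genuine closed pseudo-Anosov maps with $\lambda(\bar f_M^\dagger)=\lambda(f_M)^D$ and $\lambda(\bar f_N^\dagger)=\lambda(f_N)^D$, so that Corollary~\ref{pA_features_correspondence} yields $\lambda(f_M)=\lambda(f_N)$. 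Your argument can be repaired along these lines, but not by a cover chosen for $M$ alone via Lemma~\ref{p_a_r_cover}.
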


\begin{proof}
	The same proof as with Lemma \ref{p_a_r_closed} will work,
	once we show $\chi(S_N)=\chi(S_M)$ and $\lambda(f_N)=\lambda(f_M)$.
	To this end,
	it suffices to assume that $M$ and $N$ are the mapping tori of pseudo-Anosov automorphisms
	$f_M\colon S_M\to S_M$ and $f_N\colon S_N\to S_N$ 
	of orientable finite-type punctured surfaces,
	such that a profinite group isomorphism $\Psi\colon\widehat{\pi_1(M)}\to \widehat{\pi_1(N)}$
	maps the closed normal subgroup $\widehat{\pi_1(S_M)}$ isomorphically onto $\widehat{\pi_1(S_N)}$.
	
	We construct a pseudo-Anosov automorphism $f^\dagger_M\colon S^\dagger_M\to S^\dagger_M$
	as follows. The surface $S^\dagger_M$ is a connected finite cover of $S_M$
	associated to a characteristic finite-index subgroup $K^\dagger_M$ of $\pi_1(S_M)$,
	such that no simple closed curves of $S_M$ lift to $S^\dagger_M$.
	For example, we take an intermediate cover $S'_M$ 
	associated to the kernel of the mod 2 abelianization of $\pi_1(S_M)$,
	and take $S^\dagger_M$ associated to 
	the kernel of the mod 2 abelianization of $\pi_1(S'_M)$.
	The automorphism $f^\dagger_M\colon S^\dagger_M\to S^\dagger_M$
	is an iterate of a lift of $f_M$,
	such that $f^\dagger_M$ fixes every puncture of $S^\dagger_M$.
	For example, 
	we take the iteration power to be
	$D=[\pi_1(S_M):K^\dagger_M]\times \prod_{g} (2-2g-\chi(S_M))!$,
	where $g$ ranges over $0,1,\cdots,\lfloor(2-\chi(S_M))/2\rfloor$,
	then $f^\dagger_M$ does not depend on the choice of the lift,
	and $f^\dagger_M$ fixes every puncture of $S^\dagger_M$,
	regardless of the particular genus or puncture number of $S_M$.
	Moreover, 
	we obtain a pseudo-Anosov automorphism $\bar{f}^\dagger_M\colon\bar{S}^\dagger_M\to \bar{S}^\dagger_M$
	of an orientable closed surface $\bar{S}^\dagger_M$	by filling the punctures of $S^\dagger_M$ with points.
	Since the stable/unstable measured foliations on $S_M$ pull back to $S^\dagger_M$,
	our construction ensures that there are no $1$--prong singularities
	at any punctures of $S^\dagger_M$,
	so $\lambda(\bar{f}^\dagger_M)=\lambda(f^\dagger_M)=\lambda(f_M)^D$.
	
	We construct similarly $f^\dagger_N\colon S^\dagger_N\to S^\dagger_N$
	with respect to $N$, using the $\Psi$--corresponding characteristic finite-index subgroup $K^\dagger_N$ of $\pi_1(N)$
	and the same iteration power $D$.
	So $\lambda(\bar{f}^\dagger_N)=\lambda(f^\dagger_N)=\lambda(f_N)^D$.
	The isomorphism $\Psi$ determines an isomorphism $\Psi^\dagger$ between 
	the profinite completions of 
	the cusped mapping-torus groups
	$\pi_1(M^\dagger)$ and $\pi_1(N^\dagger)$, 
	associated to $f^\dagger_M$ and $f^\dagger_N$ respectively.
	Moreover, $\Psi^\dagger$ descends to an isomorphism $\bar{\Psi}^\dagger$	
	between the profinite completions
	of the closed mapping-torus groups
	$\pi_1(\bar{M}^\dagger)$ and $\pi_1(\bar{N}^\dagger)$,
	associated to $\bar{f}^\dagger_M$ and $\bar{f}^\dagger_N$ respectively.
	This follows from the fiber surface correspondence (Corollary \ref{fiber_surface_correspondence})
	and the profinite cusp correspondence \cite[Proposition 3.1]{WZ_decomposition},
	(as aforementioned in the proof of Lemma \ref{profinite_isomorphism_hyperbolic_robust}).
	
	In fact, $S_M^\dagger$ and $S_N^\dagger$ has bijectively corresponding punctures, 
	since $M^\dagger$ and $N^\dagger$ has bijectively corresponding peripheral tori,
	as witnessed by $\Psi^\dagger$ \cite[Proposition 3.1]{WZ_decomposition}.
	More precisely, for every peripheral torus $T^\dagger_M$ of $M^\dagger$
	there is a unique peripheral torus $T^\dagger_N$ of $N^\dagger$,
	such that $\Psi^\dagger$ maps the closure of $\pi_1(T^\dagger_M)$ onto
	a conjugate of the closure of $\pi_1(T^\dagger_N)$.
	The intersection of $\mathrm{clos}(\pi_1(T^\dagger_M))$ with 
	the normal subgroup $\mathrm{clos}(\pi_1(S^\dagger_M))$ 
	in the profinite completion of $\pi_1(M^\dagger)$
	is exactly $\mathrm{clos}(\pi_1(c^\dagger_M))$, 
	where $c^\dagger_M$ is the unique slope on $T^\dagger$ 
	which is freely homotopic in $M^\dagger$ to a peripheral simple closed curve on $S^\dagger_M$.
	The profinite completion of $\pi_1(\bar{M}^\dagger)$
	is the quotient of the profinite completion of $\pi_1(M^\dagger)$
	by the smallest closed normal subgroup 
	which contains all the peripheral subgroups
	$\mathrm{clos}(\pi_1(c^\dagger_M))$ as above.
	Since $\Psi^\dagger$ carries the similar description to 
	the profinite completion of $\pi_1(\bar{N}^\dagger)$,
	it desends to an isomorphism $\bar{\Psi}^\dagger$
	between the profinite completions of $\pi_1(\bar{M}^\dagger)$ and $\pi_1(\bar{N}^\dagger)$.
	
	Since $\pi_1(S_M)$ and $\pi_1(S_N)$ are profinitely isomorphic,
	we obtain $\chi(S_M)=\chi(S_N)$.	
	Applying Corollary \ref{pA_features_correspondence} to $\bar{\Psi}^\dagger$,
	we obtain $\lambda(\bar{f}^\dagger_M)=\lambda(\bar{f}^\dagger_N)$,
	and hence, $\lambda(f_M)=\lambda(f_N)$.	
	Supplied with these equalities, 
	the argument with the closed case (Lemma \ref{p_a_r_closed})
	works for the cusped case (Lemma \ref{p_a_r_cusped}), as desired.
\end{proof}

Theorem \ref{profinite_almost_rigidity_hyperbolic} follows from Lemmas \ref{p_a_r_closed} and \ref{p_a_r_cusped}.

\bibliographystyle{amsalpha}


\end{document}